\newcommand{\e}{\mathbbm{1}}
\newcommand{\limto}{{\displaystyle\lim_{\longrightarrow}}}
\newcommand{\rightlim}{\mathop{\limto}}
\newcommand{\leftlim}{\mathop{\displaystyle\lim_{\longleftarrow}}}
\newcommand{\limfromn}{\leftlim\limits_{\raise3pt\hbox{$n$}}}
\newcommand{\limton}{\rightlim\limits_{\raise3pt\hbox{$n$}}}
\newcommand{\rightlimit}[1]{\mathop{\lim\limits_{\longrightarrow}}\limits%
                   _{\raise3pt\hbox{$\scriptstyle #1$}}}
\newcommand{\leftlimit}[1]{\mathop{\lim\limits_{\longleftarrow}}\limits%
                   _{\raise3pt\hbox{$\scriptstyle #1$}}}
\numberwithin{equation}{section}
\newcommand{\rar}[1]{\stackrel{#1}{\longrightarrow}}
\newcommand{\lar}[1]{\stackrel{#1}{\longleftarrow}}
\newcommand{\xrar}[1]{\xrightarrow{#1}}
\newcommand{\iso}{\buildrel{\sim}\over{\longrightarrow}}
\newcommand{\into}{\hookrightarrow}
\newcommand{\al}{\alpha}
\newcommand{\be}{\beta}
\newcommand{\ga}{\gamma}
\newcommand{\Ga}{\Gamma}
\newcommand{\de}{\delta}
\newcommand{\De}{\Delta}
\newcommand{\la}{\lambda}
\newcommand{\eps}{\epsilon}
\newcommand{\sg}{\sigma}
\newcommand{\te}{\theta}
\newcommand{\Om}{\Omega}
\newcommand{\vp}{\varphi}
\newcommand{\vt}{\vartheta}
\newcommand{\bA}{{\mathbb A}}
\newcommand{\bC}{{\mathbb C}}
\newcommand{\bD}{{\mathbb D}}
\newcommand{\bF}{{\mathbb F}}
\newcommand{\bG}{{\mathbb G}}
\newcommand{\bK}{{\mathbb K}}
\newcommand{\bL}{{\mathbb L}}
\newcommand{\bN}{{\mathbb N}}
\newcommand{\bQ}{{\mathbb Q}}
\newcommand{\bR}{{\mathbb R}}
\newcommand{\bZ}{{\mathbb Z}}
\newcommand{\cA}{{\mathcal A}}
\newcommand{\cC}{{\mathcal C}}
\newcommand{\cD}{{\mathcal D}}
\newcommand{\cE}{{\mathcal E}}
\newcommand{\cF}{{\mathcal F}}
\newcommand{\cG}{{\mathcal G}}
\newcommand{\cK}{{\mathcal K}}
\newcommand{\cL}{{\mathcal L}}
\newcommand{\cM}{{\mathcal M}}
\newcommand{\cN}{{\mathcal N}}
\newcommand{\cO}{{\mathcal O}}
\newcommand{\cS}{{\mathcal S}}
\newcommand{\sC}{{\mathscr C}}
\newcommand{\sD}{{\mathscr D}}
\newcommand{\sE}{{\mathscr E}}
\newcommand{\sF}{{\mathscr F}}
\newcommand{\sL}{{\mathscr L}}
\newcommand{\sM}{{\mathscr M}}
\newcommand{\sP}{{\mathscr P}}
\newcommand{\sS}{{\mathscr S}}
\newcommand{\sX}{{\mathscr X}}
\newcommand{\sY}{{\mathscr Y}}
\newcommand{\sZ}{{\mathscr Z}}
\newcommand{\fA}{{\mathfrak A}}
\newcommand{\fG}{{\mathfrak G}}
\newcommand{\fq}{{\mathfrak q}}
\newcommand{\abs}[1]{\lvert #1\rvert}
\newcommand{\In}{\operatorname{in}}
\newcommand{\out}{\operatorname{out}}
\newcommand{\Ar}{\operatorname{Ar}}
\newcommand{\Isom}{\operatorname{Isom}}
\newcommand{\Ker}{\operatorname{Ker}}
\newcommand{\Cob}{{\text {\bf Cob}}}
\newcommand{\Cobin}{\Cob_{\In}}
\newcommand{\Cobout}{\Cob_{\out}}
\newcommand{\sCob}{{\text {\bf sCob}}}
\newcommand{\sCobin}{\sCob_{\In}}
\newcommand{\sCobout}{\sCob_{\out}}
\newcommand{\Cat}{{\text {\bf Cat}}}
\newcommand{\End}{\operatorname{End}}
\newcommand{\Hom}{\operatorname{Hom}}
\newcommand{\Ext}{\operatorname{Ext}}
\newcommand{\Aut}{\operatorname{Aut}}
\newcommand{\Spec}{\operatorname{Spec}}
\newcommand{\id}{\operatorname{id}}
\newcommand{\Id}{\operatorname{Id}}
\newcommand{\pr}{\mathrm{pr}}
\newcommand{\Ad}{\operatorname{Ad}}
\newcommand{\ev}{\operatorname{ev}}
\newcommand{\Supp}{\operatorname{supp}}
\newcommand{\Ind}{\operatorname{Ind}}
\newcommand{\ind}{\operatorname{ind}}
\newcommand{\Res}{\operatorname{Res}}
\newcommand{\Av}{\operatorname{Av}}
\newcommand{\av}{\operatorname{av}}
\newcommand{\red}{\operatorname{red}}
\newcommand{\Mor}{\sM\!or}
\newcommand{\HOM}{\underline{\operatorname{Hom}}}
\newcommand{\codim}{\operatorname{codim}}
\newcommand{\tens}{\otimes}
\newcommand{\st}{\,\big\vert\,}
\newcommand{\sbr}{\smallbreak}
\newcommand{\mbr}{\medbreak}
\newcommand{\bbr}{\bigbreak}
\newcommand{\can}{\operatorname{can}}
\newtheorem{thm}{Theorem}[section]
\newtheorem{cor}[thm]{Corollary}
\newtheorem{lem}[thm]{Lemma}
\newtheorem{prop}[thm]{Proposition}
\newtheorem{quest}[thm]{Question}
\theoremstyle{remark}
\newtheorem{rem}[thm]{Remark}
\newtheorem{rems}[thm]{Remarks}
\newtheorem{example}[thm]{Example}
\newtheorem{defin}[thm]{Definition}
\newtheorem{predef}[thm]{Pre-definition}
\newtheorem{defins}[thm]{Definitions}
\newtheorem{convention}[thm]{Convention}
\newcommand{\cst}{\bC^\times}
\newcommand{\Fun}{\operatorname{Fun}}
\newcommand{\ql}{\overline{\bQ}_\ell}
\newcommand{\Perv}{\operatorname{Perv}}
\newcommand{\ig}{\ind_{G'}^G}
\newcommand{\Ig}{\Ind_{G'}^G}
\newcommand{\Rg}{\Res_{G'}^G}
\newcommand{\iga}{\ind_{\Ga'}^{\Ga}}
\newcommand{\avg}{\av_{G/G'}}
\newcommand{\Avg}{\Av_{G/G'}}
\newcommand{\igo}{\ind_{G_1}^G}
\newcommand{\pu}{\operatorname{\mathfrak{pu}}}
\newcommand{\puc}{\pu^\circ}
\newcommand{\cpu}{\operatorname{\mathfrak{cpu}}}
\newcommand{\cpuc}{\cpu^\circ}
\newcommand{\qzp}{\bQ_p/\bZ_p}
\newcommand{\Nt}{\widetilde{N}}
\newcommand{\GV}{Grothendieck-Verdier\ }
\title[Character sheaves on unipotent groups]{Character sheaves on unipotent groups in positive characteristic: foundations}
\author{Mitya Boyarchenko \and Vladimir Drinfeld}
\thanks{Both authors were supported by the NSF grant
DMS-0701106. M.B. was also supported by the NSF Postdoctoral Research Fellowship DMS-0703679 and by the NSF grant DMS-1001769. V.D. was also supported by the NSF grant DMS-1001660. \\
\textit{Address}: Department of
Mathematics, University of Chicago, Chicago, IL 60637. \\
\textit{E-mail}: {\tt mitya@math.uchicago.edu} (M.B.), \ {\tt
drinfeld@math.uchicago.edu} (V.D.)}
\begin{document}

\begin{abstract}
In this article we formulate and prove the main theorems of the theory of character sheaves on unipotent groups over an algebraically closed field of characteristic $p>0$. In particular, we show that every admissible pair for such a group $G$ gives rise to an $\bL$-packet of character sheaves on $G$, and that, conversely, every $\bL$-packet of character sheaves on $G$ arises from a (non-unique) admissible pair.

In the appendices we discuss two abstract category theory patterns related to the study of character sheaves. The first appendix sketches a theory of duality for monoidal categories, which generalizes the notion of a rigid monoidal category and is close in spirit to the Grothendieck-Verdier duality theory. In the second one we use a topological field theory approach to define the canonical braided monoidal structure and twist on the equivariant derived category of constructible sheaves on an algebraic group; moreover, we show that this category carries an action of the surface operad. The third appendix proves that the ``naive'' definition of the equivariant $\ell$-adic derived category with respect to a unipotent algebraic group is equivalent to the ``correct'' one.
\end{abstract}

\dedicatory{Dedicated to the memory of our friend Leonid Vaksman}

\maketitle

\setcounter{tocdepth}{1}

\tableofcontents



\section*{Introduction}

In a series of works beginning in the 1980s, George Lusztig
developed a theory of character sheaves for reductive algebraic
groups and explored its relation to the character theory of finite
groups of Lie type. In 2003, he conjectured that there should also
exist an interesting theory of character sheaves for unipotent
groups in positive characteristic, and calculated the first example
of a nontrivial $\bL$-packet in this setting \cite{lusztig}. A general
definition of an $\bL$-packet of character sheaves on a unipotent
group over an algebraically closed field $k$ of characteristic $p>0$
was given in \cite{intro}. We also formulated there a list of
conjectures related to this notion and discussed the orbit method
for unipotent groups of nilpotence class $<p$.

\mbr

%

A brief introduction to the theory of character sheaves on unipotent
groups was given in a joint talk by the authors, the slides for
which are available online \cite{talk}. The present article contains
the proofs of the results announced in Parts I and II of that talk
(Part III was devoted to the relationship between character sheaves
and characters of unipotent groups over finite fields, which is discussed in \cite{characters-char-sheaves}).

\mbr

Let us summarize the main features of the theory to which our work
is devoted. Precise statements of the main results, as well as all
the background definitions, are contained in \S\ref{s:results},
which can be viewed as an ``extended introduction'' to the paper.

\mbr

Let $G$ be a unipotent algebraic group over $k$, and fix a prime
$\ell\neq p=\operatorname{char}k$. Let $\sD_G(G)$ denote the
$G$-equivariant derived category of constructible $\ql$-complexes on
$G$, where $G$ acts on itself by conjugation. It is a braided
monoidal category with respect to the functor of convolution with
compact supports (Def.~\ref{d:convolution}), which we denote by
$(M,N)\mapsto M*N$. Let us say that an object $e\in\sD_G(G)$ is a
\emph{closed idempotent} if there exists an arrow $\e\to e$ that
becomes an isomorphism after convolving with $e$, where $\e$ is the
unit object in $\sD_G(G)$ (the delta-sheaf at the identity $1\in
G$). The notion of a \emph{minimal closed idempotent} is defined in
the obvious way. If $e\in\sD_G(G)$ is a minimal closed idempotent,
the \emph{$\bL$-packet of character sheaves corresponding to $e$} is
defined as the collection of objects $M\in\sD_G(G)$ such that
$e*M\cong M$, and such that the underlying complex of $M$ (obtained
by discarding the $G$-equivariant structure) is an irreducible
perverse sheaf on $G$.

\mbr

Some of the fundamental properties of character sheaves, proved in
this article, are as follows. Every $\bL$-packet of character sheaves
on $G$ is finite. If $M$ and $N$ are two character sheaves on $G$,
then $\Ext^i_{\sD_G(G)}(M,N)=0$ for all $i>0$. Moreover, if $M$ and
$N$ lie in the same $\bL$-packet defined by a minimal closed
idempotent $e\in\sD_G(G)$, then $M*N$ is perverse up to
cohomological shift by an integer $n_e$ determined only by $e$ (if
$M$ and $N$ lie in different $\bL$-packets, then $M*N=0$).

\mbr

One of the ingredients in the proof is an explicit construction of
minimal closed idempotents in $\sD_G(G)$, based on the notion of an
\emph{admissible pair} for a unipotent group
(\S\ref{ss:Serre-duality-admissible-pairs}) and on the construction
of the \emph{induction functor with compact supports}
$\sD_{G'}(G')\rar{}\sD_G(G)$ for a closed subgroup $G'\subset G$
(see \S\ref{ss:induction-functors}).

\mbr

These tools were used previously in \cite{characters} to develop a
geometric approach to the study of characters of unipotent groups
over finite fields. In particular, it was proved there that every
admissible pair for $G$ gives rise to a minimal \emph{weak}
idempotent in $\sD_G(G)$, where a weak idempotent is defined as an
object $e\in\sD_G(G)$ that satisfies $e*e\cong e$. In the present
article we complete the picture by showing that the classes of
minimal closed idempotents and minimal weak idempotents in
$\sD_G(G)$ coincide (this result is parallel to the classical
theorem that the coadjoint orbits of a unipotent group are closed).
In addition, we prove that \emph{every} minimal (weak or closed)
idempotent in $\sD_G(G)$ arises from an admissible pair for $G$.
%
%

\mbr

One of the motivations behind our work was to provide a foundation for the theory of character sheaves and characters on unipotent groups over \emph{finite} fields, whose existence was conjectured by Lusztig. Suppose that $k$ is an algebraic closure of a finite subfield $\bF_q\subset k$ and that $G=G_0\tens_{\bF_q}k$, where $G_0$ is a unipotent group over $\bF_q$. If $M$ is a Frobenius-invariant character sheaf on $G$, the corresponding trace function $G_0(\bF_q)\rar{}\ql$ is invariant under conjugation. The relationship between these functions and the irreducible characters of the finite group $G_0(\bF_q)$ is studied in \cite{characters-char-sheaves}. In the case when $G$ is \emph{easy} in the terminology of \emph{op.~cit.} (every point of $G(k)$ is contained in the neutral connected component of its centralizer in $G$), the functions on $G_0(\bF_q)$ coming from Frobenius-invariant character sheaves on $G$ coincide with the irreducible characters up to scaling. For the general case we refer to Theorem 2.17 in \emph{op.~cit.} We remark that the present article relies on the results of \cite{characters} in a few places, while \cite{characters-char-sheaves} depends significantly both on \cite{characters} and on the present article.

\subsection*{Acknowledgments} We are indebted to George Lusztig, who originally suggested in 2003 that there should exist a theory of character sheaves on unipotent groups in positive characteristic and computed the first interesting examples in this theory. We thank A.~Beilinson, K.~Costello, J.~Lurie, and U.~Tillmann for valuable
advice. We also thank the referees for pointing out several misprints and omissions in an earlier version of our article.



\section{Main definitions and results}\label{s:results}

Most of this section is devoted to recalling several definitions and
constructions that were discussed at length in \cite{intro} and/or
\cite{characters}. In
\S\S\ref{ss:basic-definitions}--\ref{ss:equivariant-derived} we
recall some facts about derived categories of constructible
$\ell$-adic complexes, along with their equivariant versions.
In \S\ref{ss:themonoidal}-\ref{ss:properties-structures} we introduce
and discuss the monoidal categories $\sD(G)$ and $\sD_G(G)$
associated with any unipotent group $G$. The definitions of
character sheaves on unipotent groups in positive characteristic and
their functional dimension are given in
\S\S\ref{ss:char-sheaves-L-packets}--\ref{ss:functional-dimension}.
They are followed by a digression in \S\ref{ss:algebraic-reminders},
where we recall several well known results of character theory for
finite groups that serve as a motivation behind our approach to the
analysis of character sheaves. The main results of our work, along
with various preliminaries, appear in
\S\S\ref{ss:Serre-duality-admissible-pairs}--\ref{ss:mackey-theory}.
Finally, in \S\ref{ss:organization} we explain the organization of
the remaining sections of the article.

\subsection{Basic definitions and notation}\label{ss:basic-definitions}
Throughout this article we work over an algebraically closed field
$k$ of characteristic $p>0$. We also fix, once and for all, a prime
$\ell\neq p$ and an algebraic closure $\ql$ of the field $\bQ_\ell$
of $\ell$-adic numbers.

\mbr

By an \emph{algebraic group} over $k$ we will mean a smooth group
scheme (equivalently, a reduced group scheme of finite type) over
$k$. A \emph{unipotent algebraic group} (or ``unipotent group,'' for
brevity) over $k$ is an algebraic group over $k$ that is isomorphic
to a closed subgroup of the group $UL_n(k)$ of unipotent
upper-triangular matrices of size $n$ over $k$, for some $n\in\bN$.

\begin{rem}\label{r:why-algebraically-closed}
Many of the definitions and results of our work can be formulated
for unipotent groups over an arbitrary field $k$ of positive
characteristic (sometimes it is necessary to assume that $k$ is
perfect), and most of the auxiliary facts (see
\S\S\ref{s:serre-FD}--\ref{s:reduction}) remain valid in this more
general setting. On the other hand, their proofs can be trivially
reduced to the case where $k$ is algebraically closed. In addition,
certain important properties of character sheaves require $k$ to be
algebraically closed. For these reasons, we find it more convenient
to assume that $k$ is algebraically closed from the very beginning.
\end{rem}

If $X$ is an arbitrary scheme of finite type over $k$, one knows how
to define the bounded derived category $D^b_c(X,\ql)$ of
constructible complexes of $\ql$-sheaves on $X$ (see, e.g.,
\cite[\S\S1.1.2--1.1.3]{deligne-weil-2}). We will denote it simply
by $\sD(X)$, since $\ell$ is fixed.
It is a triangulated $\ql$-linear category. Furthermore, it is
equipped with a self-dual \emph{perverse} $t$-structure
$({}^p\sD^{\leq 0}(X),{}^p\sD^{\geq 0}(X))$ \cite{bbd}, whose heart,
$\Perv(X)={}^p\sD^{\leq 0}(X)\cap {}^p\sD^{\geq 0}(X)$, is called
the category of \emph{perverse sheaves} on $X$. It is an abelian
category in which every object has finite length (\emph{op.~cit.},
Thm.~4.3.1(i)).

\subsection{Formalism of the six functors}  \label{ss:six-functors}
In what follows, we will frequently employ Grothendieck's ``formalism
of the six functors'' for the categories $\sD(X)$ (as well as their
equivariant versions, defined in \S\ref{ss:equivariant-derived}
below). In particular, for any\footnote{One does not have to assume that $f$ is separated
(see, e.g., \cite{Las-Ols06}).} morphism $f:X\rar{}Y$ of $k$-schemes of finite type
one has the pullback functor $f^*:\sD(Y)\rar{}\sD(X)$,
the pushforward functor $f_*:\sD(X)\rar{}\sD(Y)$,
the functor $f_!:\sD(X)\rar{}\sD(Y)$ (pushforward with compact supports),
and the functor $f^!:\sD(Y)\rar{}\sD(X)$. \emph{We always omit the letters
``$L$'' and ``$R$'' from our notation for the six functors; thus,
$f_!$ will stand for $Rf_!$ and $\tens$ will stand for
$\overset{L}{\tens}_{\ql}$, etc.}

\begin{rem}
In \cite{sga4.5} the functor $f_!$ is defined only if $f$ is
separated. This is enough for our purposes.
\end{rem}

\subsection{Equivariant derived
categories}\label{ss:equivariant-derived} We remain in the setup of
\S\ref{ss:basic-definitions}. Let $G$ be an algebraic group over
$k$, let $X$ be a scheme of finite type over $k$, and suppose that
we are given a left action of $G$ on $X$. In general, to get
the correct definition of the ``equivariant derived category''
$\sD_G(X)$, one must either adopt the approach of Bernstein and
Lunts \cite{ber-lunts} (when $G$ is affine), or use the definition
of $\ell$-adic derived categories for Artin stacks due to Laszlo and
Olsson \cite{Las-Ols06} and define $\sD_G(X)=\sD(G\bigl\backslash
X)$, where $G\bigl\backslash X$ is the quotient stack of $X$ by $G$.

\mbr

\emph{From now on we assume that $G$ is unipotent.} In this case, the naive definition of $\sD_G(X)$ given below is equivalent to the correct one by Proposition \ref{p:equivalence-definitions-equivariant-category}.

\mbr

Let us write $\al:G\times X\to X$ for the action morphism and
$\pi:G\times X\to X$ for the projection. Let $\mu:G\times G\to G$ be
the product in $G$. Let $\pi_{23}:G\times G\times X\to G\times X$ be
the projection along the first factor $G$. The category $\sD_G(X)$
is defined as follows.

\begin{defin}\label{d:equiv-derived}
An {\em object} of the category $\sD_G(X)$ is a pair $(M,\phi)$,
where $M\in\sD(X)$ and $\phi:\al^*M\rar{\simeq}\pi^*M$ is an
isomorphism in $\sD(G\times X)$ such that
\begin{equation}\label{e:compatibility}
\pi_{23}^*(\phi)\circ(\id_G\times\al)^*(\phi) =
(\mu\times\id_X)^*(\phi),
\end{equation}
i.e., the composition of the natural isomorphisms
\[
(\id_G\times\al)^*\al^* M \cong (\mu\times\id_X)^*\al^* M \xrar{\
(\mu\times\id_X)^*(\phi)\ } (\mu\times\id_X)^*\pi^* M \cong
\pi_{23}^*\pi^* M
\]
equals the composition
\[
(\id_G\times\al)^*\al^* M \xrar{\ (\id_G\times\al)^*(\phi)\ }
(\id_G\times\al)^*\pi^* M \cong \pi_{23}^*\al^* M \xrar{\
\pi_{23}^*(\phi)\ } \pi_{23}^*\pi^* M.
\]
A {\em morphism} $(M,\phi)\rar{}(N,\psi)$ in $\sD_G(X)$ is a
morphism $\nu:M\rar{}N$ in $\sD(X)$ satisfying
$\phi\circ\al^*(\nu)=\pi^*(\nu)\circ\psi$. The {\em composition} of
morphisms in $\sD_G(X)$ is defined to be equal to their composition
in $\sD(X)$.
\end{defin}

\begin{rem}
With the aid of the proper base change theorem and the smooth base
change theorem, one can easily ``upgrade'' the functors $f^*$, $f_*$
and $f_!$ mentioned in \S\ref{ss:six-functors} to functors between
the equivariant derived categories $\sD_G(X)$ and $\sD_G(Y)$, in the
case where a unipotent group $G$ acts on $X$ and $Y$, and the
morphism $f$ commutes with the $G$-action
(cf.~\cite[\S4.4]{characters}).
\end{rem}

\begin{rem}
By Definition~\ref{d:equiv-derived}, we have a faithful forgetful functor
$\sD_G(X)\to\sD(X)$. If $G$ is a \emph{connected} unipotent group
over $k$, one can show that the forgetful functor is fully faithful.
In other words, being $G$-equivariant becomes a
\emph{property} of an $\ell$-adic complex on $X$ in this case. In particular, if $(M,\phi)\in\sD_G(X)$, then $\phi$ is determined uniquely by $M$.
\end{rem}

\subsection{The monoidal categories $\sD (G)$ and $\sD_G(G)$} \label{ss:themonoidal}
\begin{defin}
If $G$ is a unipotent algebraic group over $k$, the
\emph{equivariant derived category of $G$} is defined as the
equivariant derived category $\sD_G(G)$ with respect to the
conjugation action of $G$ on itself.
\end{defin}

\begin{defin}\label{d:convolution}
Let $G$ be a unipotent algebraic group over $k$. If $M$ and $N$ are
objects of $\sD(G)$ (respectively, $\sD_G(G)$), the
\emph{convolution with compact supports} of $M$ and $N$ is the
object of $\sD(G)$ (respectively, $\sD_G(G)$) defined by
$M*N=\mu_!\bigl((p_1^* M)\tens(p_2^* N)\bigr)$, where $\mu:G\times
G\rar{}G$ is the multiplication morphism and $p_1,p_2:G\times
G\rar{}G$ are the first and second projections.
\end{defin}

\begin{convention}
From now on, the words ``with compact supports'' will be dropped,
and the bifunctor $*$ will simply be referred to as the
\emph{convolution} of complexes on $G$. The other convolution bifunctor, obtained by replacing
$\mu_!$ with $\mu_*$ in Definition~\ref{d:convolution}, will not be used in the present work\footnote{In fact, convolution without compact supports, call it $*_*$ for now, can be expressed in terms of convolution with compact supports. Namely, if $M,N$ are objects of $\sD(G)$ or $\sD_G(G)$ one has a canonical isomorphism $M*_*N\rar{\simeq}\bD_G^-(\bD_G^-N*\bD_G^-M)$, where $\bD_G^-$ is the functor introduced in Definition \ref{d:duality}. By Remark \ref{r:duality-D(G)-inner-hom}, the functor $\bD_G^-$ also has intrinsic meaning in terms of the monoidal structure given by convolution with compact supports.}.
\end{convention}

It is easy to construct associativity constraints for the bifunctors
$*$ on $\sD(G)$ and on $\sD_G(G)$, making each category monoidal,
with unit object $\e=\e_G$ being the delta-sheaf at the identity element $1\in G(k)$.

\subsection{Properties and additional structures on $\sD_G(G)$ and $\sD(G)$}
\label{ss:properties-structures}
Here we give a brief outline (which is enough to read the most part of the article)
and refer to Appendices~\ref{s:dualityformalism} and \ref{s:CFT}
for the precise definitions and constructions.


\subsubsection{A duality property weaker than rigidity}   \label{sss:r-intro}
If $G$ is finite then the monoidal categories $\sD_G(G)$ and $\sD(G)$ are rigid.
In general, they are not rigid but have a weaker property related to duality.
Namely, $\sD_G(G)$ and $\sD(G)$ are  \emph{r-categories} in the sense of Definition~\ref{def:r-category}, see Example~\ref{example:dual-gen4} and Lemma~\ref{l:duality-on-D(G)}.


\subsubsection{$\sD_G(G)$ is a braided category} \label{sss:br}
The braiding on $\sD_G(G)$ is constructed in Definition \ref{d:braiding-equivariant-derived}.
Although this braiding enters the formulation of some of our results (notably,
Theorem \ref{t:properties-character-sheaves}(d)), in many situations it suffices to know its existence or even the following corollary of its existence.

\begin{rem}\label{r:braiding-equivariant-derived}
The functors $(M,N)\mapsto M*N$ and $(M,N)\mapsto N*M$ on $\sD_G(G)$ are isomorphic.
\end{rem}



\subsubsection{Pivotal structure on $\sD (G)$ and $\sD_G(G)$}
As explained in \S\ref{sss:pivotal_G},  each of the monoidal categories $\sD (G)$ and $\sD_G(G)$ has
a canonical \emph{pivotal structure} in the sense of Definition~\ref{d:pivotal1}.

\subsubsection{Ribbon structure on $\sD_G(G)$}   \label{sss:Rib}
According to Definition~\ref{d:ribbon}, a \emph{ribbon structure} on a braided
r-category\footnote{As explained in \S\ref{sss:r-intro}, the usual framework of rigid braided categories is too restrictive for us.} $\cM$ is an automorphism $\te$ of the functor $\Id_{\cM}$ satisfying
certain conditions\footnote{One can also consider a ribbon structure as a pivotal structure satisfying a certain condition, see Corollary~\ref{c:ribbon}.}; in particular, $\te$ has to be a \emph{twist} in the sense of Definition~\ref{d:tw}.
In \S\ref{ss:ribbon_G} we define a \emph{canonical ribbon structure on $\sD_G(G)$.}
For each $M\in\sD_G(G)$ and $g\in G$ the action of the canonical twist $\te_M :M\rar{\simeq}M$
on the stalk $M_g$ equals the action of $g\in Z(g)$ on $M_g$. Here $Z(g)\subset G$
is the centralizer of $g$ and the action of $Z(g)$ on $M_g$
comes\footnote{By Definition~\ref{d:equiv-derived}, for each $\ga ,g\in G$ one has
$\phi_{\ga ,g}:M_{\ga g\ga^{-1}}\iso M_g\,$. The (left) action of $Z(g)$ on $M_g$ is defined by
$\ga\mapsto\phi_{\ga ,g}^{-1}\,$, $\ga\in Z(g)$.}
from the equivariant structure on $M$.

\begin{example}
If $G$ is finite then $\sD_G(G)$ is the derived category of 
$\cA$, where $\cA$ is the category of $G$-equivariant constructible sheaves on $G$, also
known as the category of modules over the \emph{quantum double} of the group algebra of $G$.
$\cA$ is a standard example of a ribbon category  (and in fact, a modular one), see  \cite[\S3.2]{BK}.
\end{example}

\subsubsection{Action of the surface operad}
The category $\sD_G(G)$ is equipped with a canonical action of the surface operad, see
\S\ref{sss:surface_operad} of  Appendix~\ref{s:CFT} and Remark~\ref{r:surface_operad}(ii).
The action of the genus 0 part of the surface operad amounts to the braided monoidal structure and twist mentioned in \S\ref{ss:themonoidal}, \S\ref{sss:br}, and \S\ref{sss:Rib}.

\subsection{Character sheaves and
$\bL$-packets}\label{ss:char-sheaves-L-packets} The notion of a
character sheaf on a unipotent algebraic group $G$ is defined in
terms of certain ``idempotents'' in the category $\sD_G(G)$. A more
exhaustive study of idempotents in monoidal categories (which does
not depend on any of the other results of the present article)
appears in \S\ref{s:idempotents}. Here we will briefly summarize
some of the definitions given in that section, specialized to the
monoidal category $(\sD_G(G),*,\e)$.

\begin{defins}[Weak and closed idempotents; minimal idempotents; Hecke
subcategories]\label{d:idempotents-Hecke} In the setup of
\S\ref{ss:basic-definitions}, let $G$ be a unipotent algebraic group
over $k$.
\begin{enumerate}[(1)]
\item An object $e\in\sD_G(G)$ is said to be a \emph{weak
idempotent} if $e*e\cong e$. It is said to be a
\emph{closed\footnote{The origin of the adjective ``closed'' is
explained in \S\ref{ss:idempotents-definitions}.} idempotent} if
there exists a morphism $\pi:\e\rar{}e$ that becomes an isomorphism
after convolving with $e$. Such a $\pi$ is called an
\emph{idempotent arrow.}
 \sbr
\item If $e\in\sD_G(G)$ is a weak idempotent, the \emph{Hecke
subcategory} associated to $e$ is the full subcategory
$e\sD_G(G)\subset\sD_G(G)$ consisting of objects $M\in\sD_G(G)$ such
that $e*M\cong M$ (equivalently, such that $M\cong e*N$ for some
$N\in\sD_G(G)$).
 \sbr
\item An object $e\in\sD_G(G)$ is said to be a \emph{minimal weak
idempotent} (respectively, a \emph{minimal closed idempotent}) if
$e$ is a weak (respectively, closed) idempotent, $e\neq 0$, and for
every weak (respectively, closed) idempotent $e'$ in $\sD_G(G)$, we
have either $e*e'=0$, or $e*e'\cong e$.
\end{enumerate}
\end{defins}

\begin{rems}   \label{r:Hecke_cat}
\begin{enumerate}[(i)]
\item If $e\in\sD_G(G)$ is a closed idempotent, the
Hecke subcategory $e\sD_G(G)$ is a monoidal category with unit
object $e$ (see Lemma \ref{l:Hecke-monoidal}).
 \sbr
\item  If $e\in\sD_G(G)$ is a weak idempotent,
the category $e\sD_G(G)$ is not necessarily monoidal already for
$G=\bG_a$ (see Remark~\ref{r:caution}, including its last sentence), although by Remark
\ref{r:braiding-equivariant-derived}, the subcategory
$e\sD_G(G)\subset\sD_G(G)$ is closed under convolution. There are
also other reasons why the notion of weak idempotent is not really
good. Nevertheless, the interplay between weak and closed
idempotents, studied in \S\ref{s:idempotents}, turns out to be
useful for proving the main results of our work.
 \sbr
\item Let $e,e'\in\sD_G(G)$ be minimal closed (respectively, minimal weak) idempotents.
If $e*e'\ne 0$, then $e\cong e*e'\cong e'*e\cong e'$ (we have used
Remark~\ref{r:braiding-equivariant-derived}). So if $e\ncong e'$, then
$e*e'=0$, and therefore $e\sD_G(G)\cap e'\sD_G(G)=0$.
\end{enumerate}
\end{rems}


\begin{defins}[Character sheaves and
$\bL$-packets]\label{d:char-sheaves-L-packets} With the assumptions of
\S\ref{ss:basic-definitions}, let $G$ be a unipotent algebraic group
over $k$.
\begin{enumerate}[(1)]
\item Let $e\in\sD_G(G)$ be a minimal closed idempotent. We write
$\sM_e^{perv}$ for the full subcategory of the Hecke subcategory
$e\sD_G(G)$ consisting of those objects for which the underlying
$\ell$-adic complex is a perverse sheaf on $G$. It is clear that
$\sM_e^{perv}$ is an additive $\ql$-linear subcategory of
$\sD_G(G)$. The \emph{Lusztig packet of character sheaves} on $G$
defined by $e$ is the set of (isomorphism classes of) indecomposable
objects of the category $\sM_e^{perv}$.
 \sbr
\item An object of $\sD_G(G)$ is a \emph{character sheaf} if it lies
in the Lusztig packet of character sheaves defined by some minimal
closed idempotent in $\sD_G(G)$.
\end{enumerate}
\end{defins}

From now on, for brevity, we write ``$\bL$-packet'' in place of ``Lusztig packet.''

\begin{rem}
If minimal closed idempotents $e,e'\in\sD_G(G)$ are not isomorphic then
the corresponding $\bL$-packets are disjoint by Remark~\ref{r:Hecke_cat} (iii).
\end{rem}

The first main result of this article is the following

\begin{thm}\label{t:properties-character-sheaves}
Let $G$ be a unipotent algebraic group over $k$, and let
$e\in\sD_G(G)$ be a minimal closed idempotent.
\begin{enumerate}[$($a$)$]
\item Then $\sM_e^{perv}$
is a semisimple abelian category with finitely many simple objects.
In particular, $\bL$-packets of character sheaves on $G$ are finite.
 \sbr
\item There exists a $($necessarily unique$)$ integer $n_e$ such
that $e[-n_e]\in\sM_e^{perv}$. One has $0\le n_e\le \dim G$.
The subcategory $\sM_e:=\sM_e^{perv}[n_e]$
of the monoidal category $e\sD_G(G)$ is monoidal.
 \sbr
\item The monoidal categories $e\sD_G(G)$ and $\sM_e$ are rigid; moreover, $\sM_e$ is a fusion
category\footnote{A {\em fusion category\,} over $\ql$ is a rigid $\ql$-linear monoidal category $\cC$ such that the unit object of $\cC$ is indecomposable and as a $\ql$-linear category,  $\cC$
is equivalent to a direct sum of finitely many copies of the category of finite-dimensional vector spaces.} in the sense of \cite{ENO}.
 \sbr
\item The restrictions to $\sM_e$ of the braiding constructed in Definition \ref{d:braiding-equivariant-derived} and of the twist constructed in Definition \ref{d:twist-equivariant-derived} define a ribbon structure on $\sM_e$. This ribbon structure is modular.
 \sbr
\item The perverse $t$-structure on $\sD(G)$ induces a $t$-structure
on $e\sD_G(G)$, and the canonical functor
$D^b(\sM_e^{perv})\rar{}e\sD_G(G)$ is an equivalence.
\end{enumerate}
\end{thm}

\begin{rems}\label{r:properties-char-sheaves}
\begin{enumerate}[(i)]
\item By Remark \ref{r:Hecke_cat}(i), the last part of (b) is
equivalent to $\sM_e$ being closed under convolution (the fact that
$\sM_e$ contains the unit object $e$ of the Hecke subcategory
$e\sD_G(G)$ is clear from the definition of $n_e$).
 \sbr
\item It is not hard to show that a minimal closed idempotent $e$ is indecomposable
(cf.~Corollary \ref{c:minimal-closed-idempotent-indecomposable}). So
in the situation of Theorem~\ref{t:properties-character-sheaves}(b),
the object $e[-n_e]$ is a character sheaf. In particular,
$\bL$-packets are nonempty.
\end{enumerate}
\end{rems}

The proof of Theorem~\ref{t:properties-character-sheaves} is given
in \S\ref{ss:proof-t:properties-character-sheaves}. It relies on a
certain construction of all minimal closed idempotents in $\sD_G(G)$
(and the corresponding $\bL$-packets), which is the keystone of our
approach to the theory of character sheaves on unipotent groups.
This construction, based on the notion of an admissible pair
(\S\ref{ss:Serre-duality-admissible-pairs}) and on the induction
functors for equivariant derived categories
(\S\ref{ss:induction-functors}), is given in
\S\ref{ss:construction-L-packets}.

\mbr

The duality functor on the rigid monoidal category $e\sD_G(G)$ can
be calculated explicitly as follows.

\begin{defin}\label{d:duality}
For a unipotent algebraic group $G$ over $k$, we define the contravariant functor
$\bD^-_G:\sD_G(G)\rar{}\sD_G(G)$ by
$\bD^-_G=\bD_G\circ\iota^*=\iota^*\circ\bD_G$, where
$\iota:G\rar{}G$ is given by $g\mapsto g^{-1}$ and
$\bD_G:\sD_G(G)\rar{}\sD_G(G)$ is the Verdier duality functor. By abuse of notation, we also denote by $\bD_G^-$ and $\bD_G$ the corresponding functors $\sD(G)\rar{}\sD(G)$ in the non-equivariant setting.
\end{defin}

\begin{rem}\label{r:duality-D(G)-inner-hom}
The functor $\bD_G^-$ can be interpreted as inner Hom to the unit object in the monoidal category $\sD_G(G)$ or $\sD(G)$: see Lemma \ref{l:duality-on-D(G)}.
\end{rem}

\begin{prop}\label{p:duality}
In the situation of Theorem \ref{t:properties-character-sheaves},
\begin{enumerate}[$($a$)$]
\item There is a canonical isomorphism
\begin{equation}\label{e:dual-e}
\bD_G^- e \rar{\simeq} e[-2n_e](-n_e)\tens L_e,
\end{equation}
where $L_e$ is a certain line\footnote{That is, a $1$-dimensional vector space.}  over $\ql$.
 \sbr
\item Given an idempotent arrow $\e\rar{\pi}e$, the duality functor on $e\sD_G(G)$ canonically
identifies with\footnote{It is not hard to show that $e\sD_G(G)$ is
stable under $\bD_G^-$; see Lemma \ref{l:GV-idempotents}.}
$M\longmapsto (\bD_G^- M)[2n_e](n_e)\tens L_e^{-1}$, where
$L_e^{-1}=\Hom_{\ql}(L_e,\ql)$.
\end{enumerate}
\end{prop}

The proposition is proved in \S\ref{ss:proof-p:duality}.

\begin{rems}\label{r:duality}
\begin{enumerate}[(i)]
\item By Theorem \ref{t:properties-character-sheaves}(a) and Remark
\ref{r:properties-char-sheaves}(ii), the object $e\in\sM_e$ is
simple, so $\Hom_{\sD_G(G)}(e,e)$ is $1$-dimensional. Thus assertion
(a) of Proposition \ref{p:duality} is equivalent to the statement
that $\bD_G^-e\cong e[-2n_e](-n_e)$.
 \sbr
\item We do not know whether the line $L_e$ can be canonically trivialized.
In Proposition~\ref{p:duality-canonical} and \S\ref{ss:proof-p:duality-canonical}
below we describe a trivialization of $L_e$ depending on an additional choice.
\end{enumerate}
\end{rems}

\subsection{The notion of functional
dimension}\label{ss:functional-dimension} Theorem
\ref{t:properties-character-sheaves}(b) allows 
to introduce
\begin{defin}
Let $G$ be a unipotent algebraic group over $k$, and let
$e\in\sD_G(G)$ be a minimal closed idempotent. The \emph{functional
dimension} of $e$ is the number $d_e=(\dim G-n_e)/2$. We also call
$d_e$ \emph{the functional dimension of every character sheaf in the
$\bL$-packet defined by $e$.}
\end{defin}

By Theorem~\ref{t:properties-character-sheaves}(b), $0\le d_e\le
(\dim G)/2$.

\begin{rem}
This notion is analogous to the classical notion of functional
dimension in the representation theory of real Lie groups. First, we
show in Theorem \ref{t:geometric-mackey}(b) that functional
dimension behaves as expected under induction functors. Second, if
$G$ is connected and its nilpotence class is less than
$p=\operatorname{char}k$, then according to \cite[Thm.~5.10]{intro},
isomorphism classes of minimal closed idempotents $e\in\sD_G(G)$
bijectively correspond to ``coadjoint orbits" in the sense of
\cite[\S4.1]{intro} and
\[
d_e=\frac{1}{2}\cdot\dim\Om_e, \quad n_e=\codim\Om_e \, ,
\]
where $\Om_e$ is the orbit corresponding to $e$.
%
%
\end{rem}

\begin{rem}
The functional dimension may fail to be an integer in our setup. For
example, let $G$ be a {\em fake Heisenberg group,} i.e., a
noncommutative central extension of $\bG_a$  by $\bG_a$ (such
extensions do exist in characteristic $p>0$:
cf.~\cite[\S3.7]{intro}). Then there exist minimal closed
idempotents $e\in\sD_G(G)$ with $d_e=1/2$ (\emph{loc.~cit.}).
\end{rem}

The next theorem is proved in
\S\ref{ss:proof-t:functional-dimension}.

\begin{thm}\label{t:functional-dimension}
Let $G$ and $e$ be as in
Theorem~\ref{t:properties-character-sheaves}. The Gauss sum\footnote{See \cite[\S6.2]{dgno}, formula (122), for the definition of the Gauss sum $\tau^+(\cM_e)$ of $\cM_e$.} of the modular category $\cM_e$ equals $\eps p^n$, where $n\geq 0$ is an integer, $\eps=1$ when $d_e\in\bZ$ and $\eps=-1$ when $d_e\not\in\bZ$. In particular, if the $\bL$-packet of
$e$ has only one element, then $d_e\in\bZ$.
\end{thm}

\subsection{Elementary reminders}\label{ss:algebraic-reminders} We
now make a short digression to recall several constructions and
results from character theory for finite groups that are, for the
most part, very standard and well known. In the remainder of the
article we will see that all of them admit suitable geometric
analogues in the world of equivariant $\ell$-adic complexes on
unipotent groups, which are both interesting in their own right, and
play an essential role in the proofs of our main results.

\sbr

\begin{enumerate}[1.]
\item Let $\Ga$ be a finite group, let $\Fun(\Ga)$ denote the
algebra of functions $\Ga\rar{}\bC$ under pointwise addition and
convolution, and let $\Fun(\Ga)^\Ga\subset\Fun(\Ga)$ denote the
subalgebra of conjugation-invariant functions\footnote{It coincides
with the center of $\Fun(\Ga)$. Its elements are often called
``class functions'' on $\Ga$.}. Then there is a bijection between
the set of complex irreducible characters of $\Ga$ and the set of
minimal (or ``indecomposable'') idempotents in $\Fun(\Ga)^\Ga$,
given by $\chi\longmapsto\frac{\chi(1)}{\abs{\Ga}}\cdot\chi$.
 \sbr
This fact is one of the reasons why the definition of character
sheaves involves minimal idempotents. Another motivation, coming
from the orbit method for unipotent groups of ``small'' nilpotence
class, is explained in detail in \cite{intro}.
 \sbr
\item An important role in the character theory of finite groups is
played by the operation of induction of class functions. If
$\Ga'\subset\Ga$ is a subgroup and $f\in\Fun(\Ga')^{\Ga'}$, the
induced function $\iga f\in\Fun(\Ga)^\Ga$ can be obtained in two
steps. First, we extend $f$ by zero outside of $\Ga'$ to obtain a
$\Ga'$-invariant function $\overline{f}:\Ga\rar{}\bC$. Next, for
every coset $\ga\Ga'\subset\Ga$, we form the corresponding
conjugate, $\overline{f}^\ga$, of $\overline{f}$ (it depends only on
the coset $\ga\Ga'$ and not on the particular element $\ga$), and we
define $\iga f$ as the sum of all these conjugates, indexed by the
elements of $\Ga/\Ga'$.
 \sbr
\item If $\chi$ is a complex irreducible character of $\Ga'$, there
is a result, called \emph{Mackey's irreducibility criterion}, which
gives necessary and sufficient conditions for the induced character
$\iga\chi$ to be irreducible as well. It is not hard to show that
this result can be reformulated as follows: with the notation of \#2
above, $\iga\chi$ is irreducible if and only if
$\overline{\chi}*\de_x*\overline{\chi}=0$ for every
$x\in\Ga\setminus\Ga'$, where $\de_x$ denotes the delta-function at
$x$.
 \sbr
\item Some complex irreducible representations of a
finite group $\Ga$ can be obtained by means of the following
construction. Consider a pair $(H,\chi)$ consisting of a subgroup
$H\subset\Ga$ and a homomorphism $\chi:H\rar{}\cst$. Let $\Ga'$ be
the stabilizer of the pair $(H,\chi)$ with respect to the
conjugation action of $\Ga$. We say that the pair $(H,\chi)$ is
\emph{admissible} if the following three conditions are satisfied:
 \sbr
\begin{enumerate}[(1)]
\item $\Ga'/H$ is commutative;
 \sbr
\item the map $B_\chi:(\Ga'/H)\times(\Ga'/H)\rar{}\cst$
induced by
$(\ga_1,\ga_2)\longmapsto\chi(\ga_1\ga_2\ga_1^{-1}\ga_2^{-1})$
(which, in view of (1), is well defined and biadditive) is a perfect
pairing, i.e., induces an isomorphism
$\Ga'/H\rar{\simeq}\Hom(\Ga'/H,\cst)$; and
 \sbr
\item for every $g\in\Ga$, $g\not\in\Ga'$, we have
$\chi\big\lvert_{H\cap H^g}\neq\chi^g\bigl\lvert_{H\cap H^g}$, where
$H^g=g^{-1}Hg$ and $\chi^g:H^g\rar{}\cst$ is obtained by transport
of structure: $\chi^g(h)=\chi(ghg^{-1})$.
\end{enumerate}
 \sbr
Now, properties (1) and (2) above imply that the group $\Ga'$ has a
unique complex irreducible representation $\pi_\chi$ such that $H$ acts
 by $\chi$. Property (3) then implies that the
induced representation $\Ind_{\Ga'}^\Ga\pi_\chi$ is irreducible, in
view of Mackey's irreducibility criterion (see \#3).
 \sbr
\item If $\Ga$ is a finite \emph{nilpotent} group then
\emph{every} complex irreducible representation $\rho$ of $\Ga$ arises from
some admissible pair $(H,\chi)$ by means of the construction explained in \#4;
moreover, the ``reduction process" described in one of the appendices to \cite{intro}
allows to construct $(H,\chi)$ {\em canonically\,} (up to conjugation) for a given $\rho$.
Note that the existence, for a given $\rho$, of a non-canonical admissible pair $(H,\chi)$
(with $\Gamma'=H$) immediately follows from the classical theorem that every
irreducible representation of a finite nilpotent group is induced from
a 1-dimensional representation of a subgroup.


\end{enumerate}

\subsection{Perfect schemes and groups}\label{ss:perfect} One of the
main technical tools used in the present article is the
\emph{geometric} notion of an admissible pair for a \emph{unipotent}
group over $k$, introduced in
\S\ref{ss:Serre-duality-admissible-pairs}. The definition of this
notion uses Serre duality for unipotent groups. However, as
explained in \cite{intro}, the Serre dual of a (connected) unipotent
group over $k$ can only be defined canonically (i.e., by means of a
universal property) as a group object in the category of
\emph{perfect} schemes over $k$. For this reason, it will be
technically more convenient for us to place ourselves in the
framework of perfect schemes and perfect group schemes from the very
start.

\mbr

Let us recall that a scheme $S$ in characteristic $p$ (i.e., such
that $p$ annihilates the structure sheaf $\cO_S$ of $S$) is said to
be \emph{perfect} if the morphism $\cO_S\rar{}\cO_S$, given by
$f\longmapsto f^p$ on the local sections of $\cO_S$, is an
isomorphism of sheaves. In particular, a commutative ring $A$ of
characteristic $p$ is perfect \cite{greenberg} if and only if $\Spec
A$ is a perfect scheme. We will denote by $\mathfrak{Sch}_k$ the
category of all $k$-schemes, and by $\mathfrak{Perf}_k$ the full
subcategory consisting of perfect schemes. The inclusion functor
$\mathfrak{Perf}_k\into\mathfrak{Sch}_k$ admits a right adjoint,
which we call the \emph{perfectization functor} and denote by
$X\longmapsto X_{per\!f}$. (This follows from the results of
Greenberg \cite{greenberg}. In our setup, $k$ is algebraically
closed, but, in fact, it suffices to assume that $k$ is perfect.)

\mbr

It is not hard to see that a group object in the category
$\mathfrak{Perf}_k$ is automatically a group scheme over $k$.
Conversely, if $G$ is any group scheme over $k$, then $G_{per\!f}$
can be canonically equipped with a $k$-group scheme structure as
well. For more details, we refer the reader to
\cite[\S\S{}A.3--A.4]{characters}.

\begin{defins}\label{d:perfect-quasi-algebraic}
A \emph{perfect quasi-algebraic scheme} over $k$ is an object of
$\mathfrak{Perf}_k$ that is isomorphic to the perfectization of a
scheme of finite type over $k$. A \emph{perfect quasi-algebraic
group} over $k$ is a group object of $\mathfrak{Perf}_k$ that is
isomorphic to the perfectization of an algebraic group over $k$. For
brevity, by a \emph{perfect unipotent group} over $k$ we will mean a
perfect quasi-algebraic group over $k$ that is isomorphic to the
perfectization of a unipotent algebraic group over $k$.
\end{defins}

\begin{rem}
If $X$ is an arbitrary scheme over $k$, then, by adjunction, we
obtain a canonical morphism $X_{per\!f}\rar{}X$. It is known
\cite{greenberg} to be a homeomorphism of the underlying topological
spaces. Furthermore, if $f:U\rar{}X$ is an \'etale morphism, the
induced morphism $U_{per\!f}\rar{}U\times_X X_{per\!f}$ is an
isomorphism, and the functor $U\longmapsto U_{per\!f}$ induces an
equivalence between the \'etale topos of $X$ and that of
$X_{per\!f}$.

\mbr

It follows that every construction that can be formulated in terms
of the \'etale topos of a scheme is ``insensitive'' to replacing a
scheme over $k$ with its perfectization. In particular, if $X$ is a
perfect quasi-algebraic scheme over $k$, the derived category
$\sD(X)=D^b_c(X,\ql)$ of $X$ and the abelian category $\Perv(X)$ of
perverse sheaves on $X$ can be defined in the same way as for
schemes of finite type over $k$. We also have the formalism of the
six functors (cf.~\S\ref{ss:six-functors}) for the derived
categories of perfect quasi-algebraic schemes over $k$. Further, if
$G$ is a perfect unipotent group acting on a quasi-algebraic scheme
$X$ over $k$, the equivariant derived category $\sD_G(X)$ can be
defined as in \S\ref{ss:equivariant-derived}, and character sheaves
on $G$ can be defined as in \S\ref{ss:char-sheaves-L-packets}.
Moreover, every result about character sheaves on $G$ (or, more
generally, about the category $\sD_G(G)$) that can be proved for
perfect unipotent groups is also automatically valid for ordinary
unipotent algebraic groups over $k$.
\end{rem}

\begin{convention}
From now on, unless explicitly stated otherwise, all schemes under
consideration will be assumed to be perfect schemes over $k$. This
convention will allow us to simplify the formulation of all of our
results that depend on Serre duality.
\end{convention}

\subsection{Serre duality and admissible
pairs}\label{ss:Serre-duality-admissible-pairs} We remain in the
setup of \S\ref{ss:basic-definitions}. From the viewpoint of the
sheaves-to-functions correspondence, the next definition is a
geometric analogue of the notion of a $1$-dimensional representation
of a finite group.
\begin{defin}\label{d:multiplicative}
A \emph{multiplicative local system} on a perfect connected
quasi-algebraic group $H$ over $k$ is a rank $\ql$-local system
$\cL$ of rank $1$ equipped with an isomorphism
$\mu^*(\cL)\rar{\simeq}\cL\boxtimes\cL$, where $\mu:H\times
H\rar{}H$ denotes the multiplication morphism.
\end{defin}
By abuse of notation, we usually denote a multiplicative local
system by a single letter such as $\cL$. This is harmless in view of
Remark \ref{r:family-mult-loc-sys}(iii).

\mbr

For our purposes, the \emph{Serre dual} of $H$ should be thought of
as the moduli space of multiplicative $\ql$-local systems on $H$ (a
better approach is mentioned in Remark
\ref{r:serre-duality-canonical}). The notion of Serre duality for
unipotent groups (not to be confused with Serre duality in the
cohomology theory for coherent sheaves) goes back to the article
\cite{serre}, and is discussed in more detail in \S\ref{s:serre-FD}
below. For the time being, it will suffice to know that the Serre
dual, $H^*$, of an arbitrary perfect \emph{connected} unipotent
group $H$ exists as a (possibly disconnected) perfect commutative
unipotent group over $k$ (see Proposition \ref{p:serre-dual}).

\mbr

Furthermore, the construction of the biadditive pairing $B_\chi$
that enters condition (2) in the definition of an admissible pair
for a finite group (see \#4 in \S\ref{ss:algebraic-reminders})
admits a geometrization, which plays a role in the definition of an
admissible pair for a perfect unipotent group. This geometric
construction is explained in more detail in
\S\ref{ss:auxiliary-construction}. For the time being, we will use
this construction as a ``black box.''

\begin{defin}\label{d:normalizer}
Let $G$ be a perfect unipotent group over $k$, and let $(H,\cL)$ be
a pair consisting of a connected subgroup $H\subset G$ and a
multiplicative local system $\cL$ on $H$. The \emph{normalizer}
$N_G(H,\cL)$ of $(H,\cL)$ in $G$ is defined as the stabilizer of the
isomorphism class $[\cL]\in H^*(k)$ in the normalizer\footnote{Since
$H^*$ is defined by a universal property, the conjugation action of
$N_G(H)$ on $H$ induces an action of $N_G(H)$ on $H^*$. Note also
that $[\cL]$ is a point of $H^*$ over $k$ by the definition of
$H^*$.} $N_G(H)\subset G$ of $H$ in $G$.
\end{defin}

\begin{defin}\label{d:admissible-pair}
Let $G$ be a perfect unipotent group over $k$. An \emph{admissible
pair} for $G$ is a pair $(H,\cL)$ consisting of a connected subgroup
$H\subset G$ and a multiplicative local system $\cL$ on $H$ such
that the following three conditions are satisfied.
 \sbr
\begin{enumerate}[(1)]
\item Let $G'$ be the normalizer of $(H,\cL)$ in $G$ (see Definition
\ref{d:normalizer}), and let $G^{\prime\circ}$ denote its neutral
connected component. Then $G^{\prime\circ}/H$ is commutative.
 \sbr
\item The $k$-group morphism
$\vp_\cL : G^{\prime\circ}/H \rar{}
\bigl(G^{\prime\circ}/H\bigr)^*$ defined by applying the
construction of \S\ref{ss:auxiliary-construction} below to
$U=Z=G^{\prime\circ}$, $N=H$, and $\cN=\cL$ is an isogeny.
 \sbr
\item For every $g\in G(k)$ such that $g\not\in G'(k)$, we have
\[
\cL\bigl\lvert_{(H\cap H^g)^\circ} \not\cong
\cL^g\bigl\lvert_{(H\cap H^g)^\circ},
\]
where $H^g=g^{-1}Hg$ and $\cL^g$ is the multiplicative local system
on $H^g$ obtained from $\cL$ by transport of structure (via the map
$h\mapsto g^{-1}hg$).
\end{enumerate}
\end{defin}

\begin{rem}
Since
$\dim\bigl(G^{\prime\circ}/H\bigr)=\dim\bigl(G^{\prime\circ}/H\bigr)^*$
(see \S\ref{ss:serre-duality-properties}), condition (2) in the last
definition is equivalent to finiteness of $\Ker\vp_\cL$.
\end{rem}

\subsection{Heisenberg minimal idempotents}
\label{ss:Heisenberg-minimal-idempotents} The notion of an
admissible pair allows us to construct a certain special class of
minimal idempotents in equivariant derived categories of unipotent
groups. Namely, let $G$ be a perfect unipotent group over $k$, let
$(H,\cL)$ be an admissible pair for $G$, and let $G'$ be its
normalizer in $G$, as defined above. Write $\bK_H$ for the dualizing
complex of $H$, which in our setup is isomorphic to $\ql[2\dim H]$,
where $\ql$ is the constant $\ell$-adic local system of rank $1$ on
$H$ (because $k$ is algebraically closed). Finally, put
$e_\cL=\cL\tens\bK_H$, and let $e'_\cL$ denote the object of
$\sD_{G'}(G')$ obtained from $e_\cL$ by extension by zero. (Since
$(H,\cL)$ is invariant under the conjugation action of $G'$, both
$\cL$ and $\bK_H$ have canonical $G'$-equivariant structures.)

\begin{lem}\label{l:heisenberg-minimal-idempotent}\label{lemma6}
The object $e'_\cL$ is a closed idempotent in $\sD_{G'}(G')$, which,
moreover, is minimal as a weak idempotent in $\sD_{G'}(G')$.
\end{lem}
The lemma results from \cite[Prop.~8.1(a)--(b)]{characters}. In what
follows, we refer to $e'_\cL$ as the \emph{Heisenberg minimal
idempotent} on $G'$ defined by the admissible pair $(H,\cL)$. As we
will see shortly, one can obtain a minimal closed idempotent in
$\sD_G(G)$ from $e'_\cL$ via ``induction with compact supports''
(cf.~Theorem \ref{t:construction-L-packets}).

\mbr

The following result covers the base case in the inductive proof of
Theorems \ref{t:properties-character-sheaves} and \ref{t:functional-dimension} that will be given in
this article. 

\begin{thm}[S.~Datta and T.~Deshpande]\label{t:Heisenberg-L-packets}
If $G'$ is a perfect unipotent group over $k$ and
$e'_{\cL}\in\sD_{G'}(G')$ is a Heisenberg minimal idempotent, then
Theorems \ref{t:properties-character-sheaves} and
\ref{t:functional-dimension} hold for $(G',e'_{\cL})$ in place of
$(G,e)$.
\end{thm}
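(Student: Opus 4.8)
The statement to be proved is Theorem~\ref{t:Heisenberg-L-packets}, asserting that Theorem~\ref{t:properties-character-sheaves} holds in the base case where $G=G'$ and $e=e'_{\cL}$ is a Heisenberg minimal idempotent attached to an admissible pair $(H,\cL)$. The excerpt itself tells us this is ``proved in a forthcoming article \cite{tanmay}'', so the plan below is a sketch of how one would attack it, not a reconstruction of that paper.

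\textbf{Overall approach.} The plan is to analyse the Hecke subcategory $e'_\cL\sD_{G'}(G')$ very explicitly, using the fact that $e'_\cL$ is obtained by extension by zero from $e_\cL=\cL\tens\bK_H$ on $H$, and that by the admissibility conditions the quotient $A:=G^{\prime\circ}/H$ is commutative with $\vp_\cL$ an isogeny $A\to A^*$. Heuristically $e'_\cL$ is a ``geometric Heisenberg idempotent'': convolution with $e'_\cL$ should project $\sD_{G'}(G')$ onto a category equivalent (after the cohomological shift $n_{e'_\cL}$) to the equivariant derived category of the finite group $G'/G^{\prime\circ}$ acting on a point, twisted by a central extension — so that $\sM_{e'_\cL}^{perv}$ becomes the category of representations of a finite group (or of a twisted group algebra), which is manifestly semisimple with finitely many simple objects, giving part (a). Part (b) — that $\sM_{e'_\cL}=\sM_{e'_\cL}^{perv}[n_{e'_\cL}]$ is closed under convolution and monoidal with unit $e'_\cL$ — should then follow from the general theory of closed idempotents in \S\ref{s:idempotents} (Proposition~\ref{p:Hecke} already gives that $e'_\cL$ is a unit in its Hecke subcategory and that convolution with $e'_\cL$ is an autoequivalence), together with the explicit description just obtained.

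\textbf{Key steps, in order.} (1) First I would reduce to the connected normalizer: using Mackey-style arguments (the finite group $G'/G^{\prime\circ}$ acts, and condition (3) of admissibility is exactly the geometric Mackey disjointness), reduce the study of $e'_\cL\sD_{G'}(G')$ to that of the Heisenberg idempotent on $G^{\prime\circ}$ together with the finite symmetry group. (2) Next, on $G^{\prime\circ}$, with $A=G^{\prime\circ}/H$ and the isogeny $\vp_\cL:A\to A^*$, I would show by a Fourier–Deligne computation that convolution with $e_\cL$ kills every object whose ``Fourier support'' along $H$ is not concentrated at $\cL$, and on the remaining objects acts as an idempotent whose image is controlled by the finite kernel of $\vp_\cL$ — essentially the geometric version of the Stone–von Neumann theorem. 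This is where the Fourier-theoretic input of \S\ref{s:fourier} and the results of \cite{characters} (e.g. Prop.~6.1 there, already invoked in Lemma~\ref{l:heisenberg-minimal-idempotent}) do the work. (3) Then I would identify $\sM_{e'_\cL}^{perv}$, up to the shift $n_{e'_\cL}=-\dim H$ (so $e'_\cL[-n_{e'_\cL}]=e_\cL'[\dim H]=\cL\tens\ql[2\dim H][\dim H]$... one must pin down the normalisation so that $e'_\cL[-n_{e'_\cL}]$ is perverse, which fixes $n_{e'_\cL}$), with the category of representations of the finite group $\pi_0(G')=G'/G^{\prime\circ}$ twisted by a cohomology class coming from the central extension attached to $\vp_\cL$; semisimplicity and finiteness of the simple objects are then immediate from Maschke's theorem for a twisted group algebra in characteristic zero. (4) Finally, deduce (b): closure under convolution and the monoidal structure with unit $e'_\cL$ follow by combining the explicit description in (3) with Proposition~\ref{p:Hecke} and Lemma~\ref{l:central-equivariant-derived}, checking that the convolution of two objects of $\sM_{e'_\cL}^{perv}$ lands, after the shift $n_{e'_\cL}$, back in $\sM_{e'_\cL}$.

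\textbf{The main obstacle.} The hard part will be step~(2): making the ``geometric Stone–von Neumann'' statement precise and proving it. One must show that convolution with $e_\cL=\cL\tens\bK_H$, followed by passage to $G^{\prime\circ}$ and then the finite quotient, really does produce a $t$-exact (up to the shift $n_{e'_\cL}$) projection onto a subcategory equivalent to twisted $\pi_0(G')$-representations — and in particular that no higher cohomology survives, which is what forces $\sM_{e'_\cL}^{perv}$ to be abelian and semisimple rather than merely additive. This requires a careful Fourier–Deligne analysis along the isogeny $\vp_\cL$ (controlling how the finite kernel $\ker\vp_\cL$ interacts with the pairing $B_\cL$), and a clean bookkeeping of shifts and the dualizing complex $\bK_H\cong\ql[2\dim H]$ so that the integer $n_{e'_\cL}$ is correctly identified. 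Everything else — the reduction to $G^{\prime\circ}$ via admissibility condition~(3), and the deduction of part~(b) from the abstract theory of \S\ref{s:idempotents} — is comparatively formal.
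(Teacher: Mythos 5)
The key observation is that the paper does \emph{not} contain a proof of Theorem~\ref{t:Heisenberg-L-packets}: the text explicitly defers it to the forthcoming article \cite{tanmay} (Deshpande), so there is no in-paper argument to compare your sketch against. You acknowledge this at the outset, and your outline --- a geometric Stone--von Neumann analysis of $e'_\cL\sD_{G'}(G')$ via Fourier--Deligne along $H$ and the isogeny $\vp_\cL$, identifying the Hecke category (after a shift) with representations of a finite group, and then deducing part~(b) from Proposition~\ref{p:Hecke} --- is directionally consistent with what the cited reference does. So the \emph{route} is reasonable.

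Two points nonetheless deserve flagging. First, a sign error: you write $n_{e'_\cL}=-\dim H$ and then $e'_\cL[-n_{e'_\cL}]=e'_\cL[\dim H]=\overline{\cL}[3\dim H]$, which is not perverse. The extension by zero of a rank-one local system on the $(\dim H)$-dimensional closed subvariety $H\subset G'$ is perverse precisely when shifted by $\dim H$, so one needs $e'_\cL[-n_{e'_\cL}]\cong\overline{\cL}[\dim H]$; since $e'_\cL\cong\overline{\cL}[2\dim H]$, this forces $n_{e'_\cL}=+\dim H$, exactly as asserted in Theorem~\ref{t:construction-L-packets}(b). Second, your step~(3) identifies $\sM_{e'_\cL}^{perv}$ with twisted representations of $\pi_0(G')$ alone, but this cannot be right as stated: when $G'$ is connected (so $\pi_0(G')$ is trivial) the Heisenberg $L$-packet can still contain several simple objects, governed by the finite group $\ker\vp_\cL$, which you correctly bring up in step~(2) but then drop in step~(3). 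The finite group controlling the simple objects must incorporate both $\ker\vp_\cL$ and $\pi_0(G')$ (together with the relevant $2$-cocycle), and getting that extension right is part of the genuine content of the cited result, not a bookkeeping detail.
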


The proof given in
\S\ref{ss:proof-t:Heisenberg-L-packets} consists of references to the articles
by S.~Datta \cite{swarnendu} and T.~Deshpande \cite{tanmay}.


\subsection{Averaging functors and induction functors}\label{ss:induction-functors}
We remain in the setup of \S\ref{ss:basic-definitions}. Let $G$ be a
perfect unipotent group acting on a perfect quasi-algebraic scheme
$X$ over $k$, and let $G'\subset G$ be a closed subgroup. There is
an obvious forgetful functor
\begin{equation}\label{e:forgetful-equivariant}
F:\sD_G(X) \rar{} \sD_{G'}(X).
\end{equation}
We will need its right and left adjoints. Let us recall their
construction. It is based on the following remark.

\begin{rem}   \label{r:factorization}
The functor \eqref{e:forgetful-equivariant} can be factored as
\begin{equation}   \label{e:factorization}
\xymatrix{
   \sD_G(X) \ar[rr]^{\pr_2^*\ \ \ \ \ \ } & &  \sD_G((G/G')\times X)
 \ar[rr]^{\ \ \ \ \ i^*\circ\Phi}_{\ \ \ \ \ \sim} & &  \sD_{G'}(X)
  }
\end{equation}
Here $\Phi:\sD_G((G/G')\times X)\rar{}\sD_{G'}((G/G')\times X)$ is
the forgetful functor, $G$ acts on both $G/G'$ and $X$, $\pr_2
:(G/G')\times X\to X$ is the projection, and $i:X\hookrightarrow
(G/G')\times X$ takes $x\in X$ to $(\bar 1,x)$, where $\bar 1\in
G/G'$ is the image of $1\in G$.
\end{rem}


\begin{lem}\label{l:averaging}
The forgetful functor \eqref{e:forgetful-equivariant} has a right
adjoint and a left adjoint. The right adjoint is the functor
\begin{equation}   \label{e:averaging}
\Av_{G/G'} : \sD_{G'}(X)\rar{}\sD_G(X); \quad
\Av_{G/G'}:=(\pr_2)_*(i^*\circ\Phi)^{-1}
\end{equation}
$($we are using the notation of Remark~\ref{r:factorization}$)$. The
left adjoint is the functor \[M\longmapsto \av_{G/G'}(M)[2d](d),\]
where $d:=\dim (G/G')$ and $\av_{G/G'} : \sD_{G'}(X)\rar{}\sD_G(X)$
is defined by
\begin{equation}   \label{e:!averaging}
\av_{G/G'}:=(\pr_2)_!(i^*\circ\Phi)^{-1}.
\end{equation}
\end{lem}

\begin{proof}
Use the factorization \eqref{e:factorization} and the isomorphism
$\pr_2^! (M)\iso \pr_2^*(M)[2d](d)$, $d:=\dim (G/G')$.
\end{proof}

\begin{defin}\label{d:averaging}
The functor 
\eqref{e:averaging} will be called the \emph{averaging functor} (for the pair $G'\subset G$).
The functor \eqref{e:!averaging} will be called \emph{averaging with
compact supports}. If $G'=\{1\}$, we will write $\av_G$ and $\Av_G$
in place of $\avg$ and $\Av_{G/G'}$.
\end{defin}

Note that the canonical morphism $(\pr_{2})_!\rar{}(\pr_{2})_*$ induces a canonical  morphism
\begin{equation}  \label{e:!to*}
\av_{G/G'}\rar{}\Av_{G/G'}.
\end{equation}
\begin{rem}
Applying the forgetful functor $F$ to \eqref{e:!to*} and using the
adjunction of Lemma \ref{l:averaging} yields the following two
morphisms:
\begin{equation}\label{e:average-morphisms}
F\circ\avg\rar{}F\circ\Avg\rar{}\Id_{\sD_{G'}(X)}.
\end{equation}
These morphisms will be described more explicitly in Lemma
\ref{l:lem2}.
\end{rem}

Now let $G$ be a perfect unipotent group over $k$, and let $G'\subset G$
be a closed subgroup. Let $G$ (respectively, $G'$) act on itself by
conjugation. Further, let $\iota:G'\into G$ denote the inclusion
morphism. Then $\iota$ induces the functor
\[
\iota_!=\iota_* : \sD_{G'}(G')\rar{}\sD_{G'}(G).
\]

\begin{defin}\label{d:induction}
The functors of \emph{induction} and \emph{induction with compact
supports} are defined by
\[
\Ig=\Av_{G/G'}\circ\iota_* : \sD_{G'}(G')\rar{}\sD_G(G)
\]
and
\[
\ig=\av_{G/G'}\circ\iota_* : \sD_{G'}(G')\rar{}\sD_G(G),
\]
respectively.
\end{defin}

The morphism \eqref{e:!to*} induces a canonical morphism
$\ig\rar{}\Ig$. Lemma~\ref{l:averaging} implies the following statement.

\begin{cor}\label{c:induction-adjunctions}
\begin{enumerate}
\item[$($i$)$] Define the functor $\operatorname{Res}^G_{G'}:\sD_G(G)\rar{}\sD_{G'}(G')$ to be the
composition $\sD_G(G)\rar{}\sD_{G'}(G)\rar{\iota^*}\sD_{G'}(G')$,
where the first functor is the forgetful one. Then $\Ig$ is right
adjoint to $\operatorname{Res}^G_{G'}$.
 \sbr
\item[$($ii$)$]  Define the functor $\operatorname{res}^G_{G'}:\sD_G(G)\rar{}\sD_{G'}(G')$ to be the
composition \\ $\sD_G(G)\rar{}\sD_{G'}(G)\rar{\iota^!}\sD_{G'}(G')$.
Then $\ig$ is left adjoint to the functor
\[
M\longmapsto \operatorname{res}^G_{G'}M[2d](d), \quad d:=\dim
(G/G').
\]
\end{enumerate}
\end{cor}

The functor $\operatorname{Res}^G_{G'}$ defined in the corollary will be called the {\em restriction
functor.}

\begin{rem}
The functors $\avg$, $\Av_{G/G'}$, $\ig$ and $\Ig$ are triangulated.
The canonical morphisms $\avg\rar{}\Av_{G/G'}$ and $\ig\rar{}\Ig$
are morphisms of triangulated functors.
\end{rem}

\subsection{Construction of
$\bL$-packets}\label{ss:construction-L-packets} The next result
provides an explicit construction of all $\bL$-packets of character
sheaves on a unipotent group over $k$.

%

\begin{thm}\label{t:construction-L-packets}
Let $G$ be a perfect unipotent group over $k$.
\begin{enumerate}[$($a$)$]
\item Let $(H,\cL)$ be an admissible pair for $G$
$($Def.~\ref{d:admissible-pair}$)$, let $G'$ be its normalizer in
$G$ $($Def.~\ref{d:normalizer}$)$, and let $e'_{\cL}\in\sD_{G'}(G')$
be the corresponding Heisenberg minimal idempotent
$($\S\ref{ss:Heisenberg-minimal-idempotents}$)$. Then $f=\ig
e'_{\cL}$ is a minimal closed idempotent in $\sD_G(G)$.
 \sbr
\item With the notation of Theorem
\ref{t:properties-character-sheaves}, $n_{e'_\cL}=\dim H$ and
$n_f=\dim H-\dim(G/G')$.
 \sbr
\item Every minimal closed idempotent $f$ in $\sD_G(G)$ arises from some
admissible pair $(H,\cL)$ by means of the construction described in
part $($a$)$.
\end{enumerate}
\end{thm}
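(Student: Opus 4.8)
The plan is to obtain part (a) from the interplay between the induction functor $\ig$ and closed idempotents, to read off (b) by a cohomological-shift computation, and to prove (c) by induction on $\dim G$ using the Fourier--Deligne machinery of \S\ref{s:fourier} together with the categorical results of \S\ref{s:idempotents}. For part (a): by Lemma \ref{l:heisenberg-minimal-idempotent} the object $e'_\cL$ is a closed idempotent in $\sD_{G'}(G')$ that is minimal as a weak idempotent, hence, by \S\ref{s:idempotents}, a minimal closed idempotent. To pass to $f=\ig e'_\cL$ I would invoke two things: the projection formula $\ig(A)*N\cong\ig\bigl(A*\operatorname{Res}^G_{G'}N\bigr)$, and the geometric Mackey formula computing $\operatorname{Res}^G_{G'}\ig e'_\cL$ as a sum over $G'\backslash G/G'$ of terms built from $e'_\cL$ and its $g$-conjugates restricted to $(H\cap H^g)^\circ$. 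Condition (3) of Definition \ref{d:admissible-pair} --- the geometrization of Mackey's irreducibility criterion (\S\ref{ss:algebraic-reminders}, \#3) --- kills every term except the one indexed by $1\in G$, which is $e'_\cL$; combining this with the projection formula gives $f*f\cong f$ and, more sharply, that $f\neq0$ is a minimal weak idempotent. The categorical input of \S\ref{s:idempotents} on the behaviour of idempotents and Hecke subcategories under $\ig$ and $\operatorname{Res}^G_{G'}$ (cf.\ Proposition \ref{p:Hecke}) then upgrades this to: $f$ is a minimal closed idempotent.

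For part (b): since $\bK_H\cong\ql[2\dim H]$ and a rank-one local system on the smooth group $H$ placed in degree $-\dim H$ is perverse, $e_\cL[-\dim H]=\cL[\dim H]$ is perverse on $H$; extension by zero along the closed immersion $H\hookrightarrow G'$ is perverse $t$-exact, so $e'_\cL[-\dim H]$ is perverse and $n_{e'_\cL}=\dim H$. For $n_f$ I would track the shift through $\ig=\av_{G/G'}\circ\iota_*$: the functor $M\mapsto\widetilde M$ entering $\av_{G/G'}$ shifts the perverse degree by $\dim(G/G')$ (as $q$ is a $G'$-torsor and $\pi$ is smooth of relative dimension $\dim G$), while $\pr_{2!}$ along the affine space $G/G'$ is perverse right $t$-exact; hence $f\bigl[\dim(G/G')-\dim H\bigr]\in{}^p\sD^{\leq0}(G)$, which together with $f\neq0$ already gives $n_f\leq\dim H-\dim(G/G')$. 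The matching lower bound (nonvanishing of the top perverse cohomology of $f$ in that degree) follows from a fibrewise computation of $\pr_{2!}$ over a generic point of the support of $\widetilde{\iota_*e'_\cL}$, or --- granting Theorem \ref{t:properties-character-sheaves}(b) for $f$, available by part (a) and Theorem \ref{t:Heisenberg-L-packets} --- from uniqueness of $n_f$; uniqueness itself is immediate from the perverse $t$-structure.

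Part (c) is the main point; I would prove it by induction on $\dim G$. After a preliminary Clifford-theoretic reduction using that $G^\circ$ is normal in $G$, one is reduced to the case where $G$ has a non-trivial connected centre (when $G^\circ$ is trivial, i.e.\ $G$ is a finite $p$-group, one invokes the geometrization of classical finite-group character theory, \S\ref{ss:algebraic-reminders}, \#5). Pick a minimal non-trivial connected central subgroup $Z\subseteq Z(G)$. The Fourier--Deligne transform along $Z$ --- the geometrization of the central-character decomposition developed in \S\ref{s:fourier} --- attaches to the minimal closed idempotent $e$ a \emph{central character}: a multiplicative local system $\cM$ on $Z$ over which $e$ is concentrated, the ``Fourier support'' reducing to a single $k$-point of $Z^*$ because $e$ is minimal. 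If $\cM$ is trivial, then $e$ is a shifted pullback of a minimal closed idempotent $e''$ on $G/Z$; applying the inductive hypothesis ($\dim(G/Z)<\dim G$) gives an admissible pair for $G/Z$, and its pullback along $G\to G/Z$ is an admissible pair $(H,\cL)$ for $G$ with $\ig e'_\cL\cong e$ --- one checks that the normalizer, the isogeny condition (2), and condition (3) all transport through the central quotient. If $\cM$ is non-trivial, one builds the datum directly: take $H\supseteq Z$ maximal among connected subgroups of $G$ (normal in the relevant normalizer) to which $\cM$ extends as a multiplicative local system $\cL$ with $\operatorname{Res}^G_He$ of ``$\cL$-type''; maximality forces the commutator pairing on $G^{\prime\circ}/H$ to be an isogeny (conditions (1)--(2)), minimality of $e$ gives condition (3), and an adjunction argument --- using the already-available Theorem \ref{t:properties-character-sheaves} for Heisenberg idempotents and Proposition \ref{p:Hecke} --- identifies $e$ with $\ig e'_\cL$.

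The routine steps are (a), (b), and the trivial-central-character case of (c): these are six-functor bookkeeping together with the abstract idempotent theory already in place. The real difficulty, and the step I expect to be the main obstacle, is the non-trivial-central-character case of (c): constructing the connected subgroup $H$ and the multiplicative local system $\cL$, verifying that $(H,\cL)$ meets all of Definition \ref{d:admissible-pair}, and identifying $e$ with $\ig e'_\cL$. This is exactly where the geometrized Fourier decomposition of \S\ref{s:fourier} has to be combined with a careful Mackey/Clifford-type analysis of $\operatorname{Res}^G_He$ and with the rigidity of closed idempotents from \S\ref{s:idempotents}; it is the technical heart of the argument.
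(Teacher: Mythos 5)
Your outline correctly identifies the key tools (admissible pairs, Heisenberg minimal idempotents, the induction functors, the geometric Mackey condition, Fourier--Deligne), but there are two significant gaps relative to what actually makes the proof go through.

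First, in part (a), the passage from ``$f=\ig e'_\cL$ is a minimal \emph{weak} idempotent'' to ``$f$ is a minimal \emph{closed} idempotent'' is the real crux, and your appeal to Proposition~\ref{p:Hecke} cannot carry it. That proposition only says the Hecke subcategory $e\cM e$ of a \emph{closed} idempotent $e$ is monoidal; it does not upgrade a weak idempotent to a closed one. The relevant categorical tool is Proposition~\ref{p:interplay-weak-closed-idempotents}, packaged as Proposition~\ref{p:when-ind-closed}: $\ig e$ is a closed idempotent if and only if the canonical arrow $\ig M\to\Ig M$ is an isomorphism for all $M\in e\sD_{G'}(G')$. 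Establishing that isomorphism is exactly Theorem~\ref{t:crucial}(b), which is proved by a simultaneous double induction on $\dim G$ and $|\pi_0(G)|$, intertwined with Theorem~\ref{t:crucial}(a) and relying on the decomposition $\ig=\ind_{G''}^G\circ\ind_{G'}^{G''}$ together with Proposition~\ref{p:ind-Ind-isom-auxiliary}. Theorem~\ref{t:idempotents}(b) (every minimal weak idempotent is closed) is the packaging of this machinery that §\ref{ss:proof-t:construction-L-packets} actually invokes. Your proposal never mentions Theorem~\ref{t:crucial}, Proposition~\ref{p:when-ind-closed}, or Proposition~\ref{p:ind-Ind-isom-auxiliary}; this chain is what produces the closedness of $f$ and cannot be replaced by abstract nonsense alone.

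Second, your route to part (c) is genuinely different from the paper's, and as written it has gaps. The paper does \emph{not} construct an admissible pair directly out of the given idempotent $f$. Instead it applies the reduction process (Theorem~\ref{t:reduction-complexes}, via the proof of Theorem~\ref{t:crucial}(a)) to the nonzero object $N=f$ with the \emph{trivial} starting pair $(A,\cN)$, obtaining \emph{some} admissible pair $(H,\cL)$ with $\overline{\cL}*f\neq 0$; then $\ig e'_\cL$ is a minimal closed idempotent by part (a), $f*(\ig e'_\cL)\neq 0$, and minimality of $f$ forces $f\cong\ig e'_\cL$ --- a short indirect argument that avoids identifying $H$ and $\cL$ from $f$. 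Your proposal wants to extract a ``central character'' $\cM$ of $f$ on a minimal central $Z$ and build $(H,\cL)$ by hand. The assertion that the Fourier support of a minimal closed idempotent over $Z^*$ is a single $k$-point needs a real argument (it is not immediate from minimality), the trivial-$\cM$ case (pushing a minimal closed idempotent through $G\to G/Z$ and back) requires its own compatibility checks that you elide, and the non-trivial-$\cM$ case is described only at the level of intent. Moreover, this direct route bypasses the simultaneous induction needed for (a)--(b), so even if it can be made to work you would still need that machinery in the background.

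On part (b): your computation $n_{e'_\cL}=\dim H$ is correct, and the $t$-exactness bookkeeping for the upper bound is on the right track. The clean way to get the matching bound is to use that $\ig$ and $\Ig$ agree on $e'_\cL\sD_{G'}(G')$ (Theorem~\ref{t:geometric-mackey}(c), itself a consequence of Theorem~\ref{t:crucial}(b)): after shifting by $\dim(G/G')$, $\ig$ is perverse right $t$-exact (Artin, Corollary~\ref{c:artin}) and $\Ig$ is perverse left $t$-exact (Theorem~\ref{t:artin}), so on the subcategory where they coincide the functor is $t$-exact. Your alternative of invoking Theorem~\ref{t:properties-character-sheaves}(b) for $f$ and then uniqueness of $n_f$ is not circular in itself, but Theorem~\ref{t:properties-character-sheaves}(b) for $f$ is established in the paper precisely by the $\ig\cong\Ig$ argument, so you would not save any work.
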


For the proof, see \S\ref{ss:proof-t:construction-L-packets}.


\begin{cor}
Let $(H,\cL)$ be an admissible pair for $G$ and let  $G'$ be its normalizer in $G$.
Then $\dim H\ge \dim (G/G')$.
\end{cor}

\begin{proof}
Combine Theorem~\ref{t:construction-L-packets}(b) with the
inequality $n_f\ge 0$, which is a part of
Theorem~\ref{t:properties-character-sheaves}(b).
\end{proof}

\begin{rem}   \label{r:non-unique}
In Theorem~\ref{t:construction-L-packets}(c) the pair $(H,\cL)$ is
not unique, in general. However, we do have a weaker uniqueness
statement:
\end{rem}

\begin{prop}\label{p:uniqueness}
Let $G$, $H$ and $\cL$ be as in Theorem
\ref{t:construction-L-packets}$($a$)$. Then there exists a connected
normal subgroup $A\subset G$ such that
\begin{enumerate}[$($a$)$]
\item $A\subset H$,
 \sbr
\item $\cL\bigl\lvert_A$ is $G^\circ$-invariant, and
 \sbr
\item $A$ contains all connected normal subgroups of $G$ satisfying
$($a$)$ and $($b$)$.
\end{enumerate}
Moreover, the $G$-orbit of $\bigl(A,\cL\bigl\lvert_A\bigr)$ depends
only on the idempotent $f\in\sD_G(G)$ constructed in Theorem
\ref{t:construction-L-packets}$($a$)$.
\end{prop}

The proposition is proved in \S\ref{ss:proof-p:uniqueness}.

\begin{rem}
If $G$ is connected, then $\bigl(A,\cL\bigl\lvert_A\bigr)$ is
$G$-invariant by property (b), so in this case the last statement of
the proposition amounts to the assertion that the pair
$\bigl(A,\cL\bigl\lvert_A\bigr)$ is uniquely determined by $f$.
\end{rem}

\subsection{The Verdier dual of a minimal idempotent}
The next result is a more precise version of Proposition
\ref{p:duality}. In order to state it we temporarily return to the
framework of ordinary (as opposed to perfect) unipotent groups.

\begin{prop}\label{p:duality-canonical}
Let $G$ be a unipotent algebraic group over $k$, let $(H,\cL)$ be an
admissible pair for $G$, write $G'=N_G(H,\cL)$, and let $f=\ig
e'_{\cL}\in\sD_G(G)$ be the corresponding minimal closed idempotent.
There is a natural isomorphism
\begin{equation}\label{e:duality-canonical}
\bD_G^- f \rar{\simeq} f[-2n_f](-n_f).
\end{equation}
\end{prop}

This result is proved in \S\ref{ss:proof-p:duality-canonical}.

\begin{rem}
If we work in the framework of perfect unipotent groups, the
assertion above fails already when $H=G$. The reason is that the
dualizing complex of a smooth perfect quasi-algebraic variety of
dimension $d$ over $k$ \emph{cannot} be canonically identified with
$\ql[2d](d)$.
\end{rem}

Proposition \ref{p:duality-canonical} states that with the
notation of Proposition \ref{p:duality}, 
a choice of an admissible pair $(H,\cL)$ for $G$ 
giving rise to the minimal closed
idempotent $e$ 
defines an isomorphism $\ql\rar{\simeq}L_e$.

\begin{quest}
Does this isomorphism depend on the choice of $(H,\cL)$ ?
\end{quest}

\subsection{Existence of minimal idempotents}
\begin{thm}\label{t:idempotents}
Let $G$ be a perfect unipotent group over $k$. Then
\begin{enumerate}[$($a$)$]
\item Every minimal closed idempotent in $\sD_G(G)$ is minimal
as a weak idempotent.
 \sbr
\item Every minimal weak idempotent in $\sD_G(G)$ is closed.
 \sbr
\item For every nonzero object $N\in\sD(G)$, there exists a minimal
closed idempotent $f\in\sD_G(G)$ with $f*N\neq 0$.
\end{enumerate}
\end{thm}

For the proof, see \S\ref{ss:proof-t:idempotents}.

\begin{rem}
Parts (a) and (b) of the last theorem can be reformulated together
as follows: the class of minimal weak idempotents in $\sD_G(G)$
coincides with the class of minimal closed idempotents in
$\sD_G(G)$.
\end{rem}

\subsection{Some Mackey theory}\label{ss:mackey-theory}
We remain in the setup of \S\ref{ss:basic-definitions}. Let $G$ be a
perfect unipotent group over $k$, and let $G'\subset G$ be a closed
subgroup. The following notion is an obvious geometrization of the
Mackey irreducibility criterion for induced characters of finite
groups, as stated in \#3 of \S\ref{ss:algebraic-reminders}.
\begin{defin}\label{d:geom-mackey-condition}
A weak idempotent $e\in\sD_{G'}(G')$ is said to satisfy the
\emph{geometric Mackey condition} with respect to $G$ if for every
$x\in G(k)\setminus G'(k)$, we have
$\overline{e}*\de_x*\overline{e}=0$, where $\de_x\in\sD(G)$ denotes
the delta-sheaf at the point $x$, and $\overline{e}$ denotes the
object of $\sD(G)$ obtained by extending $e$ to $G$ by zero outside
of $G'$.
\end{defin}

The last main result of our work, proved in
\S\ref{ss:proof-t:geometric-mackey}, is

\begin{thm}\label{t:geometric-mackey}
Let $G$ be a perfect unipotent group over $k$, let $G'\subset G$ be
a closed subgroup, let $e\in\sD_{G'}(G')$ be a minimal closed
idempotent satisfying the geometric Mackey condition with respect to
$G$, and let $f=\ig e$. Then
 \sbr
\begin{enumerate}[$($a$)$]
\item $f$ is a minimal closed idempotent in $\sD_G(G)$;
 \sbr
\item we have $n_f=n_e-\dim(G/G')$, or, equivalently,
$d_f=d_e+\dim(G/G')$;
 \sbr
\item the functor $\ig$ restricts to a monoidal equivalence
$e\sD_{G'}(G')\rar{\sim}f\sD_G(G)$, which is compatible with the braidings constructed in Definition \ref{d:braiding-equivariant-derived} and the twists constructed in Definition \ref{d:twist-equivariant-derived};
 \sbr
\item for every $M\in e\sD_{G'}(G')$, the canonical arrow $\can_M:\ig
M\rar{}\Ig M$ is an isomorphism;
 \sbr
\item if $M\in e\sD_{G'}(G')$ is perverse, then so is $(\ig
M)[\dim(G/G')]$.
\end{enumerate}
\end{thm}

\begin{rem}
The functor $\ig:\sD_{G'}(G')\rar{}\sD_G(G)$ is usually not
monoidal\footnote{Similarly, if $\Ga'\subset\Ga$ are finite groups,
the induction map from class functions on $\Ga'$ to class functions
on $\Ga$ does not preserve convolution.}. Nevertheless, in
\S\ref{ss:weak-semigroupal} we construct a {\em weak semigroupal
structure} on $\ig$ in the sense of
Definition~\ref{d:weak-semigroupal}. The first part of assertion (c) of
Theorem~\ref{t:geometric-mackey} means that $\ig$ restricts to an
equivalence $F:e\sD_{G'}(G')\rar{\sim}f\sD_G(G)$ and that the
induced weak semigroupal structure on $F$ is actually strong (see
Definition \ref{d:weak-semigroupal} and Remark
\ref{r:semigroupal-equivalence}). The other two statements of assertion (c) follow from the more general Lemma~\ref{l:compatibility-ind-braiding-twists}.
\end{rem}


\begin{rem}
In the situation of Theorem~\ref{t:geometric-mackey} there is a canonical
bijection between the set of idempotent arrows $\e\rar{}e$ and the
set of idempotent arrows $\e\rar{}f$ (see Remark~\ref{r:canonical_bij} for more details).
\end{rem}

\subsection{The main facts used in the proofs} \label{ss:key_ingredients}
The proofs of the main results of this article rely on
the theory of Fourier-Deligne transform,  the equality
\begin{equation}  \label{Ext=0}
\Ext^2(\bG_a,\bQ_p/\bZ_p)=0
\end{equation}
proved by L.~Breen \cite{breen-exts},
and the fact that the orbits of a unipotent group acting on an affine variety are closed.
The latter is used in the proof of Proposition~\ref{p:avg-closed-idemp}, and the equality \eqref{Ext=0} was used in \cite{characters} to prove the key Proposition~\ref{p:extension-locsys}.

\subsection{Organization of the text}\label{ss:organization}
The rest of the article is organized as follows. In
\S\ref{s:idempotents} we define and study the notion of a closed
idempotent in an abstract monoidal category. We establish several
general results that are used in the proofs of the main theorems of
the paper.

\mbr

In \S\ref{s:serre-FD} we recall the notion of Serre
duality \cite{serre,begueri} for perfect connected unipotent groups
in characteristic $p>0$ and that of Fourier-Deligne
transform. We have to work in a setting slightly more general than the standard one
(namely, to consider multiplicative local systems on non-necessarily commutative
connected unipotent groups). The main results on the Fourier-Deligne transform in this setting are
Proposition \ref{p:fourier-deligne-*-isomorphism} and
Theorem \ref{t:FD-closed-idempotent}.

%
%

\mbr

In \S\ref{s:reduction} we establish Proposition
\ref{p:normal-admissible-pairs} and Theorem
\ref{t:reduction-complexes}, which are used in \S\ref{s:proofs} to
prove the main theorems of the article by induction. The proofs of
the results of \S\ref{s:reduction} are somewhat similar to the proof
of the classical theorem  that any irreducible representation of a
finite nilpotent group is induced from a 1-dimensional
representation of a subgroup.

\mbr

We devote \S\ref{s:mackey} to a general study of triples $(G,G',e)$
satisfying the geometric Mackey condition from Definition
\ref{d:geom-mackey-condition}, while \S\ref{s:averaging} establishes
several properties of the functors $\avg$ and $\Avg$.

\mbr

The main results of the article, stated 
in \S\S\ref{ss:char-sheaves-L-packets}--\ref{ss:mackey-theory}, are
proved in \S\ref{s:proofs}. They are deduced without difficulty from
the crucial Proposition \ref{p:crucial}, proved
in \S\ref{ss:proof-p:crucial}.

\mbr

In Appendices~\ref{s:dualityformalism} and \ref{s:CFT}  we discuss certain abstract categorical formalisms
and apply them to the category $\sD_G(G)$ from \S\ref{ss:themonoidal}.

\mbr

Appendix~\ref{s:dualityformalism} is devoted to
a duality formalism in the spirit of Grothendieck and Verdier. We introduce the notions of a Grothendieck-Verdier category and of an r-category. The latter generalizes the notion of a rigid monoidal category. We define the notions of a pivotal structure and a ribbon structure on a Grothendieck-Verdier category. (A more detailed exposition of the theory of \GV categories appears in \cite{GV}.) Assuming that $G^{\circ}$
is unipotent, we prove in Appendix~\ref{s:dualityformalism} that the monoidal category $\sD_G(G)$  is an r-category and describe a natural ribbon structure on it (without the unipotence assumption this is done in Appendix~\ref{s:CFT}). We also prove that every Hecke subcategory of a pivotal (respectively, ribbon) Grothendieck-Verdier category is also a pivotal (respectively, ribbon) \GV category.

\mbr

Appendix~\ref{s:CFT} is devoted to a formalism of ``pre-topological field theories'' (the notion of a pre-TFT is a ``lax'' version of the notion of a TFT). Slightly modifying \cite[\S6]{BFN08}, we associate
a $2$-dimensional pre-TFT to any algebraic stack $\sX$ of finite type over a field $k$.
In the case where $\sX$ is the classifying stack of an algebraic group $G$ this pre-TFT allows us to solve
two problems. First, we define a canonical ribbon structure on the r-category $\sD_G(G)$, where $G$ is an \emph{arbitrary} algebraic group over $k$ and $\sD_G(G)$ is understood as the bounded derived category of constructible $\ell$-adic complexes on the stack quotient of $G$ by the adjoint action of $G$
\cite{Las-Ols06}. Second, we prove a key Lemma~\ref{l:compatibility-ind-braiding-twists}
(on the compatibility of the functor $\ig :\sD_{G'}(G')\rar{}\sD_{G}(G)$ with braidings and twists).

\mbr

In Appendix \ref{a:equivalence-equivariant-derived} we show that if $G$ is an algebraic group over $k$ acting on a scheme $X$ of finite type over $k$, then the category $\sD_G(X)$ constructed in Definition \ref{d:equiv-derived} is naturally equivalent to the category $D^b_c(G\backslash X,\ql)$, where $G\backslash X$ is the stack quotient of $X$ by $G$, provided that the neutral connected component $G^\circ$ of $G$ is \emph{unipotent}.

%
%
%



\section{Idempotents in monoidal categories}\label{s:idempotents}

\subsection{Monoidal categories}\label{ss:monoidal-notation}
There are several equivalent approaches to defining the notion of a
monoidal category. The following approach will be the most
convenient for us. If $\cM$ is a category, a \emph{semigroupal
structure} on $\cM$ is a pair consisting of a functor
$\tens:\cM\times\cM\rar{}\cM$ and a functorial collection, $\al$, of
isomorphisms
\[
\al_{X,Y,Z}:(X\tens Y)\tens Z\rar{\simeq}X\tens(Y\tens Z)
\]
for all $X,Y,Z\in\cM$, satisfying the ``pentagon axiom''
\cite[p.~162]{m}. One calls $\al$ the \emph{associativity
constraint} for $\tens$. A \emph{semigroupal category} is a triple
$(\cM,\tens,\al)$ consisting of a category $\cM$ and a semigroupal
structure $(\tens,\al)$ on $\cM$. Often, by abuse of notation, one
denotes a semigroupal category by a single letter such as $\cM$.

\begin{defins}   \label{defs:unital}
\begin{enumerate}[(1)]
\item An object $E$ of a semigroupal category $\cM$ is said to be
\emph{unital} if the functors $X\mapsto E\tens X$ and $X\mapsto
X\tens E$ are isomorphic to the identity functor on $\cM$.
 \sbr
\item A semigroupal category is said to be \emph{monoidal} if it has
a unital object.
 \sbr
\item A \emph{unit object} in a monoidal category $\cM$ is a pair
$(E,u)$ consisting of a unital object $E$ and an
isomorphism\footnote{By definition, there exists such an isomorphism
for every unital object $E$.} $u:E\tens E\rar{\simeq}E$.
\end{enumerate}
\end{defins}

\begin{lem}[\cite{categories-sheaves}, Rem.~4.2.7]\label{l:unit-uniqueness}
If $\cM$ is a monoidal category and $(E,u)$, $(E',u')$ are two unit
objects of $\cM$, there exists a unique isomorphism
$f:E\rar{\simeq}E'$ such that $u'\circ(f\tens f)=f\circ u$.
\end{lem}


The lemma allows us to speak of `the' unit object in a monoidal
category (rather than `a' unit object). The unit object of a
monoidal category will usually be denoted by $\e$.

\begin{lem}[\cite{categories-sheaves}, Lem.~4.2.6]\label{l:mon-cat-unit-constraints}
If $\cM$ is a monoidal category and $\e=(\e,u)$ is a unit object in
$\cM$, there exist unique functorial collections of
isomorphisms
\[
\la=\bigl\{\la_X:\e\tens X\rar{\simeq}X \bigr\}_{X\in\cM}
\qquad\text{and}\qquad \rho =\bigl\{\rho_X:X\tens\e\rar{\simeq}X
\bigr\}_{X\in\cM}
\]
such that $\la_\e=u=\rho_\e$ and the triangle axiom \cite[p.~163]{m}
holds.
\end{lem}

This lemma implies that the definition of a monoidal category given
above is equivalent to the one from \cite{m}.

\subsection{Semigroupal and monoidal
functors}\label{ss:semigroupal-monoidal-functors}
\begin{defin}\label{d:weak-semigroupal}
Let $\cM_1$ and $\cM_2$ be semigroupal categories.
\begin{enumerate}[(a)]
\item
A \emph{weak semigroupal structure} on a functor $F:\cM_1\to\cM_2$
between the underlying categories is a functorial collection,
$\phi$, of morphisms
\[
\phi(X,Y) : F(X)\tens F(Y) \rar{} F(X\tens Y) \qquad
\forall\,X,Y\in\cM_1
\]
that are compatible with the associativity constraints on $\cM_1$
and $\cM_2$.
 \sbr
\item If each $\phi(X,Y)$ is an isomorphism, then
$\phi$ is called a {\em semigroupal structure} (or a {\em strong
semigroupal structure}).
 \sbr
\item A {\em semigroupal} (respectively, {\em weakly semigroupal})
functor $\cM_1\to\cM_2$ is a functor $\cM_1\rar{}\cM_2$ equipped
with a semigroupal (respectively, weakly semigroupal) structure.
 \sbr
\item If $\cM_1$ and $\cM_2$ are monoidal categories, a semigroupal
functor $\cM_1\to\cM_2$ is said to be \emph{monoidal} if it takes
unital objects to unital objects.
\end{enumerate}
\end{defin}

\begin{rem}\label{r:semigroupal-equivalence}
Suppose that $\cM_1$ and $\cM_2$ are monoidal categories and
$F:\cM_1\to\cM_2$ is a semigroupal functor that is an equivalence of
the underlying categories. Then $F$ automatically takes unital
objects to unital objects. In other words, a semigroupal equivalence
between monoidal categories is the same as a monoidal equivalence.
\end{rem}

\begin{rem}
The notion of a weakly semigroupal functor is closely related to the
notion of {\em pseudo-tensor functor} between {\em pseudo-tensor
categories} from \cite{chiral} (see \S1.1.1 and \S1.1.5 of
\emph{op.~cit.}). Namely, by \cite[\S1.1.3]{chiral}, a semigroupal
category can be considered as a special kind of pseudo-tensor
category and by \cite[\S1.1.6(ii)]{chiral}, a pseudo-tensor functor
between semigroupal categories is the same as a weakly semigroupal
functor.
\end{rem}

\subsection{Algebras} If $(\cM,\tens,\al)$ is a semigroupal category
and $A\in\cM$, one defines the notion of an associative morphism
$\mu:A\tens A\rar{}A$ in the usual way. A pair $(A,\mu)$ consisting
of an object $A\in\cM$ and an associative morphism $\mu:A\tens
A\rar{}A$ is called an \emph{algebra} in $\cM$.
A weakly semigroupal functor takes algebras to algebras.

If $\cM$ is monoidal, one also has the notion of a \emph{unital algebra} (called
a ``monoid'' in \cite{m} and a ``ring'' in \cite{categories-sheaves}) in $\cM$.
In this case $\e_{\cM}$ has a canonical structure of unital algebra
and any object of $\cM$ has a canonical structure of $\e_{\cM}$-bimodule.

\subsection{Main definitions}\label{ss:idempotents-definitions}
\begin{defins}\label{defs:idempotents}
Let $\cM$ be a monoidal category.
\begin{enumerate}[(a)]
\item An object $e\in\cM$ is said to be a \emph{weak idempotent} if
$e\tens e\cong e$.
 \sbr
\item An \emph{idempotent algebra} in $\cM$ is an algebra $(e,\mu)$
such that $\mu:e\tens e\rar{}e$ is an isomorphism.
 \sbr
\item
A morphism $\e\rar{\pi}e$ in $\cM$ is said to be an \emph{idempotent arrow}
if both 
morphisms
\begin{equation}\label{e:morphisms-idempotent-arrow}
\e\tens e\xrar{\ \ \pi\tens\id_e\ \ }e\tens e \qquad\text{and}\qquad
e\tens\e\xrar{\ \ \id_e\tens\pi\ \ }e\tens e
\end{equation}
are isomorphisms.
 \sbr
\item An object $e\in\cM$ is said to be a \emph{closed idempotent} if
there exists an idempotent arrow $\e\rar{}e$.
\end{enumerate}
\end{defins}

\begin{rems}\label{rems:idempotents}
\begin{enumerate}[(i)]
\item In the situation of (b), (c) or (d), the object $e\in\cM$ is a weak
idempotent.
 \sbr
\item Lemma \ref{l:areequal} below asserts that if \emph{both}
arrows \eqref{e:morphisms-idempotent-arrow} are isomorphisms, then
they are equal as isomorphisms $e\iso e\tens e$. On the other hand, it is \textbf{not} enough to
require \emph{one} of them to be an isomorphism: see Remark
\ref{r:one-isom-not-enough}.
 \sbr
\item The term ``closed idempotent'' may be new, but the notion
itself is not. For instance, a special class of closed idempotents,
known as \emph{idempotent monads}, was known for several decades
(see \S\ref{ss:idempotent-monads} for more details). Moreover,
closed idempotents in the general sense appear in
\cite[Exercise~4.2]{categories-sheaves}.
 \sbr
\item The origin of the adjective ``closed'' is explained in
\S\ref{ss:examples-idempotents}.
 \sbr
\item A unit of an associative algebra is unique if it exists. In
particular, this remark applies to idempotent algebras.
 \sbr
\item If $(e,\mu)$ is an idempotent \emph{unital} algebra in $\cM$,
then the unit $\pi:\e\rar{} e$ is an idempotent arrow in $\cM$.
Indeed, the compositions $\e\tens e\xrar{\ \pi\tens\id_e\ }e\tens
e\rar{\mu}e$ and $e\tens\e\xrar{\ \id_e\tens\pi\ }e\tens
e\rar{\mu}e$ are equal to $\la_e$ and $\rho_e$, respectively, and
hence are isomorphisms. But $\mu$ is an isomorphism, hence so are
the arrows \eqref{e:morphisms-idempotent-arrow}.
 \sbr
\item We will see later (Proposition \ref{p:closed-idemp-idemp-alg})
that the notion of 
idempotent arrow is in fact \emph{equivalent\,}
to the notion of an idempotent unital algebra.
 \sbr
\item The notion of a weak idempotent is not very well behaved (see, for instance, Remark~\ref{r:caution}).
The notion of a closed idempotent is much better behaved. For
example, idempotent arrows have no automorphisms and isomorphism
classes of idempotent arrows bijectively correspond to isomorphism
classes of closed idempotents (see Corollary
\ref{c:uniqueness-idempotent-arrows}).
\end{enumerate}
\end{rems}

\begin{lem} \label{l:areequal}
If $\pi: \e\rar{}e$ is an idempotent arrow in a monoidal category
$\cM$ then the isomorphisms $e\iso e\tens e$ corresponding to isomorphisms \eqref{e:morphisms-idempotent-arrow} are equal to each other.
\end{lem}

\begin{proof}
By \cite[Lem.~4.1.2]{categories-sheaves}, the lemma holds if $\cM$ is the category
of endofunctors of some category. To prove the lemma for any $\cM$, note that
the left action of $\cM$ on itself defines a monoidal functor $F:\cM\rar{}\End(\cM)$,
where $\End(\cM)$ is the monoidal category of functors $\cM\rar{}\cM$.
Clearly $F(\pi )$ is an idempotent arrow in $\End(\cM)$, so by \emph{loc.~cit.},
the isomorphisms \eqref{e:morphisms-idempotent-arrow} have equal images under $F$.
Since $F$ is faithful we are done.
\end{proof}

\subsection{An example}\label{ss:examples-idempotents}
In this subsection we give an example that explains the origin of
the adjective ``closed'' in Definition \ref{defs:idempotents}(c). At
the same time we show that in general, not every weak idempotent in
a monoidal category is closed. Another important class of examples
appears in \S\ref{ss:idempotent-monads} below.

\mbr

Let $X$ be a scheme, and let $i:Y\rar{}X$ be a morphism of finite
type such that the diagonal morphism $Y\rar{}Y\times_X Y$ is an
isomorphism\footnote{Such an $i$ is not necessarily a locally closed
embedding. For instance (if $X$ is Noetherian), one can take
$Y=Z\coprod(X\setminus Z)$, where $Z\subset X$ is any closed
subscheme.}. If $A$ is any unital commutative ring, then sheaves of
$A$-modules on the \'etale site $X_{\text{\'et}}$ form a monoidal
category $\cM$ with respect to the usual tensor product. Consider
the sheaf $i_! A_Y\in\cM$, where $A_Y$ is the constant sheaf on
$Y_{\text{\'et}}$ with stalk $A$.
Each stalk of $i_! A_Y$ equals $A$ or $0$, so it has a canonical
ring structure. It is easy to show that these structures come from a
unique structure of idempotent algebra on $i_!A_Y$. In particular,
$i_!A_Y$ is a weak idempotent. It is easy to show that $i_!A_Y$ is a
closed idempotent in $\cM$ if and only if $i:Y\rar{}X$ is a closed
embedding. Furthermore, if $A$ is a field, then every closed
idempotent in $\cM$ is isomorphic to $i_!A_Y$ for some closed
$Y\subset X$.

\subsection{Closed idempotents via adjoint functors}\label{ss:idempotent-monads}
If $\cC$ is a (small)
category, the category $\End(\cC)$ of functors $\cC\rar{}\cC$ is a
strictly associative and strictly unital monoidal category with
respect to composition of functors. Closed idempotents in this
category were studied, under the name of \emph{idempotent monads},
in a number of earlier works, among which we mention
\cite{adams,dfh1,dfh2}. In this subsection we summarize the basic
facts relating idempotent monads to the notions of adjoint functors
and reflective subcategories, mostly following
\cite[\S4.1]{categories-sheaves} (the term ``projector'' is used in
\emph{loc.~cit.} in place of ``idempotent monad'').

\mbr

Let us first recall the following
\begin{defin}
A subcategory $\cD$ of a category $\cC$ is said to be
\emph{reflective} if the inclusion functor $\cD\into\cC$ admits a
left adjoint.
\end{defin}

\begin{prop}[\cite{categories-sheaves}, Prop.~4.1.3--4.1.4]\label{p:idempotent-monads}
\begin{enumerate}[$($a$)$]
\item Let $\cC$ be a category, let $\cD\subset\cC$ be a full
reflective subcategory, and let $L:\cC\rar{}\cD$ be a left adjoint
of the inclusion functor $I:\cD\into\cC$. Further, let us denote the
adjunction morphisms by $\eta:L\circ I\rar{}\Id_{\cD}$ and
$\eps:\Id_{\cC}\rar{}I\circ L$. Then $\eta$ is an isomorphism, and
$\eps$ is an idempotent arrow in the category of endofunctors of
$\cC$.
 \mbr
\item Conversely, let $P:\cC\rar{}\cC$ be a functor, and let
$\eps:\Id_{\cC}\rar{}P$ denote an idempotent arrow in the category
of endofunctors of $\cC$. Given an object $X\in\cC$, the following
three statements are equivalent:
 \sbr
\begin{enumerate}[$($i$)$]
\item the arrow $\eps_X:X\rar{}P(X)$ is an isomorphism;
 \sbr
\item for any $Y\in\cC$, the map $\Hom(P(Y),X)\rar{}\Hom(Y,X)$
 \sbr
given by $f\mapsto f\circ\eps_Y$ is bijective;
\item the map in $($ii$)$ is surjective for $Y=X$.
 \sbr
\end{enumerate}
If $\cD$ is the full subcategory of $\cC$ consisting of objects
$X\in\cC$ satisfying the equivalent conditions $($i$)$--$($iii$)$,
then $P(X)\in\cD$ for any $X\in\cC$. Furthermore, $P$ induces a
functor $\cC\rar{}\cD$ which is left adjoint to the inclusion
functor $\cD\into\cC$. A fortiori, $\cD$ is a reflective subcategory
of $\cC$.
\end{enumerate}
\end{prop}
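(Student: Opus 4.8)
The plan is to prove the two directions separately, treating part (a) and part (b) as essentially the standard correspondence between reflective subcategories and idempotent monads, adapted to the fact that we work only with the underlying ``idempotent arrow'' data rather than a full monad structure (although, in a strictly unital monoidal category of endofunctors, an idempotent arrow $\eps:\Id_\cC\rar{}P$ does canonically upgrade $P$ to an idempotent monad, which is a convenient way to organize the argument).

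\medskip

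\textbf{Part (a).} Assume $\cD\subset\cC$ is full reflective with left adjoint $L\dashv I$ and adjunction morphisms $\eta:L\circ I\rar{}\Id_\cD$, $\eps:\Id_\cC\rar{}I\circ L$. First I would show $\eta$ is an isomorphism: the triangle identities give $\eta_{L(X)}\circ L(\eps_X)=\id_{L(X)}$ and $I(\eta_X)\circ\eps_{I(X)}=\id_{I(X)}$; fullness of $I$ lets one transport the second identity to a statement about $\cD$, and since for $D\in\cD$ we may write $D=I(D)$, standard adjunction yoga (or: the second triangle identity shows $I(\eta_X)$ is split mono, and a Yoneda/naturality argument using fullness shows it is also epi) forces $\eta$ to be invertible. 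Then, to see $\eps$ is an idempotent arrow in $\End(\cC)$, set $P=I\circ L$; the two morphisms in \eqref{e:morphisms-idempotent-arrow} to check are $\eps * \id_P$ and $\id_P * \eps$, i.e.\ the natural transformations $P\rar{}P\circ P$ given by $\eps_{P(X)}$ and $P(\eps_X)=I(L(\eps_X))$. For the latter: $L(\eps_X):L(X)\rar{}L(I(L(X)))$ is inverse to $\eta_{L(X)}$ by the first triangle identity (just shown to be an iso), so $I(L(\eps_X))$ is an iso. For the former: $\eps_{P(X)}=\eps_{I(L(X))}$, and $I(\eta_{L(X)})\circ\eps_{I(L(X))}=\id$ by the second triangle identity, while $I(\eta_{L(X)})$ is an iso since $\eta$ is; hence $\eps_{P(X)}$ is an iso. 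That completes (a).

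\medskip

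\textbf{Part (b).} Now assume only $P:\cC\rar{}\cC$ and an idempotent arrow $\eps:\Id_\cC\rar{}P$; write $\cD$ for the full subcategory of objects on which $\eps$ is an isomorphism. The equivalence (i)$\Rightarrow$(ii)$\Rightarrow$(iii) is formal: (i)$\Rightarrow$(ii) because composing with $\eps_Y$ is, up to the iso $\eps_X$, the same as composing with $P(\eps_Y)$ and using naturality ($\eps_X\circ f = P(f)\circ\eps_Y$ for $f:Y\rar{}X$, so $f\mapsto f\circ\eps_Y$ is inverted by $g\mapsto \eps_X^{-1}\circ P(g)\circ$ [iso from idempotency]); (ii)$\Rightarrow$(iii) is trivial. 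For (iii)$\Rightarrow$(i): surjectivity of $\Hom(P(X),X)\rar{}\Hom(X,X)$ at $\id_X$ produces $r:P(X)\rar{}X$ with $r\circ\eps_X=\id_X$; then $\eps_X\circ r:P(X)\rar{}P(X)$ satisfies $(\eps_X\circ r)\circ\eps_X=\eps_X=\id_{P(X)}\circ\eps_X$, and precomposition with $\eps_X$ on $\Hom(P(X),P(X))$ is injective because $\eps_{P(X)}$ (one of the maps in \eqref{e:morphisms-idempotent-arrow}) is an iso and $P(\eps_X)$ is an iso, forcing $\eps_X\circ r=\id_{P(X)}$; hence $\eps_X$ is an iso. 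Next, $P(X)\in\cD$ for every $X$: the component $\eps_{P(X)}$ is an iso by the idempotent-arrow hypothesis (it is literally $(\id_P * \eps)_X$ or $(\eps * \id_P)_X$ — one must check the naturality square identifies $\eps_{P(X)}$ with the relevant component, using that for endofunctors $\tens$ is composition), so $P(X)$ satisfies (i). Finally, since $P$ lands in $\cD$ it corestricts to $\bar P:\cC\rar{}\cD$; I claim $\bar P\dashv (\cD\into\cC)$ with unit $\eps$ and counit the inverse of $\eps$ restricted to $\cD$. The required bijection $\Hom_\cC(X,D)\cong\Hom_\cD(\bar P(X),D)=\Hom_\cC(P(X),D)$ for $D\in\cD$ is exactly condition (ii) applied with the roles rearranged — precisely, $g\mapsto g\circ\eps_X$ is a bijection $\Hom(P(X),D)\rar{}\Hom(X,D)$ because $D\in\cD$ means $\eps_D$ is an iso, so (i) holds for $D$, so (ii) holds for $D$; the triangle identities for this adjunction follow from naturality of $\eps$ and $\eps_D$ being iso. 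Reflectivity of $\cD$ is then immediate from the definition.

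\medskip

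\textbf{Expected main obstacle.} The routine-looking but genuinely fiddly point is the bookkeeping in $\End(\cC)$: one must be careful that the two morphisms \eqref{e:morphisms-idempotent-arrow}, when $\cM=\End(\cC)$ and $\tens$ is composition, really are the natural transformations with components $\eps_{P(X)}$ and $P(\eps_X)$ (horizontal composition / whiskering conventions), and that these are exactly what gets used in the (iii)$\Rightarrow$(i) step and in showing $P(X)\in\cD$. Remark~(4) after Definition~\ref{defs:idempotents} emphasizes that one genuinely needs \emph{both} maps to be isos, and indeed the argument above uses one of them ($\eps_{P(X)}$ iso) for $P(X)\in\cD$ and, implicitly through naturality, the other; so the plan is to state the whiskering identification once, carefully, and then the rest is formal adjunction manipulation with no real surprises.
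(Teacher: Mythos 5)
The paper itself gives no proof of this proposition; it simply cites Propositions 4.1.3--4.1.4 of Kashiwara--Schapira, \cite{categories-sheaves}. Your self-contained argument is the standard correspondence between reflective subcategories and idempotent monads and is essentially correct, including the careful observation that the two morphisms in \eqref{e:morphisms-idempotent-arrow}, when $\cM=\End(\cC)$, are the whiskerings with components $\eps_{P(X)}$ and $P(\eps_X)$, and that both are needed (one for $P(X)\in\cD$, one for the cancellation in (iii)$\Rightarrow$(i)). Two small inaccuracies worth flagging. In part (a), the second triangle identity $I(\eta_D)\circ\eps_{I(D)}=\id_{I(D)}$ exhibits $I(\eta_D)$ as a \emph{split epi}, not a split mono as you wrote; the rest of your Yoneda-style argument for $\eta$ being an isomorphism is fine. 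At the end of part (b), your claim that ``the triangle identities for this adjunction follow from naturality of $\eps$ and $\eps_D$ being iso'' is not quite right as stated: the first triangle identity for $\bar P\dashv I$ with unit $\eps$ and counit $\eps^{-1}\bigl\lvert_{\cD}$ amounts to $P(\eps_X)=\eps_{P(X)}$, which is exactly the content of Proposition~\ref{p:commute} (proved only later in \S\ref{ss:rigidity-closed-idempotents}, and not available here). You avoid needing this, however, by instead establishing the adjunction via the natural bijection $\Hom(P(X),D)\rar{\sim}\Hom(X,D)$ furnished by (ii), so the overall proof still goes through; I would simply delete the parenthetical remark about triangle identities, or replace it with the observation that the hom-set bijection, natural in $X$ and $D$ by naturality of $\eps$, is all that is required.
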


\begin{rem}\label{r:one-isom-not-enough}
The following example, taken from
\cite[Exercise~4.1]{categories-sheaves}, shows that in Definition
\ref{defs:idempotents}(c) it is not enough to require one of the two
arrows in \eqref{e:morphisms-idempotent-arrow} to be an isomorphism.

\mbr

Let $\cC$ be the category with one object, $X$, such that
$\End_{\cC}(X)=\{\id_X,p\}$ and the arrow $p:X\rar{}X$ satisfies
$p^2=p$. Further, let $P:\cC\rar{}\cC$ be the functor defined by
$P(\id_X)=\id_X=P(p)$, and let $\eps:\Id_{\cC}\rar{}P$ be the
morphism of functors given by $\eps_X=p$. Then
$P\circ\eps:P\rar{}P^2$ is an isomorphism, but $\eps\circ
P:P\rar{}P^2$ is not.
\end{rem}

\subsection{Hecke subcategories}\label{ss:hecke-subcategories}
Let $\cM$ be a monoidal category and $e\in\cM$ a weak idempotent.
The essential image of the functor $\cM\to\cM$ defined by $X\mapsto e\tens X$
(resp. $X\mapsto X\tens e$, $X\mapsto e\tens X\tens e$) will be denoted by
$e\cM$ (resp. $\cM e$,  $e\cM e$). The full subcategories $e\cM$, $\cM e$, $e\cM e$
of the category $\cM$ can also be described as follows:
\begin{equation}  \label{eM}
e\cM=\bigl\{X\in\cM\st e\tens X\cong X \bigr\}, \qquad \cM e =
\bigl\{X\in\cM\st X\tens e\cong X \bigr\},
\end{equation}
\begin{equation}   \label{eMe}
e\cM e = \bigl\{X\in\cM\st e\tens X\tens e\cong X \bigr\}.
\end{equation}

\begin{lem}\label{l:hecke-intersection}
$e\cM e=e\cM\cap\cM e$.
\end{lem}

\begin{proof}
By definition, $e\cM e\subset e\cM\cap\cM e$. By \eqref{eM}-\eqref{eMe},
$e\cM e\supset e\cM\cap\cM e$.
\end{proof}

If the idempotent $e$ is closed then $e\cM$ and $\cM e$ have the following description.

\begin{lem}\label{onemoredescription}
Let $\pi:\e\rar{}e$ be an idempotent arrow. An object $X\in\cM$
belongs to $e\cM$ $($resp. $\cM e${}$)$ if and only if the morphism
$\pi\tens\id_X:\e\tens X\rar{}e\tens X$ $($resp.
$\id_X\tens\pi:X\tens\e\rar{}X\tens e${}$)$ is an isomorphism.
\hfill\qedsymbol
\end{lem}

\begin{cor}\label{c:hecke-closed-under-retracts}
If $e\in\cM$ is a closed idempotent, then the subcategories $e\cM$,
$\cM e$ and $e\cM e$ of $\cM$ are closed under retracts.
\end{cor}

\begin{proof}
It suffices to consider $e\cM$. If $\pi:\e\to e$ is an idempotent
arrow, $X\in e\cM$, and $Y$ is a retract of $X$ then the morphism
$\pi\tens\id_Y:\e\tens Y\rar{}e\tens Y$ is a retract of
$\pi\tens\id_X:\e\tens X\rar{}e\tens X$. But a retract of an
isomorphism is an isomorphism.
\end{proof}

\begin{defin}
We call $e\cM e\subset\cM$ the \emph{Hecke subcategory} of $\cM$
defined by $e$.
\end{defin}

The full subcategory $e\cM e\subset\cM$ is closed under $\tens$. So it is
a semigroupal category.

\begin{lem}\label{l:Hecke-monoidal}
If $e\in\cM$ is a closed
idempotent then the semigroupal category $e\cM e$ is monoidal and $e$ is
a unital object of $e\cM e$.
\end{lem}

\begin{proof}
Lemma~\ref{onemoredescription} shows that
the functor $X\mapsto e\tens X$ is isomorphic
to the identity functor on the subcategory $e\cM\subset\cM$ and
\emph{a fortiori}, on the subcategory $e\cM e\subset\cM$.
Similarly, the functor $X\mapsto X\tens e$ is isomorphic to the
identity functor on the subcategory $\cM e\subset\cM$, and hence
also on $e\cM e$.
\end{proof}

\begin{rem}\label{r:caution}
If $e\in\cM$ is a {\em weak\,} idempotent then the semigroupal
category $e\cM e$ may fail to be monoidal.\footnote{If $e$ is an
idempotent algebra there is a remedy, see Remark~\ref{r:remedy}.}
For instance, this happens if $\cM$ is the category $\sD
(\bA^1)=D^b_c(\bA^1,\ql)$ equipped with the usual tensor product and
$e=j_! \ql\oplus i_!\ql$, where $j: \bA^1\setminus \{ 0
\}\hookrightarrow\bA^1$ is the open embedding and $i: \{ 0
\}\hookrightarrow\bA^1$ is the corresponding closed
embedding\footnote{Let $e_1=i_!\ql$ and $e_2=j_!\ql$. Then $e_1\tens
e_1\cong e_1$, $e_2\tens e_2\cong e_2$ and $e_1\tens e_2=0$. Thus
$e_1,e_2\in e\cM e$. For $n=1,2$ there exist nonzero morphisms $e_1\to e_2[n]$ in $\cM$. They are annihilated by the functors $X\mapsto e_1\tens X$
and $X\mapsto e_2\tens X$, and hence also by the functor $X\mapsto
e\tens X$. In particular, the latter functor is not an
autoequivalence of $e\cM$.}. Note that if $G=\bG_a$ then the
monoidal category $(\sD (\bA^1), \tens )$ is equivalent to $(\sD_G
(G),*)$ via the Fourier-Deligne transform.
\end{rem}

\begin{lem}\label{l:transitivity}
Let $\e\rar{\pi}e$ be an idempotent arrow in a monoidal category
$\cM$, and let $e\rar{\varpi}f$ be an idempotent arrow in the Hecke
subcategory $e\cM e$. Then $\varpi\circ\pi$ is an idempotent arrow
in $\cM$.
\end{lem}

\begin{proof}
Since $f\in e\cM e\subset e\cM$ 
the arrow $\pi\tens\id_f:\e\tens f\rar{}e\tens f$ is an isomorphism by Lemma~\ref{onemoredescription}.
The arrow $\varpi\tens\id_f:e\tens f\rar{}f\tens f$ is an isomorphism by assumption, so
$(\varpi\circ\pi)\tens\id_f:\e\tens f\rar{}f\tens f$ is an isomorphism. Similarly,
$\id_f\tens(\varpi\circ\pi):f\tens\e\rar{}f\tens f$ is an isomorphism.
\end{proof}

\subsection{Adjunction properties}\label{ss:interplay-weak-closed-idempotents}
The following result (Proposition
\ref{p:interplay-weak-closed-idempotents}) will play an important
role in the proof of Proposition \ref{p:when-ind-closed}.

\begin{defin}\label{d:weakly-symmetric}
A semigroupal category $\cM$ is said to be \emph{weakly symmetric}
if the functors $\cM\times\cM\rar{}\cM$ given by $(X,Y)\longmapsto
X\tens Y$ and $(X,Y)\longmapsto Y\tens X$ are isomorphic.
(The name ``weakly symmetric'' may be non-standard.)
\end{defin}

For example, a braided monoidal category is weakly symmetric. In
general, if $\cM$ is weakly symmetric and $e\in\cM$ is a weak
idempotent, it is clear that all three subcategories $e\cM$, $\cM e$
and $e\cM e$ of $\cM$ coincide. Normally, we will write $e\cM$ for
the Hecke subcategory of $\cM$ defined by $e$ in this situation.

\begin{prop}\label{p:interplay-weak-closed-idempotents}
\begin{enumerate}[$($a$)$]
\item If $\e\rar{\pi}e$ is an idempotent arrow in a monoidal category $\cM$, then the functor
$L:\cM\rar{}e\cM$ given by $X\longmapsto e\tens X$ is left adjoint
to the inclusion functor $I:e\cM\into\cM$. More precisely, the
family of morphisms
\begin{equation}\label{e:adjunction-morphism}
\{\pi\tens\id_X:X\rar{}e\tens X\}_{X\in\cM}
\end{equation}
defines an adjunction morphism $\Id_{\cM}\rar{}I\circ L$.
 \sbr
\item Here is a partial converse. Let $\cM$ be a weakly
symmetric monoidal category, and let $e\in\cM$ be a weak idempotent.
Suppose that the functor $\cM\rar{}e\cM$ given by $X\longmapsto
e\tens X$ is left adjoint to the inclusion functor $e\cM\into\cM$.
Then $e$ is a closed idempotent in $\cM$.
\end{enumerate}
\end{prop}

\begin{proof}
(a) Consider the functor
\begin{equation}\label{e:(*)}
P=I\circ L:\cM\rar{}\cM, \qquad P(X)=e\tens X,
\end{equation}
and note that \eqref{e:adjunction-morphism} defines an idempotent
arrow $\eps:\Id_\cM\to P$ in the category of endofunctors of $\cM$.
It remains to apply Proposition \ref{p:idempotent-monads}(b) and
Lemma~\ref{onemoredescription}.

\mbr

\noindent (b) By Proposition \ref{p:idempotent-monads}(a), there
exists an idempotent arrow $\eps:\Id_\cM\rar{}P$, where $P$ is
defined by \eqref{e:(*)}.
Define $\pi :\e\to e$ to be the composition of $\eps_\e:\e\to e\tens\e$
and the canonical isomorphism $\rho_e :e\tens\e\rar{\simeq} e$.
Since $\eps$ is an idempotent arrow, the morphism
$\id_e\tens\eps_X:e\tens X\to e\tens(e\tens X)$ is an isomorphism
for every $X\in\cM$. Setting $X=\e$, we see that
$\id_e\tens\pi:e\tens\e\to e\tens e$ is an isomorphism. Since $\cM$
is weakly symmetric, $\pi\tens\id_e:\e\tens e\to e\tens e$ is also
an isomorphism.
\end{proof}

\subsection{Minimal idempotents and Jacobson monoidal categories}
\label{ss:minimal-idempotents}
\begin{defin}\label{d:minimal-idempotents}
Let $\cM$ be a weakly symmetric monoidal category that has a zero
object\footnote{In all the applications we have in mind, $\cM$ will
in fact be additive.}, $0$. A \emph{minimal closed} (respectively,
\emph{weak}) \emph{idempotent} in $\cM$ is a closed (respectively,
weak) idempotent $e\in\cM$ such that $e\neq 0$, and such that for
every closed (respectively, weak) idempotent $e'\in\cM$, we have
either $e\tens e'=0$, or $e\tens e'\cong e$.
\end{defin}

\begin{example}\label{example1}
Let $X$, $A$, $\cM$ be as in \S\ref{ss:examples-idempotents}.
Suppose that $A$ is a field. Then for every closed point $x\in X$,
the closed idempotent $i_!A_{\{x\}}\in\cM$ is minimal among weak
idempotents. Moreover, all minimal closed idempotents in $\cM$ have
this form.
\end{example}

\begin{rem}
A minimal closed idempotent might fail to be minimal as a weak
idempotent. On the other hand, if a minimal weak idempotent happens
to be a closed idempotent, then it is clearly minimal as a closed
idempotent as well.
\end{rem}

\begin{rem}
We will show in \S\ref{ss:idempotents-partial-order} that minimal
closed idempotents can also be defined in terms of a certain partial
order on the set of isomorphism classes of \emph{all} nonzero closed
idempotents in $\cM$ (cf.~Remark~\ref{r:classical} (iii)).
Moreover, the construction of this partial order does not require the
assumption that $\cM$ is weakly symmetric.
\end{rem}

The following observation is occasionally useful.

\begin{lem}\label{l:characterize-minimal-idempotents}
Let $\cM$ be a weakly symmetric monoidal category with a zero
object, $0$. A closed idempotent $e\in\cM$ is minimal among closed
idempotents if and only if $e\neq 0$ and the Hecke subcategory
$e\cM$ has no closed idempotents $($up to isomorphism$)$ apart from
$0$ and its unit object, $e$. Similarly, a weak idempotent $e\in\cM$
is minimal among weak idempotents if and only if $e\neq 0$ and every
nonzero weak idempotent in the Hecke subcategory $e\cM\subset\cM$ is
isomorphic to $e$.
\end{lem}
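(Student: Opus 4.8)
The statement to prove is Lemma~\ref{l:characterize-minimal-idempotents}, which characterizes minimality of closed (resp.\ weak) idempotents in a weakly symmetric monoidal category $\cM$ with a zero object, in terms of the absence of nontrivial (closed, resp.\ weak) idempotents inside the Hecke subcategory $e\cM$.

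\begin{proof}[Proof plan]
The plan is to unwind the definition of minimality and translate the condition ``$e\tens e'=0$ or $e\tens e'\cong e$ for every idempotent $e'$'' into a statement purely about idempotents living in the Hecke subcategory $e\cM$. The key elementary fact I would establish first is that if $e$ is a weak idempotent and $e'$ is any weak idempotent in $\cM$, then $e\tens e'$ lies in the Hecke subcategory $e\cM$ and is itself a weak idempotent there: indeed $(e\tens e')\tens(e\tens e')\cong e\tens e\tens e'\tens e'\cong e\tens e'$, using weak symmetry to commute the middle factors and the weak-idempotent property of $e$ and $e'$. Similarly, in the closed case, if $\e\rar{\pi}e$ and $\e\rar{\pi'}e'$ are idempotent arrows, the arrow $e\cong\e\tens e\rar{\pi'\tens\id_e}e'\tens e\cong e\tens e'$ (via weak symmetry) should be checked to be an idempotent arrow in the monoidal category $e\cM$ with unit $e$ (Proposition~\ref{p:Hecke}); concretely one verifies that convolving it with $e\tens e'$ on either side is an isomorphism, which reduces to the idempotent-arrow property of $\pi$ and $\pi'$ together with Lemma~\ref{l:characterize-M-pi}. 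So $e\tens e'$ is a closed idempotent of $e\cM$.

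Granting these observations, the ``only if'' direction is immediate: if $e$ is minimal and $e'$ is any (closed, resp.\ weak) idempotent of $e\cM$, then in particular $e'$ is a (closed, resp.\ weak) idempotent of $\cM$ with $e'\in e\cM$, so $e\tens e'\cong e'$ (since $e$ acts as a unit on $e\cM$ in the closed case, and $e'\cong e\tens e'$ by definition of $e\cM$ in the weak case); minimality of $e$ forces $e\tens e'=0$ or $e\tens e'\cong e$, i.e.\ $e'\cong 0$ or $e'\cong e$. For the ``if'' direction: suppose $e\neq 0$ and the only (closed, resp.\ weak) idempotents in $e\cM$ are $0$ and $e$. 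Let $e'$ be an arbitrary (closed, resp.\ weak) idempotent in $\cM$. By the observations above, $e\tens e'$ is a (closed, resp.\ weak) idempotent of $e\cM$, hence $e\tens e'\cong 0$ or $e\tens e'\cong e$ — which is exactly the minimality condition for $e$. (For the ``only if'' of the weak case one should be slightly careful: a priori an idempotent $e'$ of $e\cM$ need not satisfy $e\tens e'\cong e'$ on the nose, but membership in $e\cM$ is defined precisely by $e\tens e'\cong e'$, so this is built in.)

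The only genuinely nontrivial point — the main obstacle, such as it is — is the verification in the closed case that the constructed arrow $e\to e\tens e'$ really is an \emph{idempotent arrow} relative to the monoidal structure $(e\cM,\tens,e,\la^\pi,\rho^\pi)$ of Proposition~\ref{p:Hecke}, as opposed to merely a map to a weak idempotent. I would handle this by writing down the two maps $e\tens(e\tens e')\to(e\tens e')\tens(e\tens e')$ and $(e\tens e')\tens e\to(e\tens e')\tens(e\tens e')$, rewriting everything inside $\cM$ using the explicit description of $\la^\pi,\rho^\pi$, and reducing each to the isomorphisms $\pi\tens\id_e$, $\id_e\tens\pi$, $\pi'\tens\id_{e'}$, $\id_{e'}\tens\pi'$ by naturality and weak symmetry; Lemma~\ref{l:characterize-M-pi} and Corollary~\ref{c:unique-endo} are the tools that keep this bookkeeping honest. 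Everything else is formal manipulation with the definitions, and the weak case requires nothing beyond the computation $(e\tens e')^{\tens 2}\cong e\tens e'$ already noted.
\end{proof}
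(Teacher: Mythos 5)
Your proof is correct. The paper itself offers no argument for this lemma (it is flagged as a ``trivial observation''), so there is no proof to compare against; your write-up is the natural unwinding of the definitions. Two small things are worth noting. First, in the ``only if'' direction you assert that a closed idempotent $e'$ of $e\cM$ is automatically a closed idempotent of $\cM$; this is true but deserves one line: composing the idempotent arrow $\tau\colon e\to e'$ in $e\cM$ with $\pi\colon\e\to e$ gives $\tau\circ\pi\colon\e\to e'$, and $(\tau\circ\pi)\tens\id_{e'}=(\tau\tens\id_{e'})\circ(\pi\tens\id_{e'})$ is a composition of isomorphisms (the first by the idempotent-arrow property of $\tau$ in $e\cM$, the second by Lemma~\ref{l:characterize-M-pi} since $e'\in e\cM$), and similarly on the other side. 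Second, the ``main obstacle'' you single out --- verifying that $e\to e\tens e'$ is an idempotent arrow in $(e\cM,\tens,e,\la^\pi,\rho^\pi)$ --- is genuinely the only nontrivial bookkeeping, and your reduction (conjugate by weak-symmetry isomorphisms to isolate $\pi'\tens\id_{e'}$ and $\id_{e'}\tens\pi'$) is the right way to carry it out; the unit constraints $\la^\pi,\rho^\pi$ never actually enter because the idempotent-arrow condition only asks the two morphisms $\tau\tens\id_f$ and $\id_f\tens\tau$ to be isomorphisms, which is a constraint-independent statement.
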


\begin{proof}
The ``only if" statements are clear. Let us prove the ``if"
statements. Let $\cM$ be a weakly symmetric monoidal category. If
$\pi:\e\to e$ and $\pi':\e\to e'$ are idempotent arrows in $\cM$,
the fact that $\cM$ is weakly symmetric implies that
$\id_e\tens\pi:e\rar{}e\tens e'$ is an idempotent arrow in $e\cM$.
This proves the first assertion. For the second assertion, observe
that if $e$ and $e'$ are weak idempotents in a weakly symmetric
semigroupal category $\cM$, then $e\tens e'$ is also a weak
idempotent.
\end{proof}

\begin{defin}\label{d:jacobson-monoidal-categories}
A weakly symmetric monoidal category $\cM$ with a zero object is
\emph{Jacobson} if for every nonzero $N\in\cM$ there exists a closed
idempotent $e\in\cM$ such that $e\tens N\neq 0$ and $e$ is minimal
among weak idempotents in $\cM$.
\end{defin}

\begin{example}\label{example2}
In the situation of Example \ref{example1}, $\cM$ is Jacobson if and
only if $X$ is a Jacobson scheme in the sense of
\cite[\S10]{ega4-3}.
\end{example}

\begin{prop}\label{p:jacobson1}
Let $\cM$ be a Jacobson monoidal category.
 \sbr
\begin{enumerate}[$($a$)$]
\item Every minimal closed idempotent in $\cM$ is minimal as a weak
idempotent.
 \sbr
\item Every minimal weak idempotent in $\cM$ is closed.
 \sbr
\item If $e\in\cM$ is a weak idempotent such that the semigroupal
category $e\cM$ is monoidal, then $e\cM$ is Jacobson. In particular,
if $e$ is a closed idempotent in $\cM$, then $e\cM$ is Jacobson.
\end{enumerate}
\end{prop}
\begin{proof}
We first prove (a) and (b) simultaneously. Let $f\in\cM$ be
\emph{either} a minimal closed idempotent \emph{or} a minimal weak
idempotent, and let $e\in\cM$ be a closed idempotent that is minimal
as a weak idempotent and satisfies $e\tens f\neq 0$. In either case,
using the minimality of $e$ and $f$, we find $e\cong e\tens f\cong f$.

%

\mbr

Let us prove (c).
For any nonzero object $N\in e\cM$, we can find
a closed idempotent $f\in\cM$ that is minimal as a weak idempotent and
satisfies $f\tens N\neq 0$. Since 
$N\in e\cM$ and $f\tens N\ne 0$ we have 
$f\tens e\neq 0$, so the minimality of $f$ implies that $f\tens e\simeq f$,
i.e., $f\in e\cM$. Clearly $f$ is  a minimal weak idempotent in $e\cM$.
Moreover, $f$ is a closed idempotent in $e\cM$
(if $\pi:\e\rar{}f$ is an idempotent arrow in
$\cM$ and $e\cM$ is monoidal, then $\id_e\tens\pi:e\rar{}e\tens f$
is an idempotent arrow in $e\cM$). 
\end{proof}

\begin{prop}\label{p:jacobson2}
Let $\cM$ be a weakly symmetric monoidal category with a zero
object. Suppose that for every nonzero $N\in\cM$ there exists a
closed idempotent $\widetilde{e}\in\cM$ such that
$\widetilde{e}\tens N\neq 0$ and $\widetilde{e}\cM$ is Jacobson.
Then $\cM$ is Jacobson.
\end{prop}

\begin{proof}
Let $N\in\cM$ be nonzero, and choose a closed idempotent
$\widetilde{e}\in\cM$ such that $\widetilde{e}\tens N\neq 0$ and
$\widetilde{e}\cM$ is Jacobson. Then $\widetilde{e}\tens N$ is a
nonzero object of $\widetilde{e}\cM$, so there exists a closed
idempotent $e\in\widetilde{e}\cM$ such that
$e\tens\widetilde{e}\tens N\neq 0$ and $e$ is minimal as a weak
idempotent in $\widetilde{e}\cM$. Clearly, $e\tens N\neq 0$. By
Lemma \ref{l:transitivity}, $e$ is a closed idempotent in $\cM$.
Moreover, $e\cM=e(\widetilde{e}\cM)$, so by Lemma
\ref{l:characterize-minimal-idempotents}, $e$ is minimal as a weak
idempotent in $\cM$. This proves that $\cM$ is Jacobson.
\end{proof}

\subsection{Modules over an idempotent algebra}
Statement (ii) of the following lemma allows to interpret the adjunction from
Proposition~\ref{p:interplay-weak-closed-idempotents} as a ``free-forget" adjunction.

\begin{lem}   \label{l:idemp2-alg-closed-idemp}
Let $(e,\mu_e)$ be a unital idempotent algebra in a monoidal
category $\cM$.
\begin{enumerate}[$($i$)$]
\item A left $e$-module $($resp. right $e$-module, $e$-bimodule$)$
$X$ is unital if and only if the action morphism $e\tens X\rar{} X$
$($resp. $X\tens e\rar{} X$, $e\tens X\tens e\rar{} X${}$)$ is an
isomorphism.
 \sbr
\item Let $e\!-\!\operatorname{mod}_{\cM}$ denote the category of
unital left $e$-modules. Then the forgetful functor
$F:e\!-\!\operatorname{mod}_{\cM}\rar{}\cM$ is fully faithful and
its essential image equals $e\cM$. Similarly, the category of unital
right $e$-modules $($resp. $e$-bimodules$)$ identifies with $\cM e$
$($resp. $e\cM e${}$)$.
\end{enumerate}
\end{lem}

\begin{proof}
We will consider only left modules.

\mbr

(i) The action morphism $\al :e\tens X\to X$ is a $e$-module
morphism, so if $\al$ is an isomorphism then the $e$-module $X$ is
isomorphic to the free $e$-module $e\tens X$, which is clearly
unital. Now suppose that $X$ is unital, i.e., the composition
\begin{equation}    \label{unitality}
X\rar{\simeq}\e\tens X \xrar{\ \ \pi\tens\id_X\ \ } e\tens X  \xrar{\ \ \al\ \ } X
\end{equation}
equals $\id_X$. Then $X$ is a retract of $e\tens X\in e\cM$, so $X\in e\cM$ by
Corollary~\ref{c:hecke-closed-under-retracts}.  Therefore $\pi\tens\id_X$ is an isomorphism by
Lemma~\ref{onemoredescription}. But the composition \eqref{unitality} equals $\id_X$,
so $\alpha$ is an isomorphism.

\mbr

(ii) By \cite[p.~174]{m}, $F$ has a left adjoint, namely, the ``free
module" functor $\Phi :\cM\to e\!-\!\operatorname{mod}_{\cM} , \quad
\Phi (X):=e\tens X$. By (i), the adjunction $\Phi F\rar{} \id_{\cM}$
is an isomorphism, so $F$ is fully faithful. It remains to show that
the adjunction $X\rar{} F(\Phi (X))$,  $X\in\cM$, is an isomorphism
if and only if $X\in e\cM$. This is a reformulation of
Lemma~\ref{onemoredescription}.
\end{proof}

Lemma~\ref{l:idemp2-alg-closed-idemp} (i) allows to give the following definition.

\begin{defin}
Let $e$ be a not necessarily unital idempotent algebra. A left
$e$-module (resp. right $e$-module, $e$-bimodule) $X$ is said to be
{\em unital\,} if the action morphism $e\tens X\rar{} X$ (resp.
$X\tens e\rar{} X$, $e\tens X\tens e\rar{} X$) is an isomorphism.
\end{defin}

\begin{rem}    \label{r:remedy}
If $e$ is not unital then the category of unital left $e$-modules
(resp. unital right $e$-modules, unital $e$-bimodules) seems to be
more reasonable than the category $e\cM$ (resp. $\cM e$ and $e\cM
e$). For instance, the category of unital $e$-bimodules is
automatically monoidal while $e\cM e$ may fail to be monoidal (see
Remark~\ref{r:caution}).
\end{rem}

\subsection{Idempotent unital algebras and closed idempotents}\label{ss:idempotent-algebras}
\begin{lem}\label{l:unit}
Let $E$ be a unital object in a semigroupal category $\cM$. Choose
an isomorphism $u:E\tens E\rar{\simeq} E$. Then
\begin{enumerate}[$($a$)$]
\item $u$ defines a structure of associative algebra on $E$;
 \sbr
\item for every pair of morphisms $f,g:E\rar{}E$, we have
$u\circ(f\tens g)=(f\circ g)\circ u$.
\end{enumerate}
\end{lem}

\begin{proof}
The pair $(E,u)$ is a unit object in $\cM$, see
Definition~\ref{defs:unital} (3). It remains to apply
Lemma~\ref{l:mon-cat-unit-constraints}.
\end{proof}

The next result is converse to Remark~\ref{rems:idempotents} (vi).

\begin{prop}\label{p:closed-idemp-idemp-alg}
If $\pi:\e\rar{}e$ is an idempotent arrow in a monoidal category
$\cM$, there exists a unique associative morphism $\mu:e\tens
e\rar{}e$ making $e$ an idempotent algebra with unit $\pi$.
\end{prop}

\begin{proof}
The condition that $\pi:\e\rar{}e$ is a left unit means that the
composition $\e\tens e\xrar{\ \pi\tens\id_e\ }e\tens e\rar{\mu}e$
equals $\la_e :\e\tens e\rar{\simeq} e$. Since $\pi\tens\id_e
:\e\tens e\xrar{}e$ is an isomorphism there is a unique $\mu :e\tens
e\rar{} e$ with this property, and this $\mu$ is an isomorphism. By
Lemma~\ref{l:Hecke-monoidal}, $e$ is a unital object of $e\cM e$, so
by Lemma~\ref{l:unit}, $\mu :e\tens e\rar{\simeq} e$ is
automatically associative. By construction, $\pi:\e\rar{}e$ is a
left unit for $\mu$. By Lemma~\ref{l:areequal}, $\pi$ is also a
right unit for $\mu$.
\end{proof}

\begin{cor}  \label{c:closed-idemp-arrows}
Let $e\in\cM$ be a closed idempotent. Then
\begin{enumerate}[$($a$)$]
\item every isomorphism $\mu:e\tens e\iso e$ makes $e$ into an idempotent algebra with unit;
 \sbr
\item associating to $\mu$ the corresponding unit $\pi:\e\rar{}e$ one gets a one-to-one correspondence
between isomorphisms $e\tens e\iso e$ and idempotent arrows $\e\rar{}e$.
\end{enumerate}
\end{cor}

\begin{proof}
By Proposition~\ref{p:closed-idemp-idemp-alg}, there exists an isomorphism $\mu:e\tens e\iso e$
making $e$ into an idempotent algebra with unit. By Lemma~\ref{l:Hecke-monoidal}, $e$ is a
unital object of $e\cM e$. So if $\mu':e\tens e\iso e$ is {\em any\,} isomorphism then by
Lemma~\ref{l:unit-uniqueness}, $(e, \mu')\simeq (e, \mu)$ and therefore
$(e, \mu')$ is also an idempotent algebra with unit. Thus we have proved (a). Statement (b) follows
from (a) by Remark~\ref{rems:idempotents}(vi) and Proposition~\ref{p:closed-idemp-idemp-alg}.
\end{proof}

\subsection{A partial order on closed idempotents  }\label{ss:idempotents-partial-order}
In \S\ref{sss:general} we define a partial order on the set of
isomorphism classes of closed idempotents in any monoidal category
$\cM$. In \S\ref{sss:weakly symmetric} we show that if $\cM$ is
weakly symmetric then any two elements of this partially ordered set
have an infimum (which equals their product).

\subsubsection{General case}  \label{sss:general}
If $\pi:\e\rar{}e$ and $\pi':\e\rar{}e'$ are arrows in a monoidal
category $\cM$, we write $\Hom(\pi,\pi')$ for the set of morphisms
$f:e\rar{}e'$ such that $f\circ\pi=\pi'$.

\begin{lem}\label{l:at-most-one-morphism}
Let $\pi$ and $\pi'$ be idempotent arrows in a monoidal category
$\cM$.
\begin{enumerate}[$($a$)$]
\item The set $\Hom(\pi,\pi')$ has at most one element.
 \sbr
\item If both $\Hom(\pi,\pi')$ and $\Hom(\pi',\pi)$ are nonempty,
then the unique $f\in\Hom(\pi,\pi')$ and $g\in\Hom(\pi',\pi)$ are
isomorphisms, inverse to each other.
\end{enumerate}
\end{lem}

\begin{proof}
We apply the method used to prove Lemma \ref{l:areequal}. The
functor $F:\cM\rar{}\End(\cM)$ defined by the left action of $\cM$
on itself enjoys the following properties:
\begin{enumerate}[(i)]
\item $F$ takes idempotent arrows to idempotent arrows;
 \sbr
\item $F$ is faithful;
 \sbr
\item $F$ reflects isomorphisms: if $f$ is a morphism in $\cM$
and $F(f)$ is an isomorphism in $\End(\cM)$, then $f$ is also an
isomorphism.
\end{enumerate}
In view of these facts, it suffices to prove the lemma when
$\cM=\End(\cC)$ for some category $\cC$. In this case the
lemma follows from Proposition~\ref{p:idempotent-monads}.
\end{proof}

\begin{lem}   \label{l:Hom(e,X)}
Let $\pi :\e\rar{}e$ be an idempotent arrow in a monoidal category
$\cM$. If $X\in e\cM$ then the map $\Hom(e,X) \rar{}\Hom (\e ,X)$ defined by
$f\mapsto f\circ\pi$ is a bijection.
\end{lem}

\begin{proof}
Use the implication (i)$\Rightarrow$(ii) from
Proposition~\ref{p:idempotent-monads} (b) in the following
situation: $\cC=\cM$,  $P(Y):=e\tens Y$, and $\eps:\Id_{\cM}\rar{}P$
is the idempotent arrow corresponding to $\pi :\e\rar{}e$.
\end{proof}

\begin{cor}\label{c:uniqueness-idempotent-arrows}
If $\pi,\pi':\e\rar{}e$ are two idempotent arrows with the same
codomain in a monoidal category $\cM$, there exists a unique
automorphism $f:e\rar{\simeq}e$ such that $f\circ\pi=\pi'$. \hfill\qedsymbol
\end{cor}


\begin{rem}\label{r:isom-classes-closed-idempotents}
The corollary implies that the map $(\e\rar{\pi}e)\mapsto e$ induces
a bijection between the set of isomorphism classes of idempotent
arrows in $\cM$ and the set of isomorphism classes of closed
idempotents in $\cM$.
\end{rem}

\begin{defin}\label{d:partial-order}
Given idempotent arrows $\pi$ and $\pi'$ in a monoidal category
$\cM$, we write $\pi\geq\pi'$ if $\Hom(\pi,\pi')\neq\varnothing$. By
Lemma \ref{l:at-most-one-morphism}, this preorder induces a partial
order on the set of isomorphism classes of idempotent arrows in
$\cM$, or, equivalently (by Remark
\ref{r:isom-classes-closed-idempotents}), on the set of isomorphism
classes of closed idempotents in $\cM$.
\end{defin}

Here are some equivalent descriptions  of the partial order in
Definition~\ref{d:partial-order}.

\begin{prop}    \label{p:orderdescriptions}
Let $e$ and $e'$ be closed idempotents in a monoidal category $\cM$.
The following conditions are equivalent:
\begin{enumerate}[$($i$)$]
\item $e\geq e'$ in the sense of Definition~\ref{d:partial-order};
 \sbr
\item $e'\in e\cM e$;
 \sbr
\item $e'\in e\cM$;
 \sbr
\item $e'\in \cM e$.
\end{enumerate}
\end{prop}

\begin{proof}
By symmetry, it suffices to check that (i)$\iff$(iii).
Lemma~\ref{l:Hom(e,X)} shows that (iii)$\Rightarrow$(i). Let us
prove that (i)$\Rightarrow$(iii). Choose idempotent arrows
$\pi:\e\rar{} e$ and $\pi':\e\rar{} e'$. Suppose there is a morphism
$f:e\rar{}e'$ with $f\circ\pi=\pi'$. Since
$\pi'\tens\id_{e'}:\e\tens e'\rar{} e'\tens e'$ is an isomorphism
and $\pi'=f\circ\pi$, we see that $e'$ is a retract of $e\tens e'$.
Since $e\tens e'\in e\cM$ we get $e'\in e\cM$ by
Corollary~\ref{c:hecke-closed-under-retracts}.
%
\end{proof}

\subsubsection{Weakly symmetric case}   \label{sss:weakly symmetric}
\begin{lem}   \label{l:productclosed}
Let $\cM$ be a weakly symmetric monoidal category.
\begin{enumerate}[$($i$)$]
\item If $\pi :\e\rar{} e$ and $\pi' :\e\rar{} e'$  are idempotent arrows in $\cM$, then so is  $\pi\tens\pi' :\e\rar{} e\tens e'$.
 \sbr
\item If $e,e'\in\cM$ are closed idempotents, so is  $e\tens e'$.
\end{enumerate}
\end{lem}

\begin{proof}
It suffices to check (i). Since $\id_e\tens\pi :e\tens\e\to e\tens e$ and
$\id_{e'}\tens\pi' :e'\tens\e\to e'\tens e'$
are isomorphisms so is
$\id_e\tens\pi\tens\id_{e'}\tens\pi'  :e\tens\e\tens e'\tens \e\to e\tens e\tens e'\tens e'$.
So by weak symmetry, $\id_{e\tens e'}\tens\pi\tens\pi' :e\tens e'\tens\e\tens\e\to e\tens e'\tens e\tens e'$
is an isomorphism, i.e., $\pi\tens\pi'$ is an idempotent arrow.
\end{proof}

\begin{rems}   \label{r:classical}
\begin{enumerate}[(i)]
\item The set of idempotents in a commutative monoid $A$ can be equipped with the following partial order:
\[
e_1\le e_2 \quad \mbox{if} \quad e_1e_2=e_1.
\]
Any two idempotents $e_1, e_2\in A$ have an infimum; namely, $\inf (e_1, e_2)=e_1e_2$.
The unit of $A$ is the biggest idempotent.
 \sbr
\item Now let $\cM$ be a weakly symmetric monoidal category and $W(\cM )$ (resp. $C(\cM )$)
the set of isomorphism classes of weak (resp. closed) idempotents in
$\cM$. Applying the previous remark to the monoid of isomorphism
classes of objects of $\cM$ one gets a partial order on $W(\cM )$.
The induced partial order on $C(\cM )\subset W(\cM )$ equals that
from Definition~\ref{d:partial-order} (to see this, use the
equivalence (i)$\iff$(iv) from
Proposition~\ref{p:orderdescriptions}).
 \sbr
\item If $\cM$ has a zero object then minimal weak (resp. closed) idempotents in $\cM$ in the sense
of Definition \ref{d:minimal-idempotents} are the same as minimal
elements in the set of (isomorphism classes of) nonzero weak (resp. closed)
idempotents in $\cM$ (in the closed case this follows from Lemma~\ref{l:productclosed}).
\end{enumerate}
\end{rems}


\begin{cor}   \label{c:infimum}
Let $\cM$ be a weakly symmetric monoidal category and $C(\cM )$ the
set of isomorphism classes of closed idempotents in $\cM$ equipped
with the partial order from Definition~\ref{d:partial-order}. Then
any two elements of $C(\cM )$ have an infimum $($which equals their
product$)$.
\end{cor}

\begin{proof}
Use Remarks~\ref{r:classical} (i-ii) and Lemma~\ref{l:productclosed}.
\end{proof}

\subsection{Retracts and direct sums of closed idempotents}\label{ss:retracts-idempotents}

\begin{lem} \label{l:retract-idempotents}
Let $e\rar{i}\e\rar{p}e$ be morphisms in a monoidal category $\cM$ such that $pi=\id_e$.
Then $p:\e\rar{}e$ is an idempotent arrow.
\end{lem}

\begin{proof}
We have to show that $p\tens\id_e:\e\tens e\to e\tens e$ and
$\id_e\tens p:   e\tens\e\to e\tens e$ are isomorphisms. Let us show this for $p\tens\id_e$.
Since $pi=\id_e$ it suffices to show that $ip\tens\id_e:\e\tens e\rar{} \e\tens e$ equals
$\id_{\e\tens e}$. Representing $\e\tens e$ as a retract of $\e\tens\e$ we see that this is
equivalent to showing that $ip\tens ip:\e\tens\e\to\e\tens\e$ equals $\id_{\e}\tens ip$.
For any $f\in\End(\e )$ one has $f\tens\id_{\e}=\id_{\e}\tens f$, so
$ip\tens ip=(ip\tens\id_{\e})\cdot (\id_{\e}\tens ip)=\id_{\e}\tens ipip=\id_{\e}\tens ip$.
\end{proof}

%

\begin{cor}\label{c:retracts-closed-idempotents}
If $e$ is a closed idempotent in a monoidal category $\cM$ and
$e'\in\cM$ is a retract of $e$, then $e'$ is a closed idempotent in
$\cM$, and $e'\leq e$ with respect to the partial order introduced
in Definition \ref{d:partial-order}.
\end{cor}

\begin{proof}
By Corollary~\ref{c:hecke-closed-under-retracts}, $e'\in e\cM e$.
By Lemma~\ref{l:Hecke-monoidal}, $e\cM e$ is a monoidal category with unital object $e$.
Applying Lemma~\ref{l:retract-idempotents} to $e\cM e$,  we get an idempotent arrow $e\to e'$
in $e\cM e$. So by Lemma~\ref{l:transitivity}, $e'$ is a closed idempotent in $\cM$.
Clearly $e'\leq e$.
\end{proof}


Corollary \ref{c:retracts-closed-idempotents} immediately implies the following statement.

\begin{cor}\label{c:minimal-closed-idempotent-indecomposable}
In an additive monoidal category every minimal closed idempotent $e$ is indecomposable.
\end{cor}

Here ``minimal'' means ``minimal among all nonzero closed
idempotents in $\cM$ with respect to the partial order introduced in
Definition~\ref{d:partial-order}". We note that by
Remark~\ref{r:classical} (iii), if $\cM$ is weakly symmetric, this
is the same as minimality in the sense of
Definition~\ref{d:minimal-idempotents}.

\begin{prop}  \label{p:dir_sum_decomps}
Let $\cM$ be an additive monoidal category. Let $e_1,\ldots , e_n\in\cM$ and
$e=e_1\oplus\ldots\oplus e_n$. Then the following conditions are equivalent:
\begin{enumerate}[$($i$)$]
\item $e$ is a closed idempotent;
 \sbr
\item each $e_i$ is a closed idempotent and $e_i\tens e_j=0$ for $i\ne j$.
\end{enumerate}
If these conditions hold, then
\begin{equation}  \label{e:dir_sum_decomps}
e\cM =\bigoplus_i e_i\cM , \quad \cM e=\bigoplus_i \cM e_i, \quad e\cM e=\bigoplus_{i,j} e_i \cM e_j
\end{equation}
\end{prop}

\begin{proof}
It is clear that (ii)$\Rightarrow$(i). Let us show that
(i)$\Rightarrow$(ii). By
Corollary~\ref{c:retracts-closed-idempotents}, each $e_i$ is a
closed idempotent in $\cM$. Moreover, the proof of
Corollary~\ref{c:retracts-closed-idempotents} shows that the
projection $p_i:e\to e_i$ is an idempotent arrow in the monoidal
category $e\cM e$. So the morphism $\id_{e_i}\tens p_i: e_i\tens
e\rar{} e_i\tens e_i$ is an isomorphism, which implies that
$e_i\tens e_j=0$ for $i\ne j$.

\mbr

Let us prove the first decomposition in \eqref{e:dir_sum_decomps}
(the other two are proved similarly). It is clear that
$e_i\cM=ee_i\cM\subset e\cM$ and that each object of $e\cM$ is a
direct sum of objects of $e_1\cM,\ldots , e_n\cM$. It remains to
show that if $X\in e_i\cM$, $Y\in e_j\cM$, and $i\ne j$ then $\Hom
(Y,X)=0$. By
Proposition~\ref{p:interplay-weak-closed-idempotents}(a), $\Hom
(Y,X)=\Hom (e_i\tens Y,X)$ and $e_i\tens Y=0$ because $e_i\tens
e_j=0$.
\end{proof}



\section{Serre duality and Fourier-Deligne transform}\label{s:serre-FD}

\subsection{Definition of the Serre
dual}\label{ss:serre-dual-definition} We keep the assumptions of
\S\ref{ss:basic-definitions} (although, in fact, it would suffice to
require the field $k$ to be perfect throughout this section). In
particular, $\operatorname{char}k=p>0$, and $\ell$ is a fixed prime
different from $p$.

\mbr

The notion of a multiplicative $\ql$-local system on a connected
perfect quasi-algebraic group over $k$ was introduced in Definition
\ref{d:multiplicative}. In order to formulate the definition of the
Serre dual of a connected perfect unipotent group, we need a
relative version:

\begin{defin}\label{d:family-mls}
Let $H$ be a connected perfect quasi-algebraic group over $k$, and
let $S$ be a perfect quasi-algebraic scheme over $k$. A \emph{family
of multiplicative $\ql$-local systems on $H$ parameterized by $S$}
(or an \emph{$S$-family of multiplicative local systems on $H$}) is
a $\ql$-local system $\sL$ on $H\times S$ of rank $1$ equipped with
an isomorphism
\begin{equation}  \label{e:monoidal_structure}
\vp:\mu_{12}^*(\sL)\iso p_{13}^*(\sL)\tens p_{23}^*(\sL),
\end{equation}
where
\[\mu_{12}=\mu\times\id_S :H\times H\times S \rar{} H\times S \]
(with $\mu$ being the multiplication morphism for $H$) and
\[
p_{13},p_{23}:H\times H\times S \rar{} H\times S
\]
are the projections along the second and first factors,
respectively. The isomorphism $\vp$ is called a \emph{multiplicative
structure} on the local system $\sL$.
\end{defin}

\begin{rems}\label{r:family-mult-loc-sys}
\begin{enumerate}[(i)]
\item Since $H$ is connected, an $S$-family of
multiplicative local systems on $H$ has no nontrivial automorphisms.
 \mbr
\item For $s\in S(k)$ and $h_1,h_2,h_3\in H(k)$ one has the diagram
\[
\xymatrix{
 \sL_{h_1h_2h_3,s} \ar[d]_\simeq \ar[rr]^{\simeq\ \ \ \ \ } & & \sL_{h_1h_2,s}\tens\sL_{h_3,s} \ar[d]^\simeq \\
 \sL_{h_1,s}\tens\sL_{h_2h_3,s} \ar[rr]^{\simeq\ \ \ \ \ } & & \sL_{h_1,s}\tens\sL_{h_2,s}\tens\sL_{h_3,s}
   }
\]
in which all arrows come from \eqref{e:monoidal_structure}. It
clearly commutes if $h_1=h_2=h_3=1$. Since $H$ is connected this
implies commutativity for all $h_1,h_2,h_3\in H(k)$.
 \mbr
\item Suppose that $\sL$ is a rank 1 $\ql$-local system on $H\times S$ such that
\[\mu_{12}^*(\sL)\cong p_{13}^*(\sL)\tens p_{23}^*(\sL).\] Then the
pullback of $\sL$ to $\{ 1\}\times S\subset H\times S$ is trivial.
Moreover, since $H$ is connected, specifying an isomorphism
\eqref{e:monoidal_structure} is equivalent to specifying a
trivialization of this pullback. In particular, $\sL$ has a
multiplicative structure, and any two multiplicative structures on
$\sL$ are canonically isomorphic.
\end{enumerate}
\end{rems}
In view of the last remark, we will always denote a family of
multiplicative local systems by a single letter such as $\sL$ or
$\sE$.

%
%
%
%
%

\begin{prop}\label{p:serre-dual}
Let $H$ be a $($possibly noncommutative$)$ connected perfect
unipotent group over $k$. There exists a $($possibly disconnected$)$
perfect commutative unipotent group $H^*$ over $k$ and an
$H^*$-family $\sE$ of multiplicative $\ql$-local systems on $H$ with
the following universal property.

\mbr

If $S$ is a perfect quasi-algebraic scheme over $k$, the map
$f\longmapsto (\id_H\times f)^*\sE$ is an isomorphism between the
group of $k$-morphisms $f:S\rar{}H^*$ and the group of isomorphism
classes of $S$-families of multiplicative $\ql$-local systems on
$H$.
\end{prop}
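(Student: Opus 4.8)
The statement asserts the representability of the functor
\[
S \longmapsto \{\text{isomorphism classes of $S$-families of multiplicative $\ql$-local systems on $H$}\}
\]
on the category of perfect quasi-algebraic schemes over $k$, and I would prove it by the standard strategy of reducing to the commutative case via a d\'evissage on the lower central series of $H$, then handling the commutative case by an explicit construction. More precisely, I would first treat the case where $H$ is commutative, where the Serre dual is built from the classical construction of Serre \cite{serre} (and, in the perfect-scheme language, from \cite{saibi} and the Serre-duality appendix of \cite{intro}): for $H$ a connected commutative perfect unipotent group one has $H^* = \operatorname{Ext}^1(H,\qzp)$ suitably interpreted, with the universal family $\sE$ coming from the universal extension, and multiplicativity of local systems on $H$ corresponds under the Lang–Kummer/Artin–Schreier–Witt dictionary to homomorphisms into $\qzp$, i.e. to characters. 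The universal property in this case is exactly the representability already recorded in \loccit, so I would simply cite it rather than reprove it.

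For the general (possibly noncommutative) connected $H$, the key observation is that a multiplicative local system $\cL$ on $H$ satisfies $\mu^*\cL \cong \cL\boxtimes\cL$, and pulling this back along the commutator map shows that $\cL$ is trivial on the derived subgroup $[H,H]$; hence any multiplicative local system — and, with the evident relative argument, any $S$-family of them — descends to the abelianization $H^{\mathrm{ab}} = H/[H,H]$, which is a connected commutative perfect unipotent group. Conversely every multiplicative local system on $H^{\mathrm{ab}}$ pulls back to one on $H$, and the same holds in families. This gives a natural isomorphism of functors between ``$S$-families of multiplicative local systems on $H$'' and ``$S$-families of multiplicative local systems on $H^{\mathrm{ab}}$'', so I can set $H^* := (H^{\mathrm{ab}})^*$ and take $\sE$ to be the pullback to $H\times_k H^*$ of the universal family on $H^{\mathrm{ab}}\times_k H^*$. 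The universal property for $H$ then follows immediately from that for $H^{\mathrm{ab}}$.

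The step that genuinely requires care — and which I expect to be the main obstacle — is the triviality on $[H,H]$ together with its relative version, i.e. showing that descent along $H\times_k S \to H^{\mathrm{ab}}\times_k S$ works cleanly. On the level of a single local system, one argues as follows: for $h_0\in H(k)$, the translate $\cL_{h_0} := \cL\otimes (\text{stalk of }\cL\text{ at }h_0)^{-1}$ is multiplicative, and the cocycle condition forces $\cL$ to be invariant under translation by commutators; chasing the biadditivity of the associated pairing shows $\cL|_{[H,H]}$ is the constant sheaf, and then the multiplicative structure provides a canonical equivariant descent datum along the fibration $H \to H^{\mathrm{ab}}$ with connected unipotent fibers. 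In families one replaces ``translation by $h_0\in H(k)$'' by the action on $H\times_k S$ coming from $\mu_{12}$, and the vanishing of $R^i$ of the projection for the relevant local systems (using that the fibers of $H\to H^{\mathrm{ab}}$ are connected unipotent, so have trivial $\ql$-cohomology in the relevant range for a rank-one multiplicative system) yields the descent. One must also check $H^*$ is again a perfect commutative unipotent group, but this is inherited verbatim from the commutative case. Finally, I would note that $H^*$ is defined by a universal property and therefore functorial in $H$ for morphisms of connected perfect unipotent groups, which is what is needed later (e.g. for the conjugation action of $N_G(H)$ on $H^*$ in Definition~\ref{d:normalizer}).
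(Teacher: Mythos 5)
Your argument reduces to the commutative case by setting $H^*=(H^{\mathrm{ab}})^*$, which would follow if every $S$-family of multiplicative local systems on $H$ descended along $H\to H^{\mathrm{ab}}$. But this is false, and the paper says so explicitly: in \S\ref{ss:serre-duality-properties}, property (2), the natural monomorphism $(H^{\mathrm{ab}})^*\into H^*$ identifies $(H^{\mathrm{ab}})^*$ only with the \emph{neutral connected component} of $H^*$ (cf.\ \cite[Prop.~A.30]{characters}), and the proposition itself stresses that $H^*$ is ``possibly disconnected.'' Thus for noncommutative connected $H$ there are, in general, multiplicative local systems that are nontrivial on $[H,H]$, and the object you construct would fail to represent the functor — it would miss precisely the non-neutral components of $H^*$.

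The concrete gap is the inference ``$c^*\cL$ trivial $\Rightarrow \cL|_{[H,H]}$ trivial,'' where $c\colon H\times H\to[H,H]$ is the commutator morphism. The hypothesis is indeed what the multiplicativity isomorphism yields, but the conclusion does not follow: $c$ is a dominant polynomial morphism that is not smooth at the identity and does not induce a surjection on \'etale fundamental groups, so a nontrivial rank-one local system on $[H,H]$ can perfectly well pull back to a trivial one along $c$. (For a finite group $\Ga$ the analogous step works because $[\Ga,\Ga]$ is generated by actual commutators and a homomorphism trivial on a generating set is trivial; over a connected unipotent group in characteristic $p$ the wild part of $\pi_1^{\text{\'et}}$ is not controlled by $k$-points, and the analogy breaks.) For what it is worth, the paper does not reprove the proposition either — it cites \cite{begueri} for the commutative case and the Serre-duality appendix of \cite{characters} in general — but whatever the correct argument, it cannot proceed by factoring through $H^{\mathrm{ab}}$ as you propose.
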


In the case where $H$ is commutative, the idea of this construction
goes back to Serre's article \cite{serre}, and the result itself is
proved in \cite{begueri}. For the proof in general, we refer the
reader to \S{}A.12 of the appendix in \cite{characters}.

\begin{rem}\label{r:uniqueness-morphism-to-dual}
If $\sL$ is a rank $1$ $\ql$-local system on $H\times S$ such that
$\mu_{12}^*(\sL)\cong p_{13}^*(\sL)\tens p_{23}^*(\sL)$, then by
Remark \ref{r:family-mult-loc-sys}(iii), we obtain a uniquely
defined morphism $S\rar{}H^*$ (it is independent of the choice of a
multiplicative structure on $\sL$).
\end{rem}

\begin{defin}\label{d:serre-dual}
The pair $(H^*,\sE)$ satisfying the conclusion of Proposition
\ref{p:serre-dual}
is called a \emph{Serre dual} of $H$. Of course, it is determined
uniquely up to unique isomorphism. As usual, by abuse of
terminology, we will often refer to $H^*$ itself as the Serre dual
of $H$, in which case $\sE$ will be called ``the universal local
system on $H\times H^*$''.
\end{defin}

\begin{rem}\label{r:serre-duality-canonical}
Following Serre \cite{serre}, the authors of  \cite{begueri,saibi}
define $H^*$ to be the moduli space of central extensions of $H$ by
$\qzp$ rather than the moduli space of multiplicative $\ql$-local
systems on $H$. In fact, a choice of a group homomorphism
$\psi:(\bQ_p,+)\rar{}\ql^\times$ with kernel $\bZ_p$ allows to
identify the two spaces. Namely, if $\widetilde{H}$ is a central
extension of the group scheme $H$ by $\qzp$ then $\widetilde{H}$ is
a $\qzp$-torsor over $H$; moreover, the $\ql$-local system
$\widetilde{H}_\psi$ on $H\times S$ corresponding to
$\psi:\qzp\rar{}\ql^\times$ and this torsor is multiplicative. Thus
one gets a map between the two moduli spaces, and it is easy to see
that this is an isomorphism.
%
%
%
%
\end{rem}

\subsection{Properties of Serre
duality}\label{ss:serre-duality-properties} Serre duality for
perfect connected unipotent groups has several properties that are
similar to the properties of the functor $\Ga\longmapsto\Ga^*$,
where $\Ga$ is an arbitrary finite group and $\Ga^*=\Hom(\Ga,\cst)$.
The first one and the third one follow easily from the definition of
the Serre dual.

\mbr

\begin{enumerate}[(1)]
\item Serre duality, $H\longmapsto H^*$, is a contravariant functor from the category
$\puc_k$ of perfect connected unipotent groups over $k$ to the
category $\cpu_k$ of commutative perfect unipotent groups over $k$.
 \mbr
\item If $H\in\puc_k$, the restriction homomorphism
$H^*\rar{}[H,H]^*$ has finite image. This follows from \cite[Theorem
1.2]{masoud}.
 \mbr
\item If $G\rar{}H\rar{}K\rar{}1$ is an exact sequence of perfect
connected unipotent groups over $k$, the induced sequence
$0\rar{}K^*\rar{}H^*\rar{}G^*$ of commutative perfect unipotent
groups is also exact.
 \mbr
\item If $G\rar{f}H$ is an isogeny between perfect connected
unipotent groups over $k$, the induced homomorphism $H^*\rar{}G^*$
has finite kernel. Indeed, multiplicative $\ql$-local systems $\cL$
on $H$ such that $f^*\cL$ is trivial are parameterized (up to
isomorphism) by the finite group $\Hom\bigl((\Ker
f)(k),\ql^\times\bigr)$.
 \mbr
\item The restriction of the functor $H\longmapsto H^*$ to the
category $\cpuc_k=\puc_k\cap\cpu_k$ of perfect connected commutative
unipotent groups over $k$ is an exact\footnote{Note that $\cpu_k$ is
an abelian category, and $\cpuc_k$ is an exact subcategory of
$\cpu_k$.} equivalence between $\cpuc_k$ and its opposite category.
Furthermore, the square of this restriction is naturally isomorphic
to the identity functor on $\cpuc_k$ (cf.~\cite{serre,begueri}).
 \mbr
\item If $H\in\puc_k$ and $H^{ab}=H/[H,H]$ is the abelianization of
$H$, then the induced morphism $(H^{ab})^*\rar{}H^*$ is a
monomorphism, and identifies $(H^{ab})^*$ with the neutral connected
component of $H^*$. To see this, combine (2), (3) and (5).
 \mbr
\item If $H\in\cpuc_k$, then $\dim H^*=\dim H$. Hence, in view of
(6), we deduce that if $H\in\puc_k$, then $\dim H^*=\dim H^{ab}$.
\end{enumerate}

\subsection{An auxiliary
construction}\label{ss:auxiliary-construction} In this subsection we
recall a construction, which plays an important role in the
definition of an admissible pair
(\S\ref{ss:Serre-duality-admissible-pairs}) and in the proofs
appearing in \S\ref{s:reduction}. On the other hand, it is not
used in the rest of \S\ref{s:serre-FD}.

\mbr

Let $U$ be a (possibly disconnected and/or noncommutative) perfect
unipotent group, and let $N\subset U$ be a normal connected
subgroup. Since the Serre dual, $N^*$, of $N$ is defined by a
universal property, it is clear that $U$ acts on $N^*$ by $k$-group
scheme automorphisms. Now let $\cN$ be a multiplicative $\ql$-local
system on $N$ such that the corresponding element of $N^*(k)$ is
$U$-invariant. Further, let $Z\subset U$ be a connected subgroup
such that\footnote{Without loss of generality, one can take $Z$ to
be the preimage in $U$ of the neutral connected component of the
center of $U/N$.} $N\subset Z$ and $[U,Z]\subset N$. According to
the next lemma, these data define a
$k$-group scheme morphism
\[
\vp_{\cN} : U/N \rar{} (Z/N)^*.
\]

\begin{lem}
$($i$)$ Let $c:U\times Z\rar{}N$ denote the commutator morphism,
$c(u,z)=uzu^{-1}z^{-1}$. Then $\sL:=c^*\cN$ has a structure of a
$U$-family of multiplicative $\ql$-local systems on $Z$.
 \sbr
$($ii$)$ The morphism of $k$-schemes
$\widetilde{\vp}_{\cN}:U\rar{}Z^*$ corresponding to\footnote{The
morphism is well defined by Remark
\ref{r:uniqueness-morphism-to-dual}.} $\sL$ factors as
\[
U \twoheadrightarrow U/N \rar{\ \ \vp_{\cN}\ \ } (Z/N)^*
\hookrightarrow Z^*.
\]
Moreover, $\widetilde{\vp}_{\cN}$ and $\vp_{\cN}$ are homomorphisms of group schemes over $k$.
\end{lem}

For the proof, see \cite[\S{}A.13]{characters}.

\subsection{The Fourier-Deligne
transform}\label{ss:fourier-deligne-transform} Let us now define a
version of the Fourier-Deligne transform that will be used in our
article. We consider a more general setting than
\cite{deligne,katz-laumon,saibi} by working with a possibly
noncommutative connected unipotent group. In the commutative case
our definition amounts to the \emph{inverse} of the usual
Fourier-Deligne transform (up to 
a cohomological shift).

\mbr

Let $H$ be a perfect connected unipotent group over $k$, and let $S$
be an arbitrary perfect quasi-algebraic scheme over $k$. Let
$(H^*,\sE)$ be the Serre dual of $H$, and let $\sE_S$ denote the
pullback of $\sE$ to $H\times H^*\times S$. Further, let
$\pr:H\times H^*\rar{}H$ and $\pr':H\times H^*\rar{}H^*$ denote the
first and second projection morphisms. As usual, we write $\bK_H$ for the
dualizing complex of $H$.

\begin{defin}\label{d:FD-inverse}
Define triangulated functors $\cF'_S\,,\cF'_{S,*}  : \sD(H^*\times S) \rar{} \sD(H\times S)$
by
\[
\cF'_S ( M ) =\bK_H\tens (\pr\times\id_S)_! \bigl( \sE_S\tens
(\pr'\times\id_S)^*M \bigr),
\]
\[
\cF'_{S,*} (M ) :=\bK_H\tens (\pr\times\id_S)_* \bigl( \sE_S\tens (\pr'\times\id_S)^*M
\bigr).
\]
We write
$\cF'=\cF'_{\Spec k}$ and $\cF'_*=\cF'_{\Spec k,*}$, so we have
\begin{equation} \label{e:fourier-deligne}
\cF':\sD(H^*)\rar{}\sD(H), \quad \cF'(M):=\bK_H\tens\pr_!(\sE\tens\pr'^*M),
\end{equation}
\begin{equation} \label{e:fourier*deligne}
\cF'_*:\sD(H^*)\rar{}\sD(H),\quad \cF'_*(M):=\bK_H\tens\pr_*(\sE\tens\pr'^*M).
\end{equation}
\end{defin}

One has a canonical morphism of functors $\cF'_S\rar{}\cF'_{S,*}$ (in particular, $\cF'\rar{}\cF'_{*}$).

\begin{rem}\label{r:FD-commutative-case}
We consider $\sD(H^*\times S)$ as a monoidal category with respect
to tensor product, and $\sD(H\times S)$ as a monoidal category with
respect to convolution with compact supports (see
\eqref{e:convol-S}). It is well known
\cite{deligne,katz-laumon,saibi} that if $H$ is commutative, then
$\cF'_S$ is a monoidal equivalence, and $\cF'_S\rar{}\cF'_{S,*}$ is
an isomorphism.
\end{rem}

\begin{rem}\label{r:FD-general-case}
One can show that in general, $\sF'_S$ can be decomposed as
\begin{equation}\label{e:ast}
\sD(H^*\times S)\rar{\sim}\sD(H^{\text{ab,st}}\times
S)\overset{\pi^!}{\into}\sD(H\times S),
\end{equation}
where $H^{\text{ab,st}}$ is the ``stacky abelianization'' of $H$
introduced in \cite[\S4.1]{masoud} with
$\pi:H\rar{}H^{\text{ab,st}}$ being the canonical projection, the
first arrow in \eqref{e:ast} is a monoidal equivalence
(Rem.~\ref{r:semigroupal-equivalence}), and $\pi^!$ is a fully
faithful semigroupal functor (Def.~\ref{d:weak-semigroupal}(c)).
\end{rem}

In the case $S=\Spec k$ we will construct a version of the decomposition
\eqref{e:ast} that does not use $H^{\text{ab,st}}$ explicitly, and we will prove
that the morphism $\cF'_S\rar{}\cF'_{S,*}$ is an isomorphism without assuming $H$
to be commutative, see Proposition~\ref{p:fourier-deligne-*-isomorphism} and
Theorem~\ref{t:FD-closed-idempotent}. The same arguments allow to treat the
case of any quasi-algebraic $k$-scheme $S$, but we will not need this degree of
generality.

\subsection{Semigroupal structure on $\cF'$}\label{ss:semigroupal-FD}
In \S\S\ref{3.6.1}-\ref{3.6.3} we will construct a semigroupal
structure (Def.~\ref{d:weak-semigroupal}(b)) on a quite general
class of functors. In \S\S\ref{3.6.4}-\ref{3.6.5} we will show that
the functor $\cF'$ defined by \eqref{e:fourier-deligne} belongs to
this class.


\subsubsection{}\label{3.6.1}
Let $S$ be a perfect quasi-algebraic scheme over $k$, and let $H$ be
any\footnote{Not necessarily connected or unipotent.} perfect
quasi-algebraic group over $k$. We equip the category $\sD(H\times
S)$ with the monoidal structure given by convolution with compact
support:
\begin{equation}\label{e:convol-S}
M*N=\mu_{12!}(p_{13}^*M\tens p_{23}^*N),
\end{equation}
where $\mu_{12},p_{13},p_{23}:H\times H\times S\rar{}H\times S$ are
as in Definition \ref{d:family-mls}.

\subsubsection{}
Every object $\cC\in\sD(H\times S)$ defines a functor
\begin{equation}\label{eq:0}
F_\cC:\sD(S)\rar{}\sD(H), \qquad
F_\cC(M):=\pr_{H!}(\cC\tens\pr_S^*M),
\end{equation}
where $\pr_S:H\times S\rar{}S$ and $\pr_H:H\times S\rar{}H$ are the
two projections. Similarly, each $\cK\in\sD(H\times S\times H\times
S)=\sD(H\times H\times S\times S)$ defines a functor
\[
F_\cK : \sD(S\times S)\rar{}\sD(H\times H).
\]

\mbr

One has a canonical functorial morphism
\begin{equation}\label{eq:1}
F_{\cC_1}(M_1)*F_{\cC_2}(M_2) \rar{} F_{\cC_1*\cC_2}(M_1\tens M_2)
\end{equation}
for all $M_t\in\sD(S)$ and $\cC_t\in\sD(H\times S)$ (where $t=1,2$),
namely, the composition
\begin{equation}\label{eq:2}
\begin{split}
& F_{\cC_1}(M_1)*F_{\cC_2}(M_2) \rar{\simeq} \mu_!
F_{\cC_1\boxtimes\cC_2}(M_1\boxtimes M_2) \rar{} \\
& \rar{} \mu_!F_{\De_!\De^*(\cC_1\boxtimes\cC_2)}(M_1\boxtimes M_2)
\rar{\simeq} F_{\cC_1*\cC_2}(M_1\tens M_2),
\end{split}
\end{equation}
where $\De:H\times H\times S\rar{}H\times S\times H\times S$ is
defined by $\De(h_1,h_2,s)=(h_1,s,h_2,s)$ and the second morphism in
\eqref{eq:2} comes from the adjunction
\[
\cC_1\boxtimes\cC_2\rar{}\De_*\De^*(\cC_1\boxtimes\cC_2) =
\De_!\De^*(\cC_1\boxtimes\cC_2).
\]

\subsubsection{}\label{3.6.3}
Now suppose we have a pair $(\cC,m)$, where $\cC\in\sD(H\times S)$
and $m:\cC*\cC\rar{}\cC$ is a morphism. Then one has a canonical
functorial morphism
\begin{equation}\label{eq:3}
F_\cC(M_1)*F_\cC(M_2)\rar{}F_\cC(M_1\tens M_2), \qquad
M_1,M_2\in\sD(S),
\end{equation}
namely, the composition
\[
F_\cC(M_1)*F_\cC(M_2)\rar{}F_{\cC*\cC}(M_1\tens
M_2)\rar{}F_\cC(M_1\tens M_2),
\]
where the first arrow is the morphism \eqref{eq:1} and the second
one is induced by $m$.

\begin{lem}\label{lem:1}
Suppose that $(\cC,m)$ is an associative algebra in $\sD(H\times
S)$. Then \eqref{eq:3} makes $F_\cC$ into a weakly semigroupal
functor $($Def.~\ref{d:weak-semigroupal}$($c$)${}$)$. It is strongly
semigroupal if and only if the algebra $(\cC,m)$ is idempotent and
\begin{equation}\label{eq:4}
\cC_s*\cC_{s'}=0 \qquad \text{for any } s,s'\in S(k) \text{ such
that } s'\neq s,
\end{equation}
where $\cC_s\in\sD(H)$ is the restriction of $\cC$ to
$H\times\{s\}\subset H\times S$.  \hfill\qedsymbol
\end{lem}

\begin{rem}\label{r:fourier-equivariant}
If $(\cC,m)$ is an associative algebra in $\sD_H(H\times S)$ (where
$H$ acts by conjugation on the first factor and trivially on the
second factor), then $F_\cC$ is a weakly semigroupal functor
$\sD(S)\rar{}\sD_H(H)$.
\end{rem}

\subsubsection{}\label{3.6.4}
From now on we assume that $H$ is connected and unipotent. Let $\sL$
be an $S$-family of multiplicative local systems on $H$. Set
\begin{equation}\label{eq:5}
\cC:=\sL\tens\pr_H^*\bK_H=\sL\tens\pr_S^!(\ql)_S,
\end{equation}
where $\bK_H$ is the dualizing complex of $H$. Let $1:S\rar{}H\times
S$ denote the unit section; then $1^!\cC=(\ql)_S$, so we get a
canonical morphism
\begin{equation}\label{eq:6}
1_!(\ql)_S\rar{}\cC.
\end{equation}
Note that $1_!(\ql)_S$ is the unit object of $\sD(H\times S)$.

\begin{lem}\label{lem:2}
\begin{enumerate}[$($i$)$]
\item The morphism \eqref{eq:6} is an idempotent arrow.
 \sbr
\item Condition \eqref{eq:4} holds if and only if the map
$S(k)\rar{}H^*(k)$ corresponding to $\sL$ is injective.
\end{enumerate}
\end{lem}

\begin{proof}
In the case $S=\Spec k$ statement (i) is proved in
\cite[Prop.~8.1(a)]{characters}, and the general case is similar.
The ``only if'' part of (ii) is clear, and the ``if'' part follows
from \cite[Lem.~9.4]{characters}.
\end{proof}

\begin{cor}\label{cor:3}
$\cC$ has a unique structure of an idempotent algebra for which
\eqref{eq:6} is a unit.
\end{cor}
\begin{proof}
Use Lemma \ref{lem:2}(i) and Proposition
\ref{p:closed-idemp-idemp-alg}.
\end{proof}

\subsubsection{}\label{3.6.5} Finally, let $S=H^*$, and let $\sL=\sE\in\sD(H\times
H^*)$ be the universal family of multiplicative local systems on
$H$. Define $\cC$ by \eqref{eq:5}; then $F_\cC=\cF'$ (to see this,
compare \eqref{eq:0} with \eqref{e:fourier-deligne}). Combining
Lemma \ref{lem:1}, Corollary \ref{cor:3}, and Lemma \ref{lem:2}(ii)
we get a semigroupal structure (Def.~\ref{d:weak-semigroupal}(b)) on
$\cF'=F_\cC$.

\subsection{Construction of closed idempotents in $\sD(H)$}
In this subsection we will use the functor $\cF':\sD(H^*)\rar{}\sD(H)$ to construct
certain closed idempotents in the monoidal category $\sD(H)$, where
$H$ is a perfect connected unipotent group over $k$. We begin by
stating
\begin{prop}\label{p:fourier-deligne-*-isomorphism}
The canonical morphism $\cF'\rar{}\cF'_*$ is an isomorphism.
\end{prop}
This proposition is proved in \S\ref{ss:proofs} below.

\mbr

Set $e_0:=\cF'\bigl((\ql)_{H^*}\bigr)$, where $(\ql)_{H^*}$ is the constant sheaf
on $H^*$ with stalk $\ql$. Since $(\ql)_{H^*}$ is the unit object of $\sD(H^*)$
and $\cF'$ is a semigroupal functor (see \S\ref{ss:semigroupal-FD}) it is clear
that $e_0$ is a weak idempotent in $\sD(H)$. The next theorem says that
$e_0$ is a closed idempotent.

\mbr

To construct an idempotent arrow $\e\rar{} e_0$, rewrite
formula~\eqref{e:fourier*deligne} as
$\cF'_*(M)=\pr_*(\sE\tens\pr'^!M)$. 
This shows that $\cF'_*$ is right adjoint to the functor
\begin{equation}\label{e:fourier-deligne-unnormalized}
\cF:\sD(H)\rar{}\sD(H^*), \qquad N\longmapsto \pr'_!(\sE^{-1}\tens
\pr^* N).
\end{equation}
We have a canonical isomorphism $\cF(\e)\rar{\simeq}(\ql)_{H^*}$,
which by adjunction yields a morphism
$\pi_0:\e\rar{}\cF'_*\bigl((\ql)_{H^*}\bigr)\rar{\simeq}e_0$ (see
Proposition \ref{p:fourier-deligne-*-isomorphism}).

\begin{thm}\label{t:FD-closed-idempotent}
\begin{enumerate}[$($a$)$]
\item The morphism $\pi_0:\e \rar{}e_0$ is an idempotent arrow in $\sD(H)$.
 \sbr
\item The functor $\cF'$ induces a monoidal equivalence
\[
\sD(H^*) \rar{\sim} e_0\sD(H)\subset\sD(H).
\]
 \sbr
\item Let $i:[H,H]\into H$ denote the inclusion, and let $K\subset [H,H]^*$ denote the image of the restriction morphism
$i^*:H^*\rar{}[H,H]^*$. Then
\[
e_0 \cong \bigoplus_{[\cN]\in K(k)} i_!e_{\cN},
\]
the sum being finite by property
\S\ref{ss:serre-duality-properties}$($2$)$. Here $e_\cN$ denotes the
closed idempotent in $\sD([H,H])$ defined by the multiplicative
local system $\cN$
$($cf.~\S\ref{ss:Heisenberg-minimal-idempotents}$)$.
\end{enumerate}
\end{thm}

The theorem is proved in \S\ref{ss:proofs} below.

\begin{rem}
By Remark \ref{r:fourier-equivariant}, the image of the functor
$\cF'$ lies in the subcategory $\sD_H(H)\subset\sD(H)$ of complexes
equivariant under conjugation. In particular, $e_0\in\sD_H(H)$, so
$e_0\sD(H)=\sD(H)e_0$ is the Hecke subcategory of $\sD(H)$ defined
by $e_0$.
\end{rem}

\begin{cor}\label{c:FD-closed-idempotent}
Let $i:F\into H^*$ be a closed immersion and
$\pi_F:(\ql)_{H^*}\rar{}i_!(\ql)_F$ the corresponding idempotent
arrow in $\sD(H^*)$. Put $e_F=\cF'\bigl(i_!(\ql)_F\bigr)$. Then
\[
\cF'(\pi_F)\circ\pi_0 : \e\rar{}e_F
\]
is an idempotent arrow in $\sD(H)$.
\end{cor}
\begin{proof}
Use Theorem \ref{t:FD-closed-idempotent}(b) and Lemma
\ref{l:transitivity}.
\end{proof}

\subsection{Proof of Proposition
\ref{p:fourier-deligne-*-isomorphism} and Theorem
\ref{t:FD-closed-idempotent}}\label{ss:proofs} Let us fix a
multiplicative local system $\cN$ on $H$ and write
$\cN'=i^*\cN\overset{\text{def}}{=}\cN\bigl\lvert_{[H,H]}$. Let
$H^*_{\cN}$ denote the fiber of the restriction morphism
$i^*:H^*\rar{}[H,H]^*$ over $[\cN']\in[H,H]^*(k)$. Properties (2),
(5) and (6) in \S\ref{ss:serre-duality-properties} imply that
$H^*_{\cN}$ is a connected component of $H^*$.

\mbr

Further, let us write $e'_{\cN}:=i_!e_{\cN'}$; it is a closed
idempotent in $\sD(H)$.

\mbr

We will now state an auxiliary lemma and deduce Proposition
\ref{p:fourier-deligne-*-isomorphism} and Theorem
\ref{t:FD-closed-idempotent} from it. The lemma will be proved at
the end of this subsection.

\begin{lem}\label{l:fourier-deligne-auxiliary}
\begin{enumerate}[$($a$)$]
\item The restriction of $\cF':\sD(H^*)\rar{}\sD(H)$ to the full
subcategory $\sD(H^*_{\cN})$ is an equivalence onto the full
subcategory $e'_{\cN}\sD(H)\subset\sD(H)$.
 \sbr
\item If $M\in\sD(H^*_{\cN})$, the canonical map
$\cF'(M)\rar{}\cF'_*(M)$ is an isomorphism.
\end{enumerate}
\end{lem}

For the time being, let us assume that this lemma holds. Choose
multiplicative local systems $\cN_1,\dotsc,\cN_n$ on $H$ such that
$[\cN'_1],\dotsc,[\cN'_n]$ is a list of all elements of $K^*(k)$,
without repetitions. Then the $H^*_{\cN_j}$ are all the connected
components of $H^*$, whence
\begin{equation}  \label{orthog1}
\sD(H^*)=\bigoplus_j \sD(H^*_{\cN_j}).
\end{equation}
Thus Lemma \ref{l:fourier-deligne-auxiliary}(b)
implies Proposition \ref{p:fourier-deligne-*-isomorphism}.

\mbr

One has $e'_{\cN_j}*e'_{\cN_r}=0$ for $j\ne r$ because the
$e_{\cN'_j}$ are pairwise non-isomorphic minimal closed idempotents
in $\sD([H,H])$. So by Proposition~\ref{p:dir_sum_decomps},
$e:=\bigoplus_{j=1}^n e'_{\cN_j}$ is a closed idempotent in $\sD(H)$
and
\begin{equation}  \label{orthog2}
e\sD(H)=\bigoplus_j e'_{\cN_j}\sD(H).
\end{equation}
In \S\ref{ss:semigroupal-FD} we constructed a semigroupal structure
on the functor $\cF':\sD(H^*)\rar{}\sD(H)$. So by \eqref{orthog1}, \eqref{orthog2},
and Lemma~\ref{l:fourier-deligne-auxiliary}(a), $\cF'$ induces a
semigroupal equivalence $\sD(H^*)\rar{\sim}e\sD(H)$.
By Remark~\ref{r:semigroupal-equivalence}, this is the same as monoidal equivalence.
By Lemma~\ref{l:Hecke-monoidal}, $e$ is a unital object of $e\sD(H)$ and $e_0$
was defined to be the image of the unit object of $\sD(H^*)$, so $e_0\cong e$.
This proves parts (c) and (b) of Theorem \ref{t:FD-closed-idempotent}.


\mbr

Finally, to prove Theorem \ref{t:FD-closed-idempotent}(a), note that
by Proposition \ref{p:fourier-deligne-*-isomorphism}, the functor
\eqref{e:fourier-deligne-unnormalized} is left adjoint to $\cF'$,
and the morphism $\pi_0:\e\rar{}e_0$ comes by adjunction from the
natural isomorphism $\cF(\e)\rar{\simeq}(\ql)_{H^*}$. We already saw
that $e_0$ is a closed idempotent in $\sD(H)$, so it remains to
apply Theorem \ref{t:FD-closed-idempotent}(b) together with
Proposition \ref{p:interplay-weak-closed-idempotents}(a) and the
uniqueness of adjunctions.

\begin{proof}[Proof of Lemma \ref{l:fourier-deligne-auxiliary}] As a
first step, we reduce the lemma to the case where $\cN$ is trivial.
If $\cL$ is any multiplicative local system on $H$, we have an
automorphism $\la_\cL:H^*\rar{\simeq}H^*$ given by
$[\cN]\longmapsto[\cL\tens\cN]$, as well as a monoidal
autoequivalence $\sg_\cL:\sD(H)\rar{}\sD(H)$ given by $M\longmapsto
\cL\tens M$. There are natural isomorphisms
\[
\cF'\rar{\simeq}\sg_\cL\circ\cF'\circ\la_{\cL}^*
\qquad\text{and}\qquad
\cF'_*\rar{\simeq}\sg_\cL\circ\cF'_*\circ\la_{\cL}^*
\]
that are compatible with the canonical morphism $\cF'\rar{}\cF'_*$.
Furthermore, if $\cN$ is another multiplicative local system on $H$,
then $\la_\cL$ restricts to an isomorphism
$H^*_{\cN}\rar{\simeq}H^*_{\cL\tens\cN}$, while
$\sg_\cL(e'_{\cN})\cong e'_{\cL\tens\cN}$. Hence Lemma
\ref{l:fourier-deligne-auxiliary} holds for $\cN$ if and only if it
holds for $\cL\tens\cN$. In particular, from now on we may and do
assume that $\cN$ is trivial. To simplify notation, we will write
$H^*_1=H^*_{\cN}$ and $e'_1=e'_\cN$ in this case.

\mbr Let $\pi:H\rar{}H^{ab}\overset{\text{def}}{=}H/[H,H]$ denote
the natural projection. Note that
$H^*_1\overset{\text{def}}{=}(H^*)^{\circ}$, so by property
\S\ref{ss:serre-duality-properties}(6), the homomorphism
$\pi^*:(H^{ab})^*\rar{}H^*$ induces an isomorphism
$(H^{ab})^*\rar{\simeq}H^*_1$ and therefore an equivalence
$\sD((H^{ab})^*)\rar{\simeq}\sD (H^*_1)$. By
Remark~\ref{r:FD-commutative-case} (applied to $H^{ab}$ in place of
$H$), the inverse Fourier transform
$\null^{ab}\cF':\sD((H^{ab})^*)\rar{}\sD(H^{ab})$ is an equivalence
and the canonical morphism $\null^{ab}\cF'\rar{}\null^{ab}\cF'_*$ is
an isomorphism. We have commutative diagrams
\[
\xymatrix{
  \sD((H^{ab})^*) \ar[d]_{\sim} \ar[rr]^{{}^{ab}\cF'}_{\sim} & & \sD(H^{ab}) \ar@{^{(}->}[d]^{\pi^*} \\
  \sD(H_1^*) \ar[rr]^{\cF'} & & \sD(H)
   } \qquad\text{and}\qquad
\xymatrix{
  \sD((H^{ab})^*) \ar[d]_{\sim} \ar[rr]^{{}^{ab}\cF'_*}_{\sim} & & \sD(H^{ab}) \ar@{^{(}->}[d]^{\pi^*} \\
  \sD(H_1^*) \ar[rr]^{\cF'_*} & & \sD(H)
   }
\]
(the right one is commutative by smooth base change). Moreover, the
canonical morphism $\cF'\rar{}\cF'_*$ agrees via these diagrams with
the canonical morphism $\null^{ab}\cF'\rar{}\null^{ab}\cF'_*$, which
is known to be an isomorphism. Now statement (b) of the lemma is
clear and statement (a) follows from the fact that the functor
$\pi^*:\sD(H^{ab})\rar{}\sD(H)$ induces an equivalence $\sD
(H^{ab})\rar{\simeq}e_1'\sD (H)\subset\sD(H)$.
\end{proof}



\section{Reduction process for constructible complexes}\label{s:reduction}

As before, we use the notation of \S\ref{ss:basic-definitions}. In
particular, $k$ denotes an algebraically closed field of
characteristic $p>0$ and $\ell$ is a prime with $\ell\neq p$.

\mbr

In this section we establish Proposition
\ref{p:normal-admissible-pairs} and
Theorem~\ref{t:reduction-complexes}, which will allow us to prove
the main results of this article by induction. In particular, we
show that if $G$ is a perfect unipotent group over $k$ and
$M\in\sD(G)$ is nonzero, then there exists an admissible pair
$(H,\cL)$ for $G$ such that $(i^{H\subset G}_!\cL)*M\neq 0$, where
$i^{H\subset G}:H\into G$ stands for the inclusion morphism.

\mbr

The arguments used in the proofs of Proposition
\ref{p:normal-admissible-pairs} and
Theorem~\ref{t:reduction-complexes} are analogous to the reduction
process for representations of finite nilpotent groups described in
one of the appendices to \cite{intro}, as well as to the ``geometric
reduction process'' for representations of finite groups of the form
$G(\bF_q)$, where $G$ is a unipotent group over $\bF_q$, introduced
in \cite[\S7]{characters}.


\subsection{Formulation of the results}\label{ss:reduction-formulation}
If $G$ is a (quasi-)algebraic group over $k$, we write $\sP(G)$ for
the set of pairs $(H,\cL)$ where $H\subset G$ is a connected
subgroup and $\cL$ is a multiplicative $\ql$-local system on $H$. We
let $\sP_{norm}(G)\subset\sP(G)$ denote the subset of pairs
$(H,\cL)\in\sP(G)$ such that $H$ is normal in $G$.

\begin{defin}\label{d:pairs-order}
We define a partial order on $\sP(G)$ by
$(H_1,\cL_1)\leq(H_2,\cL_2)$ if $H_1\subset H_2$ and
$\cL_2\bigl\lvert_{H_1}\cong\cL_1$.
\end{defin}

\begin{defin}\label{d:compatible}
A pair $(H,\cL)\in\sP(G)$ is said to be \emph{compatible with} a
given object $M\in\sD(G)$ if $(i^{H\subset G}_!\cL)*M\neq 0$.
Here $i^{H\subset G}: H\into G$ is the inclusion morphism.
\end{defin}

\begin{rem}\label{r:compatible}
If $M\ne 0$, then there exists a pair $(H,\cL)\in\sP_{norm}(G)$
compatible with $M$ (for instance, one can take $H=\{ 1\})$. Clearly
among all such pairs $(H,\cL)$ there is a maximal one.
\end{rem}

\begin{prop}\label{p:normal-admissible-pairs}
Let $G$ be a perfect unipotent group over $k$ and $M\in\sD(G)$,
$M\ne 0$. Suppose that $(H,\cL)\in\sP_{norm}(G)$ is maximal among
all pairs in $\sP_{norm}(G)$ that are compatible with $M$. If $\cL$
is invariant under the conjugation action of $G$, then the pair
$(H,\cL)$ is admissible for $G$.
\end{prop}

The proposition is proved in
\S\ref{ss:proof-p:normal-admissible-pairs}.

\begin{thm}\label{t:reduction-complexes}
Let $G$ be a perfect unipotent group over $k$, let $M\in\sD(G)$, and
let $(A,\cN)\in\sP_{norm}(G)$ be compatible with $M$. Then there
exists an admissible pair $(H,\cL)\in\sP(G)$ that is compatible with
$M$ and satisfies $(A,\cN)\leq(H,\cL)$
$($Def.~\ref{d:pairs-order}$)$.
\end{thm}

The proof of Theorem~\ref{t:reduction-complexes} is contained in
\S\ref{ss:proof-t:reduction-complexes} below.

\subsection{An auxiliary extension result}\label{ss:auxiliary-extension}
Our next goal is to prove Corollary \ref{c:extension-tricky}, which
will be used in the proofs of Proposition
\ref{p:normal-admissible-pairs} and Theorem
\ref{t:reduction-complexes}. First we recall the following key
result from \cite{characters}.

\begin{prop}   \label{p:extension-locsys}
Let $G$ be a connected perfect unipotent group over $k$, let
$A\subset G$ be a connected subgroup such that $[G,G]\subset A$, and
let $\cN$ be a multiplicative local system on $A$. Then the following
properties of $\cN$ are equivalent:
 \sbr
\begin{enumerate}[$($i$)$]
\item $\cN$ can be extended to a multiplicative local system on $G$;
 \sbr
\item the pullback of $\cN$ by the commutator morphism $G\times G\rar{}A$ is trivial.
\end{enumerate}
\end{prop}

This is \cite[Prop.~7.7]{characters}. Its proof given in \cite{characters} is based on the equality
$\Ext^2(\bG_a,\bQ_p/\bZ_p)=0$, where $\Ext^2$ is computed in the category of
fppf sheaves on the category of $k$-schemes.

\begin{rem}
According to \cite[Example 3.2]{masoud}, condition (ii) from
Proposition~\ref{p:extension-locsys} is \emph{weaker} than the
triviality of $\cN\bigl\lvert_{[G,G]}$;  in fact, (ii) is equivalent
to the triviality of the pullback of $\cN$ to the ``true commutator"
of $G$ defined in \cite[\S4.1]{masoud}.
\end{rem}

\begin{prop}\label{p:extension-special}
Let $G$, $A$, and $\cN$ be as in
Proposition~\ref{p:extension-locsys}, and let $M\in\sD(G)$. Suppose
that $(A,\cN)$ is compatible with $M$ and that $\cN$ has the
equivalent properties $($i$)$--$($ii$)$ of
Proposition~\ref{p:extension-locsys}.
Then there exists a multiplicative local system $\cL$ on $G$ such that
$\cL\bigl\lvert_A\cong\cN$ and $\cL*M\neq 0$.
\end{prop}

\begin{proof}
We begin with the following observation. Applying the projection
formula to the multiplication morphism $G\times G\rar{}G$ we see that
if $\cE$ is any multiplicative local system on $G$ then for all
$M,N\in\sD(G)$, we have
\begin{equation} \label{e:tensorfunctor}
(\cE\tens M)*(\cE\tens N) \cong \cE\tens(M*N).
\end{equation}

\mbr

Our $\cN$ extends to a
multiplicative local system $\widetilde{\cN}$ on $G$. Formula~\eqref{e:tensorfunctor}
allows us to replace $M$ with
$M\tens\widetilde{\cN}^{-1}$ and to assume that $\cN$ is trivial. Then
compatibility of $M$ with $(A,\cN)$ means that $\pi_!M\ne 0$,
where $\pi:G\rar{}G/A$ is the projection.
But $G/A$ is a connected commutative perfect unipotent group over
$k$, so the functor $\cF':\sD((G/A)^*)\rar{}\sD(G/A)$ is an
equivalence (see \S\ref{ss:fourier-deligne-transform} and
Rem.~\ref{r:FD-commutative-case}).
Thus $\cF'^{-1}(\pi_!M )\ne 0$. This means that $\cL'*\pi_!M\ne 0$
for some multiplicative local system $\cL'$ on $G/A$. Setting
$\cL=\pi^*\cL'$, we obtain $\cL*M=\pi^* (\cL'*\pi_!M)\neq 0$.
\end{proof}

\begin{cor}\label{c:extension-tricky}
Let $G$ be a perfect unipotent group over $k$, let $M\in\sD(G)$, and
let $(A,\cN)\in\sP(G)$ be compatible with $M$. Let $A_1$ be a
connected subgroup of $G$ such that $[A_1,A_1]\subset A\subset A_1$.
If the pullback of $\cN$ by the commutator morphism $A_1\times
A_1\rar{}A$ is trivial then there exists a multiplicative local
system $\cN_1$ on $A_1$ such that $\cN_1\bigl\lvert_A\cong\cN$ and
$(A_1,\cN_1)$ is compatible with $M$.
\end{cor}

\begin{proof}
Set $\overline{M}:=(i^{A\subset G}_!\cN)*M$. By definition,
compatibility of $(A,\cN)$ with $M$ means that $\overline{M}\neq 0$,
so there exists a coset $A_1g$, $g\in G(k)$, such that
$\overline{M}\bigl\lvert_{A_1 g}\neq 0$. Without loss of generality,
we may suppose that $g=1$ (otherwise replace $M$ with
$M*\de_{g^{-1}}$, where $\de_{g^{-1}}$ is the delta-sheaf at
$g^{-1}$). Then we can replace $G$ with $A_1$ and $M$ with
$M\bigl\lvert_{A_1}$,
which reduces us to Proposition \ref{p:extension-special}.
\end{proof}

\subsection{Proof of Proposition
\ref{p:normal-admissible-pairs}}\label{ss:proof-p:normal-admissible-pairs}
To prove that $(H,\cL)$ is an admissible pair, we have to check
three conditions (see Definition~\ref{d:admissible-pair}). Condition
(3) is vacuous because the normalizer of $(H,\cL)$ in $G$ equals
$G$. To check conditions (1) and (2), we use

\begin{lem}  \label{l:isogeny}
Let $Z\subset G^\circ$ be the preimage of the neutral connected
component of the center of $G^\circ/H$. Let
$\vp_{\cL}:G^\circ/H\rar{}(Z/H)^*$ be the group morphism defined in
\S\ref{ss:auxiliary-construction}. Then the morphism
$Z/H\rar{}(Z/H)^*$ induced by $\vp_{\cL}$ is an isogeny.
\end{lem}

\begin{proof}
Let $K\subset G^\circ$ be the preimage of the neutral connected
component of the kernel of $\vp_{\cL}\bigl\lvert_{(Z/H)}$. We have
to show that $K=H$.

\mbr

Clearly $K$ is connected and $[K,K]\subset H\subset K$. Since $\cL$
is $G$-invariant $K$ is normal in $G$. So by
Corollary~\ref{c:extension-tricky}, there exists a multiplicative
local system $\cL'$ on $K$ such that $\cL'\bigl\lvert_H\cong\cL$ and
$(K,\cL')$ is compatible with $M$. Therefore the maximality
assumption on $(H,\cL)$ implies that $K=H$.
\end{proof}

Set $\widetilde G:=G^\circ/H$. Let $\widetilde Z$ be the neutral
connected component of the center of $\widetilde G$.
Lemma~\ref{l:isogeny} says that the homomorphism $\widetilde
Z\rar{}\widetilde Z^*$ induced by $\vp_{\cL}:\widetilde
G\rar{}\widetilde Z^*$ is an isogeny. On the other hand, conditions
(1)--(2) of Definition~\ref{d:admissible-pair} amount to the
requirement that $\widetilde G$ is commutative and $\vp_{\cL}$ is an
isogeny. So to check these conditions, it remains to prove the
following lemma.

\begin{lem}  \label{l:remaining}
Let $f:\widetilde G\rar{}\fA$ be a homomorphism of connected
algebraic groups with $\widetilde G$ nilpotent and $\fA$
commutative. Let $\widetilde Z$ be the neutral connected component
of the center of $\widetilde G$. Suppose that $f
\bigl\lvert_{\widetilde Z}:\widetilde Z\rar{}\fA$ is an isogeny.
Then $\widetilde G$ is commutative.
\end{lem}

\begin{proof}
Assume the contrary. Then $\dim\widetilde G>\dim\widetilde
Z=\dim\fA$, so $\dim\Ker f>0$. Let $K$ be the neutral connected
component of $\Ker f$. The last nontrivial term of the sequence
$K,[\widetilde G,K],[\widetilde G,[\widetilde G,K]],\dotsc$ is a
nontrivial connected subgroup of $\widetilde Z\cap K$. So
$\dim(\widetilde Z\cap K)>0$, contrary to the assumption that $f
\bigl\lvert_{\widetilde Z}:\widetilde Z\rar{}\fA$ is an isogeny.
\end{proof}

\subsection{Proof of Theorem
\ref{t:reduction-complexes}}\label{ss:proof-t:reduction-complexes}
Without loss of generality, we may assume that $(A,\cN)$ is maximal
among all pairs in $\sP_{norm}(G)$ that are compatible with $M$. If
$\cN$ is $G$-invariant, then, by Proposition
\ref{p:normal-admissible-pairs}, we can take $(H,\cL)=(A,\cN)$. So
let $\cN$ be not $G$-invariant, and define $G_1$ as the normalizer
of $\cN$ in $G$. Using the functor $X\longmapsto X*\de_{g^{-1}}$ for
a suitable $g\in G(k)$, as in the proof of Corollary
\ref{c:extension-tricky}, we may assume that $(A,\cN)$ is compatible
with $M\bigl\lvert_{G_1}$. Since $G_1\subsetneq G$, we may assume by
induction that there exists an admissible pair $(H,\cL)$ for $G_1$
satisfying the conclusion of Theorem \ref{t:reduction-complexes} for
the quadruple $\bigl(G_1,M\bigl\lvert_{G_1},A,\cN\bigr)$ in place of
$\bigl(G,M,A,\cN)$.

\mbr

We claim that $(H,\cL)$ also satisfies the conclusion of Theorem
\ref{t:reduction-complexes} for the quadruple $\bigl(G,M,A,\cN)$. We
have $(A,\cN)\leq(H,\cL)$ by construction. Since $(H,\cL)$ is
compatible with $M\bigl\lvert_{G_1}$, it is also compatible with
$M$. It remains to prove the following lemma.

\begin{lem}   \label{l:admissibility}
$(H,\cL)$ is admissible for $G$ and $N_G(H,\cL)=N_{G_1}(H,\cL)$.
\end{lem}
\begin{proof}
Let $G'$ be the normalizer of $(H,\cL)$ in $G$. Since $(A,\cN)\leq
(H,\cL)$, we have $G'\subset G_1$, proving the second statement. To
show that $(H,\cL)$ is admissible for $G$ we have to check
conditions (1)--(3) from Definition~\ref{d:admissible-pair}. Since
$G'\subset G_1$, condition (3) holds automatically for every $g\in
G(k)$ such that $g\not\in G_1(k)$. Conditions (1)--(2) and condition
(3) for $g\in G_1(k)$ hold because $(H,\cL)$ is admissible for
$G_1$.
\end{proof}




\section{Idempotents satisfying the Mackey condition}\label{s:mackey}

Throughout this section, $k$ denotes an algebraically closed field
of characteristic $p>0$, and $\ell$ denotes a prime different from
$p$. We also fix a perfect unipotent group $G$ over $k$ and a closed
subgroup $G'\subset G$. Our goal is to prove two auxiliary results
(Propositions \ref{p:Lemma} and \ref{p:when-ind-closed}) on weak and
closed idempotents in $\sD_{G'}(G')$ satisfying the geometric Mackey
condition with respect to $G$ (Definition
\ref{d:geom-mackey-condition}). They will be used in the proofs of
the main results of the article, given in \S\ref{s:proofs}. We also
review some results on induction functors that were obtained in
\cite{characters} (see \S\ref{ss:characters-review}).

\subsection{Setup}\label{ss:idempotents-setup}
Let $e\in\sD_{G'}(G')$ be a weak idempotent satisfying the geometric
Mackey condition with respect to $G$ (see Definition
\ref{d:geom-mackey-condition}). As usual, we will write
$\overline{e}$ for the object of $\sD(G)$ obtained by extending $e$
to all of $G$ by zero outside of $G'$. It follows from Lemma
\ref{l:convolution-support} below that for every $N\in\sD_G(G)$, the
convolution $\overline{e}*N$ is supported on $G'$. Thus
$\overline{e}*N$ can be viewed as the extension of
$e*\bigl(N\bigl\lvert_{G'}\bigr)$ by zero. \emph{In view of this
fact, and in order to save space, we introduce the notation $e*N$
for $e*\bigl(N\bigl\lvert_{G'}\bigr)$, and we view $N\longmapsto
e*N$ as a functor $\sD_G(G)\rar{}e\sD_{G'}(G')$.}

\subsection{Induction functors} Let $G'\subset G$ be as above.
The induction functors
\[
\ig:\sD_{G'}(G')\rar{}\sD_G(G) \qquad\text{and}\qquad
\Ig:\sD_{G'}(G')\rar{}\sD_G(G)
\]
were defined in \S\ref{ss:induction-functors}. The functor $\Ig$ is
right adjoint to the restriction functor
$\sD_G(G)\rar{}\sD_{G'}(G')$, and for every $M\in\sD_{G'}(G')$, we
let
\[
\eta_M : \bigl( \Ig M \bigr) \bigl\lvert_{G'} \rar{} M
\]
denote the adjunction morphism. Furthermore, by construction (see
\S\ref{ss:induction-functors}), for every $M\in\sD_{G'}(G')$, we
have a canonical arrow
\[
\can_M : \ig M \rar{} \Ig M.
\]

\subsection{Summary of some results of
\cite{characters}}\label{ss:characters-review} Let us now summarize
some of the results that were obtained in \cite[\S5]{characters} in
the setup of \S\ref{ss:idempotents-setup}.

\begin{lem}\label{lemma1}
The object $f=\ig e$ is a weak idempotent in $\sD_G(G)$. If $e$ is a
minimal weak idempotent, then so is $f$.
\end{lem}
This follows from \cite[Prop.~5.11~and~Cor.~5.13]{characters}.

\begin{lem}\label{lemma2}
\begin{enumerate}[$($a$)$]
\item
We have $\ig M\in f\sD_G(G)$ for every $M\in e\sD_{G'}(G')$.
 \sbr
\item The functor
\begin{equation}\label{e:induction-Hecke}
\ig\Bigl\lvert_{e\sD_{G'}(G')} \,:\, e\sD_{G'}(G') \rar{} f\sD_G(G)
\end{equation}
is a bijection at the level of isomorphism classes of objects.
 \sbr
\item If $e$ is a closed idempotent in $\sD_{G'}(G')$, then the functor
\eqref{e:induction-Hecke} is faithful.
 \sbr
\item If both $e$ and $f$ are closed idempotents $($in
$\sD_{G'}(G')$ and $\sD_G(G)$, respectively$)$, then the functor
\eqref{e:induction-Hecke} is an equivalence, 
whose quasi-inverse is isomorphic to the functor $N\longmapsto e*N$.
\end{enumerate}
\end{lem}
This follows from \cite[Thm.~5.12]{characters}. In
Remark~\ref{r:induction-inverse-explicit} below we will see that
in the situation of part (d) of the lemma, a choice of an idempotent
arrow $\e_{G'}\rar{}e$ provides a {\em canonical\,} isomorphism
between the quasi-inverse of \eqref{e:induction-Hecke} and the
functor $N\longmapsto e*N$.


\begin{lem}[See Prop.~5.14 in \cite{characters}]\label{lemma3}
For every $N\in\sD_G(G)$, there exists an isomorphism
$f*N\rar{\simeq}\ig(e*N)$, functorial with respect to $N$.
\end{lem}

\begin{lem}\label{lemma4}
For every $M\in e\sD_{G'}(G')$, the composition
\begin{equation}\label{e:compos}
e*\ig M \xrar{\ \ \id_e*\can_M\ \ } e*\Ig M
\xrar{\ \ \id_e*\eta_M\ \ } e*M
\end{equation}
is an isomorphism in $\sD_{G'}(G')$. In particular,
\begin{equation}  \label{e:ef=e}
e*f\cong e.
\end{equation}
\end{lem}
This result, which strengthens \cite[Prop.~5.15]{characters}, is
proved in \S\ref{ss:proof-lemma4}.

\begin{rem}\label{r:induction-inverse-explicit}
Suppose that the idempotent $e$ is closed. Choose an idempotent arrow
$\pi:\e_{G'}\rar{}e$, then for
every $M\in e\sD_{G'}(G')$ the map $\pi*\id_M:M\rar{}e*M$ is an
isomorphism. So by Lemma \ref{lemma4}, the composition of \eqref{e:compos}
and the isomorphism $(\pi*\id_M)^{-1}:e*M\iso M$ gives a functorial isomorphism
\[
e*\Rg\ig M\iso M, \quad M\in e\sD_{G'}(G').
\]

\end{rem}

\begin{lem}[See Lemma 5.16 in \cite{characters}]\label{l:convolution-support}
Let $\overline{e}\in\sD(G)$ denote the extension of $e$ by zero to
$G$. If $N\in\sD_G(G)$, then
$(\overline{e}*N)\bigl\lvert_{G\setminus G'}=0$.
\end{lem}

\subsection{Main results} The next result will be used in the
proof of Proposition \ref{p:crucial}.

\begin{prop}\label{p:Lemma}
Let $G$, $G'$, $e$, $f$ be as in
\S\S\ref{ss:idempotents-setup}--\ref{ss:characters-review}. Let
$(A,\cN)\in\sP_{norm}(G)$ $($see \S\ref{ss:reduction-formulation}$)$
be such that $\cN$ is $G$-invariant, and let
$e_1:=\cN\tens\bK_A\in\sD_G(A)\subset\sD_G(G)$ be the corresponding
idempotent (cf. \S\ref{ss:Heisenberg-minimal-idempotents}).
If $f*e_1\cong f$ and $f\neq 0$, then $G'\supset A$ and
$e*e_1\cong e$.
\end{prop}
\begin{proof}
By Lemma \ref{l:convolution-support}, $e*e_1$ is supported on $G'$,
and by Lemma \ref{lemma3}, $\ig(e*e_1)\cong f$. Now by Lemma
\ref{lemma2}(b), $e*e_1\cong e$. This clearly implies that
$G'\supset A$ unless $e= 0$. But $f\ne 0$, so $e\ne 0$.
\end{proof}
\begin{rem}\label{r:Lemma}
If $e$ is a minimal weak idempotent in $\sD_{G'}(G')$, then $f$ is
also minimal by Lemma \ref{lemma1}, so in this case, it suffices to
assume that $f*e_1\neq 0$.
\end{rem}

The rest of the section is devoted to a proof of
\begin{prop}\label{p:when-ind-closed} Let $G$ be a perfect unipotent group
over $k$, let $G'\subset G$ be a closed subgroup, and let
$e\in\sD_{G'}(G')$ be a closed idempotent satisfying the geometric
Mackey condition $($Def.~\ref{d:geom-mackey-condition}$)$. Then the
following statements are equivalent.
\begin{enumerate}
\item[$($i$)$] For every $M\in e\sD_{G'}(G')$, the canonical arrow $\ig
M\rar{}\Ig M$ is an isomorphism.
 \sbr
 \item[$($i$\,')$] The canonical arrow $\varphi :\ig e\rar{}\Ig e$ is an isomorphism.
 \sbr
\item[$($ii$)$] The object $f=\ig e$ is a closed idempotent.
\end{enumerate}
\end{prop}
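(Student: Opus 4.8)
The plan is to use Proposition~\ref{p:interplay-weak-closed-idempotents} as the central tool, exploiting the adjunction already established between $\ig$ and the restriction functor (once we pass to the relevant Hecke subcategories). First I would set $f=\ig e$; by Lemma~\ref{l:ind-weak-idempotent}, $f$ is a weak idempotent in $\sD_G(G)$, and by Lemma~\ref{l:ind-faithful} the functor $\ig$ maps $e\sD_{G'}(G')$ into $f\sD_G(G)$ and is faithful there (using that $e$ is closed). Since $\sD_G(G)$ is weakly symmetric (it is braided, cf.~Lemma~\ref{l:central-equivariant-derived}), Proposition~\ref{p:interplay-weak-closed-idempotents}(b) tells us that $f$ is a closed idempotent \emph{if and only if} the functor $N\mapsto f*N$, viewed as $\sD_G(G)\rar{}f\sD_G(G)$, is left adjoint to the inclusion $f\sD_G(G)\into\sD_G(G)$. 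Conversely part~(a) of that proposition gives the other direction if $f$ is already known to be closed. So the whole proposition reduces to showing: statement~(i) holds $\iff$ $N\mapsto f*N$ is left adjoint to the inclusion $f\sD_G(G)\into\sD_G(G)$.

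The key computation is to identify the functor $N\mapsto f*N$. By Lemma~\ref{l:auxil1}, there is a functorial isomorphism $f*N\cong\ig(e*N)$, where $e*N$ means $e*(N\bigl\lvert_{G'})$, and this lands in $e\sD_{G'}(G')$ because $e$ is a weak idempotent. Now $N\mapsto e*N$ as a functor $\sD_G(G)\rar{}e\sD_{G'}(G')$ is itself a composite of restriction $\operatorname{Res}^G_{G'}$ with $N'\mapsto e*N'$, and by Proposition~\ref{p:interplay-weak-closed-idempotents}(a) applied to the closed idempotent $e$ in $\sD_{G'}(G')$, the latter is left adjoint to the inclusion $e\sD_{G'}(G')\into\sD_{G'}(G')$. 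Meanwhile $\Ig$ is right adjoint to $\operatorname{Res}^G_{G'}$ (Def.~\ref{d:induction}). Chaining these adjunctions, the \emph{right adjoint} of $N\mapsto e*N:\sD_G(G)\rar{}e\sD_{G'}(G')$ is $M\mapsto \Ig M$ (restricted to $e\sD_{G'}(G')$, and composed with the inclusion $e\sD_{G'}(G')\into\sD_{G'}(G')$ which is fully faithful, hence has a left adjoint given by $e*-$; the two adjunctions compose). So the functor $\ig\bigl\lvert_{e\sD_{G'}(G')}:e\sD_{G'}(G')\rar{}f\sD_G(G)$ agrees, up to the isomorphism $\ig(e*N)\cong f*N$, with a functor whose relation to the inclusion $f\sD_G(G)\into\sD_G(G)$ we must pin down: it is left adjoint precisely when $\ig$ agrees with $\Ig$ on $e\sD_{G'}(G')$.

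More concretely: for $N\in\sD_G(G)$ and $X\in f\sD_G(G)$, I would compute $\Hom_{\sD_G(G)}(f*N, X)$. Write $X\cong\ig M$ for $M\in e\sD_{G'}(G')$ (possible by Lemma~\ref{l:ind-faithful}), so we want to compare $\Hom(f*N,X)$ with $\Hom(N,X)$. Using Lemma~\ref{l:auxil2}, the composite $e*(\ig M)\rar{}e*M$ coming from $\can_M$ and $\eta_M$ is an isomorphism; this lets one replace $\Hom$ into $X=\ig M$ by $\Hom$ into $\Ig M$ exactly when $\can_M:\ig M\rar{}\Ig M$ is an isomorphism. Thus if (i) holds, then for all $M$ we may replace $\ig M$ by $\Ig M$, and then adjunction $(\operatorname{Res}^G_{G'},\Ig)$ plus the adjunction $(e*-, \iota)$ on the $G'$-side gives $\Hom(f*N,\ig M)\cong\Hom(e*N, e*M)\cong\Hom(N,\ig M)$ functorially, which is exactly the adjunction making $N\mapsto f*N$ left adjoint to the inclusion; then Proposition~\ref{p:interplay-weak-closed-idempotents}(b) yields that $f$ is closed, i.e.\ (ii). Conversely, if (ii) holds, Proposition~\ref{p:interplay-weak-closed-idempotents}(a) says $N\mapsto f*N$ is left adjoint to the inclusion $f\sD_G(G)\into\sD_G(G)$; chasing the resulting uniqueness of adjoints against the description of $\ig M\cong f*M$ and comparing with the $\Ig$-adjunction forces $\can_M$ to be an isomorphism for all $M\in e\sD_{G'}(G')$, which is (i).

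\textbf{Main obstacle.} The delicate point is the bookkeeping of the various adjunctions and the precise sense in which $\ig\bigl\lvert_{e\sD_{G'}(G')}$ ``is'' or ``fails to be'' a left adjoint: one must be careful that $\ig$ lands in $f\sD_G(G)$ but the target category of the hoped-for adjunction (in Prop.~\ref{p:interplay-weak-closed-idempotents}) is $f\sD_G(G)$ with inclusion into $\sD_G(G)$, whereas $\Ig$ naturally lands in $\sD_G(G)$ without any idempotent truncation. Reconciling this — i.e.\ checking that the relevant unit/counit morphisms are the expected ones and that ``$\can_M$ is an isomorphism'' is exactly the obstruction — is where Lemmas~\ref{l:auxil1} and~\ref{l:auxil2} (imported from \cite{characters}) do the real work, and where one has to verify compatibility of the isomorphism $f*N\cong\ig(e*N)$ with the canonical arrows rather than merely its existence.
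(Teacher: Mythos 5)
Your overall strategy matches the paper's: reduce to Proposition~\ref{p:interplay-weak-closed-idempotents} via Lemmas~\ref{l:auxil1} and~\ref{l:auxil2}, and chain the $(\operatorname{Res}, \Ig)$ adjunction with the $(e*{-},\iota)$ adjunction on $e\sD_{G'}(G')$. The $(i)\Rightarrow(ii)$ direction is essentially right, though you should make explicit why the composition is fully faithful: by Lemma~\ref{l:auxil2} and assumption $(i)$, the counit $e*(\Ig M)\rar{}M$ is an isomorphism for $M\in e\sD_{G'}(G')$, which is exactly the criterion for the right adjoint to be fully faithful.

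The $(ii)\Rightarrow(i)$ direction has a genuine gap. You write that ``chasing the resulting uniqueness of adjoints \dots forces $\can_M$ to be an isomorphism,'' but uniqueness of adjoints only produces \emph{some} isomorphism of functors $\ig\bigl\lvert_{e\sD_{G'}(G')}\cong\Ig\bigl\lvert_{e\sD_{G'}(G')}$, namely the one compatible with the adjunction units and counits. The arrow $\can_M$ is \emph{not} defined via any adjunction: it comes from the natural transformation $\pr_{2,!}\rar{}\pr_{2,*}$, and there is no a priori reason it coincides with the abstract isomorphism produced by uniqueness of adjoints. You flag this issue yourself as the ``main obstacle,'' but you do not resolve it, and it is precisely here that the argument requires a separate idea. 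The paper's route is: first establish that $\ig M$ and $\Ig M$ are isomorphic \emph{objects} for $M\in e\sD_{G'}(G')$ (by comparing left adjoints after composing with the equivalence $\ig$, using that $f*f*N\cong f*N$ because $f$ is now known closed); then observe that the first arrow in the composition \eqref{e:compos} has a left inverse (the full composition is an isomorphism by Lemma~\ref{l:auxil2}), and that its source $e*(\ig M)$ and target $e*(\Ig M)$ are isomorphic; finally invoke the fact that in a triangulated $\ql$-linear category with finite-dimensional $\Hom$'s, any morphism between isomorphic objects admitting a left inverse is already an isomorphism. Transporting back across the equivalence $e*{-}:f\sD_G(G)\rar{\sim}e\sD_{G'}(G')$ then gives that $\can_M$ itself is an isomorphism. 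Without this (or an alternative to it), your $(ii)\Rightarrow(i)$ argument does not go through.
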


Clearly (i)$\Rightarrow$(i$'$). The implications
(i$'$)$\Rightarrow$(ii)$\Rightarrow$(i) will be proved in
\S\ref{ss:first'-implication}--\ref{ss:second'-implication} using
the adjointness between $\Ig :\sD_{G'}(G')\rar{}\sD_G(G)$ and
$\Rg:\sD_G(G)\rar{}\sD_{G'}(G')$, see
Corollary~\ref{c:induction-adjunctions}.

\begin{rem}
Only the implication (ii)$\Rightarrow$(i) will be used in the proofs
in Section \ref{s:proofs}.
\end{rem}

\subsection{Proof of the implication (i$'$)$\Rightarrow$(ii)} \label{ss:first'-implication}
Let $\e_G\in\sD_G(G)$, $\e_{G'}\in\sD_{G'}(G')$ be the unit objects.
Fix an idempotent arrow $\Rg (\e_G)=\e_{G'}\rar{}e$. By adjunction
(see Corollary~\ref{c:induction-adjunctions}), we get a morphism
$\theta : \e_G\rar{}\Ig e$. Let us prove that $\varphi^{-1}\theta :\e\rar{}f$ is an
idempotent arrow. It suffices to construct a morphism

\begin{equation}  \label{e:modulestructure}
\mu :f*\Ig e\rar{} f
\end{equation}
such that the compositions
\begin{equation}  \label{e:comp1}
f*\e_G\xrar{\ \ \id_f*\theta\ \ } f*\Ig e\rar{\mu} f
\end{equation}
and
\begin{equation}  \label{e:comp2}
f*f \xrar{\ \ \id_f*\varphi\ \ } f*\Ig e\rar{\mu} f
\end{equation}
are isomorphisms (indeed, looking at
\eqref{e:comp2} we see that $\mu$ is an isomorphism, and then looking at
\eqref{e:comp1} we see that $\id_f*\theta :f*\e_G\rar{}f*\Ig e$ is an isomorphism).

\mbr

We will construct \eqref{e:modulestructure} and prove the invertibility of
\eqref{e:comp1} and \eqref{e:comp2} without assuming that $\varphi$ is an isomorphism,
By Lemma~\ref{lemma3}, there is a functorial isomorphism
\begin{equation}   \label{e:5}
f*N\rar{\simeq}\ig(e*\Rg N), \quad N\in\sD_G(G).
\end{equation}
So to construct \eqref{e:modulestructure}, it suffices to define a morphism
$e*\Rg\Ig e\rar{} e$. We define it to be the composition
\[
e*\Rg\Ig e\rar{} e*e\iso e,
\]
where the first morphism comes from the adjunction $\Rg\Ig \rar{}\id$ and the second one from
the idempotent arrow $\e_{G'}\rar{}e$. Using \eqref{e:5}, it is easy to see that
\eqref{e:comp1} is an isomorphism. To show that \eqref{e:comp2} is an isomorphism,
use \eqref{e:5} and apply Lemma~\ref{lemma4}  to $M=e$.

\subsection{Proof of the implication (ii)$\Rightarrow$(i)}
\label{ss:second'-implication}
We keep the notation of Proposition \ref{p:when-ind-closed} and of
\S\ref{ss:idempotents-setup}. We now assume that both $e$ and $f$
are closed idempotents. We must prove property (i) in Proposition
\ref{p:when-ind-closed}.

\begin{lem}  \label{l:1}
If $M\in e\sD_{G'}(G')$ then $\Ig M\in f\sD_G(G)$.
\end{lem}

\begin{proof}
It suffices to show that for every $N\in\sD_G(G)$ the map
\begin{equation}   \label{e:bij1}
\Hom (f*N,\Ig M)\rar{} \Hom (N,\Ig M)
\end{equation}
induced by an idempotent arrow $\e_G\rar{}f$ is bijective. Then the
lemma will follow by applying the implication (ii)$\Rightarrow$(i)
of Proposition \ref{p:idempotent-monads}(b).

\mbr

By Corollary~\ref{c:induction-adjunctions}(i), \[\Hom (N,\Ig M)=\Hom
(\Rg N,M).\] Since $M\in e\sD_{G'}(G')$ an idempotent arrow
$\e_{G'}\rar{}e$ induces an isomorphism \[\Hom (e*\Rg N,M)\iso\Hom
(\Rg N,M)\] by
Proposition~\ref{p:interplay-weak-closed-idempotents}(a). Thus we
get a functorial isomorphism
\begin{equation} \label{e:bij2}
\Hom (N,\Ig M)\iso
\Hom (e*\Rg N,M), \quad M\in e\sD_{G'}(G'), N\in\sD_G(G).
\end{equation}
So to prove the bijectivity of \eqref{e:bij1}, it suffices to show
that the morphism \[e*\Rg N\rar{}e*\Rg (f*N)\] induced by an
idempotent arrow $\e_G\rar{}f$ is an isomorphism. Let
$\overline{e}\in\sD_{G'}(G)$ be the extension of $e\in\sD_{G'}(G')$
by zero, then it suffices to show that the morphism
$\overline{e}*N\rar{}\overline{e}*f*N$ is an isomorphism. This is
clear because by \eqref{e:ef=e}, $\overline{e}\in
f\sD_{G'}(G)=\sD_{G'}(G)f$ and therefore the morphism
$\overline{e}\rar{}\overline{e}*f$ is an isomorphism.
\end{proof}

Now let us prove that for any $M\in e\sD_{G'}(G')$ the arrow
$\can_M:\ig M\rar{}\Ig M$ is an isomorphism\footnote{The idea of the
argument we present below was borrowed from a proof of the result
that Fourier-Deligne transform commutes with Verdier duality, which
was explained to us by Dennis Gaitsgory and is reproduced in the
appendix on the Fourier-Deligne transform in \cite{intro}.}. By
Lemmas~\ref{l:1} and \ref{lemma2}(a), both $\Ig$ and $\ig$ can be
considered as functors from $e\sD_{G'}(G')$ to $f\sD_{G}(G)$. So it
suffices to show that for every $N\in f\sD_{G}(G)$ the map
\begin{equation}   \label{e:bij3}
\Hom (N,\ig M)\rar{}\Hom (N,\Ig M) 
\end{equation}
is bijective.

\mbr

Fix an idempotent arrow $\pi:\e_{G'}\rar{}e$.
By Lemma~\ref{lemma2}(d) and Remark \ref{r:induction-inverse-explicit},
$\ig :e\sD_{G'}(G')\rar{}f\sD_{G}(G)$ is an
equivalence whose quasi-inverse is canonically isomorphic to the functor
$N\longmapsto e*\Rg N$. Thus we get a canonical isomorphism
\begin{equation}   \label{e:bij4}
\Hom (N,\ig M)\iso\Hom (e*\Rg N,M).
\end{equation}

Let $h:\Hom (N,\ig M)\rar{}\Hom (e*\Rg N,M)$ be the composition of
\eqref{e:bij3} with \eqref{e:bij2}. To prove that the map
\eqref{e:bij3} is bijective, it suffices to show that $h$ is. But
$h$ equals \eqref{e:bij4}: both maps take $\be\in\Hom (N,\ig M)$ to
the composition
\begin{eqnarray*}
e*\Rg N && \xrar{\id_e*\Rg (\be )} e*\Rg\ig M\xrar{ \id_e*\Rg
(\can_M )} e*\Rg\Ig M \\ && \xrar{\ \ \id_e*\eta_M\ \ } e*M\xrar{\ \
(\pi*\id_M)^{-1}\ \ } M.
\end{eqnarray*}



\section{Properties of averaging functors}\label{s:averaging}

Throughout this section, $k$ denotes an algebraically closed field
of characteristic $p>0$, and $\ell$ denotes a prime different from
$p$. We also fix a perfect unipotent group $G$ over $k$ and a closed
subgroup $G'\subset G$, with the exception of \S\ref{ss:averaging-duality}, where $G$ and $G'$ are assumed to be ordinary unipotent algebraic groups over $k$.

\mbr

Our goal is to establish certain properties
of the functors $\avg$ and $\ig$ that were introduced in
\S\ref{ss:induction-functors} (they will be used in the proofs of
the main theorems of our work, given in \S\ref{s:proofs}) and to
prove Lemma \ref{lemma4}.

\subsection{Compatibility of $\cF'$ with
$\avg$}\label{ss:f-d-compatibility-averaging} Let us fix a perfect
connected unipotent group $H$ over $k$ equipped with a $G$-action by
\emph{group automorphisms}. The construction of the functor $\cF'$
given in \S\ref{ss:fourier-deligne-transform} easily generalizes to
the equivariant setting (note that the universal local system $\sE$
on $H\times H^*$ has a canonical $G$-equivariant structure), which
yields functors
\[
\cF'^{G'}:\sD_{G'}(H^*)\rar{}\sD_{G'}(H) \qquad\text{and}\qquad
\cF'^{G}: \sD_{G}(H^*)\rar{}\sD_G(H)
\]
Our goal is to prove that these functors are compatible with the
averaging functors $\avg:\sD_{G'}(H)\to\sD_G(H)$ and
$\avg:\sD_{G'}(H^*)\to\sD_G(H^*)$ (see Proposition
\ref{p:f-d-compatibility-averaging}).

\begin{defin}\label{d:avg-pushforward}
Let $X$ and $Y$ be perfect quasi-algebraic schemes over $k$ equipped
with a $G$-action, and let $f:X\rar{}Y$ be a $G$-morphism. We define
an isomorphism of functors\footnote{We are using a slight abuse of
notation. On the left hand side of \eqref{e:avg-pushforward}, $f_!$
is viewed as a functor from $\sD_{G'}(X)$ to $\sD_{G'}(Y)$, and
$\avg$ is computed on $X$. On the right hand side, $f_!$ is viewed
as a functor from $\sD_{G}(X)$ to $\sD_{G}(Y)$, and $\avg$ is
computed on $Y$.}
\begin{equation}\label{e:avg-pushforward}
f_!\circ\avg \rar{\simeq} \avg\circ f_! : \sD_{G'}(X)\rar{}\sD_G(Y)
\end{equation}
as follows. The Cartesian square
\[
\xymatrix{
 X \ar[d]_f \ar[rr]^{i_X\ \ \ \ \ \ \ } & & (G/G')\times X \ar[d]^{\id\times f} \\
 Y \ar[rr]^{i_Y\ \ \ \ \ \ \ } & & (G/G')\times Y
   }
\]
where $i_X(x)=(\overline{1},x)$ and $i_Y(y)=(\overline{1},y)$
determines an isomorphism of functors\footnote{We are using the
definition of $\avg$ given in \S\ref{ss:induction-functors}. In
particular, $G$ acts on $(G/G')\times X$ and on $(G/G')\times Y$
diagonally, via the translation action on $G/G'$ and the given
action on $X$ and $Y$. The functors $\Phi_X:\sD_G((G/G')\times
X)\rar{}\sD_{G'}((G/G')\times X)$ and $\Phi_Y:\sD_G((G/G')\times
Y)\rar{}\sD_{G'}((G/G')\times Y)$ are the forgetful ones.}
\[
i_Y^*\circ\Phi_Y\circ(\id\times f)_! \rar{\simeq} f_!\circ
i_X^*\circ\Phi_X : \sD_G((G/G')\times X)\rar{}\sD_{G'}(Y),
\]
which induces an isomorphism of functors
\[
(\id\times f)_!\circ (i_X^*\circ\Phi_X)^{-1} \rar{\simeq}
(i_Y^*\circ\Phi_Y)^{-1}\circ f_! : \sD_{G'}(X) \rar{}
\sD_G((G/G')\times Y).
\]
Composing the latter isomorphism with $\pr^Y_{2!}$, where
$\pr^Y_2:(G/G')\times Y\rar{}Y$ is the second projection, defines
\eqref{e:avg-pushforward}.
\end{defin}

\begin{defin}\label{d:avg-pullback}
Let $X$ and $Y$ be perfect quasi-algebraic schemes over $k$ equipped
with a $G$-action, and let $f:X\rar{}Y$ be a $G$-morphism. We define
an isomorphism of functors\footnote{We use an abuse of notation
similar to that employed in Definition \ref{d:avg-pushforward}.}
\begin{equation}\label{e:avg-pullback}
f^*\circ\avg \rar{\simeq} \avg\circ f^* : \sD_{G'}(Y)\rar{}\sD_G(X)
\end{equation}
as follows. With the notation of Definition \ref{d:avg-pushforward},
we have an isomorphism
\[
i_X^*\circ\Phi_X\circ(\id\times f)^* \rar{\simeq} f^*\circ
i_Y^*\circ\Phi_Y : \sD_G((G/G')\times Y)\rar{}\sD_{G'}(X),
\]
which induces an isomorphism
\[
(\id\times f)^*\circ (i_Y^*\circ\Phi_Y)^{-1} \rar{\simeq}
(i_X^*\circ\Phi_X)^{-1}\circ f^* : \sD_{G'}(Y) \rar{}
\sD_G((G/G')\times X).
\]
Composing the latter isomorphism with $\pr^X_{2!}$, where
$\pr^X_2:(G/G')\times X\rar{}X$ is the second projection, and
applying the proper base change theorem to the Cartesian square
\[
\xymatrix{
 (G/G')\times X \ar[d]_{\id\times f} \ar[rr]^{\ \ \ \ \ \ \ \pr^X_2} & & X \ar[d]^f \\
 (G/G')\times Y \ar[rr]^{\ \ \ \ \ \ \ \pr_2^Y} & & Y
   }
\]
defines \eqref{e:avg-pullback}.
\end{defin}

\begin{defin}\label{d:avg-tensor-product}
Let $X$ be a perfect quasi-algebraic scheme equipped with a
$G$-action, and let $L\in\sD_G(X)$. We define functorial
isomorphisms
\begin{equation}\label{e:avg-tensor-product}
L\tens\avg(M)\rar{\simeq}\avg(F(L)\tens M), \qquad M\in\sD_{G'}(X),
\end{equation}
where $F:\sD_G(X)\rar{}\sD_{G'}(X)$ is the forgetful functor, as
follows.

\mbr

With the notation of Remark \ref{r:factorization}, we have
$\pr_2\circ i=\id_X$, which yields functorial isomorphisms
\[(i^*\circ\Phi)((\pr_2^*L)\tens
N)\rar{\simeq}F(L)\tens((i^*\circ\Phi)(N))\] for all
$N\in\sD_G((G/G')\times X)$. Thus we obtain functorial isomorphisms
\[
(\pr_2^*L)\tens\bigl((i^*\circ\Phi)^{-1}(M)\bigr)\rar{\simeq}(i^*\circ\Phi)^{-1}(F(L)\tens
M).
\]
Applying $\pr_{2!}$ to both sides and using the projection formula
yields \eqref{e:avg-tensor-product}.
\end{defin}

\begin{prop}\label{p:f-d-compatibility-averaging} Let $H$ be as above. There is a functorial family of isomorphisms
\[\cF'^G\bigl(\avg(M)\bigr)\rar{\simeq}\avg\bigl(\cF'^{G'}(M)\bigr)\] in $\sD_G(H)$ for all
$M\in\sD_{G'}(H^*)$.
\end{prop}
\begin{proof}
Use Definitions \ref{d:avg-pushforward}, \ref{d:avg-pullback} and
\ref{d:avg-tensor-product}.
\end{proof}

\subsection{Weak semigroupal structure on
$\av_{G/G'}$}\label{ss:averaging-semigroupal} In this subsection we
let $H$ be any perfect quasi-algebraic group over $k$ equipped with
an action of $G$ by group automorphisms. Note that both
$\sD_{G'}(H)$ and $\sD_G(H)$ are monoidal categories with respect to
the functor of convolution with compact supports. The goal of this
subsection is to construct a weak semigroupal structure
(Def.~\ref{d:weak-semigroupal}(a)) on the functor
$\av_{G/G'}:\sD_{G'}(H)\rar{}\sD_G(H)$.

\mbr

One ingredient in the construction is Definition
\ref{d:avg-pushforward}. Another ingredient is

\begin{defin}\label{d:avg-external}
Let $X$ and $Y$ be perfect quasi-algebraic schemes over $k$ equipped
with a $G$-action, and let $G$ act on $X\times Y$ diagonally. We
construct a functorial collection of morphisms\footnote{Here,
$\boxtimes$ denotes the external tensor product, viewed either as a
functor from $\sD_{G'}(X)\times\sD_{G'}(Y)$ to $\sD_{G'}(X\times Y)$
or as a functor from $\sD_G(X)\times\sD_G(Y)$ to $\sD_G(X\times
Y)$.} (not necessarily isomorphisms)
\begin{equation}\label{e:avg-external}
(\avg M)\boxtimes(\avg N) \rar{} \avg(M\boxtimes N)
\end{equation}
for all $M\in\sD_{G'}(X)$ and $N\in\sD_{G'}(Y)$ as follows.
 \sbr
\begin{enumerate}[1)]
\item Let $i_X$, $i_Y$ be as in Definition \ref{d:avg-pushforward},
and let $i_{X\times Y}:X\times Y\rar{}(G/G')\times X\times Y$ by
given by $(x,y)\mapsto(\overline{1},x,y)$. As before, $G$ acts
diagonally on all product $k$-schemes appearing in the definition.
 \sbr
\item Define $\De:(G/G')\times X\times Y \rar{} (G/G')\times
X\times(G/G')\times Y$ by $(g,x,y)\mapsto(g,x,g,y)$.
 \sbr
\item Observe that we have a natural isomorphism\footnote{For any $G$-scheme $Z$, we have the forgetful functor
$\Phi_Z:\sD_G((G/G')\times Z)\rar{}\sD_{G'}((G/G')\times Z)$.}
\[
\De^* \bigl(
(i_X^*\circ\Phi_X)^{-1}(M)\boxtimes(i_Y^*\circ\Phi_Y)^{-1}(N) \bigr)
\rar{\simeq} (i_{X\times Y}^*\circ\Phi_{X\times Y})^{-1}(M\boxtimes
N),
\]
which gives rise to a morphism
\begin{eqnarray*}
(i_X^*\circ\Phi_X)^{-1}(M)\boxtimes(i_Y^*\circ\Phi_Y)^{-1}(N)
&\rar{}& \De_*(i_{X\times Y}^*\circ\Phi_{X\times Y})^{-1}(M\boxtimes
N) \\
&=& \De_!(i_{X\times Y}^*\circ\Phi_{X\times Y})^{-1}(M\boxtimes N).
\end{eqnarray*}
 \sbr
\item Write $\pr:(G/G')\times X \times(G/G')\times Y \rar{}X\times
Y$ for the projection. Applying $\pr_!$ to the last morphism defines
\eqref{e:avg-external}.
\end{enumerate}
\end{defin}

Now we can give the main definition of this subsection:

\begin{defin}[Weak semigroupal structure on
$\av_{G/G'}$]\label{d:semigroupal-averaging} Given
$M,N\in\sD_{G'}(H)$, we define a morphism
\[
\phi(M,N):(\avg M)*(\avg N)\rar{}\avg(M*N)
\]
as follows. Let $\mu:H\times H\rar{}H$ be the multiplication
morphism. Applying $\mu_!$ to the morphism constructed in Definition
\ref{d:avg-external}, we obtain a morphism
\[
(\avg M)*(\avg N)\rar{}\mu_!\bigl(\avg(M\boxtimes N)\bigr).
\]
We define $\phi(M,N)$ as the composition of the latter morphism and
the isomorphism
\[
\mu_!\bigl(\avg(M\boxtimes N)\bigr) \rar{\simeq}
\avg\bigl(\mu_!(M\boxtimes N)\bigr) \overset{\text{def}}{=}
\avg(M*N)
\]
constructed in Definition \ref{d:avg-pushforward}. It is
straightforward to check that $\phi$ is a weak semigroupal structure
on the functor $\avg:\sD_{G'}(H)\rar{}\sD_G(H)$.
\end{defin}

\subsection{The functor $\avg$ as a bimodule
functor}\label{ss:averaging-module} We remain in the setup of
\S\ref{ss:averaging-semigroupal}.
\begin{prop}
Let $X$ and $Y$ be perfect quasi-algebraic schemes over $k$ equipped
with a $G$-action, and let $G$ act on $X\times Y$ diagonally. There
exist isomorphisms
\begin{equation}\label{e:avg-module-external}
(\avg M)\boxtimes N \rar{\simeq} \avg(M\boxtimes N),
\end{equation}
functorial with respect to $M\in\sD_{G'}(X)$ and $N\in\sD_G(Y)$.
\end{prop}
\begin{proof}
Let us use the notation of Definition \ref{d:avg-external} and
observe that
\[
i_{X\times Y} = i_X\times\id_Y : X\times Y \into (G/G')\times
X\times Y.
\]
This implies that for $M\in\sD_{G'}(X)$, $N\in\sD_G(Y)$, there are
functorial isomorphisms
\[
\bigl((i_X^*\circ\Phi_X)^{-1}(M)\bigr)\boxtimes N \rar{\simeq}
(i_{X\times Y}^*\circ\Phi_{X\times Y})^{-1}(M\boxtimes N).
\]
Similarly, if $\pr_2^X:(G/G')\times X\rar{}X$ and $\pr_2^{X\times
Y}:(G/G')\times X\times Y\rar{}X\times Y$ are the natural
projections, we have $\pr_2^{X\times Y}=\pr_2^X\times\id_Y$. This
defines \eqref{e:avg-module-external}.
\end{proof}

\begin{cor}\label{c:averaging-module}
Let $H$ be a perfect quasi-algebraic group over $k$ equipped with an
action of $G$ by group automorphisms. There exist isomorphisms
\begin{equation}\label{e:avg-module}
(\avg M)*N \rar{\simeq} \avg(M*N),
\end{equation}
\begin{equation}\label{e:avgmodule}
N*(\avg M) \rar{\simeq} \avg(N*M),
\end{equation}
functorial with respect to $M\in\sD_{G'}(H)$ and $N\in\sD_G(H)$.
\end{cor}
\begin{proof}
As in Definition \ref{d:semigroupal-averaging}, let $\mu:H\times
H\rar{}H$ be the multiplication morphism and apply $\mu_!$ to the
isomorphism \ref{e:avg-module-external}. Composing the result with
the isomorphism \eqref{e:avg-pushforward} constructed for $f=\mu$
defines \eqref{e:avg-module}. The construction of \eqref{e:avgmodule}
is similar.
\end{proof}

One can check that isomorphisms
\eqref{e:avg-module}-\eqref{e:avgmodule} define on $\avg
:\sD_{G'}(H)\to\sD_G(H)$ the structure of a $\sD_G(H)$-bimodule
functor.

\subsection{Averaging functors and duality}\label{ss:averaging-duality}
In this subsection we fix an ordinary (as opposed to perfect) unipotent group $G$ over $k$ and a closed subgroup $G'\subset G$. Our goal is to establish Corollary~\ref{c:induction-duality}, which will be used
in \S\ref{ss:proof-p:duality-canonical} below.

\mbr

Let $X$ be a scheme of finite type over $k$ equipped with a $G$-action, and let
$\bD_X:\sD_G(X)\rar{}\sD_G(X)$ denote the Verdier duality functor. When viewed as a functor $\sD_{G'}(X)\rar{}\sD_{G'}(X)$, the Verdier duality functor will be denoted by $\bD_X'$.

\begin{lem}\label{l:averaging-duality}
There is a natural isomorphism of functors from $\sD_{G'}(X)$ to $\sD_G(X)$,
\begin{equation}\label{e:averaging-duality-nat-isom}
\av_{G/G'}[2d](d) \rar{\simeq} \bD_X\circ\Av_{G/G'}\circ\bD'_X ,
\end{equation}
where $d=\dim(G/G')$.
\end{lem}

\begin{proof}
Recall that $\Av_{G/G'}$ is right adjoint to the forgetful functor $F:\sD_G(X)\rar{}\sD_{G'}(X)$
(see Lemma \ref{l:averaging}). Since $\bD_X'\circ F\rar{\simeq}F\circ\bD_X$ the functor
$\bD_X\circ\Av_{G/G'}\circ\bD'_X$ is left adjoint to $F$. But $\av_{G/G'}$ is also left adjoint to $F$
(see Lemma \ref{l:averaging}).
\end{proof}

In Definition \ref{d:duality} we constructed a contravariant functor $\bD_G^-:\sD_G(G)\rar{}\sD_G(G)$.

\begin{cor}\label{c:induction-duality}
There is a natural isomorphism of functors $\sD_{G'}(G')\rar{}\sD_G(G)$,
\begin{equation}\label{e:induction-duality-nat-isom}
\ig[2d](d) \rar{\simeq} \bD^-_G\circ\Ig\circ\bD^-_{G'} ,
\end{equation}
where $d=\dim(G/G')$.
\end{cor}

\begin{proof}
Use Definition~\ref{d:induction} and Lemma~\ref{l:averaging-duality}.
%
%
%
\end{proof}

\subsection{Proof of Lemma \ref{lemma4}}\label{ss:proof-lemma4}
The last assertion of the lemma follows from the first one by taking
$M=e$. In turn, the first assertion results from the more general
\begin{prop}\label{p:lemma4}
Let $G$ be a perfect unipotent group over $k$, let $G'\subset G$ be
a closed subgroup, and let $N\in\sD_{G'}(G)$ and $L\in\sD(G)$ be
such that
\begin{equation}  \label{e:6-1}
L*\de_g*N=0 \mbox{\; for all } g\in G(k), g\not\in G'(k).
\end{equation}
Then the composition
\begin{equation}  \label{e:6-2}
F\bigl(\avg N\bigr) \rar{} F\bigl(\Avg N\bigr) \rar{} N
\end{equation}
becomes an isomorphism after convolution\footnote{Here convolution is interpreted as a functor $\sD(G)\times\sD_{G'}(G)\rar{}\sD(G)$.} with $L$ on the left; here
\[F:\sD_G(G)\rar{}\sD_{G'}(G)\] is the forgetful functor, the first
morphism in \eqref{e:6-2} is induced by the canonical morphism
$\avg\rar{}\Avg$, and the second one is the adjunction morphism.
\end{prop}

Indeed, if $e\in\sD_{G'}(G')$ is a weak idempotent that satisfies
the geometric Mackey condition with respect to $G$, then for every
$M\in e\sD_{G'}(G')$ and every $g\in G(k)$ such that $g\not\in
G'(k)$ we have $\overline{e}*\de_g*\overline{M}\cong
\overline{e}*\de_g*\overline{e}*\overline{M}=0$, so the proposition
can be applied to $L=\overline{e}$ and $N=\overline{M}$, which
yields the first assertion of Lemma \ref{lemma4}.

\mbr

The proposition is proved in \S\ref{sss:proof-p:lemma4} below. As a
first step, we will find a more explicit description of the
morphisms \eqref{e:6-2}
in the slightly more general setting of~\S\ref{ss:explicitdescription}.

\subsubsection{} \label{ss:explicitdescription}
We fix a perfect quasi-algebraic scheme $X$ over $k$ and an action
of $G$ on $X$. Then we have the functors
$\avg,\Avg:\sD_{G'}(X)\rar{}\sD_G(X)$ defined by
\eqref{e:averaging}--\eqref{e:!averaging} and a canonical morphism
$\avg\rar{}\Avg$. By Lemma \ref{l:averaging}, $\Avg$ is right
adjoint to the forgetful functor $F:\sD_G(X)\rar{}\sD_{G'}(X)$, so
we get morphisms
\begin{equation}\label{e:(1)}
F\circ\avg\rar{}F\circ\Avg\rar{}\Id_{\sD_{G'}(X)}.
\end{equation}
They will be described explicitly in Lemmas \ref{l:lem1} and
\ref{l:lem2} below.

\subsubsection{} First let us explicitly describe the functors
$F\circ\avg$, $F\circ\Avg$ and the morphism
$F\circ\avg\rar{}F\circ\Avg$ in terms of the $G'$-equivariant
embedding $i;X\rar{}(G/G')\times X$, $i(x)=(\overline{1},x)$ and the
projection $\pr_2:(G/G')\times X\rar{}X$.

\mbr

We have commutative diagrams
\begin{equation}\label{e:(2)}
\xymatrix{
  & & \sD_G((G/G')\times X) \ar[d]^\Phi \ar[dll]^\sim_{i^*\circ\Phi} \ar[rr]^{\ \ \ \ \ \ \ \pr_{2*}} & & \sD_G(X) \ar[d]^F \\
  \sD_{G'}(X) & & \sD_{G'}((G/G')\times X) \ar[ll]_{i^*} \ar[rr]^{\ \ \ \ \ \ \ \pr_{2*}} & & \sD_{G'}(X)
   }
\end{equation}
and
\begin{equation}\label{e:(3)}
\xymatrix{
  & & \sD_G((G/G')\times X) \ar[d]^\Phi \ar[dll]^\sim_{i^*\circ\Phi} \ar[rr]^{\ \ \ \ \ \ \ \pr_{2!}} & & \sD_G(X) \ar[d]^F \\
  \sD_{G'}(X) & & \sD_{G'}((G/G')\times X) \ar[ll]_{i^*} \ar[rr]^{\ \ \ \ \ \ \ \pr_{2!}} & & \sD_{G'}(X)
   }
\end{equation}
in which $F$ and $\Phi$ are the forgetful functors and
$i^*\circ\Phi:\sD_G((G/G')\times X)\rar{}\sD_{G'}(X)$ is an
equivalence.

\begin{lem}\label{l:lem1}
The functors $F\circ\avg$ and $F\circ\Avg$ canonically identify with
the compositions
\begin{equation}\label{e:(4)}
\sD_{G'}(X)\xrar{\ \ \Phi(i^*\Phi)^{-1}\ \ } \sD_{G'}((G/G')\times
X)\xrar{\ \ \pr_{2!}\ \ }\sD_{G'}(X)
\end{equation}
and
\begin{equation}\label{e:(5)}
\sD_{G'}(X)\xrar{\ \ \Phi(i^*\Phi)^{-1}\ \ } \sD_{G'}((G/G')\times
X)\xrar{\ \ \pr_{2*}\ \ }\sD_{G'}(X).
\end{equation}
The morphism $F\circ\avg\rar{}F\circ\Avg$ corresponds under these
identifications to the canonical morphism $\pr_{2!}\rar{}\pr_{2*}$.
\end{lem}

\begin{proof}
This immediately follows from the definitions
\eqref{e:averaging}--\eqref{e:!averaging} together with the
commutativity of the diagrams \eqref{e:(2)}--\eqref{e:(3)}.
\end{proof}

\subsubsection{} The diagram of $G'$-varieties
\begin{equation}\label{e:diagram}
X\overset{i}{\into} (G/G')\times X\rar{\pr_2}X, \qquad \pr_2\circ
i=\id_X,
\end{equation}
defines canonical morphisms
\begin{equation}\label{e:(6)}
\pr_{2!}\rar{}\pr_{2*}\rar{}i^*,
\end{equation}
where $\pr_{2!}$ and $\pr_{2*}$ are viewed as functors
$\sD_{G'}((G/G')\times X)\rar{}\sD_{G'}(X)$. Namely, the morphism
$\pr_{2*}\rar{}i^*$ in \eqref{e:(6)} is the composition
\begin{equation}\label{e:(7)}
\pr_{2*}\rar{}\pr_{2*}\circ i_*\circ i^* =(\pr_2\circ i)_*\circ
i^*=i^*
\end{equation}
and also the composition
\begin{equation}\label{e:(8)}
\pr_{2*}=(\pr_2\circ i)^*\circ\pr_{2*} =
i^*\circ\pr_2^*\circ\pr_{2*}\rar{}i^*.
\end{equation}

\begin{rem}  \label{r:thecomposition}
Formula \eqref{e:(7)} shows that the composition
$\pr_{2!}\rar{}\pr_{2*}\rar{}i^*$ equals the morphism
$\pr_{2!}\rar{}\pr_{2!}\circ i_*\circ i^*=\pr_{2!}\circ i_!\circ
i^*=i^*$.
\end{rem}

\begin{lem}\label{l:lem2}
Identify $F\circ\avg$ and $F\circ\Avg$ with the compositions
\eqref{e:(4)} and \eqref{e:(5)}. Then the diagram \eqref{e:(1)}
comes from the morphisms \eqref{e:(6)} and the equality
$i^*\Phi(i^*\Phi)^{-1}=\Id_{\sD_{G'}(X)}$.
\end{lem}

\begin{proof}
It suffices to show that the adjunction
$F\circ\Avg\rar{}\Id_{\sD_{G'}(X)}$ comes from the morphism
$\pr_{2*}\rar{}i^*$ defined by \eqref{e:(8)}. By definition,
\[
F\circ\Avg=i^*\Phi\circ\pr_2^*\circ\pr_{2^*}\circ(i^*\Phi)^{-1},
\]
and the adjunction $F\circ\Avg\rar{}\Id_{\sD_{G'}(X)}$ comes from
the adjunction $\pr_2^*\pr_{2*}\rar{}\Id$. It remains to consider
the commutative diagram
\[
\xymatrix{
  & \sD_G((G/G')\times X) \ar[d]^\Phi \ar[dl]^\sim_{i^*\circ\Phi}
  \ar[r]^{\ \ \ \ \ \ \ \pr_{2*}} & \sD_G(X) \ar[d]^F
  \ar[r]^{\pr_2^*\ \ \ \ \ \ \ }
  & \sD_G((G/G')\times X) \ar[d]^\Phi \\
  \sD_{G'}(X) & \sD_{G'}((G/G')\times X) \ar[l]_{i^*} \ar[r]^{\ \ \ \ \ \ \ \pr_{2*}}
   & \sD_{G'}(X) \ar[r]^{\pr_2^*\ \ \ \ \ \ \ } \ar[dr]^\sim_\Id & \sD_{G'}((G/G')\times
   X) \ar[d]^{i^*} \\
   & & & \sD_{G'}(X)
   }
\]
\end{proof}

\subsubsection{Proof of Proposition
\ref{p:lemma4}}\label{sss:proof-p:lemma4} In the proof we specialize
the earlier discussion to the case where $X=G$ and $G$ acts on
itself by conjugation. In particular, we will use the diagram
\eqref{e:diagram} in this setting.

\mbr

Define $\Nt\in\sD_{G'}((G/G')\times X)$ by
\begin{equation}  \label{e:Nt}
\Nt :=\Phi ((i^*\circ\Phi)^{-1} (N)).
\end{equation}
In view of Lemma \ref{l:lem2} and Remark~\ref{r:thecomposition}, we
can reformulate Proposition \ref{p:lemma4} as follows:

\mbr

\noindent {\sc Claim.} Under the assumptions of Proposition
\ref{p:lemma4}, the natural morphism
\[
\pr_{2!}\Nt\rar{}\pr_{2!}i_*i^*\Nt
\]
becomes an isomorphism after convolution with $L$ on the left.

\mbr

To prove the claim, let $U$ be the complement of $\{\overline{1}\}$
in $G/G'$, and let $j:U\times G\into(G/G')\times G$ denote the
inclusion map. The exact triangle
\[
j_!j^*\Nt=j_!j^!\Nt\rar{}\Nt\rar{}i_*i^*\Nt\rar{}j_!j^*\Nt[1]
\]
implies that it is enough to check that $L*\pr_{2!}j_!j^*\Nt=0$,
i.e.,
\begin{equation}\label{e:(star)}
L*\pi_! N'=0,
\end{equation}
where $\pi:=\pr_2\circ j:U\times G\rar{}G$ is the second projection
and  $N'=j^*\Nt$.

\mbr

\begin{lem}[Projection formula]\label{l:projection-formula}
We have
\begin{equation}\label{e:projection}
L*\pi_!N'\cong\pi_!(\pi^*L*N'),
\end{equation}
where on the right hand side we are using the convolution with
compact support\footnote{It is defined by a formula essentially
identical to \eqref{e:convol-S}, except that now $G$ plays the role
of $H$ and $U$ plays the role of $S$ (and the order of the factors
must be reversed) in \eqref{e:convol-S}.}
\[
\sD(U\times G)\times\sD(U\times G)\rar{}\sD(U\times G).
\]
\end{lem}

\begin{proof}
Let $\mu:G\times G\rar{}G$ denote the multiplication morphism.
Consider the commutative diagram
\[
\xymatrix{
  G\times U\times G \ar[d]_{\mu'} \ar[rr]^{\pi'} & & G\times G\ar[d]^\mu \\
  U\times G \ar[rr]^\pi & & G
   }
\]
where $\pi'(g_1,u,g_2)=(g_1,g_2)$ and $\mu'(g_1,u,g_2)=(u,g_1g_2)$.
By the K\"unneth formula,
\begin{eqnarray*}
L*\pi_!N' &\overset{\text{def}}{=}& \mu_! \left( L\boxtimes \pi_! N'
\right) \cong \mu_!\pi'_! \left( L\boxtimes N' \right) \\
&\cong& \pi_!\mu'_! \left( L\boxtimes N' \right)
\overset{\text{def}}{=} \pi_!(\pi^*L * N').
\end{eqnarray*}
\end{proof}

By Lemma~\ref{l:projection-formula}, to prove \eqref{e:(star)}, it
suffices to check that $\pi^*L*N'=0$. Equivalently, we must show
that for every $u\in U(k)$, the restriction of $\pi^*L*N'$ to
$\{u\}\times G\subset U\times G$ is equal to $0$.
This follows from \eqref{e:6-1}
together with
\begin{lem}
Let $i_u:G\into U\times G$ be given by $x\mapsto (u,x)$, and let
$g\in G(k)$ be any representative of $u$. Then
\[
i_u^*(\pi^*L*N')\cong L*\de_g*N*\de_{g^{-1}}.
\]
\end{lem}

\begin{proof} By the proper base change theorem,
\[
i_u^*(\pi^*L*N') \overset{\text{def}}{=} i_u^*\mu'_! \left(
L\boxtimes N' \right) \cong \mu_!\bigl(L\boxtimes i_u^*\Nt\bigr)
\cong L*(i_u^*\Nt).
\]
Define $\la_g:G/G'\rar{}G/G'$ and $c_g:G\rar{}G$ by $\la_g (x):=gx$,
$c_g (y):=gyg^{-1}$.
Then $i_u=(\la_g\times\id_G)\circ i$, whence
\begin{eqnarray*}
i_u^*\Nt &\cong& i^*(\la_g\times\id_G)^*\Nt \\
&\cong& i^*(\la_g\times\id_G)^*(\la_{g^{-1}}\times c_{g^{-1}})^*\Nt
\\
&\cong& i^*(\id\times c_{g^{-1}})^*\Nt\cong c_{g^{-1}}^*i^*\Nt\cong
c_{g^{-1}}^*N \overset{\text{def}}{=} \de_g*N*\de_{g^{-1}}
\end{eqnarray*}
(the second isomorphism uses the fact that
$\Nt\in\sD_{G'}((G/G')\times X)$ comes from a $G$-equivariant
complex $(i^*\circ\Phi)^{-1} (N)$, see \eqref{e:Nt}). The lemma
follows.
\end{proof}



\section{Proofs of the main results}\label{s:proofs}

Throughout this section, $k$ denotes an algebraically closed field
of characteristic $p>0$ and $\ell$ denotes a prime different from
$p$. Our goal is to prove the five main theorems
(\ref{t:properties-character-sheaves}, \ref{t:functional-dimension},
\ref{t:construction-L-packets}, \ref{t:idempotents} and
\ref{t:geometric-mackey}) and the propositions stated in
\S\ref{s:results}.

\mbr

The section is organized as follows. In
\S\ref{ss:weak-semigroupal} and \S\ref{ss:Heis-idemp-geom-Mackey} we
recall some results from \cite{characters}. In \S\ref{ss:stumbling_block}
we formulate a key compatibility lemma, which will be proved in Appendix~\ref{s:CFT}.
In \S\ref{ss:proof-p:uniqueness} we prove Proposition
\ref{p:uniqueness}. In \S\ref{ss:artin} we review the exactness
properties of pushforward and pullback functors $f_*,f_!,f^*$ and
induction functors $\ig,\Ig$ with respect to perverse
$t$-structures. In \S\ref{ss:key-proposition} we formulate a key
result from which the five main theorems are deduced in
\S\S\ref{ss:proof-t:idempotents}--\ref{ss:proof-t:properties-character-sheaves}
without difficulty. Propositions \ref{p:duality-canonical} and
\ref{p:duality} are proved in \S\ref{ss:proof-p:duality-canonical}
and \S\ref{ss:proof-p:duality}, respectively (the proof of
Proposition \ref{p:duality} uses Theorem
\ref{t:construction-L-packets}). Finally, the aforementioned key
result is proved in \S\ref{ss:proof-p:crucial}, using an auxiliary
proposition from \S\ref{ss:avg-closed-idemp} that relies on
Proposition \ref{p:f-d-compatibility-averaging} and an equivariant
version of Corollary \ref{c:FD-closed-idempotent}.

\mbr

We remark that the order in which the results of \S\ref{s:results} were formulated are different from the order in which they are proved here. On the other hand, the structure of the present section is linear: each argument we give relies only on the earlier proofs and/or the results of the preceding sections of the article.

\subsection{Weak semigroupal structure on the functor
$\ig$}\label{ss:weak-semigroupal}

\begin{lem}\label{lemma5}
Let $G$ be a perfect unipotent group over $k$, and let $G'\subset G$
be a closed subgroup.
 \sbr
\begin{enumerate}[$($a$)$]
\item The functor of induction with compact supports
$\ig:\sD_{G'}(G')\rar{}\sD_G(G)$ has a natural weak semigroupal
structure
\begin{equation}\label{e:semigroupal-structure}
\bigl(\ig M\bigr)*\bigl(\ig N\bigr) \rar{} \ig(M*N)
\end{equation}
$($the morphism is defined for all $M,N\in\sD_{G'}(G')${}$)$.
 \sbr
\item If $M,N\in\sD_{G'}(G')$ satisfy
$\overline{M}*\de_x*\overline{N}=0$ for all $x\in G(k)\setminus
G'(k)$, then \eqref{e:semigroupal-structure} is an isomorphism
$($where $\overline{M}\in\sD(G)$ is the extension of $M$ by zero to
$G$, and $\de_x$ denotes the delta-sheaf at $x${}$)$.
 \sbr
\item Suppose that $e\in\sD_{G'}(G')$ is a weak idempotent
satisfying the geometric Mackey condition with respect to $G$. If
$M,N\in e\sD_{G'}(G')$, then $\overline{M}*\de_x*\overline{N}=0$ for
all $x\in G(k)\setminus G'(k)$.
\end{enumerate}
\end{lem}

\begin{proof}
(a) If $\iota:G'\into G$ is the inclusion morphism, then, by
definition (cf.~\S\ref{ss:induction-functors}), we have
$\ig=\av_{G/G'}\circ\iota_*$, where $G$ acts on itself by
conjugation. The functor $\iota_*:\sD_{G'}(G')\rar{}\sD_{G'}(G)$ has
an obvious strong semigroupal structure. Combining it with the weak
semigroupal structure on $\avg$ constructed in Definition
\ref{d:semigroupal-averaging}, we obtain a weak semigroupal
structure on $\ig$.

\mbr

\noindent (b) It is not hard to check that the weak semigroupal
structure on $\ig$ constructed in part (a) coincides with that
defined in \cite[\S5.5.2]{characters}. Hence the desired assertion
follows from Proposition 5.11 of \emph{op.~cit.}

\mbr

\noindent (c) This follows immediately from the observation that
$\overline{M}\cong\overline{M}*\overline{e}$ and
$\overline{N}\cong\overline{e}*\overline{N}$ whenever $M,N\in
e\sD_{G'}(G')$, together with Definition
\ref{d:geom-mackey-condition}.
\end{proof}

\begin{rem}
In the situation of Lemma \ref{lemma5}(c), the object $\ig
e\in\sD_G(G)$ is a weak idempotent by Lemma \ref{lemma2}(a).
Moreover, Lemma~\ref{l:Hecke-monoidal},
Lemma~\ref{lemma2}(d), and Lemma~\ref{lemma5} imply
\end{rem}

\begin{cor}\label{c:ind-monoidal-equivalence}
Suppose that in the situation of Lemma \ref{lemma5}$($c$)$, the
idempotents $e$ and $f=\ig e$ are closed. Then the semigroupal
categories $e\sD_{G'}(G')$ and $f\sD_G(G)$ are monoidal, and the weakly
semigroupal functor $\ig$ restricts to a monoidal equivalence
\begin{equation}  \label{e:monoidal-equivalence}
e\sD_{G'}(G') \rar{\sim} f\sD_G(G).
\end{equation}
\end{cor}

\begin{rem}   \label{r:canonical_bij}
In the situation of Corollary~\ref{c:ind-monoidal-equivalence} there is a canonical bijection
$\varphi :\Ar_e\iso\Ar_f$,
where $\Ar_e$ is the set of idempotent arrows $\e\rar{}e$. Namely,
$\varphi :\Ar_e\iso\Ar_f$ is the composition
\begin{equation}  \label{e:bijections}
\Ar_e\iso\Isom (e*e,e)\iso\Isom (f*f,f)\iso\Ar_f,
\end{equation}
where $\Isom (e*e,e)$ stands for the set of isomorphisms $e*e\iso e$,
the middle bijection in \eqref{e:bijections} comes from the monoidal equivalence
\eqref{e:monoidal-equivalence}, and the other two bijections in  \eqref{e:bijections}
are provided by Corollary~\ref{c:closed-idemp-arrows}.
\end{rem}

\begin{rem}
The weak semigroupal functor $\ig$ from Lemma~\ref{lemma5}(a) gives an algebra structure on $\ig (\e_{G'})$.
Unless $G/G'$ is finite, this algebra is not unital.
\end{rem}

\subsection{Compatibility of $\ig$ with braidings and twists}   \label{ss:stumbling_block}

\begin{lem}\label{l:compatibility-ind-braiding-twists}
Let $G$ be a perfect unipotent group over $k$, and let $G'\subset G$ be a closed subgroup.
 \sbr
\begin{enumerate}[$($a$)$]
\item For all $M,N\in\sD_{G'}(G')$, the diagram
\[
\xymatrix{
  \ig(M) * \ig(N) \ar[d]_{\be_{\ig(M),\ig(N)}} \ar[rr] & & \ig(M*N) \ar[d]^{\ig(\be'_{M,N})} \\
  \ig(N) * \ig(M) \ar[rr] & & \ig(N*M)
   }
\]
commutes, where $\be$ $($respectively, $\be'${}$)$ is the braiding on $\sD_G(G)$ $($respectively, on $\sD_{G'}(G')${}$)$ constructed in Definition \ref{d:braiding-equivariant-derived}, and the horizontal arrows come from the weak semigroupal structure on $\ig$ constructed in Lemma \ref{lemma5}(a).
 \sbr
\item For all $M\in\sD_{G'}(G')$, we have $\te_{\ig M}=\ig(\te'_M)$, where $\te$ $($respectively, $\te'${}$)$ is the twist on $\sD_G(G)$ $($respectively, on $\sD_{G'}(G')${}$)$ constructed in Definition \ref{d:twist-equivariant-derived}.
\end{enumerate}
\end{lem}

The lemma will be proved in Appendix~\ref{s:CFT} (see Corollary~\ref{c:what_we_struggled_for} and
\S\ref{sss:conclusion}).

\subsection{Heisenberg idempotents and the geometric Mackey
condition}\label{ss:Heis-idemp-geom-Mackey}
\begin{lem}\label{lemma7}
Let $G$ be a perfect unipotent group over $k$, let $(H,\cL)$ be an
admissible pair for $G$, and let $G'$ denote its normalizer in $G$.
Let $e'_{\cL}$ denote the Heisenberg minimal idempotent in
$\sD_{G'}(G')$ constructed in
\S\ref{ss:Heisenberg-minimal-idempotents} $($cf.~Lemma
\ref{lemma6}$)$. Then $e'_{\cL}$ satisfies the geometric Mackey
condition with respect to $G$.
\end{lem}

\begin{proof}
This is shown in \cite[\S9.5]{characters}.
\end{proof}

\subsection{Proof of Proposition
\ref{p:uniqueness}}\label{ss:proof-p:uniqueness} To prove the
existence of $A$ satisfying properties (a)--(c) of the proposition,
it suffices to show that if normal subgroups $A_1,A_2\subset G$
satisfy (a) and (b), then so does $A_1A_2$. To this end, it is
enough to check that the homomorphism
\begin{equation}\label{(1)}
(A_1A_2)^*\rar{}A_1^*\times A_2^*
\end{equation}
has finite kernel. But
\[
\Ker\bigl((A_1A_2)^*\rar{}A_1^*\bigr) = (A_1A_2/A_1)^*=
\bigl(A_2/(A_1\cap A_2)\bigr)^*
\]
by property \S\ref{ss:serre-duality-properties}(3), the homomorphism
\[
\bigl(A_2/(A_1\cap A_2)\bigr)^* \rar{} \bigl(A_2/(A_1\cap
A_2)^\circ\bigr)^*
\]
has finite kernel by \S\ref{ss:serre-duality-properties}(4), and the
homomorphism $\bigl(A_2/(A_1\cap A_2)^\circ\bigr)^*\rar{}A_2^*$ is
injective by \S\ref{ss:serre-duality-properties}(3), so \eqref{(1)}
has finite kernel.

\mbr

To finish the argument, we will prove the following
\begin{lem}\label{l:uniqueness-lemma}
Let $A\subset G$ be a connected normal subgroup and $\cN$ a
$G^\circ$-invariant multiplicative local system on $A$. Put
$G_1=N_G(\cN)$, so that $G^\circ\subset G_1\subset G$, and let
$e_1=\av_{G/G_1}(\cN\tens\bK_A)\in\sD_G(A)\subset\sD_G(G)$. The
following properties are equivalent:
\begin{enumerate}[$($i$)$]
\item $A\subset H$ and $\cL\bigl\lvert_A$ is $G$-conjugate to $\cN$;
 \sbr
\item $f*e_1\cong f$, where $f$ is constructed in Theorem
\ref{t:construction-L-packets}$($a$)$.
\end{enumerate}
\end{lem}

\begin{rem}\label{r:uniqueness-remark}
The lemma implies that the $G$-orbit of
$\bigl(A,\cL\bigl\lvert_A\bigr)$ depends only on $f$, and hence
completes the proof of Proposition \ref{p:uniqueness}. Indeed,
property (ii) of the lemma manifestly depends only on $f$. On the
other hand, if $g_1,\dotsc,g_n$ are representatives of the cosets of
$G_1$ in $G$ and $\cN^{g_j}$ are the corresponding conjugates of
$\cN$, then $e_1\cong\oplus_{j=1}^n (\cN^{g_j}\tens\bK_A)$ in
$\sD(A)$, which shows that the data of $e_1$ is equivalent to the
data of the $G$-orbit of $\bigl(A,\cL\bigl\lvert_A\bigr)$.
\end{rem}

\begin{proof}[Proof of Lemma \ref{l:uniqueness-lemma}]
If (i) holds, then $e'_{\cL}*e_1\cong e'_{\cL}$, as
$\cL\bigl\lvert_A\cong\cN^{g_j}$ for a unique $j$ with the notation
of Remark \ref{r:uniqueness-remark}. By definition, $f=\ig
e'_{\cL}=\avg(i_*e'_{\cL})$, where $i:G'\into G$ is the embedding.
So (ii) follows from Corollary \ref{c:averaging-module}.

\mbr

Now assume (ii). By Lemma \ref{lemma7}, $e'_{\cL}\in\sD_{G'}(G')$
satisfies the geometric Mackey condition with respect to $G$. The
argument that was used to prove Proposition \ref{p:Lemma} can be
repeated verbatim in this case, and it shows that $A\subset G'$ and
$e'_{\cL}*e_1\cong e'_{\cL}$. Property (i) follows.
\end{proof}

\subsection{Exactness properties of $\ig$ and $\Ig$}\label{ss:artin}
Let us recall a result of M.~Artin:
\begin{thm}[\cite{bbd}, Thm.~4.1.1]   \label{t:artin}
If $f:X\rar{}Y$ is an affine morphism of schemes of finite type over
$k$, the functor $f_*:\sD(X)\rar{}\sD(Y)$ is left exact with respect
to the perverse $t$-structures, i.e., takes ${}^p\sD^{\leq 0}(X)$
into ${}^p\sD^{\leq 0}(Y)$.
\end{thm}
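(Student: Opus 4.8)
This statement is precisely Theorem~4.1.1 of \cite{bbd}, so strictly speaking nothing is needed beyond the citation; let me nonetheless indicate the shape of the argument I would give. Since the conclusion ``$f_*M\in{}^p\sD^{\leq 0}(Y)$'' may be checked \'etale-locally on $Y$, I would first rephrase it in terms of stalks of cohomology sheaves. Fix $M\in{}^p\sD^{\leq 0}(X)$ and a point $y\in Y$, and set $d_y=\dim\overline{\{y\}}$ and $X_{(\bar y)}=X\times_Y\Spec\cO_{Y,\bar y}^{sh}$. Computing the stalk of a pushforward by base change gives $\cH^m(f_*M)_{\bar y}\cong H^m(X_{(\bar y)},M)$, and the perverse support condition for $f_*M$ (namely $\dim\Supp\cH^m(f_*M)\leq -m$ for all $m$) is exactly the requirement that this group vanish whenever $m>-d_y$, for every $y\in Y$. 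The crucial feature is that $X_{(\bar y)}$ is \emph{affine}, because $f$ is affine.

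The two classical inputs I would invoke are: (1) Artin's theorem on the cohomological dimension of affine schemes (SGA~4, Exp.~XIV; see also \cite[\S4.1]{bbd}), which says that an affine scheme of finite type over a separably closed field, of Krull dimension $\leq n$, has \'etale cohomological dimension $\leq n$ with torsion --- hence with $\ql$ --- coefficients, and which extends by a limit argument to $X_{(\bar y)}$; and (2) the elementary dimension estimate that, for any locally closed subscheme $Z\subseteq X$, one has $\dim\bigl(Z\times_Y\Spec\cO_{Y,\bar y}^{sh}\bigr)\leq\dim Z-d_y$ --- that is, localizing $Y$ at a point of ``codimension'' $d_y$ drops the dimension of any subscheme lying over $Y$ by at least $d_y$.

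I would then combine these through the hypercohomology spectral sequence
\[
E_2^{a,b}=H^a\bigl(X_{(\bar y)},\,\cH^b(M)|_{X_{(\bar y)}}\bigr)\ \Longrightarrow\ H^{a+b}\bigl(X_{(\bar y)},M\bigr).
\]
The hypothesis $M\in{}^p\sD^{\leq 0}(X)$ gives $\dim\Supp\cH^b(M)\leq-b$, so input~(2) yields that $\cH^b(M)|_{X_{(\bar y)}}$ is supported in dimension $\leq-b-d_y$, and then input~(1), applied to this sheaf on the affine scheme $X_{(\bar y)}$, forces $E_2^{a,b}=0$ for $a>-b-d_y$, i.e., whenever $a+b>-d_y$. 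Hence $H^m(X_{(\bar y)},M)=0$ for $m>-d_y$, which is what was required.

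The part that requires genuine care --- and the reason \cite{bbd} devotes real work to it --- is the dimension bookkeeping of step~(2) over the strictly henselian local ring $\cO_{Y,\bar y}^{sh}$, together with the passage to the limit in Artin vanishing (one must check that the scheme involved still has the expected Krull dimension, and that cohomology commutes with the cofiltered limit over \'etale neighbourhoods of $y$). Since all of this is carried out in \cite[\S4.1]{bbd}, for the purposes of the present article it is enough to cite Theorem~4.1.1 of \emph{loc.\,cit.} directly.
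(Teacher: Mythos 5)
Your proposal is correct, and it correctly recognizes what the paper does here: Theorem \ref{t:artin} is simply quoted from \cite[Thm.~4.1.1]{bbd} with no proof given in the paper at all. The sketch you supply --- reduce to stalks at geometric points via base change to $X_{(\bar y)}=X\times_Y\Spec\cO_{Y,\bar y}^{sh}$, observe that affineness of $f$ makes $X_{(\bar y)}$ affine, feed the perverse support bound $\dim\Supp\cH^b(M)\leq -b$ together with the dimension-drop estimate into the hypercohomology spectral sequence, and close with Artin's affine vanishing theorem --- is the actual argument of \cite[\S4.1]{bbd} (in particular Lemma~4.1.3 there), and you flag precisely the places where care is needed, namely the dimension bookkeeping over $\Spec\cO_{Y,\bar y}^{sh}$ and the limit argument in Artin vanishing. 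For the record, the dimension estimate in your step (2) does hold for finite-type $k$-schemes: if $c$ is a closed point of a component $C$ of $Z\times_Y\Spec\cO_{Y,\bar y}^{sh}$, one has $\dim C\leq\dim\cO_{Z,c}\leq\dim Z-\dim\overline{\{c\}}\leq\dim Z-\dim\overline{\{y\}}$, where the last step uses that $\overline{\{c\}}\to\overline{\{y\}}$ is dominant and hence cannot decrease transcendence degree. One small remark unrelated to your argument: the paper's phrase ``left exact \ldots i.e.\ takes ${}^p\sD^{\leq 0}$ into ${}^p\sD^{\leq 0}$'' uses the opposite of the BBD \S1.3 convention (there this is called \emph{right} $t$-exactness); the mathematical content, which is what you prove, is the correct one.
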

\begin{cor}[\emph{op.~cit.}, Cor.~4.1.2]\label{c:artin}
Under the same assumptions, the functor $f_!:\sD(X)\rar{}\sD(Y)$ is
right $t$-exact, i.e., takes ${}^p\sD^{\geq 0}(X)$ into
${}^p\sD^{\geq 0}(Y)$.
\end{cor}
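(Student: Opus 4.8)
The plan is to obtain Corollary~\ref{c:artin} from Theorem~\ref{t:artin} by a one-line Verdier-duality argument; the statement has no independent content beyond that of the theorem. I would rely on two standard facts. First, Verdier duality: writing $D_X\colon\sD(X)\rar{}\sD(X)$ and $D_Y\colon\sD(Y)\rar{}\sD(Y)$ for the duality functors, one has the canonical isomorphism $f_!\cong D_Y\circ f_*\circ D_X$ for any separated morphism $f\colon X\rar{}Y$ of finite type (both $f_*$ and $f_!$ are defined here, since an affine morphism is in particular separated), together with biduality $D_X\circ D_X\cong\id$. Second, the self-duality of the perverse $t$-structure already recorded in \S\ref{ss:basic-definitions}: $D_X$ interchanges ${}^p\sD^{\leq 0}(X)$ and ${}^p\sD^{\geq 0}(X)$, and similarly on $Y$.

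Granting these, I would argue as follows. Given $M\in{}^p\sD^{\geq 0}(X)$, self-duality gives $D_X M\in{}^p\sD^{\leq 0}(X)$; since $f$ is affine, Theorem~\ref{t:artin} yields $f_*(D_X M)\in{}^p\sD^{\leq 0}(Y)$; applying $D_Y$ and using self-duality once more, $D_Y\bigl(f_*(D_X M)\bigr)\in{}^p\sD^{\geq 0}(Y)$; and by the duality isomorphism together with biduality this object is canonically identified with $f_! M$. Hence $f_! M\in{}^p\sD^{\geq 0}(Y)$, which is exactly the assertion.

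The only thing to check — and it is routine rather than a genuine obstacle — is that the duality formalism is legitimately available in the present setting of perfect quasi-algebraic schemes over the algebraically closed field $k$. As explained in \S\ref{ss:perfect}, any construction phrased in terms of the étale topos is insensitive to passing to perfectizations, so one reduces immediately to ordinary schemes of finite type over $k$, where Verdier duality, biduality, its compatibility with $f_!$ and $f_*$, and the self-duality of the perverse $t$-structure are all classical (see \cite{sga4.5} and \cite{bbd}); and since $k$ is algebraically closed there are no arithmetic complications. Alternatively, one may simply invoke \cite[Cor.~4.1.2]{bbd} verbatim, since the reduction above places us exactly in its hypotheses.
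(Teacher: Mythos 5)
Your argument is correct and is exactly the standard derivation: the paper itself simply quotes the result from \cite[Cor.~4.1.2]{bbd}, and that reference proves it precisely by the Verdier-duality conjugation $f_!\cong D_Y\circ f_*\circ D_X$ applied to Theorem~4.1.1, just as you do. There is nothing to add.
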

We also recall
\begin{prop}\label{p:pullback-perverse}
If $f:X\rar{}Y$ is a smooth morphism of $k$-schemes everywhere of
relative dimension $d$, then $f^*[d]$ takes $\Perv(Y)$ into
$\Perv(X)$.
\end{prop}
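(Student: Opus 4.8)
The plan is to prove the stronger statement that $f^*[d]$ is \emph{$t$-exact} for the perverse $t$-structures, i.e.\ that it carries ${}^p\sD^{\leq 0}(Y)$ into ${}^p\sD^{\leq 0}(X)$ and ${}^p\sD^{\geq 0}(Y)$ into ${}^p\sD^{\geq 0}(X)$; this of course forces $f^*[d]$ to send the heart $\Perv(Y)$ into the heart $\Perv(X)$. (This is \cite[4.2.4]{bbd}; I recall the argument.) The two halves are handled separately, the support condition by a direct dimension count and the cosupport condition by reduction to the support condition via Verdier duality.

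First I would verify the support condition. Recall that $K\in{}^p\sD^{\leq 0}(Y)$ means $\dim\Supp\cH^{j}(K)\leq -j$ for all $j$, where $\cH^{j}$ denotes the $j$-th ordinary cohomology sheaf. Since pullback of sheaves is exact, $\cH^{j}(f^*K)\cong f^*\cH^{j}(K)$, and, inspecting stalks, $\Supp f^*\cH^{j}(K)=f^{-1}\bigl(\Supp\cH^{j}(K)\bigr)$. Because $f$ is smooth of relative dimension $d$, every nonempty fibre of $f$ has pure dimension $d$, so $\dim f^{-1}(Z)\leq\dim Z+d$ for any locally closed $Z\subseteq Y$. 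Hence
\[
\dim\Supp\cH^{j}(f^*K)\leq\dim\Supp\cH^{j}(K)+d\leq -j+d,
\]
and therefore $\dim\Supp\cH^{j}\bigl(f^*K[d]\bigr)=\dim\Supp\cH^{j+d}(f^*K)\leq -(j+d)+d=-j$, which is exactly the assertion $f^*K[d]\in{}^p\sD^{\leq 0}(X)$.

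For the cosupport condition I would pass to Verdier duals, writing $\bD_X$ and $\bD_Y$ for the duality functors. The standard compatibility of duality with the four functors gives $\bD_X\circ f^*\cong f^!\circ\bD_Y$, and relative purity for the smooth morphism $f$ of relative dimension $d$ gives the isomorphism $f^!\cong f^*[2d](d)$. Combining these, for any $K\in\sD(Y)$,
\[
\bD_X\bigl(f^*K[d]\bigr)\cong\bigl(\bD_X f^*K\bigr)[-d]\cong f^!(\bD_Y K)[-d]\cong f^*(\bD_Y K)[d](d).
\]
If $K\in{}^p\sD^{\geq 0}(Y)$ then $\bD_Y K\in{}^p\sD^{\leq 0}(Y)$, so by the support case already proved (applied to $\bD_Y K$), together with the fact that the perverse $t$-structure is insensitive to Tate twists, we get $f^*(\bD_Y K)[d](d)\in{}^p\sD^{\leq 0}(X)$; that is, $\bD_X\bigl(f^*K[d]\bigr)\in{}^p\sD^{\leq 0}(X)$, which is precisely $f^*K[d]\in{}^p\sD^{\geq 0}(X)$. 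Putting the two cases together yields $f^*K[d]\in\Perv(X)$ whenever $K\in\Perv(Y)$.

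The only step that relies on anything beyond an elementary fibre-dimension count is the input from the six-functor formalism used in the cosupport argument: the compatibility $\bD_X f^*\cong f^!\bD_Y$ and the relative purity isomorphism $f^!\cong f^*[2d](d)$ for smooth $f$. Both are standard, and — as explained in the discussion of perfect schemes in \S\ref{ss:perfect} — they are insensitive to replacing a scheme of finite type by its perfectization, so they apply verbatim in our setting. I do not anticipate a genuine obstacle here; the routine part is simply keeping the cohomological shifts straight.
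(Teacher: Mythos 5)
Your proof is correct. Note, however, that the paper does not actually prove this proposition: it is stated under the heading ``We also recall'' and is one of three standard facts cited from \cite{bbd} (it is [BBD82, 4.2.4], cited alongside Theorem 4.1.1 and Corollary 4.1.2 which are quoted just above). So there is no paper proof to compare against. Your argument — establishing the stronger $t$-exactness statement, verifying the support condition by a direct fibre-dimension count using exactness of sheaf pullback, and then deducing the cosupport condition from it via the identities $\bD_X f^*\cong f^!\bD_Y$ and $f^!\cong f^*[2d](d)$ — is precisely the standard BBD argument, and the bookkeeping of shifts is correct. Your closing remark that these ingredients pass through unchanged under perfectization, so the result applies in the paper's setting, is also the right thing to check.
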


Using the construction of induction functors presented in
\S\ref{ss:induction-functors} together with Theorem \ref{t:artin},
Corollary \ref{c:artin} and Proposition \ref{p:pullback-perverse},
one obtains

\begin{lem}\label{l:ind-Ind-exactness}
Let $G$ be a perfect unipotent group over $k$, and let $G'\subset G$
be a closed subgroup. Then
\[
\ig \bigl( {}^p\sD^{\geq 0}_{G'}(G') \bigr) \subset {}^p\sD^{\geq
0}_G(G) \bigl[ -\dim(G/G') \bigr]
\]
and
\[
\Ig \bigl( {}^p\sD^{\leq 0}_{G'}(G') \bigr) \subset {}^p\sD^{\leq
0}_G(G) \bigl[ -\dim(G/G') \bigr].
\]
\end{lem}

\subsection{The key proposition}\label{ss:key-proposition} The next
proposition will be proved in \S\ref{ss:proof-p:crucial}.
\begin{prop}\label{p:crucial}
Let $G$ be a perfect unipotent group over $k$. For every nonzero
$N\in\sD(G)$, there exists a closed idempotent $f\in\sD_G(G)$ such
that $f*N\neq 0$ and $f\cong\ind_{G'}^Ge'$, where $G'\subset G$ is
the normalizer of some admissible pair $(H,\cL )$ and $e'$ is the
Heisenberg minimal idempotent $e'_{\cL}\in\sD_{G'}(G')$
corresponding to $(H,\cL )$.
\end{prop}

\begin{rem}\label{r:crucial}
In the situation of Proposition \ref{p:crucial}, $f$ is
automatically minimal as a weak idempotent in $\sD_G(G)$. This
follows from Lemmas \ref{l:heisenberg-minimal-idempotent},
\ref{lemma7} and \ref{lemma1}.
\end{rem}

\subsection{Proof of Theorem
\ref{t:idempotents}}\label{ss:proof-t:idempotents} Let $N\in\sD(G)$
be nonzero, and let $f\in\sD_G(G)$ satisfy the conclusion of
Proposition \ref{p:crucial}. By Remark \ref{r:crucial}, $f$ is
minimal as a weak idempotent in $\sD_G(G)$. \emph{A fortiori}, we
see that:
\begin{itemize}
\item $f$ is minimal as a closed idempotent, which yields Theorem
\ref{t:idempotents}(c), and
 \sbr
\item $\sD_G(G)$ is a Jacobson monoidal category (Definition
\ref{d:jacobson-monoidal-categories}), so parts (a) and (b) of
Theorem \ref{t:idempotents} follow from Proposition
\ref{p:jacobson1}.
\end{itemize}

\subsection{Proof of Theorem
\ref{t:geometric-mackey}}\label{ss:proof-t:geometric-mackey} (a) By
assumption, $e$ is a minimal closed idempotent in $\sD_{G'}(G')$. By
Theorem \ref{t:idempotents}(a), $e$ is a minimal weak idempotent in
$\sD_{G'}(G')$. By Lemma \ref{lemma1}, $f=\ig e$ is a minimal weak
idempotent in $\sD_G(G)$. By Theorem \ref{t:idempotents}(b), $f$ is
a minimal closed idempotent in $\sD_G(G)$.

\mbr

\noindent (c) Since $f$ is a closed idempotent by part (a), the assertion follows
from Corollary \ref{c:ind-monoidal-equivalence} and Lemma \ref{l:compatibility-ind-braiding-twists}.

\mbr

\noindent (d) Use the fact that $f$ is closed and the implication
(ii)$\Rightarrow$(i) of Proposition \ref{p:when-ind-closed}.

\mbr

\noindent (e) Combine Lemma \ref{l:ind-Ind-exactness} with assertion
(d) of the theorem.

\mbr

\noindent (b) This follows from part (e).

\subsection{Proof of Theorem \ref{t:construction-L-packets}}\label{ss:proof-t:construction-L-packets}
By Lemma \ref{l:heisenberg-minimal-idempotent},
$e'_{\cL}\in\sD_{G'}(G')$ is a minimal closed idempotent. By Lemma
\ref{lemma7}, it satisfies the geometric Mackey condition with
respect to $G$. So Theorem \ref{t:construction-L-packets}(a) follows
from Theorem \ref{t:geometric-mackey}(a).

\mbr

The equality $n_{e'_{\cL}}=\dim H$ follows from the definition of
$e'_{\cL}$. Using this equality and Theorem
\ref{t:geometric-mackey}(b), we get Theorem
\ref{t:construction-L-packets}(b).

\mbr

Let $N\in\sD_G(G)$ be a minimal closed idempotent. Let $f$ be as in
Proposition \ref{p:crucial} (i.e., $f*N\neq 0$ and $f$ is obtained
from an admissible pair by induction with compact supports). To
prove Theorem \ref{t:construction-L-packets}(c) for $N$, it suffices
to show that $N\cong f$. This is clear because $f*N\neq 0$, and both
$N$ and $f$ are minimal closed idempotents in $\sD_G(G)$ (for $f$
this follows from assertion (a), proved above).

\subsection{Proof of Theorem \ref{t:Heisenberg-L-packets}}  \label{ss:proof-t:Heisenberg-L-packets}
Theorem \ref{t:properties-character-sheaves} for $(G',e'_{\cL})$ in place of $(G,e)$ was
proved by T.~Deshpande, see  \cite[Theorems~1.1-1.5]{tanmay}. To prove
Theorem \ref{t:functional-dimension} for $(G',e'_{\cL})$, we need some notation.

\mbr

Let $(H,\cL)$ be the admissible pair for $G'$ that gives rise to the minimal Heisenberg idempotent
$e'_{\cL}$. We can view $e'_{\cL}$ as a closed idempotent either in $\sD_{G'}(G')$ or in
$\sD_{G'^\circ}(G'^\circ)$. Accordingly, we have modular categories
\[
\sM:=\{M\in e'_{\cL}\sD_{G'}(G')\, \big\vert\, M[-\dim H] \mbox{ is perverse}\},
\]

\[
\sM_1:=\{M\in e'_{\cL}\sD_{G'^\circ}(G'^\circ)\, \big\vert\, M[-\dim H] \mbox{ is perverse}\}.
\]
Let $\tau^+(\sM )$ and $\tau^+(\sM_1 )$ be their Gauss sums. S.~Datta \cite[\S2.4]{swarnendu} proved
Theorem~\ref{t:functional-dimension} for $(G'^\circ ,e'_{\cL})$ in place of $(G,e)$. So to prove
Theorem~\ref{t:functional-dimension} for $(G' ,e'_{\cL})$ it suffices to show that $\tau^+(\sM )/\tau^+(\sM_1 )$
is a power of $p$. In fact, we will prove that
\begin{equation} \label{e:Gauss_relation}
\tau^+(\sM_1 )=\tau^+(\sM )/|\Ga |\, ,
\end{equation}
where $\Ga:=\pi_0(G')=G'/G'^\circ$ is a $p$-group by unipotence of $G'$.

\mbr

To this end, consider the full subcategory $\cE:=\{M\in \sM \, \big\vert\, \Supp M\subset H\}$.
Note that $\cE$ is braided equivalent to the category of finite-dimensional representations of $\Ga$
and the twist on $\sM$ induces the trivial twist on $\cE$. In this situation we can apply
\cite[Thm.~6.16]{dgno}, which interprets the r.h.s. of \eqref{e:Gauss_relation} as the Gauss sum of a
certain modular category. The latter identifies with $\sM_1$ (to see this, combine \cite[Lem.~1.4]{tanmay},
\cite[Thm.~ 4.44]{dgno}, and \cite[Prop.~4.56(i)]{dgno}).

\subsection{Proof of Theorems
\ref{t:properties-character-sheaves} and
\ref{t:functional-dimension}}
\label{ss:proof-t:properties-character-sheaves}
\label{ss:proof-t:functional-dimension} Let $e\in\sD_G(G)$ be a
minimal closed idempotent. By Theorem
\ref{t:construction-L-packets}(c), we have $e\cong\ig e'_{\cL}$ for
some admissible pair $(H,\cL)$ for $G$, where $G'$ is the normalizer
of $(H,\cL)$ in $G$ and $e'_{\cL}\in\sD_{G'}(G')$ is the Heisenberg
minimal idempotent defined by $\cL$ (see
\S\ref{ss:Heisenberg-minimal-idempotents}).

\mbr

By Theorem \ref{t:geometric-mackey}(c), $\ig$ restricts to a
monoidal triangulated equivalence
\begin{equation}\label{(*)}
e'_{\cL}\sD_{G'}(G') \rar{\sim} e\sD_G(G),
\end{equation}
and by Theorem \ref{t:geometric-mackey}(e), the equivalence
\eqref{(*)} restricts to an equivalence between
$\cM_{e'_{\cL}}^{perv}$ and $\cM_e^{perv}[-\dim(G/G')]$. By Lemma \ref{l:compatibility-ind-braiding-twists}, the monoidal equivalence \eqref{(*)} is compatible with the canonical braidings (Definition \ref{d:braiding-equivariant-derived}) and twists (Definition \ref{d:twist-equivariant-derived}) on both categories. In
addition, by Theorem \ref{t:geometric-mackey}(b),  $n_e\le n_{e'_{\cL}}$ and
the functional dimensions $d_{e'_{\cL}}$ and $d_e$ differ by an integer.
Theorem~\ref{t:Heisenberg-L-packets} shows that
Theorems~\ref{t:properties-character-sheaves} and \ref{t:functional-dimension}
hold for the idempotent $e'_{\cL}\in\sD_{G'}(G')$. Hence
Theorems~\ref{t:properties-character-sheaves} and \ref{t:functional-dimension} also
hold for $e\in\sD_G(G)$ with a possible exception of the inequality
\begin{equation}  \label{e:nonnegativity}
n_e\ge 0,
\end{equation}
which is a part of Theorem~\ref{t:properties-character-sheaves}(b).

\mbr

Now let us prove \eqref{e:nonnegativity}. Recall that $e[-n_e]$ is perverse by the definition of
$n_e$. So
\begin{equation}  \label{e:Ext-vanishing}
\Ext^i(\e ,e[-n_e])=0 \mbox{ for }i<0
\end{equation}
(this follows from the definition of a perverse sheaf and the fact
that $\e$ is a delta-sheaf). On the other hand, an idempotent arrow
$\e\rar{}e$ is a nonzero element of $\Hom (\e ,e)$, so
$\Ext^{n_e}(\e ,e[-n_e])=\Hom (\e ,e)\ne 0$. Comparing this with
\eqref{e:Ext-vanishing}, we get \eqref{e:nonnegativity}.

\subsection{Proof of Proposition
\ref{p:duality-canonical}}\label{ss:proof-p:duality-canonical}
We begin by proving the proposition in the case where $H=G$. In this case $n_f=n_{e_{\cL}'}=\dim H=\dim G$ by Theorem \ref{t:construction-L-packets}(b). Moreover, $f=\cL\tens\bK_G$, where $\cL$ is a multiplicative local system on $G$ and $\bK_G$ is the dualizing complex of $G$. Since $G$ is smooth of dimension $n_f$, 
there is a canonical identification $\bK_G\rar{\simeq}\ql [2n_f](n_f)$ and therefore
\begin{equation}   \label{e:dualizingcomplex}
f\rar{\simeq}\cL[2n_f](n_f).
\end{equation}
By definition, $\bD_G^-=\bD_G\circ\iota^*$, where $\iota:G\rar{}G$ is given by $g\mapsto g^{-1}$.
Since $\cL$ is multiplicative $\iota^*\cL=\cL^{-1}$, so
$\bD_G^-f=\bD_G(\cL^{-1}\tens\bK_G)=\cL\tens\bD_G(\bK_G)=\cL$. Combining this with
\eqref{e:dualizingcomplex} one gets a canonical isomorphism
$\bD_G^-f\rar{\simeq}f[-2n_f](-n_f)$, completing the proof of the proposition when $H=G$.

%
%

\mbr

Next we treat the general case. Writing $d=\dim(G/G')$, we recall that $n_f=n_{e'_{\cL}}-d$ by Theorem \ref{t:construction-L-packets}(b). The first part of the proof yields a natural isomorphism
\[
\bD_{G'}^- e'_{\cL} \rar{\simeq} e'_{\cL}[-2n_{e'_{\cL}}](-n_{e'_{\cL}}).
\]
Applying the functor $\bD_G^-\circ\Ig$, we obtain a natural isomorphism
\[
\bigl(\bD_G^-\circ\Ig\bigr) \bigl(e'_{\cL}[-2n_{e'_{\cL}}](-n_{e'_{\cL}})\bigr) \rar{\simeq} \bigl(\bD_G^-\circ\Ig\circ \bD_{G'}^-\bigr) (e'_{\cL}).
\]
Composing the latter with the inverse of the isomorphism \eqref{e:induction-duality-nat-isom} provided by Corollary \ref{c:induction-duality}, we obtain a natural isomorphism
\[
\bigl(\bD_G^-\circ\Ig\bigr) \bigl(e'_{\cL}[-2n_{e'_{\cL}}](-n_{e'_{\cL}})\bigr) \rar{\simeq} \ig(e'_{\cL})[2d](d),
\]
which is the same thing as an isomorphism
\[
\bigl(\bD_G^-\circ\Ig\bigr) (e'_{\cL}) \rar{\simeq} \ig(e'_{\cL})[-2n_f](-n_f).
\]
Finally, the natural morphism $f=\ig(e'_{\cL})\rar{}\Ig(e'_{\cL})$ is an isomorphism by Theorem \ref{t:geometric-mackey}(d) and Lemma \ref{lemma7}. This yields a natural isomorphism
\[
\bD^-_G f \rar{\simeq} f[-2n_f](-n_f),
\]
as desired.

\subsection{Proof of Proposition \ref{p:duality}}\label{ss:proof-p:duality}
By Theorem \ref{t:construction-L-packets}(c), every minimal closed
idempotent $e\in\sD_G(G)$ arises from some admissible pair for $G$. In
view of Remark \ref{r:duality}(i), we see that Proposition
\ref{p:duality}(a) follows from Proposition
\ref{p:duality-canonical}. Now we will prove Proposition \ref{p:duality}(b) using the
language of Grothendieck-Verdier categories (see Definitions~\ref{def:dualizing}, \ref{def:GV}, and
\ref{def:r-category} from Appendix~\ref{s:dualityformalism}).

\mbr

By Example \ref{example:dual-gen4}, $\sD_G(G)$ is an r-category, where the duality functor is the functor $\bD_G^-$ from Definition \ref{d:duality}. In particular, $\sD_G(G)$ is a Grothendieck-Verdier category.

\mbr

By Lemma \ref{l:GV-Hecke-subcategory}, $e\sD_G(G)$ is a Grothendieck-Verdier category with dualizing object $\bD_G^- e$, and the corresponding duality functor can be identified with $\bD_G^-$.
By part (a) of Proposition \ref{p:duality}, we have
$\bD_G^- e \cong e[-2n_e](-n_e)\tens L_e$ for a certain line $L_e$ over $\ql$. In particular, $\bD_G^- e$ is an invertible object of the monoidal category $e\sD_G(G)$, with inverse $e[2n_e](n_e)\tens L_e^{-1}$. Hence $e$ is also a dualizing object of $e\sD_G(G)$, and the duality functor that $e$ defines is given by $M\longmapsto (\bD_G^- M)[2n_e](n_e)\tens L_e^{-1}$. This implies Proposition \ref{p:duality}(b) since we already saw in Theorem
\ref{t:properties-character-sheaves}(c) that $e\sD_G(G)$ is a rigid
monoidal category with unit object $e$.

\bbr

\emph{The rest of this section is devoted to a proof of Proposition
\ref{p:crucial}. It will be given in
\S\S\ref{sss:crucial-reduction}--\ref{sss:crucial-equivariant} after
various preliminaries.}

\subsection{Closed idempotents via averaging}\label{ss:avg-closed-idemp} Let $H$ be a connected
perfect unipotent group over $k$, and let $G$ be a perfect unipotent
group acting on $H$ by group automorphisms. Fix a multiplicative
local system $\cL$ on $H$, and let $G'\subset G$ be the stabilizer
of the corresponding point of $H^*(k)$. Equip the categories
$\sD_{G'}(H)$ and $\sD_G(H)$ with the monoidal structure given by
convolution with compact supports.
%
%
Since $G'$ stabilizes the (isomorphism class of) $\cL$, both $\cL$ and $e_{\cL}=\cL\tens\bK_H$ can be viewed as objects of $\sD_{G'}(H)$.

\begin{lem}
$e_{\cL}$ is a closed idempotent in $\sD_{G'}(H)$.
\end{lem}

\begin{proof}
Let $s:H\rar{}\Spec(k)$ and $1:\Spec(k)\rar{}H$ denote the structure morphism and the identity of $H$, respectively. Then $\bK_H\cong s^!\ql$ and $\e\cong 1_!\ql$ (the unit object of $\sD_{G'}(H)$). The natural isomorphism $\ql\rar{\simeq}1^!\bK_H$ induces by adjunction a morphism $\e\rar{}\bK_H$. On the other hand, since $\cL^\vee$ is a multiplicative local system, its fiber at the identity element of $H(k)$ has a canonical trivialization, which yields an isomorphism $\e\tens\cL^\vee\rar{\simeq}\e$. The composition $\e\tens\cL^\vee\rar{}\bK_H$ corresponds to a morphism $\e\rar{}e_{\cL}$. By \cite[\S8.3]{characters} this morphism becomes an isomorphism in $\sD(H)$, and hence also in $\sD_{G'}(H)$, after convolving with $e_{\cL}$.
\end{proof}

Recall that the averaging functor $\avg:\sD_{G'}(H)\rar{}\sD_G(H)$ was defined in \S\ref{ss:induction-functors}. The next result is used in the proof of Lemma \ref{l:crucial}.

\begin{prop}\label{p:avg-closed-idemp}
The object $\avg(e_{\cL})\in\sD_G(H)$ is a closed idempotent.
\end{prop}
\begin{proof}
In the proof we will use the functors
\[
\cF'^{G'}:\sD_{G'}(H^*)\rar{}\sD_{G'}(H) \qquad\text{and}\qquad
\cF'^{G}: \sD_{G}(H^*)\rar{}\sD_G(H)
\]
(see \S\ref{ss:f-d-compatibility-averaging}) together with
Proposition \ref{p:f-d-compatibility-averaging} and Corollary
\ref{c:FD-closed-idempotent}.

\mbr

Let $\de_\cL\in\sD_{G'}(H^*)$ denote the delta-sheaf at the point
$[\cL]\in H^*(k)$, let $\cO_\cL$ be the $G$-orbit of $[\cL]$, and
let $i:\cO_{\cL}\into H^*$ denote the inclusion morphism. Since $G$
is unipotent and $H^*$ is affine, $i$ is closed. Moreover,
$\avg(\de_\cL)\cong i_!(\ql)_{\cO_\cL}$ and
$e_\cL\cong\cF'^{G'}(\de_\cL)$, whence by Proposition
\ref{p:f-d-compatibility-averaging}, we have
\[
\avg(e_\cL)\cong \avg(\cF'^{G'}(\de_{\cL})) \cong
\cF'^G(\avg(\de_\cL))\cong \cF'^G\bigl( i_!(\ql)_{\cO_\cL} \bigr).
\]
Now Corollary \ref{c:FD-closed-idempotent} yields an idempotent
arrow
\[
\cF'^G(\pi_{\cO_\cL})\circ\pi_0 : \e\rar{}\cF'^G\bigl(
i_!(\ql)_{\cO_\cL} \bigr)
\]
in $\sD(H)$, and we only need to check that this arrow is a morphism
in $\sD_G(H)$. But
$\pi_{\cO_\cL}:(\ql)_{H^*}\rar{}i_!(\ql)_{\cO_\cL}$ is clearly
$G$-equivariant, and $\pi_0$ is $G$-equivariant because its
construction is canonical. This completes the proof.
\end{proof}

\subsection{Proof of Proposition \ref{p:crucial}}\label{ss:proof-p:crucial}
The argument has three stages. First we will prove an auxiliary
Lemma \ref{l:crucial} (which does not involve the object $N$). Next
we will reduce the proposition to the case where $N$ is
$G$-equivariant, using Corollary \ref{c:averaging-module}. Finally,
we will prove the proposition in this special case with the aid of
Lemma \ref{l:crucial} and Proposition \ref{p:Lemma}.

\subsubsection{An auxiliary lemma} Let us recall from \S\ref{ss:reduction-formulation} that
$\sP_{norm}(G)$ denotes the set of pairs $(A,\cN)$, where $A$ is a
normal connected subgroup of $G$ and $\cN$ is a multiplicative local
system on $A$.

\begin{lem}\label{l:crucial}
Let $(A,\cN)\in\sP_{norm}(G)$. Denote by $G_1$ the normalizer of
$\cN$ in $G$, consider the closed idempotent
$\cN\tens\bK_A\in\sD_A(A)$ determined by $\cN$
$($cf.~\S\ref{ss:Heisenberg-minimal-idempotents}$)$, and let
$e_1\in\sD_{G_1}(G_1)$ be its extension by zero\footnote{$e_1$ has a
canonical $G_1$-equivariant structure because $\cN$ is
$G_1$-invariant.} to $G_1$.

\begin{enumerate}[$($a$)$]
\item $e_1$ is a closed idempotent in $\sD_{G_1}(G_1)$.
 \sbr
\item $e_1$ satisfies the geometric Mackey condition with
respect to $G$.
 \sbr
\item Let $e_{\cO}=\igo e_1$. Then $e_{\cO}$ is a closed
idempotent in $\sD_G(G)$.
 \sbr
 \item The semigroupal categories $e_1\sD_{G_1}(G_1)$ and $e_{\cO}\sD_G(G)$ are mo\-noidal.
 \sbr
\item The functor $\igo$ restricts to a monoidal equivalence
$e_1\sD_{G_1}(G_1)\rar{\sim}e_{\cO}\sD_G(G)$. \sbr
\item $e_1*e_{\cO}\cong e_1$.
\end{enumerate}
\end{lem}
\begin{proof}
(a) We have a canonical idempotent arrow $\e\rar{}\cN\tens\bK_A$ in
$\sD_A(A)$, and it is clear that it is in fact $G_1$-equivariant.
Thus $\cN\tens\bK_A$ is also a closed idempotent in $\sD_{G_1}(A)$,
which proves (a).

\mbr

\noindent (b) We must prove that
$\overline{e_1}*\de_x*\overline{e_1}=0$ for all $x\in G(k)$ such
that $x\not\in G_1(k)$. The last identity is equivalent to
$\overline{e_1}*\de_x*\overline{e_1}*\de_{x^{-1}}=0$. But
$\de_x*\overline{e_1}*\de_{x^{-1}}$ is the extension by zero of the
object $\cN^x\tens\bK_A\in\sD_A(A)$, where $\cN^x$ denotes the
pullback of $\cN$ by the automorphism $a\mapsto x^{-1}ax$ of $A$.
Since $x\not\in G_1(k)$, we have $\cN^x\not\cong\cN$, whence
$(\cN\tens\bK_A)*(\cN^x\tens\bK_A)=0$. This implies that
$\overline{e_1}*\de_x*\overline{e_1}*\de_{x^{-1}}=0$.

\mbr

\noindent (c) With the notation of Proposition
\ref{p:avg-closed-idemp}, $e_{\cO}$ is the extension by zero of the
object $\av_{G/G_1}(e_{\cN})\in\sD_G(A)$. Applying Proposition
\ref{p:avg-closed-idemp} to the conjugation action of $G$ on $A$
yields (c). 

\mbr

Statements (d) and (e) follow from (a)--(c) and
Corollary~\ref{c:ind-monoidal-equivalence}. Finally, statement (f)
follows from the last assertion of Lemma \ref{lemma4}.
\end{proof}

\subsubsection{Reduction of Proposition \ref{p:crucial} to the equivariant
case}\label{sss:crucial-reduction} Let us assume for the moment that
the proposition holds for any $0\neq N\in\sD_G(G)$. We will explain
how to prove it in full generality.

\mbr

Let $N\in\sD(G)$ be nonzero. It suffices to prove Proposition
\ref{p:crucial} assuming that the stalk of $N$ at $1\in G$ is
nonzero (otherwise we can replace $N$ with its right translation by
an appropriate $g\in G(k)$). Define $\Nt\in\sD_G(G)$ by $\Nt:=\av_G
(N)$, where $\av_G : \sD (G)\rar{}\sD_G(G)$ is the functor of
averaging with compact support (see \eqref{e:!averaging} and
Definition~\ref{d:averaging}). Then $\Nt\ne 0$ (because the stalk of
$\Nt$ at $1\in G$ is nonzero). So by assumption, $\Nt*f\ne 0$ for
some closed idempotent $f\in\sD_G(G)$ satisfying the last
requirement of Proposition \ref{p:crucial}. Since
$\Nt*f\cong\av_G(N*f)$ by Corollary \ref{c:averaging-module}, we see
that $N*f\ne 0$.

\subsubsection{Proof of Proposition \ref{p:crucial} in the
equivariant case}\label{sss:crucial-equivariant} We now fix a
nonzero $N\in\sD_G(G)$ and complete the proof of Proposition
\ref{p:crucial} in this special case.

\mbr

Let $(A,\cN)$ be maximal among all pairs in $\sP_{norm}(G)$ that are
compatible with $N$ in the sense of Definition~\ref{d:compatible}.
Let $G_1$, $e_1$, and $e_{\cO}$ have the same meaning as in
Lemma~\ref{l:crucial}. Compatibility of $(A,\cN)$ with $N$ means
that
\begin{equation} \label{e:e_1Nnonzero}
e_1*N\ne 0,
\end{equation}
where $e_1$ is viewed as an object of $\sD_{G_1}(G)$.

\mbr

We consider two cases. If $G_1=G$, then $(A,\cN)$ is an admissible
pair for $G$ by Proposition \ref{p:normal-admissible-pairs}. Thus
the idempotent $f:=e_1$ has the properties required in the
formulation of Proposition \ref{p:crucial}.

\mbr

Now assume that $G_1\subsetneq G$. By Lemma~\ref{l:crucial}(e),
$e_{\cO}*N=\igo N_1$ for some $N_1\in e_1\sD_{G_1}(G_1)$. Combining
\eqref{e:e_1Nnonzero} with Lemma~\ref{l:crucial}(f) we see that
$e_{\cO}*N\ne 0$ and therefore $N_1\ne 0$. Since $G_1\neq G$ we may
assume by induction that there exists a closed idempotent
$f_1\in\sD_{G_1}(G_1)$ that satisfies
\begin{equation} \label{e:f_1N_1nonzero}
f_1*N_1\neq 0
\end{equation}
and has the form $f_1\cong\ind_{G'}^{G_1}e'_{\cL}$ for some
admissible pair $(H,\cL)$ for $G_1$, where $G'$ is the normalizer of
$(H,\cL)$ in $G_1$ and $e'_{\cL}\in\sD_{G'}(G')$ is the Heisenberg
minimal idempotent corresponding to $(H,\cL)$.

\mbr

Since $N_1\in e_1\sD_{G_1}(G_1)$ it follows from
\eqref{e:f_1N_1nonzero} that $f_1*e_1\ne 0$. By Remark
\ref{r:crucial}, $f_1$ is minimal as a weak idempotent in
$\sD_{G_1}(G_1)$, so
\begin{equation} \label{e:f_1e_1}
f_1*e_1\cong f_1,
\end{equation}
i.e., $f_1\in e_1\sD_{G_1}(G_1)$.

 \mbr

Define $f\in e_{\cO}\sD_G(G)$  by $f:=\igo f_1$. Let us show that
$f$ has the properties required in the formulation of Proposition
\ref{p:crucial}. By Lemma~\ref{l:crucial}(e), $f\in e_{\cO}\sD_G(G)$
is a closed idempotent, so by Lemma~\ref{l:crucial}(c), $f$ is also
a closed idempotent in $\sD_G(G)$. To show that $f*N\neq 0$, note
that since $f\in e_{\cO}\sD_G(G)$ one has \[f*N\cong
f*e_{\cO}*N\cong (\igo f_1)*(\igo N_1),\] and $(\igo f_1)*(\igo
N_1)\ne 0$ by \eqref{e:f_1N_1nonzero} and Lemma~\ref{l:crucial}(e).

\mbr

It remains to check that  $f$ satisfies the last requirement of
Proposition \ref{p:crucial}. By \eqref{e:f_1e_1} and Lemma
\ref{lemma7}, we can apply Proposition \ref{p:Lemma}, replacing
$G,f,e$ with $G_1,f_1,e'_{\cL}$, respectively. We deduce that
$G'\supset A$ and $e'_{\cL}*e_1\cong e'_{\cL}$, which implies that
$(A,\cN)\leq(H,\cL)$ with respect to the partial order introduced in
Definition \ref{d:pairs-order}. Lemma \ref{l:admissibility} shows
that $(H,\cL)$ is admissible for $G$, and that $G'$ is equal to the
normalizer of $(H,\cL)$ in $G$.


\appendix

\section{Grothendieck-Verdier categories and r-categories}  \label{s:dualityformalism}


Let $G$ be an algebraic group. The monoidal categories $(\sD(G),*)$ and $(\sD_G(G),*)$ are
usually not rigid, but they have a weaker type of duality, which goes back to Grothendieck
and Verdier. In this appendix we give an axiomatic treatment of the Grothendieck-Verdier
formalism in monoidal categories. A more complete exposition of the subject can be found in
\cite{GV}.

\mbr

Throughout this appendix, with the exception of \S\ref{ss:ribbon_G}, we interpret $\sD_G(G)$ as the bounded derived category $D^b_c\bigl((\Ad G)\backslash G,\ql\bigr)$ of the stack quotient of $G$ with respect to its conjugation action on itself (cf.~\cite{Las-Ols06}).


\subsection{Grothendieck-Verdier categories and r-categories}
\label{ss:dual-general}

\subsubsection{Definitions and examples}

\begin{defin}\label{d:dualizing-functor}
Let $\sC$ be a category and $\Phi:\sC\times\sC\rar{}\cS{}ets$ a functor, which is contravariant in both arguments. We say that $\Phi$ is a \emph{dualizing functor} if
for every $Y\in\sC$ the functor $X\mapsto\Phi(X,Y)$ is representable by some object $DY\in\sC$ and the contravariant functor $D:\sC\rar{}\sC$ is an antiequivalence. $D$ is called the \emph{duality functor} with respect to $\Phi$.
\end{defin}

\begin{defin}     \label{def:dualizing}
An object $K$ in a  monoidal category $\cM$ is said to be {\em dualizing\,} if the functor $\Phi(X,Y)=\Hom(X\tens Y,K)$ is dualizing. The corresponding duality functor is called the
\emph{duality functor with respect to $K$}.
\end{defin}

\begin{rem}
One can show that if a dualizing object exists then it is unique up to tensoring by an
invertible object, see \cite[Proposition 1.3(i)]{GV} for more details.
\end{rem}

\begin{defin}      \label{def:GV}
A {\em Grothendieck-Verdier category\,} is a pair $(\cM ,K)$, where $\cM$ is a monoidal category
and $K\in\cM$ is a dualizing object.
\end{defin}

By abuse of language, we will usually say ``Grothendieck-Verdier category $\cM$" instead of
``Grothendieck-Verdier category $(\cM ,K)$".

\mbr

Below we give some examples of Grothendieck-Verdier categories. More examples of
such categories can be found in \cite{GV} and in the works by M.~Barr, who studied them under the name of {\em $\ast$-autonomous categories\,}  (e.g., see  \cite{Barr79,Barr95,Barr96,Barr99}).


\begin{example}   \label{example:GVoriginal}
Let $\cM=(\sD (X),\otimes )$, where $X$ is a scheme of finite type over a field $k$
and $\sD(X)$ is the bounded derived category of constructible $\ell$-adic
sheaves on $X$, $\ell\ne$ char $k$.\footnote{For a general field $k$, the definition of $\sD(X)$ is
given in  \cite{Jannsen,Ekedahl}. If $k$ is algebraically closed it is equivalent to the definition
from \cite[\S\S1.1.2--1.1.3]{deligne-weil-2}.}
Let $K_X\in\sD (X)$ be the dualizing complex. Then $(\cM ,K_X)$ is a Grothendieck-Verdier category. In this case $D$ is the usual Verdier duality functor $ \bD_X$.
\end{example}

\begin{defin}\label{def:r-category}
A monoidal category $\cM$ is said to be an {\em r-category\,} if
the unit object $\e\in\cM$ is dualizing.
\end{defin}

So any r-category can be considered as a Grothendieck-Verdier category with $K=\e$.
The letter `r' in the name ``r-category" is related to the words ``rigid'' and ``regular'', see
Examples~\ref{example:dual-gen1}-\ref{example:dual-gen2} below.


\begin{example} \label{example:dual-gen1}
Any rigid monoidal category is an  r-category. The next example shows that the converse is false.
\end{example}

\begin{example}   \label{example:dual-gen2}
Let $X$ be a smooth $k$-scheme (or if you wish, a regular scheme of finite type over $k$).
Suppose that $X$ has pure dimension $d$. Then the monoidal category
$(\sD (X),\otimes )$ is an r-category and $D: \sD (X)\to\sD (X)$ is the functor
$N\mapsto( \bD_X  N)[-2d](-d)$. If $d>0$ then $(\sD (X),\otimes )$ is not rigid because
$D(M_1\tens M_2)\not\simeq D(M_2)\tens D(M_1)$ for some $M_1,M_2\in\sD (X)$.
E.g., take $M_1=M_2=i_*\ql$, where $i:\Spec k\into X$ is a point;  then
$D(M_1\tens M_2)= D(i_*\ql)=i_*\ql [-2d](-d)$ while $D(M_2)\tens D(M_1)=i_*\ql [-4d](-2d)$.
\end{example}


\begin{example}     \label{example:dual-gen4}
Let $G$ be any algebraic group (not necessarily unipotent or even affine) over a field $k$.
By Lemma \ref{l:duality-on-D(G)} below, the monoidal categories $\sD (G)$ and $\sD_G(G)$
equipped with the functor of convolution with compact support (see Definition~\ref{d:convolution}) are   r-categories with $D$ being the functor $\bD_G ^-$ from Definition~\ref{d:duality}. One can show that these r-categories are rigid if and only if $G$ is proper, see \cite[Corollary 3.8]{GV}.
\end{example}

\begin{lem}\label{l:duality-on-D(G)}
Let $\cM$ denote either $(\sD(G),*)$ or $(\sD_G(G),*)$. There is a family of isomorphisms $\Hom(M*N,\e)\cong\Hom(M,\bD_G^-N)$, functorial in $M,N\in\cM$.
\end{lem}

\begin{proof}
By Example \ref{example:GVoriginal}, there are canonical isomorphisms
\[
\Hom(M,\bD_G^-N) \cong \Hom(M,\bD_G(\iota^*N)) \cong \Hom(M\tens\iota^*N,\bK_G)
\]
for all $M,N\in\cM$, where $\iota:G\rar{}G$ is given by $g\mapsto g^{-1}$. Hence we need to identify $\Hom(M\tens\iota^*N,\bK_G)$ with $\Hom(M*N,\e)$.

\mbr

Let $p:G\rar{}\Spec k$ denote the structure map, and let $1:\Spec k\rar{}G$ denote the unit of $G$. Adjunction yields functorial isomorphisms
\[
\Hom(M*N,\e)\cong\Hom(1^*(M*N),\ql)
\]
for all $M,N\in\cM$, and the proper base change theorem identifies $1^*(M*N)$ with $p_!(M\tens\iota^*N)$. Using adjunction again, we get isomorphisms
\[
\Hom(p_!(M\tens\iota^*N),\ql)\cong\Hom(M\tens\iota^*N,p^!\ql)=\Hom(M\tens\iota^*N,\bK_G)
\]
functorial in $M,N\in\cM$, completing the proof.
\end{proof}

\begin{example}     \label{example:dual-gen5}
Here  is a generalization of the previous example.
Suppose we have a groupoid in the category of schemes of finite type over a field $k$. Let $X$ denote its ``scheme of objects'' and $\Ga$ its ``scheme of morphisms".
Then $\sD(\Ga)$ has a natural structure of
Grothendieck-Verdier category, see \cite[Example 2.2]{GV} for details. If $X$ is a point
we get the Grothendieck-Verdier category $\sD(G)$ from Example~\ref{example:dual-gen4}.
On the other hand, one can take any $X$ and set $\Gamma=X\times X$.
\end{example}

One can get more examples of Grothendieck-Verdier categories by using
Lem\-ma~\ref{l:GV-Hecke-subcategory}(b) below.

\subsubsection{Some canonical isomorphisms} \label{sss:rules_of_operation}

\begin{rems}   \label{r:r}
\begin{enumerate}[(i)]
\item By definition, in any  Grothendieck-Verdier category $\cM$ one has an isomorphism
\begin{equation}  \label{e:duality1}
\Hom (X\otimes Y, K )\iso\Hom (X,DY)
\end{equation}
functorial in $X,Y\in\cM$. Since $D$ is an antiequivalence the right-hand side
of \eqref{e:duality1} identifies with $\Hom (Y,D^{-1}X)$. So one
also has an isomorphism
\begin{equation}  \label{e:duality2}
\Hom (X\otimes Y, K )\iso\Hom (Y,D^{-1}X)
\end{equation}
functorial in $X,Y\in\cM$. Thus a Grothendieck-Verdier category equipped with the opposite
tensor product is still a Grothendieck-Verdier category, {\em but $D$ gets replaced by $D^{-1}$.}
\item
By \eqref{e:duality2}, in any Grothendieck-Verdier category $\cM$ one has a functorial isomorphism
$\Hom (D^2Y\otimes X, K )\iso\Hom (X,DY)$.
Combining it with \eqref{e:duality1} one gets a functorial isomorphism
\begin{equation}  \label{e:duality3}
g:\Hom (X\otimes Y, K )\iso\Hom (D^2Y\otimes X, K ), \quad X,Y\in\cM .
\end{equation}
Equivalently, $g$ is characterized by the commutativity of the diagram
\begin{equation}   \label{e:duality4}
\xymatrix{
\Hom (X\otimes Y, K )  \ar[d] \ar[rr]^{g} & &
\Hom (D^2Y\otimes X, K )\ar[d]\\
 \Hom (X,DY) \ar[rr]^{D} & &
 \Hom (D^2Y,DX)}
\end{equation}
whose vertical arrows come from \eqref{e:duality1}.
\item
In any Grothendieck-Verdier category  there exist right and left
internal $\Hom$'s.
More precisely, if one sets
\begin{equation}   \label{e:internal_Hom2a}
\HOM(X,Z)=D^{-1}(DZ\tens X),
\end{equation}

\begin{equation}  \label{e:internal_Hom1a}
\HOM'(Y,Z)=D(Y\tens D^{-1}Z)
\end{equation}
then\eqref{e:duality1} and \eqref{e:duality2} yield
functorial isomorphisms
\begin{equation}   \label{e:internal_Hom2b}
\Hom (X\otimes Y,Z)\iso\Hom (Y, \HOM(X,Z)),
\end{equation}

\begin{equation}  \label{e:internal_Hom1b}
\Hom (X\otimes Y,Z)\iso\Hom (X, \HOM'(Y,Z)).
\end{equation}


\item From \eqref{e:duality1} and  \eqref{e:duality2} one gets canonical isomorphisms
\begin{equation}   \label{e:D1}
D\e\iso K, \quad\quad D^{-1}\e\iso K.
\end{equation}
and therefore canonical isomorphisms
\begin{equation}  \label{e:D^2(1)}
\e\iso D^2\e ,
\end{equation}
\begin{equation}  \label{e:D^2K}
K\iso D^2K
\end{equation}
(the 
latter is the composition $K\iso D\e\iso D^2D^{-1}\e\iso D^2K$).
\end{enumerate}
\end{rems}

\subsubsection{$D^2$ as a monoidal equivalence}  \label{sss:D^2as_mon}
By \eqref{e:duality3}, for each $X,Y_1,Y_2\in\cM$ one has a canonical isomorphism
\begin{equation}  \label{e:D^21}
\Hom (X\otimes Y_1\otimes Y_2, K )\iso\Hom (D^2(Y_1\otimes Y_2)\otimes X, K ).
\end{equation}
On the other hand, writing $X\otimes Y_1\otimes Y_2$ as $(X\otimes Y_1)\otimes Y_2$ and applying
\eqref{e:duality3} twice one gets an isomorphism
\begin{equation}   \label{e:D^22}
\Hom (X\otimes Y_1\otimes Y_2, K )\iso\Hom (D^2Y_1\otimes D^2Y_2\otimes X, K ).
\end{equation}
Combining \eqref{e:D^21} and \eqref{e:D^22} one gets a functorial isomorphism
\begin{equation} \label{e:D^23}
\Hom (D^2(Y_1\otimes Y_2)\otimes X, K )\iso\Hom (D^2Y_1\otimes D^2Y_2\otimes X, K ),
\quad X,Y_1,Y_2\in\cM .
\end{equation}
Using Yoneda's lemma and the isomorphism $\Hom (Z\tens X,K)\iso\Hom (Z,DX)$ we see that
the isomorphism~\eqref{e:D^23} comes from a unique functorial isomorphism
\begin{equation}   \label{e:D^24}
D^2(Y_1\otimes Y_2)\iso D^2Y_1\otimes D^2Y_2 , \quad Y_1,Y_2\in\cM .
\end{equation}

\begin{prop} \label{p:DD'}
The isomorphism \eqref{e:D^24} defines a monoidal structure on the functor $D^2:\cM\iso\cM$.
\end{prop}

A proof is given in \cite[\S11.1]{GV}. We do not use Proposition~\ref{p:DD'} in the main body of this article.

\subsection{Pivotal structures on Grothendieck-Verdier categories} \label{ss:pivotal}
\subsubsection{The notion of a pivotal structure}
\begin{defin}  \label{d:pivotal1}
A {\em pivotal structure\,} on a  Grothendieck-Verdier category $\cM$ is a functorial isomorphism
\begin{equation}   \label{e:psi1}
\psi_{X,Y}:\Hom (X\otimes Y,K )\iso\Hom (Y\otimes X,K ), \quad X,Y\in\cM
\end{equation}
such that
\begin{equation}   \label{e:pretty2}
\psi_{X\tens Y,Z}\circ \psi_{Y\tens Z,X}\circ\psi_{Z\tens X,Y}=\id , \quad X,Y\in\cM ;
\end{equation}
\begin{equation}   \label{e:pretty1}
\psi_{X,Y}\circ \psi_{Y,X}=\id , \quad X,Y\in\cM .
\end{equation}
\end{defin}

In particular, one has the notion of a pivotal structure on an r-category
(which can be considered as a Grothendieck-Verdier category with $K=\e$).

\begin{defin} \label{d:pivotal2}
A {\em pivotal Grothendieck-Verdier category\,}
is a Grothendieck-Verdier category with a pivotal structure. A {\em pivotal r-category\,}
is an  r-category with a pivotal structure.
\end{defin}

The name ``pivotal category" goes back to \cite[Definition 1.3]{FY}.

\begin{example}\label{ex:pivotal-D(X)-tensor-product}
A symmetric \GV category has a canonical pivotal structure: the isomorphisms
$\Hom(M\tens N,K)\rar{\simeq}\Hom(N\tens M,K)$ are induced by the symmetry isomorphisms
$M\tens N\rar{\simeq}N\tens M$. In particular, one thus gets a canonical pivotal structure on the
\GV category $(\sD(X),K_X)$ from Example~\ref{example:GVoriginal}.
\end{example}

\begin{lem}   \label{l:pivotal}
Let $\cM$ be a Grothendieck-Verdier category and $\psi$ an isomorphism \eqref{e:psi1}
satisfying \eqref{e:pretty2}. Then $\psi$ satisfies \eqref{e:pretty1} if and only if $\psi_{K,\e}=\id$.
\end{lem}

\begin{proof}
Setting $Z=\e$ in \eqref{e:pretty2} we see that \eqref{e:pretty1} holds  if and only if the isomorphism
$\psi_{X,\e}:\Hom (X,K )\to\Hom (X,K )$ equals the identity for all $X$. By Yoneda's lemma, this happens if and only if  $\psi_{K,\e}=\id$.
\end{proof}

\begin{cor}   \label{c:pivotal}
If $\cM$ is an r-category then \eqref{e:pretty2} implies \eqref{e:pretty1}.
\end{cor}

\begin{rem}   \label{r:pivotal1}
By \eqref{e:pretty2} and \eqref{e:pretty1}, a pivotal structure on a Grothendieck-Verdier category defines for any integers $n\ge m\ge 1$
a {\em canonical\,} isomorphism
\[
\Hom (X_1\tens\ldots\tens X_n,K)\iso\Hom (X_m\tens\ldots\tens X_n\tens X_1\tens\ldots \tens
X_{m-1},K),\;  X_i\in\cM.
\]
\end{rem}

\subsubsection{Pivotal structures and isomorphisms $\id\iso D^2$}
\begin{rem}        \label{r:pivotal2}
By \eqref{e:duality1}-\eqref{e:duality2} and Yoneda's lemma, a functorial isomorphism
\eqref{e:psi1} is the same as an isomorphism
$D^{-1}\iso D$ or equivalently an isomorphism $\id\iso D^2$.
\end{rem}

Remark~\ref{r:pivotal2} yields an injective map from the set of pivotal structures on a \GV category
$\cM$ to the set of isomorphisms $f:\id\iso D^2$.

\begin{prop}    \label{p:2pivotal}
An isomorphism $f:\id\iso D^2$ belongs to the image of this map if and only if it satisfies the
following conditions:
 \sbr
\begin{enumerate}[$($i$)$]
\item $f$ is monoidal;
 \sbr
\item $f_K:K\iso D^2K$ equals the isomorphism \eqref{e:D^2K}.
\end{enumerate}
\end{prop}

A proof is given in \cite[\S13]{GV}. We do not use
Proposition~\ref{p:2pivotal} in the main body of this article.


\begin{rems}        
\begin{enumerate}[(i)]
\item If $\cM$ is an r-category then condition (ii) from Proposition~\ref{p:2pivotal} clearly
follows from condition (i). For arbitrary \GV categories this is false, see \cite[Remark 5.8(i)]{GV}.
 \sbr
\item By the previous remark, in the case of r-categories a pivotal structure can
equivalently be defined to be a monoidal isomorphism $f:\id\iso D^2$. It is this
definition that was used in works on rigid monoidal categories
(e.g., see \cite[Definition 2.7]{ENO}).
 \sbr
\item Here is a way to combine the two conditions on $f$ from Proposition~\ref{p:2pivotal} into one. Let $\fA$ be the 2-groupoid of pairs consisting of a monoidal category and an object in it. A
Grothendieck-Verdier category $(\cM ,K)$ is an object in $\fA$. The monoidal structure on $D^2$
and the isomorphism $K\iso D^2(K)$ defined in Remark~\ref{r:r}(iv) allow us to
consider $D^2$ as a 1-automorphism of $(\cM ,K)\in\fA$. The two conditions on $f$ from
Proposition~\ref{p:2pivotal}  mean that $f:\id\iso D^2$ is a 2-isomorphism in $\fA$.
\end{enumerate}
\end{rems}



\subsubsection{The canonical pivotal structure on $\sD(G)$ and $\sD_G(G)$} \label{sss:pivotal_G}

\begin{example}   \label{example:pivotal}
We will write $\cM$ for one of the r-categories $\sD(G)$ and $\sD_G(G)$ (cf.~Example \ref{example:dual-gen4}). Let us give a description of $\Hom(M_1*\dotsb*M_n,\e)$, $M_i\in\cM$,
which makes the pivotal structure on $\cM$ obvious. First,
\begin{equation}  \label{e:ex_pivotal1}
\Hom(M_1*\dotsb*M_n,\e)=\Hom (1^* (M_1*\dotsb*M_n), \ql ),
\end{equation}
where $1:\Spec k\rar{}G$ is the unit of $G$ (of course, in the case $\cM=\sD_G(G)$ the right-hand side of \eqref{e:ex_pivotal1} should be understood as $\Hom$ in the category $\sD_G (\Spec k)$).
Now define $Z_n\subset G^n$ by the equation
$g_1\ldots g_n=1$ and let $\pi_1,\ldots ,\pi_n:Z_n\rar{}G$ be the projections. Then by proper base
change,
\begin{equation}  \label{e:ex_pivotal2}
1^* (M_1*\dotsb*M_n)=p_! (\pi_1^*M_1\tens\dotsb\tens\pi_n^*M_n), \quad\quad p:Z_n\rar{}\Spec k .
\end{equation}
Combining \eqref{e:ex_pivotal1} with \eqref{e:ex_pivotal2} and using the invariance of
$Z_n\subset G^n$ with respect to cyclic permutations of the $n$ coordinates we get
%
%
%
%
%
a canonical isomorphism
\[
\Hom(M_1*\dotsb*M_n,\e) \rar{\simeq} \Hom(M_2*\dotsb*M_n*M_1,\e)
\]
whose $n$-th power (in the obvious sense) equals the identity .
%
%
It is easy to see that the isomorphisms $\Hom(M_1*M_2,\e)\rar{\simeq}\Hom(M_2*M_1,\e)$ that we obtain in the case $n=2$ of this construction define a pivotal structure on the r-category $\cM$ (which is either $\sD(G)$ or $\sD_G(G)$).
\end{example}

\begin{rems}\label{r:pivotal-groupoids}
\begin{enumerate}[(i)]
\item The \GV category from Example \ref{example:dual-gen5} 
has a canonical pivotal structure (in the spirit of Example~\ref{example:pivotal}).
 \sbr
\item By Remark~\ref{r:pivotal2},  the pivotal structure on $\sD(G)$ (respectively, $\sD_G(G)$) from Example~\ref{example:pivotal}
yields an isomorphism $f:\Id\rar{\simeq} (\bD_G^-)^2$. Let us compute it.
\end{enumerate}
\end{rems}

\begin{lem}\label{l:obvious-isomorphism}
The isomorphism $f:\Id\rar{\simeq}(\bD_G^-)^2$ coming from the pivotal structure of Example \ref{example:pivotal} is equal to the composition
\[
\Id \rar{\simeq} \bD_G^2 \rar{\simeq} (\iota^*)^2\circ\bD_G^2 \rar{\simeq} \iota^*\circ\bD_G\circ\iota^*\circ\bD_G=(\bD_G^-)^2,
\]
where the first isomorphism is the standard one and the other two come from the natural identifications $(\iota^*)^2\cong\Id$ and $\bD_G\circ\iota^*\cong\iota^*\circ\bD_G$.
\end{lem}

This lemma will be deduced in \S\ref{sss:proof-l:obvious-isomorphism} from a more general Lemma \ref{l:obvious-isomorphism-abstract}.

\subsubsection{Quasi-pivotal structures}\label{sss:quasi-pivotal}
Let $\sC$ be a category and $\Phi:\sC\times\sC\rar{}\cS{}ets$ a dualizing functor
(see Definition~\ref{d:dualizing-functor}). Let $D$ be the duality functor with respect to $\Phi$, i.e.,
$\Hom (X,DY)=\Phi (X,Y)$ for $X,Y\in\sC$.
\begin{defin}
 A \emph{quasi-pivotal structure} on $\Phi$ is a functorial family of isomorphisms
\[
\psi_{X,Y}:\Phi(X,Y)\rar{\simeq}\Phi(Y,X), \qquad X,Y\in\sC.
\]
\end{defin}

\begin{rem}\label{r:quasi-pivotal}
If 
$\psi$ is a quasi-pivotal structure on $\Phi$, then for $X,Y\in\sC$ we obtain a functorial isomorphism
\[
\Hom(X,Y)\rar{D}\Hom(DY,DX)=
\Phi(DY,X)\xrar{\ \psi_{DY,X}\ }\Phi(X,DY)=
\Hom (X,D^2 Y)
\]
and hence a functorial isomorphism $Y\rar{\simeq}D^2 Y$. This defines a bijection between quasi-pivotal structures on $\Phi$ and isomorphisms of functors $\Id_{\sC}\rar{\simeq}D^2$.
\end{rem}

From now on we assume that we are given a triple $(\sC,\Phi,\psi)$, where $\sC$ is a category, $\Phi:\sC\times\sC\rar{}\cS{}ets$ is a dualizing functor and $\psi$ is a quasi-pivotal structure on $\Phi$. Suppose moreover that we are given an action of $\bZ/2\bZ$ on $(\sC,\Phi,\psi)$. 
We write $\tau:\sC\rar{\sim}\sC$ for the autoequivalence defined by $1\in\bZ/2\bZ$.

\begin{rems}\label{rems:quasi-pivotal}
\begin{enumerate}[(1)]
\item The functor $\Phi^\tau:(X,Y)\longmapsto\Phi(\tau X,Y)$ is also dualizing, and the corresponding duality functor is $\tau^{-1}\circ D\cong\tau\circ D$, where $D$ is the duality functor with respect to $\Phi$.
 \sbr
\item The functor $D$ is $(\bZ/2\bZ )$-equivariant, so there is a natural isomorphism
$$\tau\circ D\rar{\simeq}D\circ\tau\, .$$
 \sbr
\item The composition
    \[
    \psi^\tau_{X,Y} : \Phi(\tau X,Y) \rar{\simeq} \Phi(X,\tau Y) \rar{\simeq} \Phi(\tau Y,X), \qquad X,Y\in\sC,
    \]
    defines a quasi-pivotal structure on $\Phi^\tau$, where the first isomorphism comes from the $\bZ/2\bZ$-equivariant structure on $\Phi$ and the second one is $\psi_{X,\tau Y}\,$.
\end{enumerate}
\end{rems}

\begin{lem}\label{l:obvious-isomorphism-abstract}
The isomorphism $\Id_{\sC}\rar{\simeq}(\tau\circ D)^2$ coming from the quasi-pivotal structure described in Remark \ref{rems:quasi-pivotal}(3) via the construction of Remark \ref{r:quasi-pivotal} is equal to the composition
\[
\Id_{\sC}\rar{\simeq} D^2 \rar{\simeq} \tau^2\circ D^2 \rar{\simeq} (\tau\circ D)^2,
\]
where the first isomorphism corresponds to $\psi$ as in Remark \ref{r:quasi-pivotal}, the second one comes from the natural identification $\Id_{\sC}\cong\tau^2$, and the third one is induced by the isomorphism $\tau\circ D\rar{\simeq}D\circ\tau$ of Remark \ref{rems:quasi-pivotal}(2).
\end{lem}

The proof is completely straightforward, so we omit it.

\subsubsection{Proof of Lemma \ref{l:obvious-isomorphism}}\label{sss:proof-l:obvious-isomorphism} Let us specialize \S\ref{sss:quasi-pivotal} to the following setting. Take $\sC$ to be either $\sD(G)$ or $\sD_G(G)$, define $\Phi(M_1,M_2)=\Hom(M_1\tens M_2,K_G)$, where $K_G\in\sC$ is the dualizing complex, and let $\psi$ be induced by the standard symmetry isomorphism $M_1\tens M_2\rar{\simeq}M_2\tens M_1$. The action of $\bZ/2\bZ$ on the triple $(\sC,\Phi,\psi)$ comes from $\tau:=\iota^*$, where $\iota:G\rar{}G$ is the inversion map.

\mbr

We claim that the new duality functor $\Phi^\tau$ can be naturally identified with the functor $(M_1,M_2)\longmapsto\Hom(M_1*M_2,\e)$, so that $\psi^\tau$ becomes identified with the pivotal structure defined in Example \ref{example:pivotal}. Indeed, with the notation of Example \ref{example:pivotal}, we can identify $G$ with $Z_2\subset G\times G$ via the map $g\mapsto(g^{-1},g)$. Under this identification, $\pi_1$ becomes $\iota$ and $\pi_2$ becomes the identity map on $G$. Hence
\[
\Hom(M_1*M_2,\e)\rar{\simeq}\Hom(p_!(\iota^*M_1\tens M_2),\ql)\rar{\simeq}\Hom(\iota^*M_1\tens M_2,K_G).
\]
This implies that $\Phi^\tau(M_1,M_2)=\Hom(M_1*M_2,\e)$, and the fact that $\psi^\tau$ coincides with the pivotal structure described in Example \ref{example:pivotal} follows from the construction. Applying Lemma \ref{l:obvious-isomorphism-abstract} completes the proof.

\subsection{Braided Grothendieck-Verdier categories}

\subsubsection{The functors $D^2$ and $D^4$} The next result is proved in \cite[Lemma 6.8]{GV}.

\begin{lem}
Let $(\cM,K,\be)$ be a braided Grothendieck-Verdier category, and let
$\vp^\pm:D^{-1}\rar{\simeq}D$ be the isomorphisms induced by the compositions
\[
\Hom(Y,D^{-1}X) \rar{\simeq} \Hom(X\tens Y,K) \xrar{\ \ (\be^\pm_{Y,X})^*\ \ } \Hom(Y\tens X,K) \rar{\simeq} \Hom(Y,DX)
\]
for all $X,Y\in\cM$, where $\be_{X,Y}^+:=\be_{X,Y}$ and $\be_{X,Y}^-:=\be_{Y,X}^{-1}$. Then
\[
D(\vp^\pm_X) = (\vp^\mp_{DX})^{-1} \;\mbox{ for all } X\in\cM .
\]
\end{lem}

\begin{defin}\label{d:isom-from-Id-to-D^2-in-a-braided-GV-category}
If $(\cM,K,\be)$ is a braided Grothendieck-Verdier category, we define isomorphisms of functors $\vt^\pm:\Id_{\cM}\rar{\simeq}D^2$ as follows:
\[
\vt^\pm_X = \vp^\pm_{DX} = D(\vp^\mp_X)^{-1} : X\rar{\simeq}D^2 X \qquad \forall\,X\in\cM,
\]
where the second equality holds by the lemma above.
\end{defin}

\begin{rem}
In general the isomorphisms $\vt^\pm$ are \emph{not} monoidal.
\end{rem}

\begin{lem}
Let $(\cM,K,\be)$ be a braided Grothendieck-Verdier category. Then the monoidal functor $D^2:\cM\rar{\sim}\cM$ is braided.
\end{lem}

This result is \cite[Proposition 6.1(i)]{GV}. The lemma implies that the functor $D^4:\cM\rar{\sim}\cM$ is also braided. Note that we can consider $\vt^+\vt^-$ as an isomorphism of functors between $\Id_{\cM}$ and $D^4$.

\begin{lem}\label{l:braided-GV-isomorphism-from-Id-to-D^4}
In any braided \GV category $(\cM,K,\be)$ the isomorphism $\vt^+\vt^- : \Id_{\cM}\rar{\simeq}D^4$ is monoidal.
One has $\vt^-\vt^+=\vt^+\vt^-$.
\end{lem}

For the proof, see Definition 6.11 and Remark 6.12 in \cite{GV}.

\subsubsection{Pivotal structures and twists} Let us recall the following

\begin{defin}   \label{d:tw}
If $(\cM,\be)$ is a braided monoidal category, a \emph{twist} on $(\cM,\be)$ is an automorphism $\te:\Id_{\cM}\rar{\simeq}\Id_{\cM}$ of the identity functor satisfying
\[
\te_{X\tens Y} = \be_{Y,X}\circ\be_{X,Y}\circ(\te_X\tens\te_Y) \qquad \forall\,X,Y\in\cM.
\]
\end{defin}

Now suppose $(\cM,K,\be)$ is a braided \GV category equipped with a pivotal structure $\psi$. There exists a unique automorphism $\te$ of $\Id_{\cM}$ such that for all $X,Y\in\cM$ the isomorphism
\[
\Hom(Y,D^{-1}X)\rar{\simeq}\Hom(Y,D^{-1}X), \qquad g\longmapsto g \circ \te_Y \, ,
\]
is equal to the composition
\[
\Hom(Y,D^{-1}X) \rar{\simeq} \Hom(X\tens Y,K) \xrar{\ \ \psi_{X,Y}\ \ } \Hom(Y\tens X,K)
\]
\[
\xrar{\ \ (\be_{Y,X}^{-1})^*\ \ } \Hom(X\tens Y, K) \rar{\simeq} \Hom(Y,D^{-1}X).
\]

\begin{rem}\label{r:pivotal-structure-in-terms-of-twist}
It is clear that $\psi$ can be expressed in terms of $\te$ as follows:
\[
\psi_{X,Y} = \be_{Y,X}^*\circ(\id_X\tens\te_Y)^* : \Hom(X\tens Y,K) \rar{\simeq} \Hom(X\tens Y,K) \rar{\simeq} \Hom(Y\tens X,K)
\]
for all $X,Y\in\cM$.
\end{rem}

\begin{lem}\label{l:bijection-pivotal-structures-twists}
The map $\psi\mapsto\te$ constructed above is a bijection between the set of pivotal structures on $(\cM,K)$ and the set of twists $\te$ on $(\cM,\be)$ satisfying $\te_K=\id_K$.
\end{lem}

For the proof, see Proposition 7.1 and Remark 7.2 in \cite{GV}.

\begin{rems}\label{rr:twists-pivotal-involutions}
\begin{enumerate}[(1)]
\item In the situation of Lemma \ref{l:bijection-pivotal-structures-twists}, let $f:\Id_{\cM}\rar{\simeq}D^2$ be the monoidal isomorphism corresponding to $\psi$ as in Remark \ref{r:pivotal2} (see also Proposition \ref{p:2pivotal}). Unwinding the definitions, one sees that in terms of $f$, the twist corresponding to $\psi$ is given by $\te=(\vt^+)^{-1}\circ f$, where $\vt^+:\Id_{\cM}\rar{\simeq}D^2$ is the isomorphism constructed in Definition \ref{d:isom-from-Id-to-D^2-in-a-braided-GV-category}.
 \sbr
\item Let $(\cM,K,\be)$ be a braided \GV category, and let $\te$ be a twist on $(\cM,\be)$ such that $\te_K=\id_K$. If one defines $\te'_X=D^{-1}(\te_{DX})$ for all $X\in\cM$, then $\te'$ is also a twist on $(\cM,\be)$ satisfying $\te'_K=\id_K$ \cite[Prop.~7.3(iii)--(iv)]{GV}.
 \sbr
\item The map $\te\mapsto\te'$ is an involution of the set of twists of $(\cM,\be)$ that act as the identity on $K$ \cite[Prop.~7.3(i)--(ii)]{GV}.
 \sbr
\item The involution $\te\mapsto\te'$ can be described in different terms \cite[Prop.~7.3(v)]{GV}. Let $\psi,\psi'$ be the pivotal structures corresponding to $\te$ and $\te'$, and let $f,f':\Id_{\cM}\rar{\simeq}D^2$ be the monoidal isomorphisms corresponding to $\psi,\psi'$. Then $ff':\Id_{\cM}\rar{\simeq}D^4$ is equal to the monoidal isomorphism $\vt^+\vt^-:\Id_{\cM}\rar{\simeq}D^4$ of Lemma \ref{l:braided-GV-isomorphism-from-Id-to-D^4}.
\end{enumerate}
\end{rems}

\subsection{Ribbon \GV categories}  \label{ss:ribbon_GV}
\begin{defin}\label{d:ribbon}
A \emph{ribbon structure} on a braided \GV category $(\cM,K,\be)$ is a twist $\te$ on
$(\cM,\be)$ such that
\begin{equation}   \label{e:ribbon}
\te_X=D^{-1}(\te_{DX})\quad \mbox{ for all } X\in\cM.
\end{equation}
A \emph{ribbon \GV category} is a braided \GV category with a ribbon structure.
A \emph{ribbon r-category} is an r-category with a ribbon structure.
\end{defin}

\begin{rem}\label{remark-from-GV}
The identity \eqref{e:ribbon} holds if and only if for any $X,Y\in\cM$ and $B:X\tens Y\rar{} K$ one has
\begin{equation}   \label{e:without_D}
B\circ (\id_X\tens\te_Y)=B\circ (\te_X\tens\id_Y).
\end{equation}
Note that unlike \eqref{e:ribbon}, formula \eqref{e:without_D} makes sense in \emph{any}
braided category with a fixed object $K$ ($K$ does not have to be dualizing and $\cM$ does not
have to be Grothendieck-Verdier). We do not know whether condition \eqref{e:without_D} is really
interesting in this generality.
\end{rem}

\begin{lem}   \label{l:ribbon}
A twist $\te$ satisfies \eqref{e:ribbon} if and only if $\te_K=\id_K$ and $\te'=\te$, where
$\te'$ is defined in Remark \ref{rr:twists-pivotal-involutions}(2).
\end{lem}

\begin{proof}
We only have to show that the equality $\te_K=\id_K$
follows from \eqref{e:ribbon}. This is clear because $K=D\e$ and $\te_{\e}=\id_{\e}$.
\end{proof}

\begin{cor}   \label{c:ribbon}
The correspondence between twists and pivotal structures
$($see Lemma \ref{l:bijection-pivotal-structures-twists}$)$ induces a bijection between
ribbon structures on $(\cM,K,\be)$ and those pivotal structures on $(\cM,K)$ for which the corresponding monoidal isomorphism $f:\Id_{\cM}\rar{\simeq}D^2$
is invariant under the involution $f\mapsto f'$ from Remark \ref{rr:twists-pivotal-involutions}(4).
\end{cor}

\begin{proof}
This follows from Lemma~\ref{l:ribbon} and Remark \ref{rr:twists-pivotal-involutions}(4).
\end{proof}

\subsection{The canonical ribbon structure on $\sD_G(G)$}   \label{ss:ribbon_G}
The r-category $(\sD_G(G),*)$ has a natural ribbon structure. For an arbitrary algebraic
group $G$ it is described in Appendix~\ref{s:CFT} below.
In this subsection we define it for algebraic groups $G$ such that $G^\circ$ is unipotent. This assumption allows us to use the \emph{ad hoc} construction of $\sD_G(G)$ given in Definition \ref{d:equiv-derived}.

\begin{defin}[Braiding on $\sD_G(G)$]\label{d:braiding-equivariant-derived}
Let $M,N\in\sD_G(G)$, then the braiding $\be_{M,N}:M*N\iso
N*M$ is defined as follows. Consider the commutative diagram
\[
\xymatrix{
  G\times G \ar[d]_{\tau} \ar[r]^{\xi} &  G\times G \ar[d]^{\mu} \\
  G\times G \ar[r]^{\mu} & G,
   }
\]
where $\tau(g,h):=(h,g)$ and $\xi(g,h):=(g,g^{-1}hg)$. We have
$M*N=\mu_!(M\boxtimes N)$, and the above diagram shows that
$N*M=(\mu\tau )_!(M\boxtimes N)=\mu_!\xi_!(M\boxtimes N)$. We define
$\be_{M,N}:\mu_!(M\boxtimes N)\iso\mu_!\xi_!(M\boxtimes N)$ by
$\be_{M,N}:=\mu_!(f)$, where $f:M\boxtimes N\iso\xi_!(M\boxtimes N)$
comes from the $G$-equivariant structure on $N$.
\end{defin}

\begin{rem}
Checking the axioms
of a braiding for $\be$ is straightforward and is similar to the well
known case where $G$ is finite. In this case $\sD_G(G)$ is the
derived category of modules over the so-called quantum double of the
group algebra of $G$ (see, e.g.,  \S3.2 of \cite{BK}). These modules
form a braided category which is not symmetric unless $\abs{G}=1$.
\end{rem}

\begin{defin}[Twist on $\sD_G(G)$]\label{d:twist-equivariant-derived}
Let $c:G\times G\to G$ be the conjugation action morphism
$c(g,h)=ghg^{-1}$, let $p_2:G\times G\to G$ denote the second
projection, and $\De:G\to G\times G$ the diagonal. Then
$c\circ\De=\id_G=p_2\circ\De$. For each $M\in\sD_G(G)$, the
$G$-equivariant structure on $M$ yields an isomorphism $p_2^*M\iso c^*M$.
Pulling it back by $\De$ we get an isomorphism
$\te_M:M= \De^*p_2^*M\rar{\simeq}\De^*c^*M=M$.
\end{defin}

By construction, $\te$ is an automorphism of the identity functor on $\sD_G(G)$, and one can check that it is related to the braiding $\be$ of Definition \ref{d:braiding-equivariant-derived} as follows:
\[
\te_{M*N} = \be_{N,M}\circ\be_{M,N}\circ (\te_M*\te_N) \qquad
\forall\, M,N\in\sD_G(G).
\]
In fact, this follows from part (a) of

\begin{prop}\label{p:ribbon-equivariant-derived}
\begin{enumerate}[$($a$)$]
\item If the r-category $\sD_G(G)$ is equipped with the pivotal structure of Example \ref{example:pivotal} and the braiding of Definition \ref{d:braiding-equivariant-derived}, the corresponding twist $($cf.~Lemma \ref{l:bijection-pivotal-structures-twists}$)$ is the automorphism $\te$ constructed in Definition \ref{d:twist-equivariant-derived}.
 \sbr
\item The twist $\te$ is a ribbon structure on the braided r-category $(\sD_G(G),\be)$.
\end{enumerate}
\end{prop}

By Remark \ref{r:pivotal-structure-in-terms-of-twist}, the proposition follows at once from the next two lemmas.

\begin{lem}\label{l:ribbon1}
If $M,N\in\sD_G(G)$, then the isomorphism
\[
\psi_{M,N} : \Hom(M*N,\e) \rar{\simeq} \Hom(N*M,\e)
\]
from Example \ref{example:pivotal} is equal to the pullback map via the composition
\[
(\id_M*\te_N)\circ\be_{N,M} : N*M \rar{\simeq} M*N.
\]
\end{lem}

\begin{lem}\label{l:ribbon2}
For all $M\in\sD_G(G)$ we have $\te_{\bD_G^-M}=\bD_G^-(\te_M)$.
\end{lem}

For Lemma \ref{l:ribbon2} we refer to \cite[Proposition 7.2]{tanmay}.

\begin{proof}[Proof of Lemma \ref{l:ribbon1}]
We use the notation of Example \ref{example:pivotal}. In particular,
\[
Z_2 = \bigl\{ (g,h)\in G\times G \st gh=1 \bigr\},
\]
$\pi_1,\pi_2:Z_2\rar{}G$ are the two projections, and $p:Z_2\rar{}\Spec k$ is the structure morphism. The composition
\[
\Hom(M*N,1_*\ql) \rar{\simeq} \Hom(1^*(M*N),\ql) \rar{\simeq} \Hom\bigl( p_!(\pi_1^*(M)\tens\pi_2^*(N)),\ql\bigr)
\]
yields an identification\footnote{The Hom on the left hand side is computed in $\sD_G(G)$, while the Hom on the right hand side is computed in $\sD_G(\Spec k)$.}
\begin{equation}\label{e:iden1}
\Hom(M*N,\e) \rar{\simeq} \Hom\bigl( p_!(\pi_1^*(M)\tens\pi_2^*(N)),\ql\bigr).
\end{equation}
Similarly, we have an identification
\begin{equation}\label{e:iden2}
\Hom(N*M,\e) \rar{\simeq} \Hom\bigl( p_!(\pi_1^*(N)\tens\pi_2^*(M)),\ql\bigr).
\end{equation}

\mbr

Let $\tau:G\times G\rar{\simeq}G\times G$ and $\xi:G\times G\rar{\simeq}G\times G$ be as in Definition \ref{d:braiding-equivariant-derived}, that is, $\tau(g,h)=(h,g)$ and $\xi(g,h)=(g,g^{-1}hg)$. Note that both $\tau$ and $\xi$ preserve $Z_2\subset G\times G$; moreover, $\xi\bigl\lvert_{Z_2}=\id_{Z_2}$. We also have $p\circ\tau=p$ and $\pi_1\circ\tau=\pi_2$.

\mbr

The natural isomorphism $p_!\rar{\simeq}p_!\tau_!$ yields an isomorphism
\begin{equation}\label{e:compos-auxil-2}
p_! ( \pi_1^*(N) \tens \pi_2^*(M) )  \rar{\simeq}  p_! \tau_! ( \pi_1^*(N) \tens \pi_2^*(M) ) \cong p_!(\pi_1^*(M)\tens\pi_2^*(N)).
\end{equation}
The induced isomorphism
\[
\Hom\bigl( p_!(\pi_1^*(M)\tens\pi_2^*(N)),\ql\bigr) \rar{\simeq} \Hom\bigl( p_!(\pi_1^*(N)\tens\pi_2^*(M)),\ql\bigr)
\]
coincides with $\psi_{M,N}$ modulo the identifications \eqref{e:iden1} and \eqref{e:iden2}.

\mbr

On the other hand, consider the composition
\begin{equation}\label{e:compos-auxil}
N\boxtimes M \xrar{\ \ f\ \ }  \xi_! (N\boxtimes M) \xrar{\ \ \xi_!(\id_N\boxtimes\te_M)\ \ } \xi_!(N\boxtimes M),
\end{equation}
where $f:N\boxtimes M\rar{\simeq}\xi_!(N\boxtimes M)$ is the isomorphism coming from the $G$-equivariant structure on $M$, used in Definition \ref{d:braiding-equivariant-derived}. If we restrict \eqref{e:compos-auxil} to $Z_2$, we obtain the identity automorphism of $\pi_1^*(N)\tens\pi_2^*(M)$ (here we used the definition of $\te_M$; recall also that $\xi\bigl\lvert_{Z_2}=\id_{Z_2}$). This implies that the isomorphism $1^*(N*M)\rar{\simeq}1^*(M*N)$ induced by $(\te_M*\id_N)\circ\be_{N,M}:N*M\rar{\simeq}M*N$ is equal to the composition \eqref{e:compos-auxil-2}. Equivalently, $\psi_{M,N}$ is equal to the pullback map via the composition
\[
(\te_M*\id_N)\circ\be_{N,M} : N*M \rar{\simeq} M*N.
\]
Finally, observe that since $\te_{\e}=\id_{\e}$, Lemma \ref{l:ribbon2} implies that
\[
(\te_M*\id_N)^* = (\id_M*\te_N)^* : \Hom(M*N,\e)\rar{}\Hom(M*N,\e),
\]
which completes the proof.
\end{proof}

\subsection{Hecke subcategories of Grothendieck-Verdier categories}
The notion of a closed idempotent in a monoidal category was introduced in Definition \ref{defs:idempotents}(d).

\begin{lem}\label{l:GV-idempotents}
Let $(\cM,K)$ be a Grothendieck-Verdier category.
Let $e\in\cM$ be a closed idempotent. Then 
\begin{equation}   \label{e:GV-idempotents1}
D(e\cM)=\cM e \, ,
\end{equation}
\begin{equation}    \label{e:GV-idempotents2}
D(\cM e)= D^2 e\cdot\cM \, .
\end{equation}
\end{lem}

Note that $D^2 e$ is a closed idempotent: this follows from
\eqref{e:D^2(1)} and \eqref{e:D^24}. The notation $e\cM$ and $\cM e$ was introduced in \S\ref{ss:hecke-subcategories}.

\begin{proof}
Let $\pi:\e\rar{}e$ be an idempotent arrow. If $Y\in e\cM$ the
morphism $\pi\tens\id_Y:Y\rar{}e\tens Y$ is an isomorphism. By
\eqref{e:duality1}, for every $X\in\cM$, the morphism
$\id_X\tens\pi:X\rar{}X\tens e$ induces a bijection $\Hom(X\tens e,D
Y)\rar{\simeq}\Hom(X,DY)$. Now Proposition
\ref{p:idempotent-monads}(b) implies that $DY\in \cM e$. This proves that
\begin{equation}   \label{e:GV-idempotents3}
D(e\cM)\subset\cM e \, .
\end{equation}
Now apply \eqref{e:GV-idempotents3} to $\cM$ equipped with the opposite tensor product.
Then the dualization functor equals $D^{-1}$, so we get $D^{-1} (\cM e)\subset e\cM$, i.e.,
$\cM e\subset D(e\cM )$. Combining this with \eqref{e:GV-idempotents3} we get
\eqref{e:GV-idempotents1}.

\mbr

To prove \eqref{e:GV-idempotents2}, apply $D$ to
\eqref{e:GV-idempotents1} and note that $D^2(e\cM )=D^2 e\cdot\cM$
by \eqref{e:D^24}.
\end{proof}

\begin{lem}\label{l:GV-Hecke-subcategory}
Let $(\cM,K)$ be a Grothendieck-Verdier category, and
let $e\in\cM$ be a closed idempotent such that $D^2e\simeq e$.
\begin{enumerate}[$($a$)$]
\item We have $D(e\cM)=\cM e$ and $D(\cM e)= e\cM$.
 \sbr
\item $De\in e\cM e$ is a dualizing object of the monoidal category $e\cM e$, so
$(e\cM e, De)$ is a Grothendieck-Verdier category.
 \sbr
\item The corresponding duality functor $e\cM e\rar{\sim}e\cM e$ can be identified with the restriction of $D$ to $e\cM e$. This identification is canonical as soon as one chooses an idempotent arrow
$\pi:\e\rar{}e$.
\end{enumerate}
\end{lem}

\begin{proof}
Statement (a) follows from Lemma~\ref{l:GV-idempotents}. Let us prove (b) and (c).
By Lemma \ref{l:Hecke-monoidal}, $e\cM e$ is a monoidal category. By  part (a),
$D (e\cM e)=e\cM e$; in particular, $D (e)\in e\cM e$. Fix an idempotent arrow
$\pi:\e\rar{}e$. Given $X,Y\in e\cM e$, we have canonical isomorphisms
\[
\Hom(X,DY) \cong \Hom(X\tens Y,K) \cong \Hom(X\tens Y\tens e,K) \cong
\Hom(X\tens Y, De),
\]
where the middle one comes from $\pi:\e\rar{}e$
and the other two come from \eqref{e:duality1}. This implies both (b) and (c).
\end{proof}

One can ask which \GV categories can be realized as $e\cM e$, where $\cM$ is an
r-category, and $e\in\cM$ is a closed idempotent such that $D^2e\simeq e$. An answer
to this question is given in \cite[\S9]{GV}.

\subsubsection{Hecke subcategories of pivotal \GV categories}

\begin{lem}\label{l:pivotal-Hecke-subcategory}
Let $(\cM,K)$ be a pivotal Grothendieck-Verdier category, and
$e\in\cM$ a closed idempotent. Then $(e\cM e, De)$ is a Grothendieck-Verdier category.
Moreover, it has a unique pivotal structure $\widetilde{\psi}$ such that for all $X,Y\in e\cM e$ and every
idempotent arrow $\pi:\e\rar{}e$ the diagram
\begin{equation}    \label{e:induced_pivotal}
\xymatrix{
 \Hom (X\tens Y,De)  \ar[d]_{} \ar[rr]^{\ \ \ \widetilde{\psi}_{X,Y}\ \ \ } & &
 \Hom (Y\tens X,De)\ar[d]^{} \\
 \Hom (X\tens Y,K) \ar[rr]^{\ \ \ \psi_{X,Y}\ \ \  } & &
 \Hom (Y\tens X,K)}
\end{equation}
in which the vertical arrows come from $D\pi:De\rar{}D\e=K$, commutes.
\end{lem}

\begin{proof}
The first statement follows from Lemma~\ref{l:GV-Hecke-subcategory} because
in a pivotal category $D^2\cong\Id$ (see Remark~\ref{r:pivotal2}).
Now fix an idempotent arrow $\pi:\e\rar{}e$. Then for
every $Z\in\cM e$ the map $\Hom (Z,De)\rar{}\Hom (Z,K)$ induced by $D\pi:De\rar{}D\e=K$ is bijective
because it equals the composition $\Hom (Z,De)\iso \Hom (Z\tens e,K)\iso\Hom (Z,K)$, where the
first arrow comes from \eqref{e:duality1} and the second one from $\id_Z\tens\pi :Z\iso Z\tens e$.
Since the vertical arrows in \eqref{e:induced_pivotal} are bijections there is a unique pivotal
structure $\widetilde{\psi}$ on $e\cM e$ such that the diagram \eqref{e:induced_pivotal}
corresponding to our fixed idempotent arrow $\pi:\e\rar{}e$ commutes.
We have to show that then it commutes for any idempotent arrow $\pi' :\e\rar{}e$.
By Corollary~\ref{c:uniqueness-idempotent-arrows}, $\pi'=f\circ\pi$ for some $f\in\Aut e$, so it remains
to show that the map
\begin{equation}  \label{e:our_psi}
\widetilde{\psi}_{X,Y}:\Hom (X\tens Y,De)\rar{}\Hom (Y\tens X,De), \quad X,Y\in e\cM e
\end{equation}
commutes with $\End e$ acting on both sides of \eqref{e:our_psi} via 
$D:\End e\rar{}\End (De)$. It is easy to check that this action of $\End e$ equals the one that comes
from the map $\varphi :\End e\rar{}\End X$ ($\varphi$ is defined because $e$ is a unit object of the monoidal category $e\cM e$ and $X\in e\cM e$). So the commutation of $\widetilde{\psi}_{X,Y}$ with $\End e$ follows from functoriality of $\widetilde{\psi}_{X,Y}$ with respect to~$X$.
\end{proof}

\subsubsection{Hecke subcategories of braided \GV categories}

\begin{lem}
Let $(\cM,K,\be)$ be a braided \GV category, let $D:\cM\rar{\sim}\cM$ be the corresponding duality functor, and let $e\in\cM$ be a closed idempotent. The Hecke subcategory $e\cM e=e\cM=\cM e\subset\cM$ is stable under $D$, and is a braided \GV category with dualizing object $De$. The corresponding dualizing functor can be identified with the restriction of $D$ to $e\cM e$.
\end{lem}

\begin{proof}
In Definition \ref{d:isom-from-Id-to-D^2-in-a-braided-GV-category} we described an isomorphism of functors $\Id_{\cM}\rar{\simeq}D^2$. In particular, $D^2e\cong e$. All statements of the lemma now follow from Lemma \ref{l:GV-Hecke-subcategory}.
\end{proof}

\subsubsection{Hecke subcategories of ribbon \GV categories}

\begin{lem}
Let $(\cM,K,\be)$ be a braided \GV category, fix a closed idempotent $e\in\cM$, and let $\widetilde{\cM}=e\cM e=e\cM=\cM e\subset\cM$ be the Hecke subcategory defined by $e$.
 \sbr
\begin{enumerate}[$($a$)$]
\item Suppose $\psi$ is a pivotal structure on $\cM$ and $\widetilde{\psi}$ is the induced pivotal structure on $\widetilde{\cM}$ $($see Lemma \ref{l:pivotal-Hecke-subcategory}$)$. If $\te$ and $\widetilde{\te}$ are the twists on $\cM$ and $\widetilde{\cM}$ corresponding to $\psi$ and $\widetilde{\psi}$ as in Lemma \ref{l:bijection-pivotal-structures-twists}, then $\widetilde{\te}=\te\bigl\lvert_{\widetilde{\cM}}$.
 \sbr
\item In the situation of $($a$)$, if $\te$ is a ribbon structure on $\cM$, then $\widetilde{\te}$ is a ribbon structure on $\widetilde{\cM}$.
\end{enumerate}
\end{lem}

\begin{proof}
(a) Choose an idempotent arrow $\pi:\e\rar{}e$. The diagram
\[
\xymatrix{
  \Hom (X\tens Y,De)  \ar[d]_{} \ar[r]^{\ \ \ \widetilde{\psi}_{X,Y}\ \ \ } &
 \Hom (Y\tens X,De)\ar[d]^{} \ar[r]^{(\be_{Y,X}^{-1})^*} & \Hom(X\tens Y, De) \ar[d] \\
 \Hom (X\tens Y,K) \ar[r]^{\ \ \ \psi_{X,Y}\ \ \  } &
 \Hom (Y\tens X,K) \ar[r]^{(\be_{Y,X}^{-1})^*} & \Hom(X\tens Y, K)
   }
\]
in which the vertical arrows come from $D\pi:De\rar{}D\e=K$, commutes for all $X,Y\in\widetilde{\cM}$. Now the claim follows from the definitions of $\te$ and $\widetilde{\te}$ and the fact that $\pi$ identifies the duality functor for $(\widetilde{\cM},De)$ with the restriction of $D$ to $\widetilde{\cM}$ (see Lemma \ref{l:GV-Hecke-subcategory}(c)).

\mbr

(b) This follows from Definition \ref{d:ribbon} and the fact that the duality functor for $(\widetilde{\cM},De)$ can be identified with the restriction of $D$ to $\widetilde{\cM}$.
\end{proof}

\section{The structures on $\sD_G(G)$ (a topological field theory approach)} \label{s:CFT}
\begin{convention}
{\em Throughout this appendix, $\sD_G(G)$ is understood as the bounded derived category of constructible $\ell$-adic complexes \cite{Las-Ols06} on the stack $\Ad(G)\backslash G$ obtained by taking the quotient of $G$ by the conjugation action of $G$ on itself.}
\end{convention}

The convention above is necessary because we do not require $G$ to be unipotent. On the other hand, to be able to apply the results of this appendix to Lemma \ref{l:compatibility-ind-braiding-twists}, we need to know that in the unipotent case the naive definition of $\sD_G(G)$ is equivalent to the correct one. This is proved in Proposition \ref{p:equivalence-definitions-equivariant-category} in Appendix \ref{a:equivalence-equivariant-derived}.

\subsection{Overview}
To every algebraic stack $\sX$ satisfying a certain ``perfectness" condition
D.~Ben-Zvi, J.~Francis, and D.~Nadler  \cite[\S6]{BFN08} associate a 2-dimensional topological
field theory (TFT), denoted by $Z_{\sX}$. If $G$ is an algebraic group and $\sX$ is its classifying
stack $BG$ then $Z_{\sX} (S^1)$ (i.e., the value of $Z_{\sX}$ on the standard circle $S^1$) is the equivariant derived
category of \emph{quasicoherent} sheaves on $G$. This implies that  the latter category is equipped
with a braided structure and a twist. Note that using the language of 2-dimensional TFT to define a braided structure is natural because
the braid groups are most naturally defined in terms of $\bR^2$.


In this appendix we describe a similar 
construction for \emph{constructible} sheaves
instead of quasicoherent ones. In particular, for any algebraic group $G$ over any field
we define in \S\ref{ss:The_pre-sTFT} a canonical braided structure and a twist on $\sD_G(G)$.
Moreover, we define an action of the surface operad on $\sD_G(G)$.


The main difference\footnote{There is also another difference, see \S\ref{sss:the_other_diff}.} between the constructible case and the quasicoherent one is that the
constructible derived category $\sD (X_1\times X_2)$ is usually \emph{not generated} by objects of
the form $M_1\boxtimes M_2$, $M_i\in\sD (X_i)$. Because of this,
we get not a TFT but 
a \emph{pre-TFT} in the sense of \S\ref{sss:informal-preTFT}
(this is a ``lax" version of the notion of TFT).


In \S\ref{ss:lax_functoriality} we study how the pre-TFT corresponding to an algebraic stack $\sX$
depends on $\sX$. This allows us to prove Lemma~\ref{l:compatibility-ind-braiding-twists}
(on the compatibility of the functor $\ig :\sD_{G'}(G')\rar{}\sD_{G}(G)$ with braidings and twists).

\S\ref{ss:GV-stacks} is devoted to \GV duality in $\sD_G(G)$ and more generally, in $Z_{\sX} (S^1)$,
where $\sX$ is any algebraic stack of finite type over a field. We construct a dualizing object
$K\in Z_{\sX} (S^1)$ and show that the braiding and twist from \S\ref{ss:The_pre-sTFT} define on
$(Z_{\sX} (S^1),K)$ a structure of ribbon \GV category in the sense of \S\ref{ss:ribbon_GV}.


Unlike  \cite[\S 6]{BFN08}, we use $n$-categories only for $n\le 2$.
Some remarks on the $\infty$-categorical setting are given in \S\ref{ss:infty-cats}.

\begin{convention}
{\em The words ``2-category" and ``2-functor" are always understood in the ``weak" sense}
(as opposed to the ``strict" one).
\end{convention}

\subsection{The notion of pre-TFT}   \label{ss:pre-TFT}
\subsubsection{The 2-categories $\Cob$,  $\Cobin$, $\Cobout$}   \label{sss:Cob}
We follow \cite[\S 1.1 and \S 1.4]{Lur09}.
In this subsection ``manifold" means ``$C^{\infty}$-manifold".
If $M$ and $N$ are $(n-1)$-dimen\-sional closed oriented manifolds then a \emph{bordism} from $M$ to $N$ is an $n$-dimensional oriented manifold $B$ equipped with an oriented
diffeomorphism $\al:\overline{M} \coprod N\iso\partial B$ (here $\overline{M}$ is the
manifold $M$ with the opposite orientation). If $(B',\al')$ is another bordism
from $M$ to $N$ then by a diffeomorphism between $(B,\al )$ and $(B',\al')$ we mean an
oriented diffeomorphism $f:B\iso B'$ such that $f\circ\al=\al'$.

 \begin{defin}     \label{d:(2,1)}
A \emph{(2,1)-category} is a 2-category 
whose 2-morphisms are invertible.
 \end{defin}

 \begin{defin}        \label{d:Cob}
 $\Cob$ is the following (2,1)-category:
 \begin{itemize}
\item  its objects are closed, oriented 1-dimensional $C^{\infty}$-manifolds;
\mbr
\item for any $M,N \in \Cob$ the category of 1-morphisms $\Mor (M,N)$  is the group\-oid
 whose objects are bordisms from $M$ to $N$ and whose isomorphisms
 are isotopy classes of diffeomorphisms between bordisms.
 \mbr
 \item
 1-morphisms are composed by gluing bordisms.
 \end{itemize}
 \end{defin}

\begin{rem}
 In  \cite{Lur09} and \cite{BFN08} the above (2,1)-category is denoted by $\Cob (2)$ and 2Cob, respectively (here ``2" indicates the dimension of the bordisms).
 \end{rem}

 \begin{defin}       \label{d:+Cob}
Let $\Cobin$ (resp. $\Cobout$) denote the (2,1)-category  that one gets from $\Cob$ by considering only those bordisms $B$ from $M$ to $N$ for which the map $\pi_0 (M)\rar{}\pi_0(B)$ (resp. $\pi_0 (N)\rar{}\pi_0(B)$) is surjective.
 \end{defin}

%

 We have obvious 2-functors $\Cobin\rar{}\Cob$ and $\Cobout\rar{}\Cob$.
 The $(2,1)$-categories $\Cob$, $\Cobin$, and $\Cobout$ are symmetric monoidal
 with respect to disjoint union. (The precise meaning of this statement is explained in
 Remark~\ref{r:Lurie's_def} below.)

\subsubsection{Pre-definition of a pre-TFT}    \label{sss:informal-preTFT}
Let $\Cat$ denote the 2-category of categories.

\begin{predef}   \label{d:informal-preTFT}
A \emph{2-dimen\-sional pre-TFT}
(resp.  \emph{2-dimen\-sional incoming pre-TFT,
2-dimen\-sional outgoing pre-TFT}\,)
with values in $\Cat$ is the following collection of data:

\begin{enumerate}[(i)]
\item a 2-functor $Z: \Cob\rar{}\Cat$ (resp.  $Z: \Cobin\rar{}\Cat$, $Z: \Cobout\rar{}\Cat$);

\mbr
\item  for every $n\ge 0$ and every closed oriented 1-manifolds $X_1,\ldots ,X_n$, a functor
\begin{equation}   \label{e:pseudomonoidal}
\prod_i Z(X_i)\rar{} Z(\bigsqcup_i X_i);
\end{equation}

\mbr
\item certain compatibility data and conditions for the functors \eqref{e:pseudomonoidal}.
\end{enumerate}
\end{predef}

We skip the precise list of the compatibilities mentioned in (iii) (the reader can easily guess it).
Instead, in \S\ref{ss:Segal} we give a definition of pre-TFT in the format of  \cite{Lur07};
the idea is to combine data (i)-(iii) into a single 2-functor.


\begin{rem}       \label{r:Sullivan}
In Pre-definition~\ref{d:informal-preTFT} ``incoming" and ``outgoing" are abbreviations for the names
``positive incoming boundary" and ``positive outgoing boundary", which were
suggested (in the case of TFT's) by Ralph Cohen and used
by M.~Chas and D.~Sullivan in \cite{CS04,S04}. The synonym for  
``incoming" used by J.~Lurie in Definition 4.2.10 and Theorem 4.2.11 from  \cite{Lur09} is ``noncompact".
\end{rem}


\subsubsection{The structure on the category $Z(S^1)$}   \label{sss:surface_operad}
Let $Z$ be a 
pre-TFT. Then for any $X,X_1,\ldots X_n\in\Cobout$ and any
1-morphism $f:\bigsqcup\limits_i X_i\rar{}X$ in $\Cobout$ one gets a canonical functor
$\prod\limits_i Z(X_i)\rar{} Z(X)$ by composing the functor \eqref{e:pseudomonoidal} with $Z(f)$.
In particular, for every finite set $I$ any connected bordism from $S^1\times I$ to $S^1$
defines a functor $Z(S^1)^I\rar{}Z(S^1)$. It is clear how such functors are composed:
they define an action of the \emph{surface operad} on $Z(S^1)$ (this operad was
introduced in \cite{Til00}). As explained, e.g., in  \cite[\S 3.1]{Til98}, an action of
the genus 0 part of the surface operad on a category $\cC$ defines\footnote{In fact, it is known that
an action of the genus 0 surface operad on a category $\cC$ is \emph{the same} as a structure of braided monoidal category with a twist on $\cC$. This follows from \cite[Proposition 7.6]{SW03} and the fact that the genus 0 surface operad is equivalent to the framed disk operad.} a structure of
braided monoidal category with a twist on $\cC$. In particular, if $Z$ is a 
pre-TFT then \emph{the category $Z(S^1)$ is equipped with a canonical braided monoidal structure and twist.} The same is true if $Z$ is an outgoing pre-TFT. If $Z$ is an
 \emph{incoming} pre-TFT then $Z(S^1)$ is a braided \emph{semigroupal} category
(see \S\ref{ss:monoidal-notation}) equipped with a twist.


%

\subsubsection{Precise definition of a pre-TFT}   \label{ss:Segal}
We recommend to skip this subsection. It is merely an exegesis of certain parts of Lurie's
article \cite{Lur07} (this article will be incorporated into his book ``Higher algebra''). The idea is to combine data (i)-(iii) from Pre-definition~\ref{d:informal-preTFT}
into a single 2-functor.


\begin{defin}   \label{def:Segal1}
Let $I,J$ be sets. A \emph{partially defined map} $I\dasharrow J$ is a pair $(I_f,f)$, where $I_f\subset I$ and
$f:I_f\rar{}J$ is a usual map.
\end{defin}

%

For partially defined maps there is an obvious notion of composition.

\begin{defin}    \label{def:Segal2}
\emph{Segal's category}, denoted by $\cS$, is the category whose objects are finite sets and whose morphisms are partially defined maps.
\end{defin}


\begin{rems}     \label{r:Segal1}
\begin{enumerate}[(i)]
\item According to \cite[Definition 1.1.7]{Lur07}, Segal's category (denoted by $\Ga$)
is the category of finite sets equipped with a based point. One has an equivalence
$\Ga\rar{\simeq}\cS$ (removing the base point).

\mbr
\item  The category introduced in G.~Segal's original work \cite[Definition 1.1]{Seg74}
is dual to $\cS$.
\end{enumerate}
\end{rems}

Now define a $(2,1)$-category $\Cob^{\otimes}$ as follows.
Its objects are triples $(M,I,\pi )$, where $M\in\Cob$, $I$ is a finite set, and $\pi :M\rar{}I$ is a locally constant map.
Given such a triple and an element $i\in I$ we set $M_i:=\pi^{-1}(i)$. Define a 1-morphism
$(M,I,\pi )\rar{}(M',I',\pi')$ to be the following collection of data:
\begin{itemize}
\item a partially defined map $f:I\dasharrow I'$;

\mbr
\item  for each $j\in I'$, a 1-morphism in $\Cob$ from $\bigsqcup\limits_{i\in f^{-1}(j)} M_i$ to $M'_j$\,.
\end{itemize}
The 2-morphisms in $\Cob^{\otimes}$ come from $\Cob$. The composition of 1-morphisms
and 2-morphisms in $\Cob^{\otimes}$ is clear.

\begin{example}   \label{example:most_stupid}
Let $(M,I,\pi )\in\Cob^{\otimes}$. Then for each $i\in I$ one has in $\Cob^{\otimes}$ a
canonical 1-morphism
\begin{equation}    \label{e:most_stupid}
(M,I,\pi )\rar{} (M_i,\{i \}, \pi_i)\, ,
\end{equation}
where $\pi_i$ is the unique map $M_i\rar{}\{i \}$. To define \eqref{e:most_stupid},
use the identity 1-morphism $M_i\rar{}  M_i$ in $\Cob$ and the
partially defined map $f_i:I\dasharrow\{ i\}$ such that  $f(i):=i$ and if $i'\ne i$ then $f(i')$ is not defined.
\end{example}

\begin{defin}     \label{d:preTFT}
A \emph{2-dimensional pre-TFT} with values in $\Cat$ is a 2-functor $Z:\Cob^{\otimes}\rar{}\Cat$ with the following \emph{Segal property}: for every $(M,I,\pi )\in\Cob^{\otimes}$ the functor
$Z(M,I,\pi )\rar{}\prod\limits_{i\in I} Z(M_i,\{i \}, \pi_i)$ induced by the 1-morphisms \eqref{e:most_stupid}
is an equivalence.
\end{defin}

Replacing in Definition~\ref{d:preTFT} $\Cob^{\otimes}$ by similar (2,1)-categories
$\Cobin^{\otimes}$ and $\Cobout^{\otimes}$ one gets the precise notions of 2-dimensional
\emph{incoming pre-TFT} and \emph{outgoing pre-TFT}.

Let us explain the relation between Definition~\ref{d:preTFT} and the informal
Definition~\ref{d:informal-preTFT}. Considering in $\Cob^{\otimes}$ only objects
$(M,I,\pi )$ such that $I$ has a single element and only those 1-morphisms
$(M,I,\pi )\rar{}(M',I',\pi' )$ for which the partially defined map $f:I\dasharrow I'$ is defined everywhere
we get a $(2,1)$-category equivalent to $\Cob$.
If $Z:\Cob^{\otimes}\rar{}\Cat$ is a 2-dimensional pre-TFT in the sense of Definition~\ref{d:preTFT} then the restriction of $Z$ to $\Cob$ is a 2-dimensional pre-TFT
in the sense of Pre-definition~\ref{d:informal-preTFT}. Conversely, if
$Z:\Cob\rar{}\Cat$ is a 2-dimensional pre-TFT in the sense of Definition~\ref{d:informal-preTFT}
then one extends $Z$ to a 2-functor $\Cob^{\otimes}\rar{}\Cat$ by setting
$Z(M,I,\pi ):=\prod\limits_{i\in I} Z(M_i)$ and using \eqref{e:pseudomonoidal} to define $Z$ on 1-morphisms.

\begin{rem}   \label{r:cofibered}
As explained by Grothendieck in expos\'e VI of SGA 1, given a 2-functor from a category
$\cC$ to $\Cat$ it is convenient to pass to the corresponding category cofibered over $\cC$.
Similarly, in Definition~\ref{d:preTFT}  one could pass from the 2-functor $Z$ to the
corresponding 2-category cofibered in categories over $\Cob^{\otimes}$. This is what
J.~Lurie does systematically in \cite{Lur07}.
\end{rem}

\begin{rem}   \label{r:Lurie's_def}
The pair consisting of the $(2,1)$-category $\Cob^{\otimes}$ and the functor
$\Cob^{\otimes}\to\cS$ defined by $(M,I,\pi )\mapsto I$ is a symmetric monoidal $(2,1)$-category
in the sense of  \cite[Definition 1.2.11]{{Lur07}}.
\end{rem}

\subsection{The notion of pre-sTFT}   \label{ss:pre-sTFT}
In \cite[Definition 6.4]{BFN08} Ben-Zvi, Francis, and Nadler introduce a version of the $(2,1)$-category
$\Cob$ in which manifolds are replaced by topological spaces satisfying a finiteness condition. Similarly, we will consider a version of $\Cob$ in which manifolds are replaced by groupoids
satisfying a finiteness condition. This will lead us to the notion of pre-sTFT, where ``s" stands for ``strong" (and maybe for ``stupid", see Remark~\ref{r:strong_or_stupid} below).

\subsubsection{The 2-categories $\sCob$, $\sCobin$, $\sCobout$}   \label{sss:sCob}
\begin{defin}      \label{d:finite_type}
A groupoid $\Ga$ has {\em finite presentation\,} if it has finitely many
isomorphism classes of objects and the automorphism group of each object of $\Ga$
has  finite presentation. The (2,1)-category of groupoids of finite presentation is denoted by~$\fG$.
\end{defin}

\begin{defin}   \label{d:sbordism}
Let $\Ga_1,\Ga_2 \in\fG$. A \emph{bordism} from $\Ga_1$ to $\Ga_2$ is a diagram
\[
\Ga_1\rar{}\Ga\lar{}\Ga_2, \quad\quad \Ga\in\fG.
\]
\end{defin}

Bordisms from $\Ga_1$ to $\Ga_2$ form a 
$2$-groupoid. Namely, a 1-morphism from a bordism
$\Ga_1\rar{f_1}\Ga\lar{f_2}\Ga_2$ to a bordism $\Ga_1\rar{f'_1}\Ga'\lar{f'_2}\Ga_2$ is defined to be a triple
consisting of an equivalence $F:\Ga\iso {}\Ga'$ 
and isomorphisms $F\circ f_1\iso f_1'$, $F\circ f_2\iso f_2'$; such triples clearly form a
groupoid.\footnote{This groupoid is often a set. This happens if and only if every object $\ga'\in\Ga'$ such that
$\Aut\ga'$ has nontrivial center belongs to the essential image of $\Ga_1\bigsqcup\Ga_2$.}
Now truncate the 
$2$-groupoid of bordisms to a 
$1$-groupoid.\footnote{This truncation (which is not very barbarous by the previous footnote) allows us to avoid $n$-categories for $n>2$.} 

\begin{defin}  \label{d:sgroupoid}
This groupoid is called the \emph{groupoid of bordisms} from $\Ga_1$ to $\Ga_2$.
\end{defin}

 \begin{defin}        \label{d:sCob}
 $\sCob$ is the following (2,1)-category:
 \begin{itemize}
\item  its objects are groupoids of finite presentation;
\mbr
\item for any $\Ga_1,\Ga_2 \in \sCob$ the category of 1-morphisms $\Mor (\Ga_1,\Ga_2)$  is the groupoid of bordisms  $\Ga_1\rar{}\Ga\lar{}\Ga_2$;

\mbr
 \item the composition of bordisms $\Ga_1\rar{}\Ga_{12}\lar{}\Ga_2$ and
 $\Ga_2\rar{}\Ga_{23}\lar{}\Ga_3$ is the bordism $\Ga_1\rar{}\Ga_{13}\lar{}\Ga_3$, where
 $\Ga_{13}$ is the categorical pushout $\Ga_{12}\bigsqcup_{\Ga_2}\Ga_{23}\,$.
 \end{itemize}
 \end{defin}

 \begin{defin}       \label{d:+sCob}
Let $\sCobin$ (resp.~$\sCobout$) denote the (2,1)-category  that one gets from $\Cob$ by considering only those
bordisms $\Ga_1\rar{}\Ga\lar{}\Ga_2$ for which the map $\pi_0 (\Ga_1)\rar{}\pi_0(\Ga)$ (resp. $\pi_0 (\Ga_2)\rar{}\pi_0(\Ga)$) is surjective.
 \end{defin}

%

\subsubsection{Definition of a pre-sTFT}    
Let $\Cat$ denote the 2-category of categories.

\begin{predef}   \label{d:informal-presTFT}
A \emph{pre-sTFT} (resp.  \emph{incoming pre-sTFT, outgoing pre-sTFT}\,) with values in $\Cat$ is the following collection of data:

\begin{enumerate}[(i)]
\item a 2-functor $Z: \sCob\rar{}\Cat$ (resp.  $Z: \sCobin\rar{}\Cat$, $Z: \sCobout\rar{}\Cat$);

\mbr
\item  for every $n\ge 0$ and every groupoids of finite presentation $\Ga_1,\ldots ,\Ga_n$,  a functor
\begin{equation}   \label{e:spseudomonoidal}
\prod_i Z(\Ga_i)\rar{} Z(\bigsqcup_i \Ga_i);
\end{equation}

\mbr
\item certain compatibility data and conditions for the functors \eqref{e:spseudomonoidal}
similar to those in Definition~\ref{d:informal-preTFT}.
\end{enumerate}
\end{predef}

To formulate a complete definition of pre-sTFT,
define a (2,1)-category $\sCob^{\otimes}$ similarly to the  (2,1)-category $\Cob^{\otimes}$
from \S\ref{ss:Segal} and then, just as in Definition~\ref{d:preTFT}, define a pre-sTFT to be
a 2-functor $\sCob^{\otimes}\rar{}\Cat$ having the Segal property.

\subsubsection{From a pre-sTFT to a pre-TFT}   \label{sss:destupidization}
 Associating to a manifold its fundamental groupoid one gets 2-functors
 \begin{equation}
 \Pi: \Cob\to{}\sCob,\quad\Pi_{in}: \Cobin\to{}\sCobin, \quad   \Pi_{out}: \Cobout\to{}\sCobout \;.
  \end{equation}
If $Z: \sCob\rar{}\Cat$ is a pre-sTFT then $Z\circ\Pi :\Cob\rar{}\Cat$ is a pre-TFT.
Similarly, an incoming (resp. outgoing) pre-sTFT defines an incoming (resp. outgoing) boundary pre-TFT.

\subsubsection{The canonical braiding and twist on $Z(B\bZ)$}   \label{sss:Z(BbZ)}
For any group $H$, let $BH$ denote the corresponding groupoid (i.e., $BH$ has one object
with automorphism group $H$). The fundamental groupoid of the standard circle $S^1$ equals
$B\bZ$. Combining this with  \S\ref{sss:destupidization} and \S\ref{sss:surface_operad} we see that if
$Z$ a pre-sTFT (or an outgoing pre-sTFT) then \emph{the category $\cM:=Z (B\bZ )$ is equipped with a canonical braided monoidal structure and twist} and if $Z$ is an incoming pre-sTFT then $\cM$ is a braided semigroupal category equipped with a twist.
In this subsection we will 
describe the same braided semigroupal structure and twist  in concrete algebraic terms, without referring to
\S\ref{sss:surface_operad}. The reader may prefer to skip this description and go directly to
\S\ref{ss:The_pre-sTFT}.

\mbr

Let $F_n$ be the group freely generated by $x_1,\ldots ,x_n$. For each $u\in F_n$ let
$\phi_u:B\bZ\rar{} BF_n$ be the functor induced by the homomorphism $\bZ\rar{} F_n$ that takes
$1\in\bZ$ to $u$.
For each $n>0$, we have a bordism
\begin{equation}    \label{e:n-th_bordism}
\Ga_n\rar{f}BF_n\lar{g}B\bZ , \quad\quad
\Ga_n:=\underbrace{B\bZ\bigsqcup\ldots \bigsqcup B\bZ}_n
\end{equation}
where the restriction of $f$ to the $i$-th copy of $B\bZ$
equals $\phi_{x_i}$ and $g:=\phi_{x_1\ldots x_n}$.
Let $\Phi_n:\cM^n\rar{} \cM$ be the composition
\begin{equation}   \label{e:C6}
\cM^n=Z(B\bZ)^n\rar{} Z(\Ga_n )\rar{}Z(B\bZ )=\cM,
\end{equation}
where the first arrow comes from the fact that $Z$ is a pre-sTFT and the second one comes
from the bordism \eqref{e:n-th_bordism}. Define the tensor product on $\cM$ to be
$\Phi_2:\cM\times \cM \rar{} \cM$. The associativity constraint for the tensor product is obvious, and
the $n$-fold tensor product on
$\cM$ identifies with $\Phi_n$. If $Z$ is an incoming pre-sTFT then the bordism
\eqref{e:n-th_bordism} and the functor $\Phi_n$ are defined only for $n>0$.
If $Z$ is a pre-sTFT (or an outgoing pre-sTFT) then one also has the functor
$\Phi_0$; this is the unit object in $\cM$.

\mbr

Our next goal is to define the braiding and the twist. We will use two obvious
\begin{rems}\label{rems:computation_in_sCob}
Let $\cG$ be an arbitrary group.
\begin{enumerate}[(1)]
\item Giving a functor $\Ga_1=B\bZ\rar{}B\cG$ is the same as giving an element $g\in\cG$
(namely, $g$ is the image of $1\in\bZ$).
If functors $\Psi ,\Psi':B\bZ\rar{}B\cG$ correspond to $g,g'\in \cG$ then an isomorphism $\Psi\iso\Psi'$ is an
element $\ga\in\cG$ such that $\ga g\ga^{-1}=g'$.
 \sbr
\item Similarly, giving a functor $\Ga_2\rar{}B\cG$ is the same as giving a pair $(g_1,g_2)\in\cG^2$. For two pairs $(g_1,g_2),(g'_1,g'_2)\in\cG^2$, an isomorphism between the corresponding functors $\Ga_2\rar{}B\cG$ is the same thing as a pair $(\ga_1,\ga_2)\in\cG^2$ such that $\ga_1 g_1\ga_1^{-1}=g'_1$ and $\ga_2 g_2\ga_2^{-1}=g'_2$.
\end{enumerate}
\end{rems}

\begin{defin}\label{d:twist-general}
Consider the following autoequivalence of the bordism \eqref{e:n-th_bordism} with $n=1$:
\[
\xymatrix{
  \Ga_1 \ar[rr]^{f} \ar[dd]_{\Id} & & BF_1 \ar[dd]^{\Id} &  & B\bZ\ar[ll]_g \ar[dd]^{\Id} \\
   & & & \\
  \Ga_1 \ar[rr]^{f} & & BF_1 &  &B\bZ\ar[ll]_g
   }
\]
where the isomorphism $\Id\circ f\rar{\simeq}f\circ\Id$ is given by the element $x_1\in F_1$ (cf.~Remark \ref{rems:computation_in_sCob}(1)) and the isomorphism $\Id\circ g\rar{\simeq}g\circ\Id$ is the identity map. This autoequivalence defines an automorphism $\te$ of the functor $\Phi_1=\Id_{\cM}:\cM\rar{}\cM$, which is the \emph{twist} on $\cM$.
\end{defin}

\begin{defin}\label{d:braiding-general}
Consider the following autoequivalence of the diagram \eqref{e:n-th_bordism} with $n=2$:
\[
\xymatrix{
  \Ga_2 \ar[rr]^{f} \ar[dd]_{\tau} & & BF_2 \ar[dd]^{\nu} &  & B\bZ\ar[ll]_g \ar[dd]^{\Id} \\
   & & & \\
  \Ga_2 \ar[rr]^{f} & & BF_2 &  &B\bZ\ar[ll]_g
   }
\]
where $\tau$ interchanges the two copies of $B\bZ$, $\nu$ is induced by the homomorphism $F_2\rar{}F_2$ such that $x_1\mapsto x_2$ and $x_2\mapsto x_2^{-1}x_1 x_2$, the isomorphism $\nu\circ f\rar{\simeq}f\circ\tau$ is given by the pair $(1,x_2)\in F_2^2$ (cf.~Remark \ref{rems:computation_in_sCob}(2)), and the isomorphism $\nu\circ g\rar{\simeq}g\circ\Id$ is the identity. This autoequivalence defines a functorial isomorphism
\[
\be_{M_1,M_2}:M_1\tens M_2\rar{\simeq}M_2\tens M_1, \qquad M_1,M_2\in\cM,
\]
which is the \emph{braiding} on $\cM$.
\end{defin}

The fact that $( \cM ,\be ,\te )$ is indeed a braided category with a twist follows from
\S\ref{sss:surface_operad} and \S\ref{sss:destupidization}.

\subsection{The pre-sTFT associated to an algebraic stack}   \label{ss:The_pre-sTFT}
We fix a field $k$, and we will say ``stack" or ``scheme" instead of ``stack over $k$"
or ``scheme over $k$".

To any algebraic stack $\sX$ of finite type we will associate a pre-sTFT
$Z^-_{\sX}$ and an outgoing\footnote{If $\sX$ satisfies a certain condition
(which holds, e.g., for classifying stacks of unipotent groups) then the word
``outgoing" is unnecessary here, see Corollary~\ref{c:safe2} and Definition~\ref{d:safe}.} pre-sTFT  $Z_{\sX}$. By \S\ref{sss:Z(BbZ)}, each of the
categories $Z^-_{\sX}(B\bZ )$ and $Z_{\sX}(B\bZ )$ is monoidal and equipped with a braiding and a twist. If $\sX$ is the classifying stack of an algebraic group $G$  then $Z_{\sX}(B\bZ )=\sD_G(G)$
and $Z^-_{\sX}(B\bZ )$ is the bounded above derived category $\sD^-_G(G)$. Moreover,
the monoidal structure on $Z^-_{\sX}(B\bZ )=\sD^-_G(G)$ is defined by convolution with compact support (see Example~\ref{example:convolution} below).
So we get a braiding and a twist on the category
$\sD^-_G(G)$ (or $\sD_G(G)$\,) equipped with this monoidal structure.
In the case where $G^{\circ}$ is unipotent a braiding and a twist on $\sD_G(G)$ were
already defined in \S\ref{ss:ribbon_G}; it is straightforward to check that the two braidings and twists
are the same.


\subsubsection{The stack $\sX^{\Ga}$} \label{sss:X^Ga}

Let $\Ga$ be a groupoid and $\sX$ a stack. Define the stack $\sX^{\Ga}$ as follows:
for any scheme $S$, an $S$-point of $\sX^{\Ga}$ is a functor $\Ga\rar{}\sX (S)$, where
$\sX (S)$ is the groupoid of $S$-points of $\sX$.

Groupoids form a 2-category. So do stacks (see \cite{LM}). The 2-functor $(\sX,\Ga )\mapsto\sX^{\Ga}$ is covariant in $\sX$ and contravariant in $\Ga$.

\begin{rem}  \label{r:exactness}
Given a diagram of groupoids $\Ga_1\lar{}\Ga_{2}\rar{}\Ga_3$ one can form the
categorical pushout $\Ga =\Ga_1\bigsqcup_{\Ga_2}\Ga_3\,$.
 The above definition of $\sX^{\Ga}$ immediately implies that
 $\sX^{\Ga}$ is the fiber product of $\sX^{\Ga_{1}}$ and $\sX^{\Ga_{3}}$ over $\sX^{\Ga_2}$.
\end{rem}

\begin{example}   \label{example:[G]}
 Let $G$ be an algebraic group and $\sX=BG$. Let $A$ be an abstract group and $\Gamma=BA$.
 Then $\sX^{\Ga}$ is the quotient stack $\Hom (A,G)/G$, where $G$ acts on the
 scheme $\Hom (A,G)$ by conjugation. In particular, if $\Gamma=B\bZ$ then
 $\sX^{\Ga}$ is the quotient stack of $G$ by the adjoint action of $G$.
\end{example}

\begin{rem}
If $\Ga$ is the fundamental groupoid of a topological space $T$ and $\sX$ is the classifying
stack of an algebraic group $G$ then $\sX^{\Ga}$ is often called \emph{the stack of $G$-local systems} on $T$.
\end{rem}

Recall that according to \cite{LM}, a morphism (i.e., a 1-morphism) of  stacks
$f:\sX\rar{}\sY$ is said to be \emph{representable} if for any scheme $S$ equipped with
a morphism to $\sY$ the stack $\sX\times_{\sY}S$ is an algebraic space.

\begin{lem}  \label{l:X^Ga}
Let $\sX$ be an algebraic stack of finite type.
\begin{enumerate}[$($i$)$]
\item If $\Ga$ is a groupoid of finite presentation (see Definition~\ref{d:finite_type}) then
$\sX^{\Ga}$ is an algebraic stack of finite type.
 \sbr
\item If a functor $\Ga'\rar{}\Ga$ between groupoids of finite presentation is essentially surjective then the corresponding morphism $\sX^{\Ga}\to\sX^{\Ga'}$ is representable.
\end{enumerate}
\end{lem}

\begin{rem}
The above lemma and Lemmas~\ref{l:safe2},
\ref{l:properness} below remain valid if the automorphism groups
of the objects of $\Ga$ are assumed to be of finite type but not necessarily of finite presentation.
We do not need this fact.
\end{rem}

\begin{proof}[Proof of Lemma~\ref{l:X^Ga}]
Statement (i) can be deduced from (ii) as follows. First choose an essentially surjective
functor $I\rar{}\Ga$, where $I$ is a finite set (viewed as a discrete groupoid). Then use (ii) and the fact that $\sX^I$ is an algebraic stack of finite type.

To prove (ii), it suffices to consider the following two cases.
\begin{enumerate}[$($a$)$]
\item $\Ga$ is obtained from $\Ga'$ by freely adding an isomorphism $\ga_1\rar{\simeq}\ga_2$,
$\ga_1,\ga_2\in \Ga'$. In other words, $\Ga$ is the categorical pushout
$\Ga'\bigsqcup_{\{ 1, 2\}}{\{ 1\}}$, where the sets $\{ 1, 2\}$ and $\{ 1\}$ are considered as
groupoids and the functor $\{ 1, 2\}\rar{}\Ga'$ takes $i$ to $\ga_i$.
 \sbr
\item $\Ga$ is obtained from $\Ga'$ by killing some $f\in\Aut\ga$, $\ga\in\Ga$.  In other words, $\Ga$ is the categorical pushout $\Ga'\bigsqcup_{B\bZ}{\{ 1\}}$, where the functor $B\bZ\rar{}\Ga'$
takes the single object of $B\bZ$ to $\ga$ and the element $1\in\bZ$ to $f$.
(As before, ${\{ 1\}}$ is the groupoid with one object and one morphism.)
\end{enumerate}

In case (a) it suffices to use Remark~\ref{r:exactness} and the fact that the diagonal morphism
$\sX\rar{}\sX\times\sX$ is representable. In case (b) Remark~\ref{r:exactness} shows that it suffices
to prove the representability of the morphism $\al :\sX\rar{}\sX^{B\bZ}$ corresponding to the functor
$B\bZ\rar{}{\{ 1\}}$. But we have already considered case (a), so we know that
the morphism $\be :\sX^{B\bZ}\rar{}\sX$ corresponding to the functor
${\{ 1\}}\rar{}B\bZ$ is representable. Since $\be\al =\id_{\sX}$ it follows that $\al$ is representable.
\end{proof}

\subsubsection{The $\ell$-adic derived category of a stack}   \label{ss:review_of_LO}
In this subsection we follow Y.~Laszlo and M.~Olsson \cite{Las-Ols06}.
\begin{convention}
From now on all algebraic stacks are assumed to be of finite type over $k$.
By a morphism of stacks we mean a 1-morphism.
\end{convention}

For every algebraic stack $\sX$ Laszlo and Olsson \cite{Las-Ols06} define
the bounded derived category $D^b_c(\sX,\ql)$ and the unbounded derived categories
$D^-_c(\sX,\ql)$, $D^+_c(\sX,\ql)$. We will use the notation
\[
\sD (\sX ):=D^b_c(\sX,\ql),\quad \sD^- (\sX ):=D^-_c(\sX,\ql),\quad \sD^+ (\sX ):=D^+_c(\sX,\ql).
\]
Given a morphism $f:\sX\to\sY$ they define the functors $f^*,f^!:\sD (\sY)\to \sD (\sX)$
and similar functors for $\sD^-$ and $\sD^+$. They also define
$f_!:\sD^-(\sX)\to \sD^-(\sY)$ and $f_*:\sD^+(\sX)\to \sD^+(\sY)$.

The assignment $\sX\mapsto \sD^- (\sX )$,
$f\mapsto f_!$ is a 2-functor from the 2-category of algebraic stacks to that of triangulated
categories. With obvious changes, this is also true for $f^*$, $f^!$, and $f_*$.
One also has base change isomorphisms, just as for schemes.

In general, $f_!$ and $f_*$ do not map $\sD (\sX)$ to $\sD (\sY)$
(e.g., take $\sX$ to be the classifying stack of $\bG_m$ and $\sY=\Spec k$).
However, this phenomenon does not occur for the following class of morphisms.

\begin{defin}  \label{d:safe}
An algebraic stack $\sX$ is {\em safe\,} if for every geometric point $x$ of $\sX$ the algebraic group
$(G_x)^{\circ}_{\red}$ is unipotent (here $G_x$ is the automorphism group of $x$
and $(G_x)^{\circ}_{\red}$ is the neutral component of the reduced scheme $(G_x)_{\red}$).
A morphism of  algebraic stacks  is {\em safe\,} if all its fibers are.
\end{defin}

\begin{rems}   \label{r:safe}
\begin{enumerate}[$($i$)$]
\item Representable morphisms are safe.
 \sbr
\item Morphisms from a safe stack to any algebraic stack are safe.
\end{enumerate}
\end{rems}

\begin{lem}  \label{l:safe1}
If a morphism $f:\sX\to\sY$ of  algebraic stacks is safe then
$f_!:\sD^-(\sX)\to \sD^-(\sY)$ and $f_*:\sD^+(\sX)\to \sD^+(\sY)$
map $\sD (\sX)$ to $\sD (\sY)$. 
\end{lem}

\begin{proof}
By base change and  \cite[Theorem 11.5]{LM}, it suffices to consider the case where
$\sY=\Spec k$ and $\sX$ is the classifying stack of a group scheme $G$ such that
$G^{\circ}_{\red}$ is unipotent.
\end{proof}

\subsubsection{The theory $Z^-_{\sX}$}   \label{sss:unboundedZ}
For any algebraic stack $\sX$ of finite type we will define a pre-sTFT \,$Z^-_{\sX}:\sCob\rar{}\Cat$.

By definition (see \S\ref{sss:sCob}), an object of $\sCob$ is a groupoid $\Ga$ of finite presentation.
Set $Z^-_{\sX}(\Ga ):= \sD^-(\sX^{\Ga})$.

Now let us define $Z^-_{\sX}$ on 1-morphisms. Recall that a 1-morphism in $\sCob$ is a bordism of groupoids, i.e., a diagram of groupoids of finite presentation
\begin{equation}   \label{e:bordism_groupoids}
\Ga_1\rar{}\Ga_{12}\lar{}\Ga_2 .
\end{equation}
This diagram defines a correspondence
\begin{equation}   \label{e:correspondence}
\xymatrix{
& \ar[dl]_f \ar[dr]^g \sX^{\Ga_{12}} & \\
\sX^{\Ga_{1}}& & \sX^{\Ga_{2}}\\
}
\end{equation}
and therefore a functor
\begin{equation}    \label{e:problematic}
g_!f^*:\sD^-(\sX^{\Ga_1})\rar{} \sD^-(\sX^{\Ga_2}).
\end{equation}



Recall that the composition of bordisms $\Ga_1\rar{}\Ga_{12}\lar{}\Ga_2$ and
 $\Ga_2\rar{}\Ga_{23}\lar{}\Ga_3$ is the bordism $\Ga_1\rar{}\Ga_{13}\lar{}\Ga_3$, where
 $\Ga_{13}$ is the categorical pushout $\Ga_{12}\bigsqcup_{\Ga_2}\Ga_{23}\,$.
Thus we have a commutative diagram of stacks
$$\xymatrix{
&&\sX^{\Ga_{13}} \ar[dl]_{\bar f}\ar[dr]^{\bar g}&&\\
& \ar[dl]_f \ar[dr]^g \sX^{\Ga_{12}}&&\sX^{\Ga_{23}} \ar[dl]_{f'} \ar[dr]^{g'}\\
\sX^{\Ga_1}& & \sX^{\Ga_2} & &\sX^{\Ga_3}
}
$$
in which the square is Cartesian by Remark~\ref{r:exactness}.
So the base change isomorphism $(f')^*g_!\rar{\simeq}\bar g_!\bar f^*$
provides a canonical isomorphism between the composition
\[
\sD^-(\sX^{\Ga_1})\rar{g_!f^*} \sD^-(\sX^{\Ga_2})\rar{g'_!(f')^*} \sD^-(\sX^{\Ga_3})
\]
and the functor $g'_! \bar g_!\bar f^*f^*:\sD^-(\sX^{\Ga_1})\rar{}\sD^-(\sX^{\Ga_3})$.

Finally, if $\Ga$ is a disjoint union of $\Ga_1,\ldots ,\Ga_n$ then $\sX^{\Ga}=\prod\limits_i \sX^{\Ga_i}$,
so we get a canonical functor
\[
\prod_i Z^-_\sX(\Ga_i)=\prod_i \sD^-(\sX^{\Ga_i})\rar{\boxtimes} \sD^-(\prod_i \sX^{\Ga_i})= \sD^-(\sX^{\Ga})=Z^-_\sX(\bigsqcup_i \Ga_i).
\]

\subsubsection{The theory $Z_{\sX}$}       \label{sss:boundedZ}
For $\Ga\in\sCob$ set $Z_{\sX}({\Ga}):=\sD (\sX^{\Ga})$; this is a full subcategory of $Z^-_{\sX}({\Ga})$.
To define $Z_{\sX}$ as a 2-functor, we have to ensure that the functor \eqref{e:problematic} preserves
the class of bounded complexes. By Lemma~\ref{l:safe1}, this is true if the morphism $g$ in diagram
\eqref{e:correspondence} is safe in the sense of Definition~\ref{d:safe}.

\begin{lem}  \label{l:safe2}
Let $\sX$ be an algebraic stack of finite type and $\alpha :\Ga'\rar{}\Ga$ a functor between groupoids of finite presentation. Suppose that either $\sX$ is safe or $\alpha$ is essentially surjective.
Then the morphism $\sX^{\Ga}\to\sX^{\Ga'}$ induced by $\alpha$ is safe.
\end{lem}

\begin{proof}
Just as in the proof of Lemma~\ref{l:X^Ga}(i), one shows that if  $\sX$ is safe then so is $\sX^{\Ga}$. By
Remark~\ref{r:safe}(ii), this implies that the morphism $\sX^{\Ga}\to\sX^{\Ga'}$ is safe.

If $\alpha$ is essentially surjective then use Lemma~\ref{l:X^Ga}(ii) and Remark~\ref{r:safe}(i).
\end{proof}

\begin{cor}    \label{c:safe2}
If $\sX$ is safe then $Z_{\sX}$ is a well-defined pre-sTFT. If $\sX$ is any algebraic stack of finite type then $Z_{\sX}:\sCobout\rar{}\Cat$ is an outgoing pre-sTFT. \qed
\end{cor}

\subsubsection{Examples of functors \eqref{e:problematic}}
\begin{example}   \label{example:convolution}
Let $\sX =BG$, where $G$ is an algebraic group. Then the functor \eqref{e:problematic} corresponding to the diagram \eqref{e:n-th_bordism} equals
$\mu_!:\sD^-_{G^n}(G^n)\rar{}\sD^-_G(G)$, where $\mu :G^n\rar{}G$ is the map
$(g_1,\ldots , g_n)\mapsto g_1\ldots g_n$. So the composition
\[
(\sD_G^- (G))^n\rar{\boxtimes}\sD^-_{G^n}(G^n)\rar{\eqref{e:problematic}}\sD_G^- (G)
\]
is the convolution with compact support $(M_1,\ldots ,M_n)\mapsto M_1*\ldots * M_n$.
\end{example}

The previous example was based on diagram \eqref{e:n-th_bordism}. One can consider
 \eqref{e:n-th_bordism} as the diagram of fundamental groupoids corresponding to the  bordism between $nS^1$ and $S^1$ given by the sphere with $n+1$ holes (here $nS^1$ stands for the disjoint union of $n$ copies of the standard circle $S^1$). In the next example
 we consider a more general situation of a bordism between $mS^1$ and $nS^1$ given by a connected compact oriented surface 
of genus $g$ with $m+n$ holes.



\begin{example}   \label{example:connected_oriented}
As before, let $\sX =BG$, where $G$ is an algebraic group.
Let $\pi$ be the group with generators $A_1,B_1,\ldots A_g,B_g, x_1,\ldots ,x_m,y_1,\ldots ,y_n$
and the defining relation
\begin{equation}
x_1\ldots x_m A_1B_1A_1^{-1}B_1^{-1}\ldots A_gB_gA_g^{-1}B_g^{-1}=y_1\ldots y_n \, .
\end{equation}
For each $i\in\{ 1,\ldots , m\}$ consider the homomorphism $\bZ\rar{}\pi$ such that $1\mapsto x_i\,$.
For each $j\in\{ 1,\ldots , n\}$ consider the homomorphism $\bZ\rar{}\pi$ such that $1\mapsto y_j\,$.
These homomorphisms define a diagram of groupoids
\[
\underbrace{B\bZ\bigsqcup\ldots \bigsqcup B\bZ}_m\rar{}B\pi\lar{}
\underbrace{B\bZ\bigsqcup\ldots \bigsqcup B\bZ}_n\ .
\]
This is a bordism in $\sCob$, which clearly comes from a bordism in $\Cob$ (the definitions of $\Cob$ and $\sCob$ were given in \S\ref{sss:Cob} and \S\ref{sss:sCob}).
The functor \eqref{e:problematic} corresponding to this bordism is the composition
\begin{equation}   \label{e:any_genus}
\sD^-_{G^m}(G^m)\rar{f^*} \sD^-_G(M)\rar{h_!} \sD^-_G(G^n)\rar{\av} \sD^-_{G^n}(G^n) \, .
\end{equation}
Here $M$ is the variety of homomorphisms $\pi\rar{}G$ (on which $G$ acts by conjugation),
the map $f:M\rar{}G^m$ (resp. $h:M\rar{}G^n$) corresponds to $x_1,\ldots ,x_m\in\pi$
(resp.  to $y_1,\ldots ,y_n\in\pi$), and $\av =\av_{G^n/G}$ is the functor of averaging with compact
support (see Definition~\ref{d:averaging}).
\end{example}

\begin{rems}  \label{r:surface_operad}
\begin{enumerate}[(i)]
\item In the case $G=PGL(N)$, $m=n=0$ the composition \eqref{e:any_genus}  was studied in \cite{HRV}.

\mbr

\item
Composing $\boxtimes :(\sD_G^- (G))^m\rar{}\sD^-_{G^m}(G^m)$ with a functor of the form  \eqref{e:any_genus} for $n=1$, one gets a functor  $(\sD_G^- (G))^m\rar{}\sD_G^- (G)$. All such functors
form an action of the surface operad on  $\sD_G^- (G)$. We already mentioned it in \S\ref{sss:surface_operad}.
\end{enumerate}
\end{rems}

In the next examples we consider some 1-morphisms in $\sCob$ that do not come from $\Cob$.


\begin{example}   \label{example:usual_product}
Consider the following diagram \eqref{e:bordism_groupoids}:  $\Ga_2=\Ga_{12}=B\bZ$, the functor
$\Ga_2\rar{}\Ga_{12}$ is the identity, $\Ga_1$ is the disjoint union of $n$ copies of $B\bZ$, and the
restriction of the functor $\Ga_1\rar{}\Ga_{12}$ to each copy of $B\bZ$ is the identity. Let $\sX$ be any
algebraic stack of finite type over $k$. Set $\sY:=\sX^{B\bZ}$.
Then \eqref{e:problematic} is the functor  $\De^*:\sD^- (\sY^n)\rar{}\sD^- (\sY)$, where
$\De :\sY\rar{}\sY^n$ is the diagonal morphism. So the composition
\[
\sD^- (\sY)^n\rar{\boxtimes}\sD^- (\sY^n)\rar{\eqref{e:problematic}}\sD^- (\sY)
\]
is the usual tensor product $(M_1,\ldots ,M_n)\mapsto M_1\otimes\ldots\otimes M_n$. Note that if $\sX$ is the classifying stack of an algebraic group $G$ then $\sD^- (\sY)=\sD^-_G(G)$, $\sD^- (\sY^n)=\sD^-_{G^n}(G^n)$.
\end{example}

\begin{example}   \label{example:Adams}
Consider the following diagram \eqref{e:bordism_groupoids}:  $\Ga_1=\Ga_2=\Ga_{12}=B\bZ$, the
functor $\Ga_2\rar{}\Ga_{12}$ is the identity, and the functor $\Ga_1\rar{}\Ga_{12}$ comes from
the homomorphism $\bZ\rar{}\bZ$ given by $n\mapsto mn$. Let $\sX =BG$, where $G$ is an algebraic group. Then \eqref{e:problematic} is the functor $\psi_m^*: \sD_G(G)\to\sD_G(G)$,
where $\psi_m:G\to G$ is the map $g\mapsto g^m$. Note that if $m=-1$ then $\psi_m^*$ comes from
a \emph{non-oriented bordism} between 1-manifolds.
\end{example}

\begin{rem}   \label{r:strong_or_stupid}
Examples~\ref{example:convolution} and \ref{example:usual_product} show that the pre-sTFT
$Z^-_{BG}$ encodes both the convolution on $Z^-_{BG} (B\bZ )=\sD^-_G(G)$ and the usual
tensor product. The pre-TFT corresponding to $Z^-_{BG}$ encodes the convolution but not the tensor product.
Probably this means that from the representation theorist's point of view, the pre-TFT is more adequate
than the pre-sTFT.
\end{rem}

\subsection{A ``lax" functoriality of $Z^-_{\sX}$ in $\sX$}   \label{ss:lax_functoriality}
We will show that a separated morphism $f:\sX\rar{}\sY$ between algebraic stacks of finite type
induces a ``lax 1-morphism" $f_!:Z^-_{\sX ,\In}\rar{}Z^-_{\sY ,\In}$ in the sense of
\S\ref{sss:thelaxnotion}, where $Z^-_{\sX ,\In}$ is the incoming pre-sTFT that one gets by restricting $Z^-_{\sX}$ to $\Cobin$. This implies that
$f_!^{B\bZ}:\sD^-(\sX^{B\bZ})\rar{}\sD^-(\sY^{B\bZ})$ is a weakly semigroupal\footnote{``Weakly" is related to ``lax", and ``semigroupal" (as opposed to ``monoidal") is related to ``incoming''.} functor compatible with the braidings and twists. In particular, this holds for
$\ig : \sD^-_{G'}(G')\rar{}\sD^-_G(G)$, where $G$ is an algebraic group and $G'\subset G$ is a closed subgroup.

\subsubsection{Lax 1-morphisms between pre-sTFT's}    \label{sss:thelaxnotion}

\begin{predef}   \label{d:informal-lax}
Let $Z,Z':\sCob\rar{}\Cat$ be pre-sTFT's. A \emph{lax 1-morphism} $F:Z\rar{}Z'$ is the following collection of data:

\begin{enumerate}[(i)]
\item for each $\Ga\in\sCob$, a functor $F^{\Ga}:Z(\Ga )\rar{}Z'(\Ga )$;

\mbr
\item  for each 1-morphism  $\al :\Ga_1\rar{}\Ga_2$ in $\sCob$, a morphism
\begin{equation}    \label{e:to_be_constructed}
\xi_{\al}:Z'(\al )\circ F^{\Ga_1}\rar{}F^{\Ga_2}\circ Z(\al )
\end{equation}
(note that both $Z'(\al )\circ F^{\Ga_1}$ and $F^{\Ga_2}\circ Z(\al )$ are
functors $Z(\Ga_1 )\rar{}Z'(\Ga_2 )$);

\mbr
\item for any $n\ge 0$ and any $\Ga_1,\ldots ,\Ga_n\in\sCob$,  a morphism
from the composition
$\prod\limits_i Z(\Ga_i)\rar{} \prod\limits_i Z'(\Ga_i)\rar{} Z'(\bigsqcup\limits_i \Ga_i)$
to the composition
$\prod\limits_i Z(\Ga_i)\rar{} Z(\bigsqcup\limits_i \Ga_i)\rar{} Z'(\bigsqcup\limits_i \Ga_i)$.
\end{enumerate}
These data should satisfy certain compatibility conditions. In particular, data (i)-(ii) should define
a \emph{lax natural transformation\footnote{According to \cite[\S 2]{Kelly74}, this means the following.
First, $\xi_{\al}$ should be functorial in $\al$ (this condition makes sense because all 1-morphisms
$\al :\Ga_1\rar{}\Ga_2$ in $\sCobin$ form a category and $Z(\al )$, $Z'(\al )$ depend functorially on $\al$.). Second, the assignment
$\al\mapsto\xi_{\al}$ should be compatible with the composition of $\al$'s and if $\Ga_1=\Ga_2$,
$\al=\Id$ then one should have $\xi_{\al}=\Id$.}} between 2-functors $Z$ and $Z'$.
\end{predef}

\sbr

A complete definition of lax 1-morphism can be concisely formulated in terms of \S\ref{ss:Segal}:
namely, a pre-sTFT is a 2-functor $\sCob^{\otimes}\rar{}\Cat$ with the Segal property, and a lax
1-morphism is a lax natural transformation between such functors.

Similarly, one defines the notion of lax 1-morphism between incoming pre-sTFT's
(or, say, outgoing pre-TFT's).

\begin{rem}   \label{r: weakly_semigroupal}
If $F:Z\rar{}Z'$ is a lax 1-morphism between incoming pre-sTFT's then
$F^{B\bZ}:Z(B\bZ )\rar{}Z'(B\bZ )$ has a natural structure of weakly semigroupal functor in the sense of Definition~\ref{d:weak-semigroupal} (``weakly" corresponds to ``lax", and ``semigroupal" corresponds to ``incoming''). This weakly semigroupal functor is compatible with the braidings and twists.
\end{rem}

\subsubsection{$f_!$ as a lax 1-morphism}    \label{sss:f!lax}
Let $f:\sX\rar{}\sY$ be a separated morphism between algebraic stacks of finite type.
Let $Z^-_{\sX ,\In}$ denote the restriction of $Z^-_{\sX}$ to $\sCobin\,$; this is an incoming pre-sTFT. We will define a lax 1-morphism $f_!: Z^-_{\sX ,\In}\rar{}Z^-_{\sY ,\In}$.
For any groupoid $\Ga\in\sCobin$ one has a morphism $f:\sX^{\Ga}\rar{}\sY^{\Ga}$ and therefore a functor $f^{\Ga}_!:Z_{\sX}^-(\Ga )\rar{}Z_{\sY}^-(\Ga )$ (recall that $Z_{\sX}^-(\Ga ):=\sD^-(\sX^{\Ga})$).
Thus one has datum (i) from Pre-definition~\ref{d:informal-lax}. Datum (iii) is the K\"unneth morphism
\[
f_!^{\Ga_1}M_1\boxtimes\ldots\boxtimes f_!^{\Ga_n}M_n\rar{\simeq}
f_!^{\Ga}(M_1\boxtimes\ldots\boxtimes M_n), \quad\quad M_i\in\sD^-(\sX^{\Ga_i}),
\]
where $\Ga:=\bigsqcup\limits_i\Ga_i$ (and therefore $f^{\Ga}$ is a morphism
$\prod\limits_i\sX^{\Ga_i}\rar{}\prod\limits_i\sY^{\Ga_i}$).

To define datum (ii), we will use that $f:\sX\rar{}\sY$ is separated.
By  \cite[Definition~7.6]{LM}, this means that the diagonal morphism
$\sX\rar{}\sX\times_{\sY}\sX$ is proper. The next lemma is proved just as Lemma~\ref{l:X^Ga}(ii).

\begin{lem}  \label{l:properness}
Let $f:\sX\rar{}\sY$ be a morphism of algebraic stacks of finite type and $\Phi:\Ga'\rar{}\Ga$  a
functor between groupoids of finite presentation. If $f$ is separated and
$\Phi$ is essentially surjective then the morphism
$\nu:\sX^{\Ga}\to\sX^{\Ga'} \times_{\sY^{\Ga'}}\sY^{\Ga}$ corresponding to $f$ and $\Phi$ is proper.
If, in addition, $f$ is representable then $\nu$ is a closed embedding.~\hfill\qedsymbol
\end{lem}

Now let us construct the morphism \eqref{e:to_be_constructed} corresponding to a 1-morphism $\al$
in $\sCobin$. Such $\al$ is, in fact, a  diagram of groupoids of finite presentation
$\Ga_1\twoheadrightarrow\Ga\leftarrow\Ga_2$. We have to construct a canonical morphism
\begin{equation}   \label{e:tobe_constructed}
\xi_{\al}:Z^-_{\sY}(\al )\circ f^{\Ga_1}_!\rar{}f^{\Ga_2}_!\circ Z^-_{\sX}(\al ),
\end{equation}
i.e., a morphism from the composition
\begin{equation} \label{e:compos1}
\sD^-(\sX^{\Ga_1}) \rar{f^{\Ga_1}_!}\sD^-(\sY^{\Ga_1})\rar{Z^-_{\sY}(\al )}\sD^-(\sY^{\Ga_2})
 \end{equation}
 to the composition
 \begin{equation} \label{e:compos2}
 \sD^-(\sX^{\Ga_1}) \rar{Z^-_{\sX}(\al )}\sD^-(\sX^{\Ga_2})\rar{f^{\Ga_2}_!}\sD^-(\sY^{\Ga_2}) .
 \end{equation}
Consider the diagram
\begin{equation}
\xymatrix{
     &&\sX^{\Ga} \ar[d]^{\nu} \ar[r]^{\tilde v} &\sX^{\Ga_2}\ar[dd]^{f^{\Ga_2}} \\
\sX^{\Ga_1}\ar[d]_{f^{\Ga_1}}&&\sX^{\Ga_1}\times_{\sY^{\Ga_1}}\sY^{\Ga}\ar[d]^p\ar[ll]_{\tilde u}& \\
\sY^{\Ga_1}&&\sY^{\Ga}\ar[ll]_u\ar[r]^v&\sY^{\Ga_2}
}
\end{equation}
in which $\nu$ is proper by Lemma~\ref{l:properness}. The compositions~\eqref{e:compos1} and \eqref{e:compos2} equal, respectively, $v_!u^*f^{\Ga_1}_!$ and
$f^{\Ga_2}_!\tilde v_!\nu^*\tilde u^*$. The required morphism from $v_!u^*f^{\Ga_1}_!=v_!p_!\tilde u^*$ to $f^{\Ga_2}_!\tilde v_!\nu^*\tilde u^*=v_!p_!\nu_!\nu^*\tilde u^*$
comes from the adjunction $\Id\to \nu_*\nu^*=\nu_!\nu^*$.

\subsubsection{The functor $\ig$}   \label{sss:ig}
\begin{example}     \label{example:what_we_really_need}
Let $G$ be an algebraic group and $G'\subset G$ a closed subgroup.
Let $\sX:=BG'$ and $\sY:=BG$ be the classifying stacks and $f:\sX\rar{}\sY$ the
natural morphism. Then $Z^-_{\sX}(B\bZ)=\sD^-(\sX^{B\bZ})=\sD^-_{G'}(G')$, so $f^{B\bZ}_!$ is a functor $\sD^-_{G'}(G')\rar{}\sD^-_G(G)$. Note that $f$ is representable: indeed, after base change
$\Spec k\rar{}BG$ it becomes the morphism $G/G'\rar{}\Spec k$.
So by Lemma~\ref{l:safe1}, $f^{B\bZ}_!$ maps $\sD_{G'}(G')$ to $\sD_{G}(G)$.
\end{example}

\begin{defin}     \label{d:induction_any_group}
Each of the functors $\sD^-_{G'}(G')\rar{}\sD^-_G(G)$ and $\sD_{G'}(G')\rar{}\sD_G(G)$
 from Example~\ref{example:what_we_really_need} is called \emph{induction with compact support} and denoted by $\ig\,$.
\end{defin}

 The morphism $f:BG'\rar{}BG$ from Example~\ref{example:what_we_really_need}
is separated, so combining the construction from \S\ref{sss:f!lax} with
Remark~\ref{r: weakly_semigroupal} one gets the following

\begin{cor}   \label{c:what_we_struggled_for}
Each of the functors
\[
\ig :\sD^-_{G'}(G')\rar{}\sD^-_G(G),\quad\quad \ig :\sD_{G'}(G')\rar{}\sD_G(G)
\]
has a canonical structure of weakly semigroupal functor compatible with the braidings and twists.
\end{cor}

\subsubsection{Conclusion}    \label{sss:conclusion}
If $G$ is unipotent a functor $\ig :\sD_{G'}(G')\rar{}\sD_G(G)$ and a weak semigroupal
structure on it were defined already in \S\ref{ss:induction-functors} and  \S\ref{ss:weak-semigroupal}.
It is easy to see that this weakly semigroupal  functor is equal to the functor
$\ig :\sD_{G'}(G')\rar{}\sD_G(G)$ from Corollary~\S\ref{c:what_we_struggled_for}.
Moreover, the construction of $\ig$ given in \S\ref{sss:f!lax}-\ref{sss:ig} is essentially identical to the one
given in \S\ref{ss:induction-functors},
\S\ref{ss:f-d-compatibility-averaging}-\ref{ss:averaging-semigroupal},
and \S\ref{ss:weak-semigroupal}, the only difference being the language
used.\footnote{Note that the key construction of the morphism \eqref{e:avg-external} is based on the equality $\De_*=\De_!$ used in step 3 of the construction, i.e., on the separatedness of $G/G'$
(which is equivalent to the separatedness of the morphism $BG'\rar{}BG$).} A serious advantage of
 the language used in this section is that it makes the compatibility of $\ig$ with the braidings and twists obvious: this compatibility immediately follows from the fact that the morphism \eqref{e:tobe_constructed} is functorial in $\al$.

\subsection{Grothendieck-Verdier duality and ribbon structure on $\sD (\sX^{B\bZ})$} \label{ss:GV-stacks}


Let $\sX$ be an algebraic stack  of finite type over $k$ and $\cM:=Z_{\sX}(B\bZ )=\sD (\sX^{B\bZ})$; e.g., if
$\sX$ is the classifying stack of an algebraic group $G$  then $\cM =\sD_G (G)$.
In \S\ref{sss:Z(BbZ)} and \S\ref{ss:The_pre-sTFT} we defined a canonical braided monoidal structure and twist
on $\cM$ using the structure of \emph{outgoing} pre-sTFT on $Z_{\sX}$.
Our next goal is to define a canonical structure of ribbon Grothendieck-Verdier on $\cM$.
The construction giben in \S\ref{sss:counit functor}-\ref{sss:ribbon-stacks} below uses $Z^-_{\sX}$ as well as $Z_{\sX}\subset Z^-_{\sX}$. The advantage of
$Z^-_{\sX}$ is that it is a \emph{``full"} pre-sTFT (not merely an outgoing one).

\begin{rem}
If the stack $\sX$ is safe in the sense of Definition~\ref{d:safe}
(e.g., if $\sX =BG$, where $G^{\circ}$ is unipotent) then considering $Z^-_{\sX}$ is not necessary
because by Corollary~\ref{c:safe2}, already $Z_{\sX}$ is a pre-sTFT.
\end{rem}

\subsubsection{The counit functor}   \label{sss:counit functor}
Set $\cM_0: =Z^-_{\sX} (\varnothing )=\sD^-(\Spec k)$. Let $\varepsilon :\cM\to\cM_0$ be the functor corresponding to the bordism\footnote{This bordism corresponds to the bordism
$S^1\subset\mbox{\{disk\} }\supset \varnothing$ in $\Cob$.}
 \begin{equation}           \label{e:zakleivanie}
B\bZ\to B(0)\leftarrow\varnothing
\end{equation}
in $\sCob$.
Explicitly,
 \begin{equation}           \label{e:counit}
\varepsilon (M)=R\Gamma_c (\sX ,1^*M ), \quad\quad M\in \cM=\sD (\sX^{B\bZ}),
\end{equation}
where $1:\sX\to\sX^{B\bZ}$ comes from the homorphism $\bZ\to 0$. (Note that if $\sX =BG$ then
$1:\sX\to\sX^{B\bZ}$ is obtained from $1:\Spec k\to G$ by passing to the quotient with respect to the
action of $G$ by conjugation.) Informally, we think of the functor $\varepsilon :\cM\to\cM_0$ as a
``counit" or ``augmentation".

\subsubsection{The dualizing object in $\cM$}   \label{sss:Dualizing-stacks}
Set $K_{\cM_0}:= \ql\in\cM_0$. There is a unique object $K_{\cM}\in\cM$ such that
\begin{equation}   \label{e:functor_representing_K}
\Hom (M,K_{\cM} ) =\Hom (\varepsilon (M),K_{\cM_0}), \quad\quad M\in \cM .
\end{equation}
Explicitly,
\begin{equation}   \label{e:K_itself}
K_{\cM}=1_*K_{\sX},
\end{equation}
where $K_{\sX}\in\sD (\sX)$ is the dualizing object.

\begin{example}      
If $\sX =BG$ then $K_{\cM}\simeq\e_{\cM}[-2d]$, where $d:=\dim G=-\dim BG$.
\end{example}

The next lemma is similar to Lemma \ref{l:duality-on-D(G)}.

\begin{lem}  \label{l:GV-stacks}
$K_{\cM}$ is a dualizing object in $\cM$. The corresponding dualizing functor is
$\bD^-:=\bD\circ\iota^*=\iota^*\circ\bD$, where $\bD:\sD (\sX^{B\bZ})\iso \sD (\sX^{B\bZ})$ is the Verdier duality functor and $\iota\in\Aut (\sX^{B\bZ})$ corresponds to $-1\in\Aut (\bZ )$. 
\end{lem}

\begin{proof}
We have to construct a functorial isomorphism
\begin{equation}    \label{e:duality_on_M}
\Hom (M_1*M_2,K_{\cM} )\iso \Hom (M_1\otimes \iota^* M_2,K_{\sX^{B\bZ}}),
\; M_i\in \cM =\sD (\sX^{B\bZ}),
\end{equation}
where $K_{\sX^{B\bZ}}\in\sD(\sX^{B\bZ})$ is the dualizing object. It follows from \eqref{e:functor_representing_K} that for $M_1,M_2\in \cM =\sD (\sX^{B\bZ})$ one has
\begin{equation}
\Hom (M_1*M_2,K_{\cM} ) =\Hom (N,K_{\cM_0}), \quad
N:=R\Gamma_c (\sX^{B\bZ} ,M_1\otimes \iota^* M_2).
\end{equation}
By usual Verdier duality, $\Hom (N,K_{\cM_0})=\Hom (M_1\otimes \iota^* M_2,K_{\sX^{B\bZ}})$.
\end{proof}

\begin{rem}  
If $\sX =BG$ then the functors $\bD^-$ and $\bD$ from Lemma~\ref{l:GV-stacks} differ by a shift from the functors $\bD_G^-:\sD_G (G)\iso\sD_G (G)$ and $\bD_G:\sD_G (G)\iso\sD_G (G)$ used in the main part of the article. This difference is not essential for our purposes. In particular, Proposition~\ref{p:ribbon-stacks} below implies that the braiding and twist from \S\ref{sss:Z(BbZ)} make $\cM =\bD_G (G)$ into a ribbon category even if one uses $\e_G$ rather than $ K_{\cM}$ as a dualizing object in $\cM$.
\end{rem}

\subsubsection{The pivotal structure on $\cM$}   \label{sss:pivotal-stacks}
As before, let $\iota\in\Aut (\sX^{B\bZ})$ denote the automorphism corresponding to $-1\in\Aut (\bZ )$.
For $M_1,M_2\in\cM$ let
\begin{equation}  \label{e:def_of_psi}
\psi_{M_1,M_2}:\Hom (M_1*M_2,K_{\cM})\iso\Hom (M_2*M_1,K_{\cM})
\end{equation}
be the isomorphism corresponding via \eqref{e:duality_on_M} to the isomorphism
$$\iota^*:\Hom (M_1\otimes \iota^* M_2,K_{\sX^{B\bZ}})\iso \Hom (\iota^* M_1\otimes M_2,K_{\sX^{B\bZ}}).$$

\begin{prop}\label{p:pivotal-stacks}
\begin{enumerate}[$($a$)$]
\item The isomorphism \eqref{e:def_of_psi} is a pivotal structure on $(\cM ,K_{\cM})$ (see Definition~\ref{d:pivotal1}).
 \sbr
\item The isomorphism $(\bD^-)^2\iso\Id$ corresponding to this pivotal structure by Remark~\ref{r:pivotal2} and Lemma~\ref{l:GV-stacks} is equal to the obvious isomorphism $$(\bD^-)^2=(\bD\circ\iota^*)^2\iso \bD^2\circ(\iota^*)^2\iso\Id.$$
\end{enumerate}
\end{prop}

The proof will be given in \S\S\ref{sss:proof-p:pivotal-stacks-a}--\ref{sss:proof-p:pivotal-stacks-b}. One can also deduce Pro\-po\-si\-tion~\ref{p:pivotal-stacks}(a) from Proposition \ref{p:ribbon-stacks} and Lemma \ref{l:bijection-pivotal-structures-twists}.

\subsubsection{The ribbon structure on $\cM$}   \label{sss:ribbon-stacks}
By \S\ref{sss:Z(BbZ)}, $\cM$ is a braided category equipped with a twist $\te$. In \S\ref{ss:ribbon_GV} we defined the notion of ribbon structure.
\begin{prop} \label{p:ribbon-stacks}
\begin{enumerate}[$($a$)$]
\item $\te$ corresponds (in the sense of Lemma~\ref{l:bijection-pivotal-structures-twists}) to the pivotal structure from \S\ref{sss:pivotal-stacks}.
 \sbr
\item $\te$ defines a ribbon structure on $(\cM ,K_{\cM})$.
\end{enumerate}
\end{prop}

The proof will be given in \S\S\ref{sss:proof-p:ribbon-stacks-a}--\ref{sss:proof-p:ribbon-stacks-b}.

\subsubsection{A formula for $\Hom (M_1*\ldots *M_n,K_{\cM} )$, $M_i\in\cM$}   \label{sss:A_formula}
Consider the following
bordism\footnote{It corresponds to the following bordism in $\Cob$:
$\underbrace{S^1\sqcup\dotsb\sqcup S^1}_n\subset\{S^2\text{ with } n \text{ holes}\} \supset \varnothing$ .} in
$\sCob$:
\begin{equation}   \label{e:thebordism}
\Ga_n\rar{f'} BF'_n\lar{}\varnothing \, ,
 \quad\quad \Ga_n:=\underbrace{B\bZ\bigsqcup\ldots \bigsqcup B\bZ}_n \, ,
\end{equation}
where $F'_n$ is the group generated by $x_1,\ldots ,x_n$ with the defining relation $x_1\cdot\ldots\cdot x_n=1$
and the restriction of $f'$ to the $i$-th copy of $B\bZ$ takes $1\in B\bZ$ to $x_i$.

\mbr

Since $Z^-_{\sX}$ is a pre-sTFT the bordism \eqref{e:thebordism} defines a functor
\[
\Phi'_n:\cM^n=(Z_{\sX}(B\bZ))^n\hookrightarrow(Z^-_{\sX}(B\bZ))^n\to Z^-_{\sX}(\varnothing )=\cM_0 \, .
\]
\begin{lem}   \label{l:Hom(XYZ,K)}
One has a functorial isomorphism
\[
\Hom (M_1*\ldots *M_n,K_{\cM} )\iso\Hom\bigl(\Phi'_n(M_1,\ldots ,M_n),K_{\cM_0}\bigr), \qquad M_i\in\cM \, .
\]
\end{lem}

\begin{proof}
By \eqref{e:functor_representing_K}, it suffices to check that
$\varepsilon (M_1*\ldots *M_n)=\Phi'_n(M_1,\ldots ,M_n)$.
This is clear since composing the bordisms  \eqref{e:n-th_bordism} and \eqref{e:zakleivanie}
one gets 
\eqref{e:thebordism}.
\end{proof}


\subsubsection{Proof of Proposition \ref{p:pivotal-stacks}(a)}\label{sss:proof-p:pivotal-stacks-a}

\begin{rems}   \label{r:cyclic_action}
\begin{enumerate}[(i)]
\item The diagram \eqref{e:thebordism} is acted upon by the cyclic subgroup $C_n$ of the symmetric group $S_n$ generated by the cycle $(2,3,\ldots ,n,1)$. Namely, $C_n$ acts on $\Ga_n$ (respectively, on $F'_n$) by permuting the $n$ copies of $B\bZ$ (respectively, the generators $x_1,\dotsc,x_n$ of $F'_n$) and the functor $f':\Ga_n\rar{}BF'_n$ from \eqref{e:thebordism} is $C_n$-equivariant (in the strict sense).
 \mbr
\item  The previous remark yields a functorial isomorphism
\begin{equation}\label{e:my_pivotal-stacks1}
\Phi'_n(M_n,M_1,\dotsc,M_{n-1}) \rar{\simeq} \Phi'_n(M_1,M_2,\dotsc,M_n), \quad M_i\in\cM
\end{equation}
whose $n$-th power (in the obvious sense) equals the identity.
\end{enumerate}
\end{rems}

To prove Proposition \ref{p:pivotal-stacks}(a) we have to show that the isomorphism $\psi$ defined in
\S\ref{sss:pivotal-stacks} has properties
\eqref{e:pretty2}-\eqref{e:pretty1}. Using Remark~\ref{r:cyclic_action}(ii) for $n=2,3$ and
 Lemma~\ref{l:Hom(XYZ,K)}, we obtain  functorial isomorphisms
\begin{equation}  \label{e:my_pivotal-stacks2}
\Hom(M_1*M_2,K_{\cM}) \rar{\simeq} \Hom(M_2*M_1,K_{\cM}), \qquad M_1,M_2\in\cM,
\end{equation}
\begin{equation}  \label{e:my_pivotal-stacks3}
\Hom(M_1*M_2*M_3,K_{\cM}) \rar{\simeq} \Hom(M_3*M_1*M_2,K_{\cM}), \quad M_1,M_2,M_3\in\cM
\end{equation}
such that the square of \eqref{e:my_pivotal-stacks2} and the cube of \eqref{e:my_pivotal-stacks3} are equal
to the identity. It is easy to see that \eqref{e:my_pivotal-stacks2} equals the isomorphism $\psi_{M_1,M_2}$ defined by \eqref{e:def_of_psi}
and  \eqref{e:my_pivotal-stacks3} equals $\psi_{M_1*M_2,M_3\,}$. Properties \eqref{e:pretty2}-\eqref{e:pretty1} follow.

\subsubsection{Proof of Proposition \ref{p:pivotal-stacks}(b)}\label{sss:proof-p:pivotal-stacks-b}
One proves the assertion using Lemma \ref{l:obvious-isomorphism-abstract} in exactly the same way as explained in \S\ref{sss:proof-l:obvious-isomorphism}. We skip the details.


%
%
%
%


\subsubsection{Proof of Proposition \ref{p:ribbon-stacks}(a)}\label{sss:proof-p:ribbon-stacks-a}
We have to show that for each $M_1,M_2\in\cM$ the isomorphism
\begin{equation}  \label{e:ribbon-stacks-1a}
\psi_{M_1,M_2}:\Hom(M_1*M_2,K_{\cM}) \rar{\simeq} \Hom(M_2*M_1,K_{\cM})
\end{equation}
is equal to
\begin{equation}  \label{e:ribbon-stacks-2a}
\be^*_{M_2,M_1}\circ(\id_{M_1}*\te_{M_2})^*:\Hom(M_1*M_2,K_{\cM}) \rar{\simeq} \Hom(M_2*M_1,K_{\cM}).
\end{equation}
To this end, we will describe the isomorphisms \eqref{e:ribbon-stacks-1a}-\eqref{e:ribbon-stacks-2a} in terms of $\sCob$.

\mbr

By Lemma~\ref{l:Hom(XYZ,K)}, we have
$\Hom (M_1*M_2,K_{\cM} )=\Hom\bigl(\Phi'_2(M_1,M_2),K_{\cM_0}\bigr)$, where
$\Phi'_2(M_1,M_2)=\varepsilon (M_1*M_2)$ comes from the bordism
\begin{equation} \label{e:ribbon-stacks-3a}
\Ga_2\rar{f'} BF'_2\lar{}\varnothing \, ,
 \quad\quad \Ga_2:=B\bZ\bigsqcup B\bZ \, ,
 \end{equation}
which is a special case of \eqref{e:thebordism}. The isomorphism \eqref{e:ribbon-stacks-1a} comes from
the auto\-equivalence
\begin{equation} \label{e:ribbon-stacks-4a}
\xymatrix{
  \Ga_2 \ar[rr]^{f'} \ar[dd]_{\tau} & & BF'_2 \ar[dd]^{\xi'} & \\
   & & & \\ 
  \Ga_2 \ar[rr]^{f'} & & BF'_2 &
   }
\end{equation}
of diagram \eqref{e:ribbon-stacks-3a} described in Remark~\ref{r:cyclic_action}(i); namely, $\tau$ interchanges the two copies of $B\bZ$ and $\xi'$ comes from the automorphism $F'_2$ interchanging the generators
$x_1,x_2\in F'_2$.

\mbr

On the other hand, the isomorphism \eqref{e:ribbon-stacks-2a} comes from the composition
\begin{equation} \label{e:ribbon-stacks-5a}
M_2*M_1 \xrar{\ \ \be_{M_2,M_1}\ \ } M_1*M_2 \xrar{\ \ \id_{M_1}*\te_{M_2}\ \ } M_1*M_2\, .
\end{equation}
Recall that the functor $(M_1,M_2)\mapsto M_1*M_2$ comes from the bordism
\begin{equation} \label{e:ribbon-stacks-6a}
\Ga_2\rar{f}BF_2\lar{g}B\bZ \, ,
\end{equation}
which is a special case of \eqref{e:n-th_bordism}.

\begin{lem}   \label{l:computation_in_sCob}
The composition \eqref{e:ribbon-stacks-5a} comes from the following autoequivalence
\begin{equation}  \label{e:GV-stacks-mydiagram}
\xymatrix{
  \Ga_2 \ar[rr]^{f} \ar[dd]_{\tau} & & BF_2 \ar[dd]^{\xi} &  &B\bZ\ar[ll]_g \ar[dd]^{\Id} \\
   & & & \\
  \Ga_2 \ar[rr]^{f} & & BF_2 &  &B\bZ\ar[ll]_g
   }
\end{equation}
of diagram \eqref{e:ribbon-stacks-6a}: the left vertical arrow is the same as in \eqref{e:ribbon-stacks-4a},
$\xi$ comes from the automorphism of $F_2$ interchanging the generators $x_1,x_2\in F_2$,
the isomorphism $\xi\circ f\iso f\circ\tau$ equals the identity, and the isomorphism $\xi\circ g\iso g$
is given by the element $x_2^{-1}\in F_2$ (see Remark \ref{rems:computation_in_sCob}(1)).
\end{lem}

The desired equality between \eqref{e:ribbon-stacks-1a} and \eqref{e:ribbon-stacks-2a} follows from
Lemma~\ref{l:computation_in_sCob} because the autoequivalence of the diagram \eqref{e:ribbon-stacks-3a} induced by \eqref{e:GV-stacks-mydiagram} equals \eqref{e:ribbon-stacks-4a}. Thus it remains to prove
Lemma~\ref{l:computation_in_sCob}.

\begin{proof}[Proof of Lemma \ref{l:computation_in_sCob}]
Combining Definitions \ref{d:twist-general} and \ref{d:braiding-general}, it is easy to check that the composition \eqref{e:ribbon-stacks-5a} comes from the autoequivalence
\begin{equation}  \label{e:GV-stacks-newdiagram}
\xymatrix{
  \Ga_2 \ar[rr]^{f} \ar[dd]_{\tau} & & BF_2 \ar[dd]^{\nu} & & B\bZ\ar[ll]_g \ar[dd]^{\Id} \\
   & & & \\
  \Ga_2 \ar[rr]^{f} & & BF_2 & & B\bZ\ar[ll]_g
   }
\end{equation}
of diagram \eqref{e:ribbon-stacks-6a} in which the notation is the same as in Definition \ref{d:braiding-general}, the isomorphism $\nu\circ f\rar{\simeq}f\circ\tau$ is given by the pair $(x_2,x_2)\in F_2^2$ (cf.~Remark \ref{rems:computation_in_sCob}(2)) and the isomorphism $\nu\circ g\rar{\simeq}g\circ\Id$ is the identity map.
Both \eqref{e:GV-stacks-mydiagram} and \eqref{e:GV-stacks-newdiagram} define 1-isomorphisms between
the bordisms $\Ga_2\rar{f}BF_2\lar{g}B\bZ$ and $\Ga_2\rar{f\circ\tau}BF_2\lar{g}B\bZ$. To prove the lemma,
it suffices to show that these 1-isomorphisms are 2-isomorphic. This means constructing an isomorphism
$\nu\iso\xi$ such that the corresponding isomorphisms $\nu\circ f\iso\xi\circ f$ and $\nu\circ g\iso\xi\circ g$
are equal to the compositions
\[
\nu\circ f\iso f\circ\tau\iso\xi\circ f, \quad \nu\circ g\iso g\iso\xi\circ g
\]
(in each of the compositions the first arrow comes from \eqref{e:GV-stacks-mydiagram} and the second one from \eqref{e:GV-stacks-newdiagram}\,). The isomorphism $\nu\rar{\simeq}\xi$
corresponding to the element $x_2\in F_2$ has the desired properties.
\end{proof}

\subsubsection{Proof of Proposition \ref{p:ribbon-stacks}(b)}\label{sss:proof-p:ribbon-stacks-b}
By Remark \ref{remark-from-GV}, it suffices to check that for all $M_1,M_2\in\cM$ the automorphism
$\te_{M_1}*\te_{M_2}^{-1}\in\Aut (M_1*M_2)$ induces the identity map from $\Hom(M_1*M_2,K_{\cM})$ to itself. By Lemma \ref{l:Hom(XYZ,K)}, it is enough to show that the automorphism
\begin{equation} \label{e:1ribbon-stacks-b}
\Phi'_2(\te_{M_1},\te_{M_2}^{-1})\in\Aut \Phi'_2(M_1,M_2)
\end{equation}
is trivial. By the definition of $\Phi'_2$ (see \S\ref{sss:A_formula}), the automorphism \eqref{e:1ribbon-stacks-b}
comes from a certain automorphism of the bordism
\begin{equation} \label{e:2ribbon-stacks-b}
\Ga_2\rar{f'} BF'_2\lar{}\varnothing \, ,
\end{equation}
which is a special case of the bordism \eqref{e:thebordism}. A general automorphism of the bordism
\eqref{e:2ribbon-stacks-b} is defined by a pair $(\al ,a)$ consisting of an equivalence $\al:BF'_2\iso BF'_2$ and an isomorphism of functors $a:\al\circ f'\iso  f'\,$; a pair $(\al ,a)$ corresponds to the identity automorphism of \eqref{e:thebordism} if $a$ comes from an isomorphism $\al\iso\Id_{BF'_2}\,$.
In view of Definition \ref{d:twist-general}, the automorphism in question corresponds to $\al =\Id_{BF'_2}\,$, $a_{\ga_1}=x_1\,$, $a_{\ga_2}=x_2^{-1}\,$,
where $\ga_1, \ga_2$ are the two objects of $\Ga_2$ and $x_1, x_2$ are the generators of $F'_2$.
Since $x_2^{-1}=x_1$ this automorphism is trivial.


\subsection{Some remarks on the $\infty$-categorical setting}
\label{ss:infty-cats}
\subsubsection{}
It is becoming customary to define $\sD (\sY )$ and  $\sD^- (\sY )$ as (stable) $\infty$-cate\-go\-ries rather
than merely as categories. In this setting the
construction of the theories $Z^-_{\sX}$ and $Z_{\sX}$
given in \S\ref{sss:unboundedZ}-\ref{sss:boundedZ} 
still goes through if the definition of pre-sTFT is modified accordingly.
Namely, the 
2-groupoid of bordisms defined in \S\ref{sss:sCob} should not be truncated to a 
1-groupoid; then 
$\sCob$ becomes a $(3,1)$-category rather than a $(2,1)$-category.

\subsubsection{}   \label{sss:the_other_diff}
In \cite{BFN08} Ben-Zvi, Francis, and Nadler consider the quasicoherent derived category of $\sX^Y$,
where $\sX$ is a \emph{derived} stack (rather than a ``classical" one) and $Y$ is \emph{any} topological
space (rather than a classifying space of a groupoid). This degree of generality would be useless to treat
the categories (or $\infty$-categories) $\sD (\sX^Y)$ and  $\sD^- (\sX^Y)$. Reason: unlike the quasicoherent case, for any derived stack $\sZ$ one has $\sD (\sZ )=\sD (\sZ^{cl})$ and  $\sD^- (\sZ )=\sD^- (\sZ^{cl})$, where
$\sZ^{cl}$ stands for the  classical stack underlying $\sZ$. On the other hand, if $\sX$ is a derived stack and
$Y$ is a topological space then $(\sX^Y )^{cl}$ depends only on $\sX^{cl}$ and the fundamental groupoid
$\Pi (Y)$. To see this, note that for any classical scheme $S$ one has
\[
\sX^Y (S):=\Mor (Y,\sX (S))=\Mor (Y,\sX^{cl} (S))=\Mor (\Pi (Y),\sX^{cl} (S));
\]
the latter equality holds because $\sX^{cl} (S))$ is a usual groupoid rather than an $\infty$-groupoid.

\section{Equivalence of two definitions of $\sD_G(X)$}\label{a:equivalence-equivariant-derived}

In this appendix $k$ denotes an algebraically closed field of arbitrary characteristic and $G$ is an algebraic group over $k$ acting on a scheme $X$ of finite type over $k$. We form the quotient stack $\sY:=G\backslash X$, write $\sD(\sY)=D^b_c(\sY,\ql)$ and let $\sD_G^{naive}(X)$ denote the category constructed in Definition \ref{d:equiv-derived} (where it was denoted $\sD_G(X)$). If $\fq:X\rar{}\sY$ is the quotient morphism, we obtain a Cartesian diagram
\begin{equation}\label{e:cart-diag}
\xymatrix{
  G\times X \ar[rr]^{\al} \ar[d]_{\pi} & & X \ar[d]^{\fq} \\
  X \ar[rr]^{\fq} & & \sY
   }
\end{equation}
where $\al$ is the action map and $\pi$ is the second projection. Hence given $N\in\sD(\sY)$, the pullback $\fq^*(N)\in\sD(X)$ acquires an isomorphism $\phi:\al^*\fq^*(N)\rar{\simeq}\pi^*\fq^*(N)$, which is easily seen to satisfy condition \eqref{e:compatibility} of Definition \ref{d:equiv-derived}. Therefore $\fq^*$ can be viewed as a functor $\sD(\sY)\rar{}\sD_G^{naive}(X)$.

\begin{prop}\label{p:equivalence-definitions-equivariant-category}
If $G^\circ$ is unipotent, the functor $\fq^*:\sD(\sY)\rar{}\sD_G^{naive}(X)$ is an equivalence.
\end{prop}

To prove the proposition we will construct a functor $\fq_*^G:\sD_G^{naive}(X)\rar{}\sD(\sY)$ and show that when $G^\circ$ is unipotent, the functors $\fq_*^G$ and $\fq^*$ are quasi-inverse to each other. Given $g\in G(k)$, by a slight abuse of notation we will also denote by $g:X\rar{\simeq}X$ the automorphism $x\mapsto\al(g,x)$. We have $\fq\circ g=\fq$. If $(M,\phi)\in\sD_G^{naive}(X)$, then for each $g\in G(k)$, the isomorphism $\phi$ induces an isomorphism $M\rar{\simeq}g_*(M)$ and hence an automorphism of $\fq_*(M)$. In this way we obtain an action of $G(k)$, viewed as an abstract group, on the object $\fq_*(M)$. Note that since $\fq$ is representable, the functor $\fq_*$ preserves boundedness, so $\fq_*(M)\in\sD(\sY)$.

\begin{lem}\label{lemm:1}
If $g\in G^\circ(k)$, then $g$ acts trivially on $\fq_*(M)$.
\end{lem}

The lemma is proved by a standard continuity argument, which we include for completeness and the lack of a suitable reference.

\begin{proof}
The statement becomes obvious if one rephrases the definition of the $G(k)$-action on $\fq_*(M)$ as follows. Set $N:=\fq_*(M)\in\sD(\sY)$. The $G$-equivariant structure on $M$ induces a $G$-equivariant structure on $N$ with respect to the trivial $G$-action on $\sY$, that is, an isomorphism $\phi_{\sY}:\pi_{\sY}^*(N)\rar{\simeq}\pi_{\sY}^*(N)$, where $\pi_{\sY}:G\times\sY\rar{}\sY$ is the projection. Rewrite $\phi_{\sY}$ as a morphism
\begin{equation}\label{e:*}
N \rar{} \pi_{\sY*}\pi_{\sY}^*(N)=N\tens R\Ga(G,\ql)
\end{equation}
Then the automorphism of $N$ corresponding to $g\in G(k)$ is the composition of \eqref{e:*} with the morphism $\id_N\tens\ev_g:N\tens R\Ga(G,\ql)\rar{}N$, where $\ev_g:R\Ga(G,\ql)\rar{}R\Ga(\Spec k,\ql)=\ql$ is induced by $g:\Spec k\rar{}G$. Clearly $\ev_g$ depends only on the image of $g$ in $G(k)/G^\circ(k)$.
\end{proof}

By Lemma \ref{lemm:1}, for every $M=(M,\phi)\in\sD_G^{naive}(X)$, we obtain an action of the finite group $\pi_0(G)\cong G(k)/G^\circ(k)$ on $\fq_*(M)$. Since $\sD(\sY)$ is a $\ql$-linear Karoubi-complete category, the endomorphism $P_M:=\frac{1}{\abs{\pi_0(G)}}\sum_{g\in\pi_0(G)}g$ of $\fq_*(M)$ has an image, which is also the kernel of $\id-P_M$. We denote it by $\fq_*^G(M)$, and it is clear that this construction defines a functor $\fq_*^G:\sD_G^{naive}(X)\rar{}\sD(\sY)$.

\mbr

The definition of $\fq_*^G$ shows that for each $M\in\sD_G^{naive}(X)$ we have natural morphisms $\fq_*^G(M)\to\fq_*(M)\to\fq_*^G(M)$. In particular, if $M\in\sD_G^{naive}(X)$, then the adjunction morphism $\fq^*\fq_*(M)\rar{}M$ induces a morphism
\begin{equation}\label{e:adj1}
\fq^*\fq_*^G(M)\rar{}M,
\end{equation}
and if $N\in\sD(\sY)$, then the adjunction morphism $N\rar{}\fq_*\fq^*(N)$ induces a morphism
\begin{equation}\label{e:adj2}
N\rar{}\fq_*^G\fq^*(N)
\end{equation}
Proposition \ref{p:equivalence-definitions-equivariant-category} follows from the next

\begin{lem}\label{lemm:2-3}
If $G^\circ$ is unipotent, then \eqref{e:adj1} and \eqref{e:adj2} are isomorphisms.
\end{lem}

\begin{rem}
It is not hard to show in general that $\fq_*^G:\sD_G^{naive}(X)\rar{}\sD(\sY)$ is right adjoint to $\fq^*:\sD(\sY)\rar{}\sD_G^{naive}(X)$, but we do not need this fact.
\end{rem}

\begin{proof}[Proof of Lemma \ref{lemm:2-3}]
Smooth base change \cite[\S12]{Las-Ols06} with respect to the morphism $\fq:X\rar{}\sY$ (cf.~diagram \eqref{e:cart-diag}) reduces the proof of the lemma to the special case where $X=G\times Y$ for some scheme $Y$ of finite type over $k$ and the $G$-action on $X$ is given by the left multiplication action of $G$ on itself. In this case the quotient morphism $\fq:X\to\sY$ can be identified with the second projection $\pr_2:G\times Y\to Y$. It is straightforward to check that the functor $\pr_2^*:\sD(Y)\rar{}\sD^{naive}_G(G\times Y)$ is an equivalence, with quasi-inverse $\iota^*:\sD^{naive}_G(G\times Y)\rar{}\sD(Y)$, where $\iota:Y\to G\times Y$ is given by $y\mapsto (1,y)$. On the other hand, since $G^\circ$ is unipotent, it is isomorphic to an affine space as a variety over $k$, so $H^j(G,\ql)=0$ for $j\geq 1$ and $H^0(G,\ql)$ can be identified with the space of functions $\pi_0(G)\rar{}\ql$ on which $\pi_0(G)$ acts by translations. These observations and the K\"unneth formula imply that the maps \eqref{e:adj1} and \eqref{e:adj2} are isomorphisms.
\end{proof}


\begin{thebibliography}{55}

\bibitem[Ad73]{adams} J.F.~Adams, {\em Idempotent functors in homotopy theory}, in: ``Manifolds --- Tokyo 1973 (Proc. Internat. Conf., Tokyo, 1973),'' pp.~247--253. Univ. Tokyo Press, Tokyo, 1975.

\bibitem[Ba79]{Barr79} M.~Barr,  {\em $\ast$-autonomous categories} (with an appendix by Po Hsiang Chu), Lecture Notes in Mathematics \textbf{752}, Springer-Verlag, Berlin, 1979.

\bibitem[Ba95]{Barr95} M.~Barr,  {\em Nonsymmetric $\ast$-autonomous categories}, Theoret. Comput. Sci. \textbf{139} (1995), no. 1-2, 115--130.

\bibitem[Ba96]{Barr96} M.~Barr,  {\em $\ast$-autonomous categories, revisited}, J. Pure Appl. Algebra \textbf{111} (1996), no. 1-3, 1--20.

\bibitem[Ba99]{Barr99} M.~Barr,  {\em $\ast$-autonomous categories: once more around the track}, Theory Appl. Categ. \textbf{6} (1999), 5--24 (electronic).

\bibitem[Be80]{begueri} L.~Begueri, {\em Dualit\'e sur un corps local \`a corps r\'esiduel alg\'ebriquement clos}, M\'em. Soc. Math. France (N.S.) 1980/81, no.~4.

\bibitem[BBD82]{bbd} A.A.~Beilinson, J.~Bernstein and P.~Deligne, \emph{Faisceaux Pervers}, in: ``Analyse et topologie sur les espaces singuliers (I)'', Ast\'erisque \textbf{100}, 1982.

\bibitem[BeDr04]{chiral} A.~Beilinson and V.~Drinfeld, ``Chiral Algebras'', Amer. Math. Soc. Colloq. Publ. \textbf{51}, American Mathematical Society, Providence, RI, 2004.

\bibitem[BK01]{BK} B.~Bakalov and A.Kirillov, Jr, ``Lectures on tensor categories and modular functors", University Lecture Series, \textbf{21}. American Mathematical Society, Providence, RI, 2001.

\bibitem[BFN08]{BFN08} D.~Ben-Zvi, J.~Francis, and D.~Nadler, {\em Integral transforms and Drinfeld centers in derived algebraic geometry}, J. Amer. Math. Soc. \textbf{23} (2010), no.~4, 909--966.

\bibitem[BL94]{ber-lunts} J.~Bernstein and V.~Lunts, ``Equivariant Sheaves and Functors'', Lecture Notes in Math. \textbf{1578}, Springer-Verlag, Berlin, 1994.


\bibitem[Bo10]{characters} M.~Boyarchenko, {\em Characters of unipotent groups over finite fields}, Selecta Math. \textbf{16} (2010), no.~4, 857--933.

\bibitem[Bo11]{characters-char-sheaves} M.~Boyarchenko, {\em Character sheaves and characters of unipotent groups over finite fields}, {\tt arXiv:1006.2476}, to appear in the American Journal of Mathematics.

\bibitem[BD06]{intro} M.~Boyarchenko and V.~Drinfeld, {\em A motivated introduction to character sheaves and the orbit method for unipotent groups in positive characteristic}, Preprint, September 2006, arXiv: {\tt math.RT/0609769}

\bibitem[BD08]{talk} M.~Boyarchenko and V.~Drinfeld, {\em Character sheaves on unipotent groups in characteristic $p>0$}, a talk at the conference ``Current Developments and Directions in the Langlands Program'', Northwestern University, May 14, 2008. The slides are available online (in PDF) at the URL \url{http://www.math.uchicago.edu/~mitya/LanglandsConf.pdf} and can also be found at {\tt arXiv:1301.0025}

\bibitem[BD11]{GV} M.~Boyarchenko and V.~Drinfeld, {\em A duality formalism in the spirit of Grothendieck and Verdier}, {\tt arXiv:1108.6020}, to appear in Quantum Topology.


\bibitem[Br81]{breen-exts} L.~Breen, {\em Extensions du groupe additif sur le site parfait}, in:  ``Algebraic surfaces (Orsay, 1976--78)'', pp.~238--262, Lecture Notes in Math. \textbf{868}, Springer, Berlin-New York, 1981.


\bibitem[CS04]{CS04} M.~Chas and D.~Sullivan, {\em Closed string operators in topology leading to Lie bialgebras and higher string algebra}, in: `` The legacy of Niels Henrik Abel", 771--784, Springer, Berlin, 2004.

\bibitem[Da10]{swarnendu} S.~Datta, {\em Metric groups attached to biextensions}, Transformation Groups \textbf{15} (2010), no.~1, 72--91.

\bibitem[DFH74]{dfh1} A.~Deleanu, A.~Frei and P.~Hilton, {\em Generalized Adams completion}, Cahiers Topologie G\'eom. Diff\'erentielle \textbf{15} (1974), 61--82.

\bibitem[DFH75]{dfh2} A.~Deleanu, A.~Frei and P.~Hilton, {\em Idempotent triples and completion}, Math. Z. \textbf{143} (1975), 91--104.

\bibitem[De76]{deligne} P.~Deligne, Letter to D.~Kazhdan, November 29, 1976 (unpublished).

\bibitem[De80]{deligne-weil-2} P.~Deligne, {\em La conjecture de Weil II}, Publ. Math. IHES \textbf{52} (1980), 137--252.

\bibitem[De90]{deligne-cat-tan} P.~Deligne, {\em Cat\'egories tannakiennes}, in: ``The Grothendieck Festschrift'', Vol. II,  111--195, Progr. Math. \textbf{87}, Birkh\"auser Boston, Boston, MA, 1990.

\bibitem[SGA$4\frac{1}{2}$]{sga4.5} P.~Deligne, with J.-F.~Boutot, L.~Illusie and J.-L.~Verdier, ``SGA $4\frac{1}{2}$: Cohomologie \'Etale'', Lecture Notes in Math. \textbf{569}, Springer, Heidelberg, 1977.

\bibitem[De10]{tanmay} T.~Deshpande,  {\em Heisenberg idempotents on unipotent groups}, Math. Res. Lett. \textbf{17} (2010), no.~3, 415--434.

\bibitem[DGNO10]{dgno} V.~Drinfeld, S.~Gelaki, D.~Nikshych and V.~Ostrik, {\em On Braided Fusion Categories I}, Selecta Math. (N.S.) \textbf{16} (2010), no.~1, 1--119.

\bibitem[Ek90]{Ekedahl} T.~Ekedahl, {\em On the adic formalism}, in: ``The Grothendieck Festschrift'', vol.~II, Progr. Math., \textbf{87}, pp. 197--218, Birkh\"auser, Boston, MA, 1990.


\bibitem[ENO05]{ENO} P.~Etingof, D.~Nikshych and V.~Ostrik, {\em On fusion categories}, Annals of Math. \textbf{162} (2005), 581--642.

\bibitem[FY89]{FY}  P.~J.~Freyd and D.~N.~Yetter, {\em Braided compact closed categories with applications to low-dimensional topology}, Adv. Math. \textbf{77} (1989), no. 2, 156--182.

\bibitem[Gr65]{greenberg} M.J.~Greenberg, {\em Perfect closures of rings and schemes}, Proc. AMS \textbf{16} (1965), 313--317.

\bibitem[EGA\,IV-3]{ega4-3} A.~Grothendieck, ``\'El\'ements de g\'eom\'etrie alg\'ebrique. IV. \'Etude locale des sch\'emas et des morphismes de sch\'emas. III.'' Inst. Hautes \'Etudes Sci. Publ. Math. \textbf{28}, 1966.

\bibitem[HRV08]{HRV} T.~Hausel and F.~Rodriguez-Villegas, {\em Mixed Hodge polynomials of character varieties (With an appendix by N.~M.~Katz)}, Invent. Math. \textbf{174} (2008), no. 3, 555--624.









\bibitem[Jan88]{Jannsen} U.~Jannsen, {\em Continuous \'etale cohomology}, Math. Ann., \textbf{280} (1988), 207--245.

\bibitem[JS93]{joyal-street} A.~Joyal and R.~Street, {\em Braided tensor categories},  Adv. Math. \textbf{102} (1993), no.~1, 20--78.

\bibitem[Ka09]{masoud} M.~Kamgarpour, {\em Stacky abelianization of algebraic groups}, Transform. Groups \textbf{14} (2009), no.~4, 825--846.

\bibitem[KS06]{categories-sheaves} M.~Kashiwara and P.~Schapira, ``Categories and Sheaves'', Grundlehren Math. Wiss. \textbf{332}, Springer-Verlag, Berlin, 2006.




\bibitem[KL85]{katz-laumon} N.M.~Katz and G.~Laumon, {\em Transformation de Fourier et majoration de sommes exponentielles}, Publ. Math. IHES \textbf{62} (1985), 361--418.

 \bibitem[Ke74]{Kelly74} G.~M~Kelly, {\em On clubs and doctrines}, Category Seminar (Proc. Sem., Sydney, 1972/1973), pp. 181--256. Lecture Notes in Math., Vol. 420, Springer, Berlin, 1974.

\bibitem[LO06]{Las-Ols06} Y.~Laszlo and M.~Olsson, {\em The six operations for sheaves on Artin stacks II: Adic coefficients}, Publ. Math. IHES. \textbf{107} (2008), 169--210.

\bibitem[LM00]{LM} G.~Laumon and L.~Moret-Bailly, ``Champs alg\'ebriques", Springer-Verlag, Berlin, 2000.

\bibitem[Lur07]{Lur07} J.~Lurie, {\em Derived Algebraic Geometry III: Commutative Algebra}, e-print, 2007, arXiv: {\tt math.CT/0703204}

\bibitem[Lur09]{Lur09} J.~Lurie, {\em On the Classification of Topological Field Theories}, e-print, 2009, arXiv: {\tt math.CT/0905.0465}

\bibitem[Lu03]{lusztig} G.~Lusztig, {\em Character sheaves and generalizations}, in: ``The unity of mathematics'' (editors: P.~Etingof, V.~Retakh, I.~M.~Singer), 443--455, Progress in Math. \textbf{244}, Birkh\"auser Boston, Boston, MA, 2006, arXiv: {\tt math.RT/0309134}


\bibitem[McL98]{m} S.~MacLane, ``Categories for the working mathematician'', 2nd ed. Graduate Texts in Mathematics \textbf{5}. Springer-Verlag, New York, 1998.




\bibitem[Sa96]{saibi} M.~Saibi, {\em Transformation de Fourier-Deligne sur les groupes unipotents}, Ann. Inst. Fourier (Grenoble) \textbf{46} (1996), no.~5, 1205--1242.

\bibitem[SW03]{SW03} P.~Salvatore and N.~Wahl, {\em Framed discs operads and Batalin-Vilkovisky algebras}, Q. J. Math.  \textbf{54} (2003), no. 2, 213--231.

\bibitem[Seg74]{Seg74} G.~Segal, {\em Categories and cohomology theories}, Topology \textbf{13} (1974), 293--312.

\bibitem[Se60]{serre} J.-P.~Serre, {\em Groupes proalg\'ebriques}, Publ. Math. IHES \textbf{7} (1960).





\bibitem[S04]{S04} D.~Sullivan, {\em Open and closed string field theory interpreted in classical algebraic topology}, in: ``Topology, geometry and quantum field theory", 344--357, London Math. Soc. Lecture Note Ser. \textbf{308}, Cambridge Univ. Press, Cambridge, 2004.

\bibitem[Til98]{Til98} U.~Tillmann, {\em $\sS$-structures for $k$-linear categories and the definition of a modular functor}, J. London Math. Soc. (2) \textbf{58} (1998), no. 1, 208--228.

\bibitem[Til00]{Til00} U.~Tillmann, {\em Higher genus surface operad detects infinite loop spaces}, Math. Ann. \textbf{317}  (2000), no.~3, 613--628.

\end{thebibliography}
\end{document}